\documentclass[11pt]{article}

\usepackage[a4paper,margin=1in]{geometry}
\usepackage[numbers,sort&compress]{natbib}
\usepackage{microtype}
\usepackage{amsmath,amssymb,amsfonts,amsthm,mathrsfs,mathtools}
\usepackage{algorithm}
\usepackage{algpseudocode}
\usepackage{booktabs}
\usepackage{multirow}
\usepackage{array}
\usepackage{tabularx}
\usepackage{longtable}
\usepackage{graphicx}
\usepackage{subcaption}
\usepackage{xcolor}
\definecolor{DarkGreen}{RGB}{0,100,0}
\definecolor{DarkRed}{RGB}{139,0,0}
\definecolor{LightBlue}{RGB}{90,150,205}
\usepackage{enumitem}
\usepackage[section]{placeins}
\usepackage[hypertexnames=false]{hyperref}
\hypersetup{
  colorlinks=true,
  citecolor=DarkGreen,
  linkcolor=DarkRed,
  urlcolor=LightBlue,
  pdftitle={Composite and Stochastic Smoothed Decentralized Bregman Proximal-Gradient Methods},
  pdfauthor={Shengze Xu, Chenhan Jin, Qingsong Wang, Tieyong Zeng},
  pdfsubject={Decentralized nonconvex optimization with composite regularization, stochastic gradients, and Bregman geometry}
}

\graphicspath{{figures/}}
\allowdisplaybreaks[4]
\numberwithin{equation}{section}
\renewcommand{\arraystretch}{1.06}

\newtheorem{theorem}{Theorem}[section]
\newtheorem{assumption}[theorem]{Assumption}
\newtheorem{proposition}[theorem]{Proposition}
\newtheorem{lemma}[theorem]{Lemma}
\newtheorem{corollary}[theorem]{Corollary}
\theoremstyle{definition}
\newtheorem{definition}[theorem]{Definition}
\newtheorem{example}[theorem]{Example}
\theoremstyle{remark}
\newtheorem{remark}[theorem]{Remark}

\newcommand{\R}{{\mathbb R}}

\newcommand{\E}{{\mathbb E}}

\newcommand{\best}[1]{\textbf{#1}}

\renewcommand{\geq}{\geqslant}
\def\epsilon{\varepsilon}
\def\phi{\varphi}

\title{Composite and Stochastic Smoothed Decentralized Bregman Proximal-Gradient Methods}
\author{
Shengze Xu$^{1}$\thanks{These authors contributed equally to this work.} \quad
Chenhan Jin$^{1}$\footnotemark[1] \quad Qingsong Wang$^{2}$ \quad
Tieyong Zeng$^{3,4}$\thanks{Corresponding author: \texttt{tieyongzeng@bnbu.edu.cn}}\\[0.5em]
$^{1}$The Chinese University of Hong Kong \quad
$^{2}$Xiangtan University\\
$^{3}$Beijing Normal-Hong Kong Baptist University \quad
$^{4}$Guangzhou Nanfang College
}
\date{}

\begin{document}
\maketitle

\begin{abstract}
Smoothed decentralized Bregman proximal-gradient (SDBPG) methods combine relative-smoothness geometry with edge-dual consensus. We extend SDBPG to node-specific convex nonsmooth regularizers and unbiased stochastic local gradients while retaining its Bregman primal--dual smoothing mechanism. The analysis isolates temporal subgradient motion and aggregation mismatch as two defect sequences. For trajectories in a forward-invariant compact subset of the kernel domain and admissible parameters, finite accumulated defects yield an $O(T^{-1})$ bound on the squared best-iterate proximal residual over $T$ iterations. The stochastic bound separates accumulated oracle variance from expected defects and attains the same order when the variance bounds and expected defects are summable. Common active-structure identification, together with local subgradient conditions, gives sufficient finite-defect conditions for $\ell_1$ and group $\ell_{2,1}$ regularization. Experiments with common regularizers and heterogeneous sensing operators relate defect decay to observed support stabilization and compare reconstruction and communication with decentralized proximal and gradient-tracking methods. A separate phase-retrieval study evaluates the effect of a quartic Bregman kernel. Code and experiment scripts are available at \url{https://github.com/shengze-xu/composite-stochastic-sdbpg}.
\end{abstract}

\noindent\textbf{Keywords:} decentralized optimization; networked signal processing; nonconvex optimization; Bregman proximal gradient; relative smoothness; composite regularization; stochastic approximation.

\section{Introduction}
Decentralized optimization enables networked signal-processing, sensing, and learning systems to operate through neighbor communication without a fusion center \citep{tsitsiklis1986distributed,nedic2009distributed}. Avoiding centralized data aggregation improves scalability and data locality, but makes optimization accuracy depend on communication, graph connectivity, and heterogeneous local information.

Signal-processing models commonly exhibit several of these difficulties at once. Phase retrieval and low-rank recovery have nonconvex data-fidelity terms \citep{candes2015phase,tu2016low,chi2019nonconvex}; sparse and shared-support estimation use nonsmooth $\ell_1$ or group penalties \citep{tibshirani1996regression,donoho2006compressed,yuan2006model}; and partial-Fourier imaging combines structured complex-valued sensing with compressible representations. Local gradients may also be sampled or noisy. Euclidean proximal and gradient-tracking methods address some of these features, but do not preserve the problem-adapted Bregman geometry and edge-dual smoothing considered here.

Relative smoothness represents non-Euclidean curvature through a Bregman divergence instead of a global Euclidean Lipschitz-gradient bound \citep{bregman1967relaxation,beck2003mirror,bauschke2017descent,lu2018relatively}. The smooth deterministic SDBPG method of \citet{pu2024smoothed} couples this geometry to edge-based dual consensus through a moving smoothing reference. Its formulation contains no nonsmooth regularizer and uses exact local gradients, so it does not cover structural penalties or stochastic local information.

Kernel choice links the local optimization model to the curvature of the data-fidelity term. For example, quadratic measurements in phase retrieval produce a loss with quartic growth and unbounded Euclidean curvature. A kernel with comparable growth represents this geometry through the Bregman proximal step. Structural penalties then act within that step to promote sparse or shared-support solutions, while the edge-dual and smoothing updates coordinate the local variables through neighbor communication. This combination motivates a decentralized method that accommodates both problem-adapted geometry and nonsmooth structure.

The analysis of these extensions must account for how nonsmooth structure interacts with consensus. Temporal changes in the subgradients selected by the proximal updates affect the control of dual motion. Aggregating node subgradients at the network average introduces a separate compatibility issue: nearby node variables can lie on different active supports or faces. These effects couple to the motion of both primal and smoothing references, and stochastic gradients add oracle error to the same system.

We separate temporal subgradient motion and aggregation mismatch into two defect sequences and keep oracle variance explicit in the residual bounds. Common active-structure identification, together with local subgradient conditions, provides sufficient control of accumulated defects. The resulting analysis relates convergence to structural quantities that can be examined in sparse and group-sparse recovery. The main contributions are:
\begin{enumerate}[leftmargin=2em]
\item \emph{Composite and stochastic SDBPG algorithms.} The composite method places node-specific structural penalties inside the Bregman proximal step, and the stochastic method replaces exact local gradients by unbiased estimates. Both retain the edge-dual and smoothing updates of SDBPG.
\item \emph{Defect-based convergence analysis.} Temporal subgradient motion and aggregation mismatch are represented by separate defects. A Lyapunov analysis on a forward-invariant compact set gives deterministic and stochastic residual bounds. For trajectories with common active-structure identification, local subgradient conditions provide defect control for $\ell_1$, group $\ell_{2,1}$, polyhedral, and partly smooth regularizers. Under the corresponding admissible parameter conditions, summable defects give an $O(T^{-1})$ squared best-iterate proximal residual; summable expected defects and oracle variance give the same order in expectation for the stochastic method.
\item \emph{Signal-processing experiments with communication accounting.} With common coordinate- and group-sparse regularizers across nodes, the experiments relate observed active-structure stabilization to defect decay and evaluate recovery under heterogeneous local sensing operators. A separate study compares quadratic and matched non-Euclidean kernels. Partial-Fourier imaging and stochastic tests also compare reconstruction, consensus, and communication with decentralized proximal and gradient-tracking methods.
\end{enumerate}

\section{Related Work}
The closest lines of work concern decentralized composite optimization, stochastic gradient tracking, Bregman geometry, and active identification.

\paragraph{Decentralized deterministic optimization.}
Distributed optimization grew from consensus, asynchronous computation, and distributed subgradient methods \citep{tsitsiklis1986distributed,nedic2009distributed}. Convex developments include stochastic projection, dual averaging, time-varying digraph methods, and distributed alternating direction method of multipliers (ADMM) \citep{sundhar2010distributed,duchi2011dual,nedic2014distributed,neal2011distributed}; decentralized gradient descent relates objective smoothness to graph mixing \citep{yuan2016convergence}. EXTRA and gradient tracking remove its steady-state bias in smooth convex problems \citep{shi2015extra,nedic2017achieving,qu2017harnessing}, while proximal-gradient EXTRA (PG-EXTRA) admits convex proximal terms \citep{shi2015proximal}.
Unified distributed composite frameworks and inexact proximal-gradient schemes give topology-explicit rates, communication--computation tradeoffs, and network-independent stepsizes in convex settings \citep{xu2021distributed,guo2023decentralized}. Related developments include probabilistic multi-gossip communication, local adapt-then-combine updates, optimal strongly convex complexity, and adaptive three-operator splitting \citep{guo2025proximal,guo2026local,ye2025optimal,chen2026adaptive}. These analyses do not cover the nonconvex relatively smooth SDBPG model considered here.

\paragraph{Decentralized nonconvex and primal-dual methods.}
Nonconvex decentralized methods combine communication with local surrogates, time-varying networks, or Frank--Wolfe updates \citep{di2016next,scutari2018distributed,wai2017decentralized}. Primal--dual analyses characterize consensus error, dual motion, and graph spectra \citep{hong2017prox,sun2019distributed}, and stochastic variants also track sampling error \citep{yang2016parallel,lian2017can,koloskova2020unified}. Recent proximal gradient-tracking methods include ProxGT, which supports exact, mini-batch, and variance-reduced estimators with multiple consensus rounds \citep{xin2021stochastic}, and proximal decentralized averaged stochastic approximation with gradient tracking (Prox-DASA-GT), which uses constant batch sizes \citep{xiao2023one}. Compressed stochastic proximal-gradient methods address heterogeneous data and communication reduction, normal-map schemes provide another route to distributed composite optimization, and proximal gradient tracking has also been developed for composite objectives on Riemannian manifolds \citep{yan2023compressed,huang2024distributed,wang2025decentralized}. In the Bregman setting, \citet{pu2024smoothed} reported an $O(\epsilon^{-2})$ proximal-residual complexity for smooth deterministic SDBPG. The ICASSP paper does not include the complete convergence proof. The present work retains that geometry, treats node-specific regularizers and stochastic local gradients, and provides complete proofs of its smooth, composite, and stochastic convergence results in Appendices~\ref{app:deterministic-proof}--\ref{app:stochastic-proof}.

\paragraph{Bregman geometry.}
Bregman divergences underlie row-action, proximal-point, and mirror-descent methods \citep{bregman1967relaxation,chen1993convergence,eckstein1993nonlinear,kiwiel1997proximal,beck2003mirror}. Relative smoothness replaces a global Euclidean Lipschitz-gradient condition with a Bregman model inequality \citep{bauschke2017descent,lu2018relatively}; distributed relatively smooth and delay-tolerant Bregman proximal methods extend this geometry to networked or asynchronous computation \citep{jegnell2022distributed,chraibi2025delay}. Interior-gradient and proximal methods address constrained domains \citep{auslender2006interior}, while structured nonconvex analyses combine descent with limiting subdifferentials \citep{attouch2011convergence,bolte2014proximal,bolte2018first}. Approximate Bregman proximal-gradient methods address nonconvex relatively smooth subproblems \citep{takahashi2025approximate}, and stochastic Bregman analyses cover mini-batch, momentum, and variance-reduced estimators \citep{xiao2021unified,ding2025nonconvex,liao2025momentum}. Kernel-conditioned mappings also connect Bregman residuals to subdifferential stationarity \citep{zhang2025stochastic}. The present analysis uses local curvature bounds on a forward-invariant compact subset of the kernel domain.

\paragraph{Composite regularization and active structure.}
Composite methods keep nonsmooth convex terms inside the proximal step \citep{rockafellar1998variational,parikh2014proximal,combettes2011proximal}. In structured recovery, $\ell_1$ and group penalties encode sparsity and shared support \citep{tibshirani1996regression,donoho2006compressed,yuan2006model}, with proximal-gradient and separable approximations providing standard templates \citep{beck2009fast,wright2009sparse}. Active-set and partial-smoothness theory characterizes eventual motion on an identified face or manifold and the resulting local convergence behavior \citep{lewis2002active,liang2017activity}. Stochastic identification has been established for variance-reduced proximal methods, stabilized support-identification schemes, and normal-map proximal stochastic gradients \citep{poon2018local,dai2023variance,qiu2026normal}. In our analysis, identification is used to prove finite defect accumulation, avoiding a global Lipschitz assumption on subgradient variation.

\paragraph{Stochastic approximation and recovery models.}
Stochastic approximation incorporates sampling noise into convergence guarantees \citep{robbins1951stochastic,nemirovski2009robust}. Centralized nonconvex bounds track variance, weak convexity, or composite structure \citep{ghadimi2013stochastic,lan2012optimal,davis2019stochastic,bottou2018optimization}, whereas multi-agent analyses separate sampling error from descent and consensus \citep{yang2016parallel,lian2017can,koloskova2020unified}. In our bounds, oracle variance and nonsmooth structural changes appear as separate terms. The experiments use sparse recovery, phase retrieval, low-rank recovery, and structured Fourier imaging \citep{candes2015phase,eldar2014phase,candes2012exact,recht2010guaranteed,tu2016low,chi2019nonconvex} to evaluate reconstruction quality and decentralized cost.

Table~\ref{tab:method-scope-comparison} summarizes the closest algorithmic reference points.
\begin{table}[!htbp]
\centering
\caption{Comparison with closely related decentralized proximal methods.}
\label{tab:method-scope-comparison}

\small
\setlength{\tabcolsep}{3.5pt}
\renewcommand{\arraystretch}{1.12}

\begin{tabularx}{\textwidth}{@{}>{\raggedright\arraybackslash}p{0.15\textwidth}>{\raggedright\arraybackslash}X>{\raggedright\arraybackslash}p{0.16\textwidth}>{\raggedright\arraybackslash}p{0.18\textwidth}>{\raggedright\arraybackslash}p{0.175\textwidth}@{}}
\toprule
\textbf{Method} & \textbf{Objective and regularizer} & \textbf{Geometry and updates} & \textbf{Gradient oracle} & \textbf{Communication}\newline \textbf{per iteration}\\
\midrule
\textbf{Composite and stochastic SDBPG}\newline (this work)
& Possibly nonconvex; two-sided relative smoothness; compact localization.\newline Node-specific convex nonsmooth terms.
& Bregman; edge-dual smoothing
& Exact or unbiased gradients; explicit variance term
& One neighbor round; two states\\
\midrule
SDBPG\newline \citep{pu2024smoothed}
& Possibly nonconvex; relative smoothness.\newline No regularizer.
& Bregman; edge-dual smoothing
& Exact gradients
& One neighbor round; two states\\
\addlinespace[5pt]
ProxGT\newline \citep{xin2021stochastic}
& Possibly nonconvex; globally Lipschitz gradients.\newline Common convex nonsmooth term.
& Euclidean; gradient tracking
& Exact, mini-batch, variance-reduced
& Two consensus blocks\\
\addlinespace[5pt]
Prox-DASA-GT\newline \citep{xiao2023one}
& Stochastic nonconvex; Lipschitz gradients.\newline Common convex nonsmooth term.
& Euclidean; proximal averaging and tracking
& Unbiased bounded-variance gradients
& Multiple mixing rounds; three states\\
\addlinespace[5pt]
MG-Skip\newline \citep{guo2025proximal}
& Smooth strongly convex local losses.\newline Common convex nonsmooth term.
& Euclidean primal--dual splitting
& Exact gradients
& Randomized communication; probabilistic multi-gossip\\
\bottomrule
\end{tabularx}
\end{table}

\section{Problem Formulation and Composite/Stochastic SDBPG}
We formulate the decentralized composite problem and state the smooth, composite, and stochastic SDBPG updates. Each application is specified by local fidelity functions $f_i$, structural terms $r_i$, and a convex kernel $\phi$.

\subsection{Consensus Model and Bregman Geometry}
Consider $N$ agents communicating over a fixed, connected, undirected graph $G=(\mathcal V,\mathcal E)$, where $|\mathcal V|=N$. Agent $i$ holds a differentiable, possibly nonconvex function $f_i:\R^d\to\R$ and communicates with its neighbor set $\mathcal N_i$. The agents share a closed convex parameter domain $\mathcal X\subseteq\R^d$ with nonempty interior $\mathcal C:=\operatorname{int}\mathcal X$. Unless stated otherwise, vector and product-space norms are Euclidean, and matrix norms are Frobenius norms.

The decentralized problem couples the per-node objectives through exact consensus,
\begin{equation}\label{eq:problem-main}
\min_{x_1,\ldots,x_N}\ \frac1N\sum_{i=1}^N f_i(x_i)
\quad\text{s.t.}\quad x_i=x_j,\ (i,j)\in\mathcal E,
\qquad x_i\in\mathcal X,\ i\in\mathcal V.
\end{equation}
The edge constraints impose a common primal variable. The choice $\mathcal X=\R^d$ gives the unconstrained model; for a proper domain, the analysis remains within a compact subset of $\mathcal C$.

Structural priors are added through a composite term $r_i$ at each node:
\begin{equation}\label{eq:problem-composite}
\min_{x_1,\ldots,x_N}\ \frac1N\sum_{i=1}^N \big(f_i(x_i)+r_i(x_i)\big)
\quad\text{s.t.}\quad x_i=x_j,\ (i,j)\in\mathcal E,\quad x_i\in\mathcal X,
\end{equation}
where each $r_i:\R^d\to(-\infty,+\infty]$ is proper, closed, and convex; $r_i\equiv0$ recovers the smooth objective. Thus \eqref{eq:problem-composite} allows a nonconvex smooth fidelity term and a convex nonsmooth regularizer. The kernel $\phi$ specifies the geometry, while $r_i$ encodes sparsity, shared support, or another structural prior. We write $[N]=\{1,\ldots,N\}$, $d_i=|\mathcal N_i|$, $d_{\min}=\min_i d_i$, $d_{\max}=\max_i d_i$, and $\bar d=N^{-1}\sum_i d_i$.

\begin{definition}[Kernel and Bregman divergence]\label{def:bregman-kernel}
The kernel $\phi:\R^d\to(-\infty,+\infty]$ is proper, closed, convex, finite on $\mathcal X$, and continuously differentiable on $\mathcal C$. For $x\in\mathcal X$ and $y\in\mathcal C$, the induced Bregman divergence is
\[
D_\phi(x,y)=\phi(x)-\phi(y)-\langle\nabla\phi(y),x-y\rangle\ge0.
\]
\end{definition}

\begin{definition}[Relative smoothness and relative weak convexity]\label{def:relative-smoothness}
Let $Q\subseteq\mathcal C$ be convex. A differentiable function $f$ is $L_+$-smooth relative to $\phi$ on $Q$ if
\[
f(x)-f(y)-\langle \nabla f(y),x-y\rangle
\le L_+D_\phi(x,y),\qquad x,y\in Q,
\]
and $L_-$-weakly convex relative to $\phi$ on $Q$ if the same quantity is bounded below by $-L_-D_\phi(x,y)$. Equivalently, $L_+\phi-f$ and $L_-\phi+f$ are convex on $Q$. The upper bound drives descent. Together, the two bounds give local gradient-variation estimates; the lower bound also supplies the curvature needed for the consensus Bregman proximal-point residual.
\end{definition}

\subsection{Smooth Deterministic SDBPG}
Following \citet{pu2024smoothed}, smooth SDBPG combines a Bregman local model with edge multipliers. Each agent maintains a primal variable $x_i$, a smoothing reference $z_i$, and skew-symmetric edge variables $y_{ij}=-y_{ji}$. For $Z_i=\{z_j:j\in\mathcal N_i\cup\{i\}\}$, local-reference coefficient $c_0>0$, and neighbor-reference coefficient $\rho\ge0$, define
\begin{equation}\label{eq:Fi-main}
F_i(x_i,Z_i)=f_i(x_i)+c_0D_\phi(x_i,z_i)+\rho\sum_{j\in\mathcal N_i}D_\phi(x_i,z_j).
\end{equation}
The associated local primal-dual model is
\begin{equation}\label{eq:Ki-main}
K_i(x_i;Y_i,Z_i)=F_i(x_i,Z_i)+\sum_{j\in\mathcal N_i} y_{ij}^\top x_i,
\end{equation}
where $Y_i=\{y_{ij}:j\in\mathcal N_i\}$. The linear term is the local edge-consensus Lagrangian contribution, and the Bregman terms stabilize the moving model.
Throughout, $\nabla K_i$ denotes the gradient of $K_i$ with respect to its first argument.

\subsection{Composite and Stochastic Extensions}
Algorithm~\ref{alg:sdbpg} defines composite SDBPG. The variables $x_i^t$, $z_i^t$, and $y_{ij}^t$ are the primal, smoothing-reference, and edge-dual iterates; $\alpha$, $\lambda$, and $\beta$ denote the dual stepsize, primal stepsize, and smoothing weight. The effective domain of the primal subproblem is determined jointly by $D_\phi(\cdot,x_i^t)$ and $r_i$.

\begin{algorithm}[H]
\caption{Composite Kernel-Domain Smoothed Decentralized Bregman Proximal Gradient}
\label{alg:sdbpg}
\begin{algorithmic}[1]
\Require Graph $G$; local data $(f_i,r_i)$; kernel $\phi$; initial $x_i^0=z_i^0\in\mathcal C$ and $y_{ij}^0=0$; parameters $\alpha,\lambda,\beta,c_0>0$ and $\rho\ge0$.
\For{$t=0,1,2,\ldots$}
  \State Each agent exchanges $(x_i^t,z_i^t)$ with its neighbors.
  \For{each edge $(i,j)\in\mathcal E$}
    \State $y_{ij}^{t+1}=y_{ij}^t+\alpha(x_i^t-x_j^t)$ and $y_{ji}^{t+1}=-y_{ij}^{t+1}$.
  \EndFor
  \For{each agent $i\in\mathcal V$}
    \State Set $\nabla K_i^t:=\nabla_{x_i}K_i(x_i^t;Y_i^{t+1},Z_i^t)$.
    \State $x_i^{t+1}\in\arg\min_{x\in\R^d}\left\{r_i(x)+\langle \nabla K_i^t,x-x_i^t\rangle+\frac1\lambda D_\phi(x,x_i^t)\right\}$.
    \State $z_i^{t+1}=\arg\min_{z\in\R^d}\left\{\frac12\|z-x_i^{t+1}\|^2+\beta D_\phi(z,z_i^t)\right\}$.
  \EndFor
\EndFor
\end{algorithmic}
\end{algorithm}

The subproblems are solved exactly, with any minimizer admissible when the primal solution is nonunique. Setting $r_i\equiv0$ recovers smooth deterministic SDBPG. In the composite method, primal optimality selects a subgradient in $\partial r_i(x_i^{t+1})$, where $\partial r_i$ is the convex subdifferential. Stochastic composite SDBPG replaces $\nabla K_i^t$ by an unbiased estimator $G_i^t$ and keeps the remaining updates unchanged. A convex constraint set $Q$ can be included through its indicator $\iota_Q$, yielding a Bregman projected-gradient step and, for a quadratic kernel, its Euclidean counterpart.

\section{Assumptions, Defects, and Local Verification}
The convergence proof uses three ingredients. Localization provides uniform Bregman constants. Two defects account for nonsmooth structural changes. Conditions on an identified active structure then give finite accumulated defects.

Fix an orientation of $G$ and let $\mathcal B:\R^{|\mathcal E|d}\to\R^{Nd}$ be the block incidence operator, so that $\mathcal B^\top x$ stacks the oriented edge differences. Write $x^t=(x_1^t,\ldots,x_N^t)\in\R^{Nd}$ and $z^t=(z_1^t,\ldots,z_N^t)\in\R^{Nd}$, let $y^t\in\R^{|\mathcal E|d}$ stack the edge multipliers in the chosen orientation, and set $s^t:=\mathcal B^\top x^t$. Define
\[
\bar x^t=\frac1N\sum_{i=1}^N x_i^t,
\qquad
S^t=\|s^t\|^2,
\]
\[
d_x^t=\sum_{i=1}^N D_\phi(x_i^{t+1},x_i^t),\qquad
d_z^t=\sum_{i=1}^N D_\phi(z_i^{t+1},z_i^t),\qquad
e^t=\sum_{i=1}^N\|x_i^{t+1}-z_i^{t+1}\|^2.
\]
Here $S^t$ is the squared consensus violation, $d_x^t$ and $d_z^t$ quantify primal and reference motion, and $e^t$ is the squared smoothing mismatch. These four quantities form the descent terms in the Lyapunov estimates.

\subsection{Localized Geometry and Composite Structure}

\begin{assumption}[Geometry and compact localization]\label{ass:standing-local}
The problem data and the generated trajectory satisfy the following.
\begin{enumerate}[label=(\alph*),leftmargin=2em]
\item \emph{Convex domain and well-posed subproblems.} The common parameter set $\mathcal X$ is closed and convex with nonempty interior $\mathcal C=\operatorname{int}\mathcal X$. The primal, smoothing, and consensus proximal subproblems used below are well posed with selected solutions in $\mathcal C$, in both the smooth and composite settings.
\item \emph{Local kernel regularity.} The kernel in Definition~\ref{def:bregman-kernel} is twice continuously differentiable ($C^2$) and strongly convex on an open convex neighborhood $U\subseteq\mathcal C$ of the localization set $K$ in part (d).
\item \emph{Local two-sided relative smoothness.} Each $f_i$ is continuously differentiable ($C^1$) on $\mathcal C$. There exist $L_+,L_-<\infty$ such that $L_+\phi-f_i$ and $L_-\phi+f_i$ are convex on $U$. The symmetric case is $L_+=L_-=L$.
\item \emph{Admissible compact localization.} There is a compact convex set $K\Subset\mathcal C$, where $\Subset$ denotes compact containment, and $x_i^0=z_i^0\in K$. The set is forward invariant: if the current primal and smoothing references lie in $K$, then the selected primal updates, smoothing updates, and the common-variable proximal comparison points in \eqref{eq:stationarity-measure-main} also lie in $K$ for the generated edge multipliers. All local Bregman and model-gradient constants are measured on $K$.
\end{enumerate}
\end{assumption}

\begin{remark}[Role of compact localization]\label{rem:compact-localization}
Compact localization provides uniform curvature bounds and a finite Bregman diameter, following the use of regularity on bounded sets in Bregman analysis \citep{bolte2018first,takahashi2025approximate}. The curvature conditions are imposed on a neighborhood of $K$, and continuity gives a lower bound for each $f_i$ on $K$. The quadratic and quadratic-plus-quartic kernels used here have uniformly positive and bounded Hessians on compact sets, while $K\Subset\mathcal C$ provides an interior margin for proper kernel domains. In sparse and shared-support least squares, positive $\ell_1$ or group $\ell_{2,1}$ penalties covering all coordinates make the composite objectives coercive, yielding compact sublevel sets even for rank-deficient sensing operators. Forward invariance additionally concerns the evolving dual terms, moving Bregman references, and consensus proximal comparison points; deriving it from the problem data entails joint control of these coupled updates.
\end{remark}

\begin{lemma}[Local constants on $K$]\label{lem:local-variation-from-relative}
Under Assumption~\ref{ass:standing-local}, the generated iterates and the consensus proximal comparison points lie in the compact convex set $K\Subset\mathcal C$, and there exist constants $\mu,L_\phi,L_{F,K}<\infty$ such that
\[
D_\phi(u,v)\ge\frac\mu2\|u-v\|^2,
\qquad
\|\nabla\phi(u)-\nabla\phi(v)\|\le L_\phi\|u-v\|,
\qquad u,v\in K,
\]
and, for reference collections $U_i=\{u_j^z:j\in\mathcal N_i\cup\{i\}\}$ and $V_i=\{v_j^z:j\in\mathcal N_i\cup\{i\}\}$ with all entries in $K$,
\[
\begin{aligned}
&\sum_{i=1}^N\|\nabla f_i(u_i)-\nabla f_i(v_i)\|^2
+\sum_{i=1}^N\|\nabla_xF_i(u_i,U_i)-\nabla_xF_i(v_i,V_i)\|^2\\
&\qquad\le
L_{F,K}\left(\sum_{i=1}^N\|u_i-v_i\|^2+
\sum_{i=1}^N\sum_{j\in\mathcal N_i\cup\{i\}}\|u_j^z-v_j^z\|^2\right).
\end{aligned}
\]
\end{lemma}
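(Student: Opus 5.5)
The first claim, that the iterates and the consensus proximal comparison points lie in $K$, follows by induction on $t$ from Assumption~\ref{ass:standing-local}(d). The base case is $x_i^0=z_i^0\in K$. Forward invariance then places the selected primal update $x_i^{t+1}$, the smoothing update $z_i^{t+1}$, and the comparison points in $K$ whenever $(x^t,z^t)\subset K$. With membership in $K$ settled, every constant below is defined as a supremum of a continuous function over the compact set $K$ or $K\times K$. These suprema are finite because $K\Subset U$ and $\phi\in C^2(U)$ by part (b).

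\textbf{Kernel constants.} I will take $L_\phi:=\max_{w\in K}\|\nabla^2\phi(w)\|$, which is finite by continuity of the Hessian on compact $K$. The Lipschitz bound then follows from the mean-value integral $\nabla\phi(u)-\nabla\phi(v)=\int_0^1\nabla^2\phi(v+s(u-v))(u-v)\,ds$, where the segment stays in $K$ by convexity of $K$. For $\mu$, strong convexity of $\phi$ on $U\supset K$ gives a modulus $\mu>0$ with $D_\phi(u,v)\ge\frac\mu2\|u-v\|^2$ for $u,v$ in the convex set $U$, hence on $K$. One can also take $\mu:=\min_{w\in K}\lambda_{\min}(\nabla^2\phi(w))>0$ and use Taylor's formula with integral remainder along the segment.

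\textbf{Gradient variation of $f_i$.} This is the only nonroutine step, because part (c) only gives $f_i\in C^1$, not $C^2$. I would use the two-sided relative bounds. Set $g_\pm := L_\pm\phi\mp f_i$, both convex on $U$. Then $\nabla f_i = L_+\nabla\phi-\nabla g_+$, where $g_+$ is convex and $C^1$ on $U$, and $\nabla g_+ = (L_++L_-)\nabla\phi - \nabla g_-$. Convexity of $g_-$ together with the upper bound $g_+\ge0$-type sandwich, namely $0\le D_{g_+}(x,y)\le (L_++L_-)D_\phi(x,y)$, shows that $g_+$ is convex and $(L_++L_-)L_\phi$-smooth in the Euclidean sense on $K$. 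The standard argument that a convex function with a quadratic upper bound has Lipschitz gradient requires the quadratic bound on a neighborhood. I would apply the sandwich on a slightly larger compact convex set $K'$ with $K\Subset K'\Subset U$, for instance $K'=K+\bar B_\delta$ with $\delta<\operatorname{dist}(K,U^c)$, which is convex and compact. The cocoercivity proof (minimizing $g_+(\cdot)-\langle\nabla g_+(y),\cdot\rangle$ around $x$ with step $\|\nabla g_+(x)-\nabla g_+(y)\|/\ell$) then stays in $K'$ once $\delta$ exceeds that step. This may require shrinking the step or using a local version, namely a bounded-gradient argument on $K'$ together with Lipschitzness on small balls, then chaining along segments. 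Either route gives $\|\nabla f_i(u)-\nabla f_i(v)\|\le L_{f}\|u-v\|$ on $K$ with $L_f\le L_+L_\phi+(L_++L_-)L_\phi'$. A simpler fallback is to note that local Lipschitzness of $\nabla g_+$ near each point of $K$, together with compactness and convexity of $K$, yields a global Lipschitz constant on $K$ by covering and subdividing segments.

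\textbf{Assembling $L_{F,K}$.} Since $\nabla_xD_\phi(x,z)=\nabla\phi(x)-\nabla\phi(z)$,
\[
\nabla_xF_i(u_i,U_i)-\nabla_xF_i(v_i,V_i)=\nabla f_i(u_i)-\nabla f_i(v_i)+(c_0+\rho d_i)(\nabla\phi(u_i)-\nabla\phi(v_i))-c_0(\nabla\phi(u_i^z)-\nabla\phi(v_i^z))-\rho\sum_{j\in\mathcal N_i}(\nabla\phi(u_j^z)-\nabla\phi(v_j^z)).
\]
Applying the Lipschitz bounds and then Cauchy--Schwarz, $\|\sum_{k=1}^m a_k\|^2\le m\sum_k\|a_k\|^2$ with $m=d_i+3$, gives a per-node bound in $\|u_i-v_i\|^2$ and $\sum_{j\in\mathcal N_i\cup\{i\}}\|u_j^z-v_j^z\|^2$. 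The constant is of order $(d_{\max}+3)\big(L_f^2+(c_0+\rho d_{\max})^2L_\phi^2+\rho^2L_\phi^2+c_0^2L_\phi^2\big)$. Adding the $\sum_i\|\nabla f_i(u_i)-\nabla f_i(v_i)\|^2\le L_f^2\sum_i\|u_i-v_i\|^2$ term and summing over $i$ yields the stated inequality with an explicit $L_{F,K}$.
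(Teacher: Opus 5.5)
Your proposal is correct. It agrees with the paper on the induction, the kernel constants, and the $(d_i+3)$-term Cauchy--Schwarz assembly of $L_{F,K}$. It differs in the one nontrivial step: Lipschitz continuity of $\nabla f_i$ on $K$ when only $f_i\in C^1$.

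The paper passes to a compact convex $Q$ with $K\subseteq\operatorname{int}Q$ and $Q\Subset U$. It converts the relative conditions into Euclidean ones, namely $\frac{L_f}{2}\|\cdot\|^2\mp f_i$ convex on $Q$ with $L_f=M_\phi\max\{L_+,L_-,0\}$. It then mollifies: $-L_fI\preceq\nabla^2f_{i,\varepsilon}\preceq L_fI$ on $K$, and uniform convergence of $\nabla f_{i,\varepsilon}$ carries the Lipschitz bound to $\nabla f_i$.

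You instead write $\nabla f_i=L_+\nabla\phi-\nabla g_+$, with $g_+$ convex and $0\le D_{g_+}\le(L_++L_-)D_\phi$, and use a localized cocoercivity (Baillon--Haddad) argument. Your first formulation, that the step stays in $K'$ ``once $\delta$ exceeds that step,'' is not automatic for distant $x,y$. The branch that closes the argument is the local one:
\begin{itemize}
\item by uniform continuity of $\nabla g_+$ on $K$, choose $r>0$ so that $\|x-y\|<r$ forces $\ell^{-1}\|\nabla g_+(x)-\nabla g_+(y)\|\le\delta$;
\item for such pairs, obtain $\|\nabla g_+(x)-\nabla g_+(y)\|\le\ell\|x-y\|$ with $\ell=(L_++L_-)\max_{K'}\|\nabla^2\phi\|$;
\item sum along a subdivision of the segment $[x,y]\subset K$.
\end{itemize}
A shortened step $s$ with $2s\max_K\|\nabla g_+\|\le\delta$ also works globally, with a constant of order $1/s$.

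The mollifier route gets locality for free, because Hessian bounds are pointwise, and it gives the symmetric constant $M_\phi\max\{L_+,L_-\}$. Your route is purely first order and avoids the approximation machinery. The price is the chaining step and a somewhat larger constant $|L_+|L_\phi+(L_++L_-)L'_\phi$, which is harmless since only finiteness of $L_{F,K}$ is used downstream.
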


\begin{assumption}[Composite regularizers]\label{ass:composite-regularizer}
For the composite model, each $r_i$ is proper, closed, convex, and finite at the iterates and comparison points in $K$. The subdifferential sum rules and first-order optimality relations used in Definition~\ref{def:subgradient-motion-defects} and Proposition~\ref{prop:kkt-bridge} hold at all selected primal updates and consensus comparison points.
\end{assumption}

Properness and closedness give a lower bound for each $r_i$ on the compact set $K$. The sum-rule qualification holds when $r_i$ is finite and continuous near the relevant part of $K$, including the $\ell_1$, squared $\ell_2$, and group $\ell_{2,1}$ penalties. Existence and localization of the selected subproblem solutions are specified in Assumption~\ref{ass:standing-local}(a),(d).

\subsection{Nonsmooth Defects and Finite Accumulation}

For $a\in\R$, let $[a]_+:=\max\{a,0\}$. The motion defect is the selected-subgradient variation left after the controllable primal, reference, and oracle terms are absorbed. Subscripts $T\xi$ and $C\xi$ refer to temporal-motion and consensus-aggregation absorption constants; the superscript $s$ denotes stochastic quantities.

\begin{definition}[Composite subgradient selections and motion defects]\phantomsection\label{def:subgradient-motion-defects}
\begin{enumerate}[label=(\alph*),leftmargin=2em]
\item \emph{Smooth case.} If $r_i\equiv0$, select $\xi_i^t=0$ for all $i,t$.
\item \emph{Deterministic composite case.} Choose $\xi_i^{t+1}\in\partial r_i(x_i^{t+1})$ satisfying
\[
0=
\nabla K_i(x_i^t;Y_i^{t+1},Z_i^t)
+\frac1\lambda\big(\nabla\phi(x_i^{t+1})-\nabla\phi(x_i^t)\big)
+\xi_i^{t+1}.
\]
For any fixed absorption constant $M_{T\xi}\ge0$, define the deterministic subgradient-motion defect by
\[
\begin{aligned}
\eta_t^r:=
\Big[
&\sum_{i=1}^N\|\xi_i^{t+1}-\xi_i^t\|^2\\
&-M_{T\xi}\big(d_x^t+d_x^{t-1}+d_z^{t-1}\big)
\Big]_+,\qquad t\ge1.
\end{aligned}
\]
\item \emph{Stochastic composite case.} Let $\E$ denote expectation and define the stochastic oracle error $\delta_i^t=G_i^t-\nabla K_i(x_i^t;Y_i^{t+1},Z_i^t)$, with $\delta^t=(\delta_1^t,\ldots,\delta_N^t)$. Choose $\xi_i^{t+1}\in\partial r_i(x_i^{t+1})$ satisfying
\[
0=
G_i^t
+\frac1\lambda\big(\nabla\phi(x_i^{t+1})-\nabla\phi(x_i^t)\big)
+\xi_i^{t+1}.
\]
For fixed absorption constants $M_{T\xi}^s,M_{\delta\xi}\ge0$, define the expected stochastic subgradient-motion defect by
\[
\begin{aligned}
\eta_t^s:=
\Big[
&\E\sum_{i=1}^N\|\xi_i^{t+1}-\xi_i^t\|^2\\
&-M_{T\xi}^s\E\big(d_x^t+d_x^{t-1}+d_z^{t-1}\big)\\
&-M_{\delta\xi}\E\big(\|\delta^t\|^2+\|\delta^{t-1}\|^2\big)
\Big]_+,\qquad t\ge1.
\end{aligned}
\]
\end{enumerate}
The defect sum records the part of the selected-subgradient variation that exceeds the motion and oracle-error terms.
\end{definition}

Choosing $M_{\delta\xi}=0$ leaves all oracle error outside the motion defect; otherwise the absorbed part is controlled by the variance term that remains explicit in the stochastic bounds.

\begin{definition}[Aggregation defects]\label{def:aggregation-defects}
For a vector $v$ and set $A$, write $\operatorname{dist}(v,A):=\inf_{a\in A}\|v-a\|$. The sum $\sum_i\partial r_i(\bar x^t)$ is a Minkowski sum: it contains all $\sum_i v_i$ with $v_i\in\partial r_i(\bar x^t)$. The aggregation defect measures the selected-subgradient mismatch at the network average that remains after consensus has been absorbed.
\begin{enumerate}[label=(\alph*),leftmargin=2em]
\item \emph{Deterministic aggregation.} For $t\ge1$, with selected $\xi_i^t$ from Definition~\ref{def:subgradient-motion-defects}(b) and any fixed $M_{C\xi}\ge0$, define
\[
\zeta_t^r:=\left[\operatorname{dist}^2\!\left(\sum_{i=1}^N\xi_i^t,\sum_{i=1}^N\partial r_i(\bar x^t)\right)-M_{C\xi}S^t\right]_+\ \ge0 .
\]
Then, by construction (using $a\le[a-b]_++b$ for $b\ge0$),
\[
\operatorname{dist}^2\!\left(\sum_{i=1}^N\xi_i^t,\sum_{i=1}^N\partial r_i(\bar x^t)\right)
\le M_{C\xi}S^t+\zeta_t^r.
\]
\item \emph{Stochastic aggregation.} For $t\ge1$, with the selected stochastic subgradients from Definition~\ref{def:subgradient-motion-defects}(c) and any fixed $M_{C\xi}^s\ge0$, define
\[
\zeta_t^s:=\left[\E\operatorname{dist}^2\!\left(\sum_{i=1}^N\xi_i^t,\sum_{i=1}^N\partial r_i(\bar x^t)\right)-M_{C\xi}^s\,\E S^t\right]_+\ \ge0 ,
\]
so that
\[
\E\operatorname{dist}^2\!\left(\sum_{i=1}^N\xi_i^t,\sum_{i=1}^N\partial r_i(\bar x^t)\right)
\le M_{C\xi}^s\E S^t+\zeta_t^s.
\]
\end{enumerate}
\end{definition}

The aggregation defect appears only when the node-wise lifted residual is converted to the consensus proximal-point residual.

\begin{assumption}[Finite defect accumulation]\label{ass:finite-defect-accumulation}
For fixed absorption constants, the defects satisfy the condition corresponding to the oracle setting:
\begin{enumerate}[label=(\alph*),leftmargin=2em]
\item \emph{Deterministic case.}
\[
\sum_{t=1}^{\infty}\eta_t^r<\infty
\qquad\text{and}\qquad
\sum_{t=1}^{\infty}\zeta_t^r<\infty .
\]
\item \emph{Stochastic case.}
\[
\sum_{t=1}^{\infty}\eta_t^s<\infty
\qquad\text{and}\qquad
\sum_{t=1}^{\infty}\zeta_t^s<\infty .
\]
\end{enumerate}
\end{assumption}

\begin{remark}[Finite-horizon bounds and rate conditions]\label{rem:finite-defect-role}
The general deterministic and stochastic bounds retain the accumulated defects explicitly. Parts (a) and (b) of Assumption~\ref{ass:finite-defect-accumulation} make their contribution uniformly bounded in the respective rate corollaries. Deterministic verified selections control the defect tails, and stochastic verified selections give finite expected accumulation. Example~\ref{ex:df-vs-verified} illustrates finite accumulation for subgradients that vary at a fixed primal iterate.
\end{remark}

\subsection{Deterministic Verification by Active Identification}

\begin{definition}[Verified local subgradient selection]\label{def:verified-local-selection}
The deterministic subgradients in Definition~\ref{def:subgradient-motion-defects}(b) admit a \emph{verified local selection} if there exist an index $T_{\rm id}\ge1$, sets $U_i$ containing the tail points $\{x_i^t,\bar x^t:t\ge T_{\rm id}\}$, and single-valued maps $\Xi_i:U_i\to\R^d$ such that, for all $t\ge T_{\rm id}$,
\[
\xi_i^t=\Xi_i(x_i^t)\in\partial r_i(x_i^t),
\qquad
\Xi_i(\bar x^t)\in\partial r_i(\bar x^t),
\]
and each $\Xi_i$ is Lipschitz on $U_i$. The sets $U_i$ may be ordinary neighborhoods in the smooth case or relative neighborhoods on an identified face or active manifold. The finite prefix $t<T_{\rm id}$ is absorbed into the constants of the final best-iterate bound.
\end{definition}

Verified selection gives tail defect control and deterministic Lyapunov lower boundedness. Smooth convex regularizers with Lipschitz gradients satisfy it directly; polyhedral and partly smooth regularizers may do so after identification. For $\ell_1$ and group $\ell_{2,1}$ penalties, common active-structure identification also permits direct verification of the defects from the primal optimality equations, as shown in Proposition~\ref{prop:stability-sufficient}. The next example illustrates finite defect accumulation for a selected subgradient that varies at a fixed iterate.

\begin{example}[Finite accumulation with varying subgradients]\label{ex:df-vs-verified}
Consider the scalar model $f_i\equiv0$, $r_i(x)=\omega|x|$ with $\omega>0$, and $\phi(x)=x^2/2$. Choose admissible parameters with $\rho=0$ and initialize every node at $x_i^0=z_i^0=a$, where
\[
0<a<\frac{\lambda\omega}{1+2\lambda\alpha d_{\max}}.
\]
Set $q_\beta=\beta/(1+\beta)\in(0,1)$. Soft-thresholding and the smoothing update give
\[
x_i^t=0,\qquad z_i^t=a q_\beta^t,\qquad y_{ij}^t=0,\qquad t\ge1.
\]
The selected subgradients are
\[
\xi_i^1=\frac a\lambda,\qquad
\xi_i^{t+1}=c_0a q_\beta^t,\qquad t\ge1,
\]
and lie in $\partial r_i(0)=[-\omega,\omega]$. All iterates and consensus proximal points lie in $K=[-a,a]$: the threshold bound maps primal points in $K$ to zero for the generated multipliers, the smoothing update preserves $K$, and the consensus proximal point is zero. The geometric subgradient variation gives
\[
\sum_{t=1}^{\infty}\eta_t^r
\le\sum_{t=1}^{\infty}\sum_i|\xi_i^{t+1}-\xi_i^t|^2<\infty,
\qquad \zeta_t^r=0\quad(t\ge1).
\]
Thus Assumption~\ref{ass:finite-defect-accumulation}(a) holds. Since the scalar primal iterate remains zero while the selected subgradient changes at every subsequent step, a single-valued map at zero cannot represent these selections. Finite defect accumulation therefore includes this trajectory beyond the verified-selection condition.
\end{example}

\subsection{Stochastic Defects, Oracle, and Verification}

\begin{assumption}[Stochastic oracle]\label{ass:stochastic-oracle}
Let $\mathcal F_t$ denote the information available before $G^t=(G_1^t,\ldots,G_N^t)$ is drawn. With $\delta_i^t=G_i^t-\nabla K_i(x_i^t;Y_i^{t+1},Z_i^t)$, assume
\[
\E[\delta_i^t\mid\mathcal F_t]=0,
\qquad
\E[\|\delta_i^t\|^2\mid\mathcal F_t]\le (\sigma_i^t)^2<\infty.
\]
\end{assumption}

For each finite horizon $T$, denote the accumulated variance bound by
\[
V_T:=\sum_{t=0}^{T}\sum_{i=1}^N(\sigma_i^t)^2<\infty.
\]
We use $\delta^{-1}=0$ and $\sigma_i^{-1}=0$.

The stochastic rate corollaries use Assumption~\ref{ass:finite-defect-accumulation}(b). A sufficient verification uses identification at a stopping time adapted to the oracle history, together with integrability of the preceding subgradient variations.

\begin{definition}[Stochastic verified local subgradient selection]\label{def:stochastic-verified-local-selection}
The stochastic subgradients in Definition~\ref{def:subgradient-motion-defects}(c) admit a \emph{stochastic verified local selection} if the following properties hold. On an event of probability one there exist a finite $\{\mathcal F_t\}$-stopping time $T_{\rm id}^s$, satisfying $\{T_{\rm id}^s\le t\}\in\mathcal F_t$, and measurable single-valued maps $\Xi_i$ on tail neighborhoods containing $\{x_i^t,\bar x^t:t\ge T_{\rm id}^s\}$ such that, for all $t\ge T_{\rm id}^s$,
\[
\xi_i^t=\Xi_i(x_i^t)\in\partial r_i(x_i^t),
\qquad
\Xi_i(\bar x^t)\in\partial r_i(\bar x^t),
\]
and the maps $\Xi_i$ have a common deterministic Lipschitz constant on those neighborhoods. In addition,
\[
\sup_t\E\sum_i\|\xi_i^t\|^2<\infty
\]
and
\begin{equation}\label{eq:integrable-identification-prefix}
\E\sum_{1\le t<T_{\rm id}^s}
\left[
\sum_i\|\xi_i^{t+1}-\xi_i^t\|^2+
\operatorname{dist}^2\!\left(\sum_i\xi_i^t,\sum_i\partial r_i(\bar x^t)\right)
\right]<\infty .
\end{equation}
\end{definition}

\begin{remark}[Noise decay and active identification]\label{rem:noise-identification}
The sufficient condition in Definition~\ref{def:stochastic-verified-local-selection} combines almost-sure identification, Lipschitz compatibility of the selected subgradients on the identified tail, and integrable pre-identification defects. These conditions yield finite expected defect accumulation. Summability of oracle variance is an additional condition for the $O(T^{-1})$ expected squared best-iterate residual. Corollary~\ref{cor:stochastic-proximal} describes the joint contribution of expected defects and oracle variance for general noise sequences.
\end{remark}

\subsection{Lyapunov Lower Bounds and Sufficient Regimes}

Fix an auxiliary weight $\tau>0$ and define the composite primal energy and dual-corrected Lyapunov sequence by
\[
H^t:=\sum_{i=1}^N\big(F_i(x_i^t,Z_i^t)+r_i(x_i^t)\big),
\qquad
\mathcal L_r^t:=H^t+\langle y^t,s^t\rangle+\tau S^t.
\]
For $r_i\equiv0$, this reduces to the smooth Lyapunov sequence $\mathcal L^t$.

\begin{assumption}[Lower bounded Lyapunov sequence]\label{ass:lyapunov-lower-bound}
In the deterministic composite setting, $\inf_t\mathcal L_r^t> -\infty$. In the stochastic composite setting, $\inf_t\E\mathcal L_r^t> -\infty$. The smooth deterministic analysis establishes this property directly.
\end{assumption}

\begin{proposition}[Sufficient lower-bound condition]\label{prop:lyapunov-lower-sufficient}
Let Assumption~\ref{ass:standing-local} hold. In the deterministic composite setting, if the selected subgradients $\xi_i^t$ are uniformly bounded, then Assumption~\ref{ass:lyapunov-lower-bound} holds. In the stochastic composite setting, if $\sup_t\E\sum_i\|\xi_i^t\|^2<\infty$ and $\sup_t\sum_i(\sigma_i^t)^2<\infty$, then the expected Lyapunov sequence is bounded below.
\end{proposition}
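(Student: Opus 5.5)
We need a lower bound on $\mathcal L_r^t=H^t+\langle y^t,s^t\rangle+\tau S^t$. The plan is to bound $H^t$ from below by compactness, and to control the indefinite cross term $\langle y^t,s^t\rangle$ by rewriting $y^t$ through the primal optimality condition.

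\textbf{Step 1: lower bound on $H^t$.} By Assumption~\ref{ass:standing-local}(d) and Lemma~\ref{lem:local-variation-from-relative}, all $x_i^t,z_i^t$ lie in $K$. Continuity of $f_i$ on $K$ and nonnegativity of the Bregman terms then give $F_i(x_i^t,Z_i^t)\ge\min_K f_i$. For the regularizer, the remark after Assumption~\ref{ass:composite-regularizer} gives $r_i\ge\inf_K r_i>-\infty$, since a proper closed convex function is bounded below on a compact set by an affine minorant. Hence $H^t\ge -C_H$ uniformly in $t$.

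\textbf{Step 2: expressing $y^t$.} Since $y^0=0$ and each update adds an increment $\alpha\mathcal B^\top x^t$, the sequence $y^t$ stays in $\operatorname{range}(\mathcal B^\top)$. Moreover $\mathcal B y$ stacks the node terms $\sum_{j\in\mathcal N_i}y_{ij}$. The optimality relation in Definition~\ref{def:subgradient-motion-defects}(b), written at step $t-1$, reads
\[
\mathcal B y^t=-\Big(\nabla_xF_i(x_i^{t-1},Z_i^{t-1})+\tfrac1\lambda\big(\nabla\phi(x_i^t)-\nabla\phi(x_i^{t-1})\big)+\xi_i^t\Big)_{i}.
\]
Every term on the right is uniformly bounded. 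The model gradients are bounded by continuity on the compact set $K$, with $\nabla\phi$ Lipschitz on $K$. The subgradients $\xi_i^t$ are bounded by hypothesis. Because $y^t\in\operatorname{range}(\mathcal B^\top)$, we get $\|y^t\|\le\|\mathcal B y^t\|/\sigma_{\min}^+(\mathcal B)\le C_y$, where $\sigma_{\min}^+(\mathcal B)$ is the smallest positive singular value and is positive because the graph is connected. For $t=0$ we use $y^0=0$ directly.

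\textbf{Step 3: the cross term.} Since $s^t=\mathcal B^\top x^t$ with $x^t\in K^N$, we have $\|s^t\|\le C_s$ from the diameter of $K$. Therefore $\langle y^t,s^t\rangle\ge -C_yC_s$, and also $\tau S^t\ge0$, so $\mathcal L_r^t\ge -C_H-C_yC_s$ for all $t$. An alternative is Young's inequality, $\langle y,s\rangle\ge -\tfrac{1}{4\tau}\|y\|^2-\tau S$, which gives the same conclusion.

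\textbf{Stochastic case.} The argument repeats with $G_i^{t-1}=\nabla K_i+\delta_i^{t-1}$. Then $\|\mathcal B y^t\|^2\le 4\sum_i\big(\|\nabla_xF_i\|^2+\lambda^{-2}\|\nabla\phi(x_i^t)-\nabla\phi(x_i^{t-1})\|^2+\|\xi_i^t\|^2+\|\delta_i^{t-1}\|^2\big)$. Taking expectations yields $\E\|y^t\|^2\le C$, using the hypotheses $\sup_t\E\sum_i\|\xi_i^t\|^2<\infty$ and $\sup_t\sum_i(\sigma_i^t)^2<\infty$. Cauchy--Schwarz then gives $\E\langle y^t,s^t\rangle\ge -C_s\sqrt{\E\|y^t\|^2}$, and $\E H^t\ge -C_H$ holds pointwise.

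\textbf{Main obstacle.} The delicate step is Step 2: confirming the exact form of the optimality relation. In Algorithm~\ref{alg:sdbpg}, $\nabla K_i^t$ uses $Y_i^{t+1}$ and $Z_i^t$, so the index bookkeeping must be checked. It also has to be verified that the range argument applies, i.e.\ that $y^t\perp\ker\mathcal B$ under the chosen orientation.
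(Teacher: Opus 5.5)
Your proposal is correct and follows essentially the same route as the paper. You bound $H^t$ below by compactness, place $y^t$ in $\operatorname{range}(\mathcal B^\top)$, bound $\mathcal B y^t$ through the stacked primal optimality system at step $t-1$, and repeat the argument with the oracle error and expectations in the stochastic case. The index bookkeeping you flag does check out, since $(\mathcal B y)_i=\sum_{j\in\mathcal N_i}y_{ij}$ under the skew-symmetric convention. Your explicit step $\E\langle y^t,s^t\rangle\ge -C_s(\E\|y^t\|^2)^{1/2}$ simply spells out what the paper leaves as ``taking expectations.''
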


For $r_i(x)=\omega_i\|x\|_1$, every subgradient has norm at most $\omega_i\sqrt d$. For $r_i(x)=\omega_i\sum_{g\in\mathcal P_i}\|x_g\|_2$, where $\mathcal P_i$ is a partition of the coordinates, the corresponding bound is $\omega_i\sqrt{|\mathcal P_i|}$. Squared $\ell_2$ penalties have bounded gradients on $K$. Thus, under the standing geometry, Proposition~\ref{prop:lyapunov-lower-sufficient} supplies the deterministic Lyapunov lower bound for these penalties directly, and the expected lower bound when the oracle variance is uniformly bounded. Summable variance in the stochastic $O(T^{-1})$ corollaries satisfies this variance requirement.

\begin{corollary}[Verified selection gives deterministic lower boundedness]\label{cor:verified-selection-lower-bound}
Under Assumptions~\ref{ass:standing-local} and~\ref{ass:composite-regularizer}, a verified local selection in the sense of Definition~\ref{def:verified-local-selection} gives Assumption~\ref{ass:lyapunov-lower-bound} in the deterministic setting.
\end{corollary}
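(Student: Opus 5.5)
The plan is to reduce the corollary to Proposition~\ref{prop:lyapunov-lower-sufficient}. That proposition says Assumption~\ref{ass:lyapunov-lower-bound} holds in the deterministic composite setting as soon as the selected subgradients $\xi_i^t$ are uniformly bounded. So it suffices to show $\sup_{t\ge1}\max_i\|\xi_i^t\|<\infty$ whenever a verified local selection in the sense of Definition~\ref{def:verified-local-selection} exists. Throughout, Assumption~\ref{ass:standing-local} places every iterate $x_i^t$ and the average $\bar x^t$ (a convex combination of points of $K$) in the compact convex set $K$.

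\textbf{Tail bound.} For $t\ge T_{\rm id}$ we have $\xi_i^t=\Xi_i(x_i^t)$, and $\Xi_i$ is Lipschitz on $U_i$ with some constant $L_\Xi$. Fix the anchor point $x_i^{T_{\rm id}}\in U_i$. Then
\[
\|\xi_i^t\|\le\|\Xi_i(x_i^{T_{\rm id}})\|+L_\Xi\|x_i^t-x_i^{T_{\rm id}}\|\le\|\xi_i^{T_{\rm id}}\|+L_\Xi\operatorname{diam}(K),
\]
using that both points lie in $K$. No control of $U_i$ itself is needed beyond its containing the tail points, since the anchor is itself a tail point.

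\textbf{Prefix bound.} For $1\le t<T_{\rm id}$ there are only finitely many indices, and each $\xi_i^t$ is a finite vector: it is determined by the primal optimality relation in Definition~\ref{def:subgradient-motion-defects}(b), whose terms $\nabla K_i$ and $\nabla\phi$ are finite at points of $K\subset\mathcal C$. The maximum over this finite prefix is therefore finite; this is exactly the statement in Definition~\ref{def:verified-local-selection} that the prefix is absorbed into constants. Together with the tail bound, $\sup_t\sum_i\|\xi_i^t\|^2<\infty$, and Proposition~\ref{prop:lyapunov-lower-sufficient} yields $\inf_t\mathcal L_r^t>-\infty$.

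\textbf{Main obstacle.} The delicate point is only making sure the Lipschitz estimate can be anchored. $U_i$ may be a relative neighborhood on an identified face rather than a bounded open set, so a bound of the form $\sup_{U_i}\|\Xi_i\|$ is not available directly. Anchoring at the identified iterate $x_i^{T_{\rm id}}$ and using the boundedness of $K$ avoids this. If one prefers not to invoke the proposition as a black box, its content can be recovered directly, as follows.
\begin{itemize}
\item Use the subgradient inequality $r_i(x_i^t)\ge r_i(x_0)+\langle\xi_0,x_i^t-x_0\rangle$ at a fixed point $x_0\in K$ with $\xi_0\in\partial r_i(x_0)$; this gives a lower bound on $r_i$ over $K$.
\item Bound $f_i$ and the Bregman terms below by continuity and nonnegativity on $K$.
\item Bound the dual coupling $\langle y^t,s^t\rangle$ using the optimality relation to express $\mathcal B y^{t+1}$ through $\nabla f_i$, $\nabla\phi$ differences and the bounded $\xi_i^{t+1}$, all of which are bounded on $K$.
\end{itemize}
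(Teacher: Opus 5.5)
Your proposal is correct and follows the paper's proof: you bound the tail selections $\xi_i^t=\Xi_i(x_i^t)$ using the Lipschitz property of $\Xi_i$ together with compactness of $K$, you bound the finite prefix because it has finitely many terms, and you then invoke Proposition~\ref{prop:lyapunov-lower-sufficient}. Anchoring the Lipschitz estimate at $x_i^{T_{\rm id}}$ makes explicit a step that the paper states in a single sentence.
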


\begin{corollary}[Stochastic verified selection gives lower boundedness]\label{cor:stochastic-verified-selection-lower-bound}
Under Assumptions~\ref{ass:standing-local}, \ref{ass:composite-regularizer}, and~\ref{ass:stochastic-oracle}, a stochastic verified local selection in the sense of Definition~\ref{def:stochastic-verified-local-selection}, together with $\sup_t\sum_i(\sigma_i^t)^2<\infty$, gives Assumption~\ref{ass:lyapunov-lower-bound} in expectation.
\end{corollary}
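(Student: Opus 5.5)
The plan is to reduce the claim to Proposition~\ref{prop:lyapunov-lower-sufficient} in its stochastic form. That proposition already gives a lower bound on $\E\mathcal L_r^t$ when two conditions hold: $\sup_t\E\sum_i\|\xi_i^t\|^2<\infty$ and $\sup_t\sum_i(\sigma_i^t)^2<\infty$. The variance condition is assumed verbatim in the corollary. The second-moment bound on the selected subgradients appears explicitly as one of the clauses in Definition~\ref{def:stochastic-verified-local-selection}. Formally, then, the corollary follows by invoking the proposition under Assumptions~\ref{ass:standing-local} and~\ref{ass:stochastic-oracle}.

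To make the argument less dependent on that built-in clause, I would also show how the second-moment bound follows from the remaining structure. On the identified tail $t\ge T_{\rm id}^s$ we have $\xi_i^t=\Xi_i(x_i^t)$, and $\Xi_i$ has a deterministic Lipschitz constant $L_\Xi$. Fix a reference point $\hat x\in K$. Then
\[
\|\xi_i^t\|\le\|\Xi_i(x_i^{T_{\rm id}^s})\|+L_\Xi\operatorname{diam}(K).
\]
Bounding the first term requires the value at the identification time, and controlling that value in expectation is exactly what the integrable-prefix condition \eqref{eq:integrable-identification-prefix} does. It bounds $\E\sum_{t<T_{\rm id}^s}\sum_i\|\xi_i^{t+1}-\xi_i^t\|^2$, and a telescoping and Cauchy--Schwarz argument then controls $\xi_i^{T_{\rm id}^s}$ in terms of $\xi_i^1$. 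In turn, $\xi_i^1$ is bounded via the optimality relation in Definition~\ref{def:subgradient-motion-defects}(c) using the deterministic initialization in $K$, the local constants of Lemma~\ref{lem:local-variation-from-relative}, and $\E\|\delta^0\|^2\le\sum_i(\sigma_i^0)^2$.

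The main obstacle is this pre-identification control. The square of a sum over a random number of steps is not bounded by the sum of squares without a factor of $T_{\rm id}^s$. For that reason I would rely on the explicit $\sup_t$ clause of the definition rather than try to derive it.

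Granting the bound, the rest is a check of how Proposition~\ref{prop:lyapunov-lower-sufficient} is used. On $K$, each $f_i$, each $D_\phi$ term, and each $r_i$ is bounded below, so $H^t$ is bounded below. The term $\tau S^t$ is nonnegative. The dual pairing $\langle y^t,s^t\rangle$ is then handled by expressing $\mathcal B y^{t+1}$ through the primal optimality equation. That equation involves $G_i^t=\nabla K_i^t+\delta_i^t$, $\xi_i^{t+1}$, and differences of $\nabla\phi$ bounded by $L_\phi\operatorname{diam}(K)$. Young's inequality then bounds $\E|\langle y^t,s^t\rangle|$ by a constant times
\[
\sup_t\E\sum_i\|\xi_i^t\|^2+\sup_t\sum_i(\sigma_i^t)^2+\text{const},
\]
and this is finite under the stated conditions. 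This yields $\inf_t\E\mathcal L_r^t>-\infty$, which is Assumption~\ref{ass:lyapunov-lower-bound} in expectation.
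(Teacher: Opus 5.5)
Your proposal is correct and matches the paper's proof. The paper also takes $\sup_t\E\sum_i\|\xi_i^t\|^2<\infty$ directly from Definition~\ref{def:stochastic-verified-local-selection} and combines it with the uniform variance bound in the stochastic part of Proposition~\ref{prop:lyapunov-lower-sufficient}; your re-check of the dual-term bound and your choice not to derive the second-moment clause from the prefix condition are consistent with this but not needed.
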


\begin{proposition}[Sufficient cases for local nonsmooth stability]\label{prop:stability-sufficient}
Let Assumption~\ref{ass:standing-local} hold. In each regime below, fixed absorption constants give $\eta_t^r=\zeta_t^r=0$ on an identified tail and $\sum_t\eta_t^r<\infty$. Parts (a) and (b) use the selected-subgradient representations in Definition~\ref{def:verified-local-selection}; part (c) controls the defects directly. In parts (a) and (c), the pre-identification aggregation defects are finite, giving Assumption~\ref{ass:finite-defect-accumulation}(a). The corresponding stochastic verification uses an almost-sure identified tail and integrable pre-identification subgradient variations and aggregation mismatch, as in \eqref{eq:integrable-identification-prefix}.
\begin{enumerate}[label=(\alph*),leftmargin=2em]
\item If each $r_i$ is $C^1$ with Lipschitz gradient on a neighborhood of $K$, take $\Xi_i=\nabla r_i$.
\item For polyhedral or partly smooth $r_i$, use a verified local selection on the common identified active face or manifold.
\item For $r_i(x)=\omega_i\|x\|_1$, the node and averaged iterates identify a common support and sign pattern. For group $\ell_{2,1}$ penalties, they identify common active groups, and the active-group norms at both node and averaged iterates are bounded away from zero. The active subgradients are fixed or Lipschitz on these structures, and the primal optimality equations control their variation on the common inactive coordinates.
\end{enumerate}
\end{proposition}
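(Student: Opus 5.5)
The common mechanism is to express the variation of the selected subgradients through controllable quantities. On the identified tail, Lipschitz continuity of a selection map, together with the primal optimality equations, bounds $\|\xi_i^{t+1}-\xi_i^t\|$ by iterate motion. The defect is then absorbed into the $M_{T\xi}$ term and vanishes. Before identification, only finitely many indices $1\le t<T_{\rm id}$ contribute, each finite by compactness of $K$.

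\emph{Tail temporal bound.} Suppose $\xi_i^t=\Xi_i(x_i^t)$ with $\Xi_i$ $L_\Xi$-Lipschitz for $t\ge T_{\rm id}$. Then
\[
\sum_i\|\xi_i^{t+1}-\xi_i^t\|^2\le L_\Xi^2\sum_i\|x_i^{t+1}-x_i^t\|^2\le \tfrac{2L_\Xi^2}{\mu}\,d_x^t
\]
by Lemma~\ref{lem:local-variation-from-relative}. Choosing $M_{T\xi}\ge 2L_\Xi^2/\mu$ gives $\eta_t^r=0$.

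\emph{Tail aggregation bound.} Because $\Xi_i(\bar x^t)\in\partial r_i(\bar x^t)$,
\[
\operatorname{dist}^2\Big(\sum_i\xi_i^t,\sum_i\partial r_i(\bar x^t)\Big)\le \Big\|\sum_i(\Xi_i(x_i^t)-\Xi_i(\bar x^t))\Big\|^2\le NL_\Xi^2\sum_i\|x_i^t-\bar x^t\|^2 .
\]
On a connected graph, $\sum_i\|x_i^t-\bar x^t\|^2\le \lambda_2(\mathcal L_G)^{-1}S^t$, where $\lambda_2(\mathcal L_G)$ is the algebraic connectivity (second-smallest Laplacian eigenvalue). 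Hence $M_{C\xi}\ge NL_\Xi^2/\lambda_2(\mathcal L_G)$ yields $\zeta_t^r=0$.

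\emph{Prefix.} For $t<T_{\rm id}$, the primal optimality equation gives $\xi_i^{t+1}=-\nabla K_i^t-\lambda^{-1}(\nabla\phi(x_i^{t+1})-\nabla\phi(x_i^t))$. This is bounded because the iterates lie in $K$, $\nabla f_i$ and $\nabla\phi$ are continuous on $K$, and $y^t$ is a finite sum of $T_{\rm id}$ bounded increments. The prefix sums of $\eta_t^r$ and $\zeta_t^r$ are therefore finite. Adding the zero tail gives Assumption~\ref{ass:finite-defect-accumulation}(a) in (a) and (c). In (b) we only claim the tail statement and $\sum_t\eta_t^r<\infty$, as the proposition does.

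\emph{Case (a)} is immediate with $\Xi_i=\nabla r_i$, since $\partial r_i$ is a singleton and the selection is forced.

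\emph{Case (b).} Take $\Xi_i$ to be the Lipschitz selection on the identified face or manifold supplied by Definition~\ref{def:verified-local-selection}. For polyhedral $r_i$, on a fixed face $r_i$ is affine, so the relevant part of the selection is constant. For partly smooth $r_i$, the Riemannian gradient along the manifold gives the smooth part. The argument then follows the same lines as above.

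\emph{Case (c)} is the main obstacle. For $\ell_1$ with a common support $S$ and sign pattern $\sigma$ on the tail, the active components $\xi_{i,S}^t=\omega_i\sigma_S$ are constant. They cancel in the temporal difference and lie in $\partial r_i(\bar x^t)$ because $\bar x^t$ has the same signs on $S$. On the inactive coordinates $S^c$, the subgradient is not determined by $x_i^t$, which is exactly the situation of Example~\ref{ex:df-vs-verified}. Here I would use the optimality equation coordinatewise:
\[
\xi_{i,S^c}^{t+1}-\xi_{i,S^c}^{t}=-(\nabla K_i^t-\nabla K_i^{t-1})_{S^c}-\lambda^{-1}\big(\nabla\phi(x_i^{t+1})-2\nabla\phi(x_i^t)+\nabla\phi(x_i^{t-1})\big)_{S^c}.
\]
The change $\nabla K_i^t-\nabla K_i^{t-1}$ splits into three parts:
\begin{itemize}
\item a gradient variation of $F_i$, bounded by $L_{F,K}$ times $\|x^t-x^{t-1}\|^2+\|z^t-z^{t-1}\|^2$ via Lemma~\ref{lem:local-variation-from-relative};
\item the dual increment $y^{t+1}-y^t=\alpha\mathcal B^\top x^t$, which is not a motion term. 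This is the delicate point. I would try to show that on the inactive coordinates the $x_{i,S^c}^t$ vanish for all nodes on the tail, so the incidence differences there are zero and only motion terms remain;
\item the $\nabla\phi$ terms, bounded by $L_\phi$ times the motion terms $d_x^t$ and $d_x^{t-1}$.
\end{itemize}
This is why $M_{T\xi}$ multiplies $d_x^t+d_x^{t-1}+d_z^{t-1}$. For the aggregation defect on $S^c$, any vector with entries in $[-\omega_i,\omega_i]$ belongs to $\partial r_i(0)$ coordinatewise, so the inactive mismatch vanishes.

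The group $\ell_{2,1}$ case follows the same template. On active groups, $\Xi(x)_g=\omega_i x_g/\|x_g\|$ is Lipschitz because the active-group norms are bounded away from zero at both node and averaged iterates. Inactive groups are handled via the optimality equation and the unit-ball subdifferential $\partial\|\cdot\|_2(0)$.

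The stochastic remark follows by taking expectations of these bounds past the stopping time $T_{\rm id}^s$ and using \eqref{eq:integrable-identification-prefix} for the prefix. Since $G_i^t$ replaces $\nabla K_i^t$ in the optimality equation, the tail bound acquires the extra terms $\|\delta^t\|^2+\|\delta^{t-1}\|^2$, which are absorbed through $M_{\delta\xi}$.
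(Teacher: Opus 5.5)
Your proposal is correct and follows essentially the same route as the paper: Lipschitz tail bounds (as in Lemma~\ref{comp:lem:verified-selection-absorption}) for (a) and (b), and for (c) fixed active subgradients together with differencing consecutive primal optimality equations on the common inactive coordinates, with the oracle errors $\delta^t,\delta^{t-1}$ added in the stochastic case. The step you leave tentative follows directly from the hypothesis of (c). Common support means every $x_i^t$ vanishes on the inactive coordinates $I$, so with $P_I$ the projection onto them, $P_I\mathcal B(y^{t+1}-y^t)=\alpha P_I\mathcal B\mathcal B^\top x^t=0$. Separately, the finite pre-identification aggregation terms in (a) and (c) rely on $\partial r_i(\bar x^t)\neq\varnothing$; this holds because these regularizers are finite and continuous near $K$, which is why (b) is excluded from that claim.
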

A finite identification prefix contributes a horizon-independent constant to the proximal-residual bound. The general finite-horizon bounds express the contribution of structural transitions through the accumulated defects.

\begin{remark}[Identification condition and numerical evidence]\label{rem:identification-diagnostic}
Proposition~\ref{prop:stability-sufficient} takes common active-structure identification and the stated local subgradient conditions as hypotheses and derives tail defect control. Section~\ref{sec:exp-defect} studies the corresponding numerical behavior through an observed stabilization index and defect values on $0\le t\le T_{\rm obs}$.
\end{remark}

\section{Main Theoretical Results}\label{sec:main-results}
The main bounds are obtained in three steps. Lyapunov descent controls primal motion, reference motion, smoothing mismatch, and consensus. These quantities bound a lifted primal--dual residual. The aggregation defect is used only when this bound is converted to the consensus composite proximal residual. Finite accumulated defects give the same deterministic rate as in the smooth case; the stochastic bound also contains the accumulated oracle variance.

\subsection{Residual Interpretation and Smooth Baseline}

Given $z^t$, select the consensus Bregman proximal comparison point
\[
u_r^t\in\arg\min_{u\in\mathcal C}\sum_{i=1}^N\big(F_i(u,Z_i^t)+r_i(u)\big),
\]
which lies in $K$ by Assumption~\ref{ass:standing-local}, and define
\begin{equation}\label{eq:stationarity-measure-main}
\mathcal G_r(z^t)=\max_{i\in[N]}\|z_i^t-u_r^t\|.
\end{equation}
For $r_i\equiv0$, write $\mathcal G_0$. The residual $\mathcal G_r(z^t)$ measures the distance from the references to this common comparison point; we call $z^t$ $\epsilon$-stationary in the proximal-residual sense if $\mathcal G_r(z^t)\le\epsilon$. Proposition~\ref{prop:kkt-bridge} connects a small fixed-point residual to first-order stationarity.

\begin{proposition}[Connection with stationarity]\label{prop:kkt-bridge}
Let $u_r(z)$ be a selected consensus Bregman proximal point in $K$. Under Assumptions~\ref{ass:standing-local} and~\ref{ass:composite-regularizer},
\[
\operatorname{dist}\!\left(0,\sum_{i=1}^N\nabla f_i(u_r(z))+\sum_{i=1}^N\partial r_i(u_r(z))\right)
\le N L_\phi(c_0+\rho d_{\max})\mathcal G_r(z).
\]
Consequently, $\mathcal G_r(z)=0$ implies first-order stationarity of the consensus composite objective at $u_r(z)\in\mathcal C$. If $\mathcal C=\R^d$, this is the usual unconstrained stationarity condition.
\end{proposition}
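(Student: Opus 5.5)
The plan is to write the first-order optimality condition of the consensus Bregman proximal subproblem defining $u:=u_r(z)$ and read off a subgradient of the consensus composite objective whose norm is controlled by $\mathcal G_r(z)$. Since $u\in K\subset\mathcal C$ is an interior point of the domain of $\phi$, and Assumption~\ref{ass:composite-regularizer} grants the sum rule at comparison points, the optimality condition for $u\in\arg\min_{u\in\mathcal C}\sum_i(F_i(u,Z_i)+r_i(u))$ reads
\[
0\in\sum_{i=1}^N\nabla_xF_i(u,Z_i)+\sum_{i=1}^N\partial r_i(u).
\]
The constraint $u\in\mathcal C$ contributes no normal cone because $\mathcal C$ is open and $u$ lies in it; more carefully, $u$ is a local minimizer of the unconstrained function on a neighborhood of $u$ inside $U$, so Fermat's rule applies. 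The nonconvexity of $f_i$ is harmless here: $f_i$ is $C^1$ near $u$, and the limiting-subdifferential sum rule gives the displayed inclusion with $\partial r_i$ the convex subdifferential.

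Next I would expand the gradient of $F_i$ from \eqref{eq:Fi-main}:
\[
\nabla_xF_i(u,Z_i)=\nabla f_i(u)+c_0\big(\nabla\phi(u)-\nabla\phi(z_i)\big)+\rho\sum_{j\in\mathcal N_i}\big(\nabla\phi(u)-\nabla\phi(z_j)\big).
\]
Hence there exist $v_i\in\partial r_i(u)$ with
\[
\sum_i\nabla f_i(u)+\sum_i v_i=-\sum_i\Big[c_0\big(\nabla\phi(u)-\nabla\phi(z_i)\big)+\rho\sum_{j\in\mathcal N_i}\big(\nabla\phi(u)-\nabla\phi(z_j)\big)\Big].
\]
The distance in the claim is at most the norm of the right-hand side. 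Since $u$ and all $z_j$ lie in $K$, Lemma~\ref{lem:local-variation-from-relative} gives $\|\nabla\phi(u)-\nabla\phi(z_j)\|\le L_\phi\|u-z_j\|\le L_\phi\mathcal G_r(z)$. The triangle inequality then bounds the $i$-th bracket by $L_\phi(c_0+\rho d_i)\mathcal G_r(z)\le L_\phi(c_0+\rho d_{\max})\mathcal G_r(z)$, and summing over $N$ nodes gives the stated constant.

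The consequence is immediate. If $\mathcal G_r(z)=0$, the distance is zero. The set $\sum_i\nabla f_i(u)+\sum_i\partial r_i(u)$ is closed, being a translate of a finite sum of compact convex sets when the $r_i$ are finite and continuous near $u$; otherwise the zero right-hand side exhibits $0$ in the set directly. So $u$ is first-order stationary, and for $\mathcal C=\R^d$ this is the usual condition.

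The main obstacle is justifying the optimality inclusion. This requires $z_i$ to lie in $K$, which holds because the references are iterates or are taken in $K$, and the sum rule must hold at $u$. Both are supplied by Assumptions~\ref{ass:standing-local}(d) and~\ref{ass:composite-regularizer}, so the argument reduces to the above bookkeeping.
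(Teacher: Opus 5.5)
Your proposal is correct and follows the paper's proof. You write the optimality inclusion for the consensus subproblem, isolate the Bregman-reference terms $c_0(\nabla\phi(u_r)-\nabla\phi(z_i))+\rho\sum_{j\in\mathcal N_i}(\nabla\phi(u_r)-\nabla\phi(z_j))$, and bound them with the Lipschitz constant $L_\phi$ on $K$; your remarks on Fermat's rule over the open set $\mathcal C$ and on $\mathcal G_r(z)=0$ making these terms vanish exactly only add detail the paper leaves implicit.
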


To connect this Bregman residual directly to the Euclidean composite stationarity metric used in the experiments, define
\[
\bar f:=\frac1N\sum_{i=1}^N f_i,
\qquad
\bar r:=\frac1N\sum_{i=1}^N r_i,
\qquad
P_\gamma(x):=\frac1\gamma\left[x-\operatorname{prox}_{\gamma\bar r}\bigl(x-\gamma\nabla\bar f(x)\bigr)\right].
\]
Here $\operatorname{prox}_{\gamma\bar r}$ denotes the Euclidean proximal map of $\bar r$ with stepsize $\gamma$.

\begin{proposition}[Bridge to the empirical Euclidean stationarity metric]\label{prop:empirical-stationarity-bridge}
Let $L_{\bar f,K}$ be a Lipschitz constant of $\nabla\bar f$ on $K$. For any $\gamma>0$ and $t\ge1$,
\begin{equation}\label{eq:empirical-stationarity-bridge}
\|P_\gamma(\bar x^t)\|
\le \left[L_\phi(c_0+\rho d_{\max})+L_{\bar f,K}+\frac2\gamma\right]\mathcal G_r(z^t)
+\left(L_{\bar f,K}+\frac2\gamma\right)\sqrt{\frac{e^{t-1}}{N}}.
\end{equation}
\end{proposition}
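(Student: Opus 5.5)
Write $u:=u_r^t$ and $\mathcal G:=\mathcal G_r(z^t)$. The plan is to bound $\|P_\gamma(u)\|$ first and then transfer the bound from $u$ to $\bar x^t$. We use two tools: nonexpansiveness of the Euclidean proximal map, and the inclusion supplied by Proposition~\ref{prop:kkt-bridge}. The statement needs an averaged form of that inclusion, so we return to the optimality condition defining $u$, as in the proof of Proposition~\ref{prop:kkt-bridge}. That condition gives $w\in\sum_i\partial r_i(u)$ with
\[
\sum_i\nabla f_i(u)+w=-\sum_i\Big(c_0\big(\nabla\phi(u)-\nabla\phi(z_i^t)\big)+\rho\sum_{j\in\mathcal N_i}\big(\nabla\phi(u)-\nabla\phi(z_j^t)\big)\Big).
\]
By Lemma~\ref{lem:local-variation-from-relative}, the right-hand side has norm at most $N L_\phi(c_0+\rho d_{\max})\mathcal G$. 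Dividing by $N$ yields $v\in\partial\bar r(u)$ (by the sum rule in Assumption~\ref{ass:composite-regularizer}) such that $\|\nabla\bar f(u)+v\|\le L_\phi(c_0+\rho d_{\max})\mathcal G$.

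\textbf{Step 1: residual at $u$.} Because $v\in\partial\bar r(u)$, we have $u=\operatorname{prox}_{\gamma\bar r}(u+\gamma v)$. Nonexpansiveness then gives
\[
\|u-\operatorname{prox}_{\gamma\bar r}(u-\gamma\nabla\bar f(u))\|\le\gamma\|v+\nabla\bar f(u)\|,
\]
and therefore $\|P_\gamma(u)\|\le L_\phi(c_0+\rho d_{\max})\mathcal G$. This is the first coefficient in \eqref{eq:empirical-stationarity-bridge}.

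\textbf{Step 2: Lipschitz transfer.} For $a,b\in K$, nonexpansiveness of the prox gives
\[
\|P_\gamma(a)-P_\gamma(b)\|\le\tfrac1\gamma\big(\|a-b\|+\|a-b\|+\gamma L_{\bar f,K}\|a-b\|\big)=\big(L_{\bar f,K}+\tfrac2\gamma\big)\|a-b\|.
\]
Since $K$ is convex and contains every $x_i^t$, the average $\bar x^t$ lies in $K$, and $u\in K$ as well. Hence
\[
\|P_\gamma(\bar x^t)\|\le\|P_\gamma(u)\|+\big(L_{\bar f,K}+\tfrac2\gamma\big)\|\bar x^t-u\|.
\]

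\textbf{Step 3: splitting $\|\bar x^t-u\|$.} Write
\[
\|\bar x^t-u\|\le\|\bar x^t-\bar z^t\|+\|\bar z^t-u\|.
\]
The second term is at most $\max_i\|z_i^t-u\|=\mathcal G$. For the first, Jensen's inequality (or Cauchy--Schwarz) gives
\[
\|\bar x^t-\bar z^t\|\le\Big(\tfrac1N\sum_i\|x_i^t-z_i^t\|^2\Big)^{1/2}=\sqrt{e^{t-1}/N},
\]
using the definition of $e^{t-1}$ with index shift, which is valid for $t\ge1$. Collecting terms produces exactly \eqref{eq:empirical-stationarity-bridge}.

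\textbf{Main obstacle.} The only delicate point is Step 1. The averaged subgradient inclusion must hold with the factor $1/N$, and the sum rule must be valid at $u$; Assumption~\ref{ass:composite-regularizer} provides this. In addition, $L_{\bar f,K}$ is a Lipschitz constant only on $K$, so we must check that both $u$ and $\bar x^t$ lie in $K$; convexity of $K$ and forward invariance give this. Everything else is routine prox nonexpansiveness.
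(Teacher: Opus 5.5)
Your proposal is correct and follows the paper's own argument. Like the paper, you bound $\|P_\gamma(u_r^t)\|$ via the proximal fixed-point characterization and the optimality condition behind Proposition~\ref{prop:kkt-bridge} (normalized by $N$), transfer this to $\bar x^t$ using the $(2/\gamma+L_{\bar f,K})$-Lipschitz property of $P_\gamma$ on $K$, and split $\|\bar x^t-u_r^t\|$ through $\bar z^t$; you merely make explicit the $1/N$ normalization and the fact that $\bar x^t\in K$ by convexity.
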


\begin{proof}
Let $\bar z^t=N^{-1}\sum_i z_i^t$. The subdifferential sum rule, the proximal fixed-point characterization, and Proposition~\ref{prop:kkt-bridge} give $\|P_\gamma(u_r^t)\|\le L_\phi(c_0+\rho d_{\max})\mathcal G_r(z^t)$, while proximal nonexpansiveness makes $P_\gamma$ $(2/\gamma+L_{\bar f,K})$-Lipschitz on $K$. Combining these facts with
\[
\|\bar x^t-u_r^t\|
\le \|\bar x^t-\bar z^t\|+\|\bar z^t-u_r^t\|
\le \sqrt{e^{t-1}/N}+\mathcal G_r(z^t),
\]
proves \eqref{eq:empirical-stationarity-bridge}.
\end{proof}

\begin{lemma}[Proximal-residual admissible parameter region]\label{lem:admissible-window-main}
For the auxiliary Lyapunov weight $\tau>0$ fixed above, define
\[
a_x:=\frac1\lambda-(L_++c_0+\rho d_{\max}),\qquad
a_z:=c_0+\rho d_{\min},\qquad
a_e:=\frac{c_0}{\beta},
\]
\[
q:=\tau+\frac{\rho}{\beta},\qquad b:=\frac{q}{\alpha^2}.
\]
Let $\widehat\Gamma_x^+,\widehat\Gamma_x^-,\widehat\Gamma_z$ be the finite constants in Lemma~\ref{det:lem:consensus-by-steps}; these constants are independent of the dual stepsize $\alpha$. If
\begin{equation}\label{eq:main-admissible-region}
c_0+\rho\bar d>L_-,
\qquad
\tau>\alpha,
\qquad
a_x>b(\widehat\Gamma_x^++\widehat\Gamma_x^-),
\qquad
a_z>b\widehat\Gamma_z,
\end{equation}
then the parameter tuple is admissible for the smooth proximal-residual theorem. Treating the local constants induced by Lemma~\ref{lem:local-variation-from-relative} as fixed, the region is nonempty: choose $c_0>L_-$, choose $\lambda<(L_++c_0)^{-1}$, take $\rho>0$ small enough that $a_x>0$, set $\tau=\kappa\alpha$ for any fixed $\kappa>1$, and then take $\alpha$ large enough.
\end{lemma}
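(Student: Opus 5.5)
The lemma has two parts. The first is that \eqref{eq:main-admissible-region} implies admissibility; this should be essentially definitional once the Lyapunov descent inequality is in place. The second is that the region is nonempty; this is an explicit ordering of parameter choices. For the first part, I would take the one-step descent estimate for $\mathcal L^t=H^t+\langle y^t,s^t\rangle+\tau S^t$ from the smooth analysis. In that estimate, the primal step yields $-a_x d_x^t$ through relative smoothness with constant $L_++c_0+\rho d_{\max}$ of $F_i$ relative to $\phi$. The smoothing step yields $-a_z d_z^t-a_e e^t$. The dual ascent term $\langle y^{t+1}-y^t,s^t\rangle$ together with the $\tau S^t$ and $\rho/\beta$ reference couplings produces a positive contribution of order $q\|y^{t+1}-y^t\|^2/\alpha^2$ after Young's inequality, which is where $\tau>\alpha$ enters.

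Lemma~\ref{det:lem:consensus-by-steps} then bounds the dual increments, i.e.\ $\alpha^{-2}\|y^{t+1}-y^t\|^2$, by $\widehat\Gamma_x^\pm$ times $d_x^t,d_x^{t-1}$ and $\widehat\Gamma_z$ times $d_z^{t-1}$. This uses the stationarity relation of the primal step and Lemma~\ref{lem:local-variation-from-relative}. Substituting gives net coefficients $a_x-b(\widehat\Gamma_x^++\widehat\Gamma_x^-)$ on primal motion and $a_z-b\widehat\Gamma_z$ on reference motion. After telescoping the shifted indices, both coefficients are positive exactly under the last two conditions. The condition $c_0+\rho\bar d>L_-$ makes $u\mapsto\sum_i F_i(u,Z_i)$ strongly convex relative to $\phi$. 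That gives uniqueness and a quadratic growth bound for the consensus proximal point, which is what converts the descent quantities into $\mathcal G_0$. Together with lower boundedness of $\mathcal L^t$, these are precisely the hypotheses of the smooth proximal-residual theorem, so the tuple is admissible.

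For nonemptiness I follow the stated order.
\begin{enumerate}
\item Choose $c_0>L_-$; then the first inequality holds for every $\rho\ge0$.
\item Choose $\lambda<(L_++c_0)^{-1}$, so $1/\lambda-(L_++c_0)>0$.
\item By continuity in $\rho$, pick $\rho>0$ small enough that $a_x>0$. Then $a_z\ge c_0>0$ as well.
\item Set $\tau=\kappa\alpha$ with $\kappa>1$, so $\tau>\alpha$.
\end{enumerate}
Now $b=(\kappa\alpha+\rho/\beta)/\alpha^2=\kappa/\alpha+\rho/(\beta\alpha^2)\to0$ as $\alpha\to\infty$. Since $\widehat\Gamma$ is independent of $\alpha$, for $\alpha$ large enough the remaining two strict inequalities hold, with $a_x,a_z$ fixed positive numbers.

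The main obstacle is the claimed $\alpha$-independence of $\widehat\Gamma_x^\pm,\widehat\Gamma_z$. That claim is what lets $b\to0$. It also requires that enlarging $\alpha$ and $\tau$ does not change the localization set $K$ or the constants $\mu,L_\phi,L_{F,K}$. I would check this against the proof of Lemma~\ref{det:lem:consensus-by-steps}. The dual increment there should be expressed via $\mathcal B$ applied to differences of $\nabla F_i$ and $\nabla\phi$ terms, using the primal optimality condition and $\lambda$, with $\alpha$ cancelling. I would also rely on Assumption~\ref{ass:standing-local}(d) holding for the chosen tuple, since $K$ is fixed as a standing hypothesis rather than derived.
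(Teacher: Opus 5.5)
Your proposal is correct and follows the paper's own argument. Under the stated inequalities, $\tau>\alpha$ lets you drop $-(\tau-\alpha)S^t$ in the telescoped Lyapunov recursion and leaves positive net coefficients $a_x-b(\widehat\Gamma_x^++\widehat\Gamma_x^-)$ and $a_z-b\widehat\Gamma_z$. The condition $c_0+\rho\bar d>L_-$ supplies the strong convexity used for residual domination, and nonemptiness follows from $b=\kappa/\alpha+O(\alpha^{-2})\to0$ with $\alpha$-independent $\widehat\Gamma$'s.

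One bookkeeping slip should be fixed. Lemma~\ref{det:lem:consensus-by-steps} bounds the dual increment $\|y^{t+1}-y^t\|^2=\alpha^2S^t$ itself, not $\alpha^{-2}\|y^{t+1}-y^t\|^2$, by the $\widehat\Gamma$-weighted step errors. The positive term to absorb is $qS^{t+1}$, which comes directly from the identity $\langle y^{t+1},s^{t+1}\rangle-\langle y^t,s^t\rangle=\langle y^{t+1},s^{t+1}-s^t\rangle+\alpha S^t$, so no Young's inequality is needed there.
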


\begin{theorem}[Kernel-domain deterministic SDBPG proximal residual]\label{thm:deterministic-main}
Suppose $r_i\equiv0$. For any deterministic SDBPG trajectory generated under Assumption~\ref{ass:standing-local} with parameters in the admissible region of Lemma~\ref{lem:admissible-window-main}, the method satisfies, for every $T\ge1$,
\[
\min_{1\le k\le T}\mathcal G_0(z^k)^2
\le
\frac{\Delta}{T},
\]
where $\Delta<\infty$ depends on the initialization, graph, localization constants, and chosen parameters, but not on $T$. Hence SDBPG produces an iterate with $\mathcal G_0(z^k)\le\epsilon$ within $O(\epsilon^{-2})$ iterations.
\end{theorem}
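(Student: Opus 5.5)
The plan is a Lyapunov descent argument on $\mathcal L^t=H^t+\langle y^t,s^t\rangle+\tau S^t$ (with $r_i\equiv0$), followed by a residual bound in terms of the descent quantities.

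\textbf{Step 1: one-step descent.} For the primal update, apply $L_+$-relative smoothness of $f_i$ on $K$ and the $(c_0+\rho d_i)$-smoothness of the Bregman terms in $F_i$ relative to $\phi$. Together with the optimality condition of the Bregman gradient step, this gives
\[
F_i(x_i^{t+1},Z_i^t)+\textstyle\sum_j y_{ij}^{t+1\top}x_i^{t+1}\le F_i(x_i^t,Z_i^t)+\textstyle\sum_j y_{ij}^{t+1\top}x_i^t-a_x D_\phi(x_i^{t+1},x_i^t).
\]
For the smoothing update, the three-point identity applied to the $z$-subproblem makes $H$ decrease in $z$. The decrease is at least $a_zd_z^t+a_e e^t$, minus a coupling term of size $\rho/\beta$ coming from the neighbor references; this coupling is what enters $q$.

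For the dual update, $y^{t+1}=y^t+\alpha s^t$, so
\[
\langle y^{t+1},s^{t+1}\rangle-\langle y^t,s^t\rangle=\langle y^{t+1},s^{t+1}-s^t\rangle+\alpha S^t.
\]
The first term cancels against the linear terms already in $K_i$. The $\tau S^t$ weight then absorbs $\alpha S^t$ (this uses $\tau>\alpha$), up to a cross term $\langle s^t,s^{t+1}-s^t\rangle$ that is bounded by motion terms.

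Summing these pieces should give
\[
\mathcal L^{t+1}\le\mathcal L^t-a_xd_x^t-a_zd_z^t-a_e e^t+q\,\|y^{t+1}-y^t\|^2/\alpha^2\cdot(\text{const}).
\]

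\textbf{Step 2: controlling dual motion.} This is the main obstacle. The term $\|y^{t+1}-y^t\|^2=\alpha^2S^t$ must be bounded by primal and reference motion. I will invoke Lemma~\ref{det:lem:consensus-by-steps}. It uses the primal optimality equation projected onto the range of $\mathcal B$, together with Lemma~\ref{lem:local-variation-from-relative}, and expresses $S^t/\alpha^2$-scaled differences through $\widehat\Gamma_x^\pm(d_x^t+d_x^{t-1})+\widehat\Gamma_z d_z^{t-1}$. After shifting indices, the conditions $a_x>b(\widehat\Gamma_x^++\widehat\Gamma_x^-)$ and $a_z>b\widehat\Gamma_z$ in \eqref{eq:main-admissible-region} leave strictly positive coefficients $c_x,c_z,c_e$ on $d_x^t,d_z^t,e^t$, and a positive multiple of $S^t$ survives as well. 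Forward invariance keeps all iterates in $K$, so the constants are uniform.

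\textbf{Step 3: lower bound and telescoping.} I will show $\inf_t\mathcal L^t>-\infty$ by writing $\langle y^t,s^t\rangle$ through the optimality equation. The multiplier $\mathcal B y^{t+1}$ equals the negative of model gradients plus Bregman increments, and these are bounded on the compact set $K$. Completing the square with $\tau S^t$ then gives the bound, together with the lower bound of each $f_i$ on $K$. Telescoping yields
\[
\sum_{t\le T}(d_x^t+d_z^t+e^t+S^t)\le C,
\]
with $C$ independent of $T$.

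\textbf{Step 4: residual bound.} The target is $\mathcal G_0(z^k)^2\le C'(d_x^{k-1}+d_x^k+d_z^{k-1}+e^{k-1}+S^k)$.

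Compare the optimality condition of the consensus proximal point $u^k$ with that of $x_i^{k}$. Because $c_0+\rho\bar d>L_-$, the function $\sum_iF_i(\cdot,Z_i^k)$ is strongly convex relative to $\phi$, hence Euclidean strongly convex with modulus $\mu(c_0+\rho\bar d-L_-)$. This gives $\|\bar x^k-u^k\|$ bounded by the norm of the summed gradient residual at $\bar x^k$. The summed dual terms vanish, since $\mathbf 1^\top\mathcal B=0$. That residual is controlled by the Lipschitz bounds of Lemma~\ref{lem:local-variation-from-relative}, by consensus $S^k$, and by $\|\nabla\phi(x_i^{k+1})-\nabla\phi(x_i^k)\|^2\le (2L_\phi^2/\mu)\,D_\phi$. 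Finally, $\|z_i^k-u^k\|\le\|z_i^k-x_i^k\|+\|x_i^k-\bar x^k\|+\|\bar x^k-u^k\|$, and each of these is bounded by $e^{k-1}$, $S^k$ (via the graph Laplacian's spectral gap), and the bound just obtained.

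Summing over $k$ and taking the minimum then gives $\min_k\mathcal G_0(z^k)^2\le C'C/T=:\Delta/T$.
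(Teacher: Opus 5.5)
Your proposal is correct and follows the paper's proof essentially step for step: surrogate descent plus the dual identity give the baseline Lyapunov inequality, Lemma~\ref{det:lem:consensus-by-steps} absorbs the consensus term under \eqref{eq:main-admissible-region}, the stacked optimality system bounds $\mathcal L^t$ below, and strong convexity of $\sum_iF_i(\cdot,Z_i^k)$ gives the residual domination. Two small differences: in Step 4 you use the summed optimality system at step $k$, which brings in $d_x^k$, where the paper uses step $k-1$, and both work; and in Step 1 no cross term $\langle s^t,s^{t+1}-s^t\rangle$ actually arises, since adding $\tau(S^{t+1}-S^t)$ to the dual identity gives exactly $-(\tau-\alpha)S^t+qS^{t+1}$, the coefficient $q$ on $S^{t+1}$ that makes $b=q/\alpha^2$ the right constant in the admissible region.
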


\begin{corollary}[Best-iterate stationarity residual]\label{cor:kkt-rate}
Under Theorem~\ref{thm:deterministic-main},
\[
\min_{1\le k\le T}\operatorname{dist}\!\left(0,\sum_{i=1}^N\nabla f_i(u_0^k)\right)=O(T^{-1/2}),
\]
where $u_0^k$ is the common component of the selected consensus Bregman proximal point for $z^k$.
\end{corollary}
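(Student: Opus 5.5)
This corollary follows by combining Theorem~\ref{thm:deterministic-main} with Proposition~\ref{prop:kkt-bridge} specialized to $r_i\equiv0$. With $r_i\equiv0$, Assumption~\ref{ass:composite-regularizer} holds trivially: the subdifferentials are $\{0\}$ and the sum rule is automatic. The selected consensus proximal point $u_0^k=u_r(z^k)$ lies in $K$ by Assumption~\ref{ass:standing-local}(d).

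The plan has three steps.

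\emph{Step 1: pointwise bound.} For each $k$, the distance in the statement reduces to the norm of a single vector. Proposition~\ref{prop:kkt-bridge} then gives
\[
\Big\|\sum_{i=1}^N\nabla f_i(u_0^k)\Big\|\le N L_\phi(c_0+\rho d_{\max})\,\mathcal G_0(z^k).
\]
If one prefers to verify this directly rather than invoke the proposition, use the first-order optimality of $u_0^k$ for $\sum_i F_i(u,Z_i^k)$ over the open set $\mathcal C$. This gives
\[
\sum_i\nabla f_i(u_0^k)+\sum_i\Big[c_0\big(\nabla\phi(u_0^k)-\nabla\phi(z_i^k)\big)+\rho\sum_{j\in\mathcal N_i}\big(\nabla\phi(u_0^k)-\nabla\phi(z_j^k)\big)\Big]=0.
\]
The Lipschitz bound on $\nabla\phi$ over $K$ from Lemma~\ref{lem:local-variation-from-relative}, together with $\|z_j^k-u_0^k\|\le\mathcal G_0(z^k)$, bounds the bracketed sum. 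This is the only place where the interiority of $u_0^k$ (so that the gradient condition holds, with no normal-cone term) and membership in $K$ are needed.

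\emph{Step 2: pass to the minimum.} Since the constant $C:=NL_\phi(c_0+\rho d_{\max})$ does not depend on $k$, the pointwise bound transfers to the minimum over $1\le k\le T$, and we can square it:
\[
\min_{1\le k\le T}\operatorname{dist}\Big(0,\sum_i\nabla f_i(u_0^k)\Big)^2\le C^2\min_{1\le k\le T}\mathcal G_0(z^k)^2.
\]
The right-hand side is at most $C^2\Delta/T$ by Theorem~\ref{thm:deterministic-main}.

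\emph{Step 3: take square roots.} Taking square roots yields the bound $C\sqrt{\Delta}\,T^{-1/2}$. Both $C$ and $\Delta$ are independent of $T$, so this is $O(T^{-1/2})$.

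There is no real obstacle here. The only subtlety is bookkeeping: confirm that the constant $L_\phi$ is measured on $K$, and that all points $u_0^k$ and $z_j^k$ lie in $K$. Both are guaranteed by the forward invariance in Assumption~\ref{ass:standing-local}(d).
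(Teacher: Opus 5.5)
Your proposal is correct and is the argument the paper intends. The paper gives no separate proof and treats the corollary as an immediate consequence of Proposition~\ref{prop:kkt-bridge} with $r_i\equiv0$ and Theorem~\ref{thm:deterministic-main}: the $k$-independent constant $NL_\phi(c_0+\rho d_{\max})$, transfer to the best iterate, and a square root give the bound $NL_\phi(c_0+\rho d_{\max})\sqrt{\Delta/T}$, exactly as you describe.
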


\subsection{Composite Residual Bounds}

Unlike the smooth system, the composite optimality conditions contain selected subgradients that may jump. The next bound keeps the unabsorbed motion and aggregation mismatch explicit instead of imposing Lipschitz variation.

\begin{lemma}[Composite admissible parameter region]\label{lem:composite-admissible}
Assume the constants in Assumptions~\ref{ass:standing-local} and~\ref{ass:composite-regularizer}, together with the absorption constant $M_{T\xi}$ in Definition~\ref{def:subgradient-motion-defects}(b), are fixed. Let $\widehat\Gamma_{r,x}^+,\widehat\Gamma_{r,x}^-,\widehat\Gamma_{r,z}$ be the finite constants from Lemma~\ref{comp:lem:consensus-by-steps}. With $a_x,a_z,a_e,q,b$ defined above, suppose
\begin{equation}\label{eq:composite-admissible-region}
\tau>\alpha,
\qquad
a_x>b(\widehat\Gamma_{r,x}^++\widehat\Gamma_{r,x}^-),
\qquad
a_z>b\widehat\Gamma_{r,z}.
\end{equation}
Then the parameter tuple is admissible for the deterministic composite estimates. The region is nonempty by the same choice order as in Lemma~\ref{lem:admissible-window-main}, without requiring $c_0+\rho\bar d>L_-$ unless the proximal-point residual is invoked.
\end{lemma}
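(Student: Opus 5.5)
The lemma makes two claims. First, the inequalities in \eqref{eq:composite-admissible-region} make the composite Lyapunov estimates strictly dissipative. Second, those inequalities can be satisfied simultaneously. I will follow the proof of Lemma~\ref{lem:admissible-window-main}, replacing the smooth constants with the composite ones from Lemma~\ref{comp:lem:consensus-by-steps}.

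For the admissibility claim, I start from the one-step composite descent for $\mathcal L_r^t=H^t+\langle y^t,s^t\rangle+\tau S^t$.
\begin{itemize}
\item The primal step, the three-point identity for the Bregman proximal update, convexity of $r_i$ at the selected $\xi_i^{t+1}$, and $L_+$-relative smoothness give a decrease of $a_x d_x^t$.
\item The smoothing update gives $a_z d_z^t$ and $a_e e^t$, exactly as in the smooth case; this part is unaffected by $r_i$.
\item The dual ascent term $\langle y^{t+1}-y^t,s^t\rangle=\alpha S^t$ is offset by the $\tau S^t$ weight through $\tau>\alpha$. The residual dual increment $\|y^{t+1}-y^t\|^2/\alpha$-type terms are converted, via Lemma~\ref{comp:lem:consensus-by-steps}, into $b(\widehat\Gamma_{r,x}^++\widehat\Gamma_{r,x}^-)(d_x^t+d_x^{t-1})+b\widehat\Gamma_{r,z}d_z^{t-1}$ plus a multiple of the defect $\eta_t^r$.
\end{itemize}
The constants $\widehat\Gamma_{r,\cdot}$ already contain $M_{T\xi}$ and the local constants of Lemma~\ref{lem:local-variation-from-relative}. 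After telescoping (shifting the $t-1$ indices), the coefficients of $d_x$ and $d_z$ are $a_x-b(\widehat\Gamma_{r,x}^++\widehat\Gamma_{r,x}^-)$ and $a_z-b\widehat\Gamma_{r,z}$, which are strictly positive by \eqref{eq:composite-admissible-region}. The coefficient of $e^t$ is $a_e>0$, and the consensus coefficient is positive because $\tau>\alpha$. This is what admissibility means for the composite estimates. The condition $c_0+\rho\bar d>L_-$ entered the smooth lemma only to guarantee strong convexity of the consensus proximal subproblem defining $u_r^t$, that is, to pass to the proximal-point residual. The composite descent and lifted-residual estimates never use it, so it is dropped here.

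For nonemptiness I follow the prescribed order, keeping $M_{T\xi}$ and the localization constants fixed.
\begin{enumerate}
\item Choose $c_0>0$ (and $c_0>L_-$ if the residual is later invoked).
\item Choose $\lambda<(L_++c_0)^{-1}$, then $\rho>0$ small enough that $a_x>0$. This fixes $a_x$, $a_z\ge c_0>0$, and, for any fixed $\beta$, the quantity $\rho/\beta$.
\item Set $\tau=\kappa\alpha$ with $\kappa>1$, so $\tau>\alpha$ holds automatically.
\item Observe that $b=q/\alpha^2=(\kappa\alpha+\rho/\beta)/\alpha^2=\kappa/\alpha+\rho/(\beta\alpha^2)\to0$ as $\alpha\to\infty$.
\item Since $\widehat\Gamma_{r,x}^\pm$ and $\widehat\Gamma_{r,z}$ do not depend on $\alpha$, the two remaining inequalities hold for all sufficiently large $\alpha$.
\end{enumerate}

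The main obstacle is confirming that the composite constants in Lemma~\ref{comp:lem:consensus-by-steps} are genuinely independent of $\alpha$ (and of $\tau$). The subgradient difference $\xi_i^{t+1}-\xi_i^t$ enters the bound on dual motion through the primal optimality relation in Definition~\ref{def:subgradient-motion-defects}(b). After absorption by $M_{T\xi}$, it contributes only fixed multiples of $d_x^t+d_x^{t-1}+d_z^{t-1}$, with the remainder placed in $\eta_t^r$ rather than in the $\Gamma$'s. I would check this bookkeeping first. If any $\alpha$-dependence appeared, for instance through $y^{t+1}$ inside $\nabla K_i^t$, the fallback is to note that such dependence is at most linear in $\alpha$ and is still dominated by the $\alpha^{-2}$ scaling of $b$ after the factor $q$.
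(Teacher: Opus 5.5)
Your proposal is correct and follows the paper's argument. The inequalities in \eqref{eq:composite-admissible-region} make the coefficients positive in the telescoped composite Lyapunov bound, and $c_0+\rho\bar d>L_-$ is reserved for the proximal-residual step. Nonemptiness then follows from the same choice order $c_0$, $\lambda$, $\rho$, $\tau=\kappa\alpha$, then $\alpha\to\infty$, because the constants $\widehat\Gamma_{r,\cdot}$, which absorb the fixed $M_{T\xi}$, do not depend on $\alpha$.

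Your fallback remark is wrong and should be dropped. Since $b=\kappa/\alpha+\rho/(\beta\alpha^2)$, constants growing linearly in $\alpha$ would make $b\widehat\Gamma$ tend to a positive limit rather than to zero. No such dependence arises, however: $\alpha$ enters Lemma~\ref{comp:lem:consensus-by-steps} only through $\mathcal B(y^{t+1}-y^t)=\alpha\mathcal B s^t$, and the right-hand side involves $g_i^t=\nabla_xF_i(x_i^t,Z_i^t)$, which contains no multipliers.
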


\begin{theorem}[Deterministic composite SDBPG proximal residual with transition terms]\label{thm:composite-main}
Consider any deterministic composite SDBPG trajectory satisfying Assumptions~\ref{ass:standing-local}, \ref{ass:composite-regularizer}, and~\ref{ass:lyapunov-lower-bound}, with the deterministic aggregation defect defined by Definition~\ref{def:aggregation-defects}(a). Suppose the parameters are compatible with Lemma~\ref{lem:composite-admissible} for the absorption constant used in Definition~\ref{def:subgradient-motion-defects}(b), and suppose
\[
c_0+\rho\bar d>L_-.
\]
Then there exist constants $\Delta_r,C_r<\infty$, independent of $T$, such that, for every $T\ge1$,
\[
\min_{1\le k\le T}\mathcal G_r(z^k)^2
\le
\frac{\Delta_r+C_r\sum_{t=1}^{T}(\eta_t^r+\zeta_t^r)}{T}.
\]
Consequently, with $T=O(\epsilon^{-2})$, the method produces an iterate satisfying $\mathcal G_r(z^k)\le\epsilon$ whenever $T^{-1}\sum_{t=1}^T(\eta_t^r+\zeta_t^r)=O(\epsilon^2)$.
\end{theorem}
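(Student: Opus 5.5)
The plan mirrors the smooth proof of Theorem~\ref{thm:deterministic-main}. The argument has three stages: a sufficient-decrease inequality for $\mathcal L_r^t$, an error bound for a lifted residual, and a conversion to $\mathcal G_r$ through the aggregation defect.

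\emph{Stage 1: descent for $\mathcal L_r^t$.}
\begin{enumerate}[leftmargin=2em]
\item \emph{Primal step.} Use $L_+$-relative smoothness of $f_i$ and the convexity of the Bregman terms in $F_i$ (their coefficients give the $L_++c_0+\rho d_{\max}$ in $a_x$). Together with the optimality of $x_i^{t+1}$ in the composite Bregman subproblem, and with $r_i$ entering through the three-point inequality, this gives
\[
H^{t+1/2}\le H^t+\langle\text{dual term}\rangle-a_x d_x^t .
\]
\item \emph{Smoothing step.} The $z$-update gives decrease $a_z d_z^t+a_e e^t$, exactly as in the smooth case; $r_i$ does not enter here.
\item \emph{Dual step.} The update $y^{t+1}=y^t+\alpha s^t$ produces a positive term of order $\|y^{t+1}-y^t\|^2/\alpha=\alpha S^t$. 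The $\tau S^t$ part of $\mathcal L_r^t$, with $\tau>\alpha$, absorbs the cross terms.
\item \emph{Controlling dual motion.} Since $\mathcal B y$ is determined by the primal optimality relation, $\|\mathcal B(y^{t+1}-y^t)\|^2$ is bounded by variations of $\nabla F_i$, which Lemma~\ref{lem:local-variation-from-relative} handles, plus $\sum_i\|\xi_i^{t+1}-\xi_i^t\|^2$.
\item \emph{Absorbing subgradient motion.} By the definition of $\eta_t^r$, the subgradient variation is at most $M_{T\xi}(d_x^t+d_x^{t-1}+d_z^{t-1})+\eta_t^r$. This is what Lemma~\ref{comp:lem:consensus-by-steps} encodes through the constants $\widehat\Gamma_{r,\cdot}$.
\item \emph{Resulting inequality.} Under the region \eqref{eq:composite-admissible-region}, I expect
\[
\mathcal L_r^{t+1}\le \mathcal L_r^t-c\,(d_x^t+d_z^t+e^t+S^t)+C\eta_t^r .
\]
\item \emph{Telescoping.} Assumption~\ref{ass:lyapunov-lower-bound} then gives
\[
\sum_{t=1}^T\bigl(d_x^t+d_z^t+e^t+S^t\bigr)\le C\Bigl(\mathcal L_r^1-\inf_t\mathcal L_r^t+\sum_{t=1}^{T}\eta_t^r\Bigr).
\]
\end{enumerate}

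\emph{Stage 2: lifted residual.} Compare $z^k$ with $u_r^k$. The optimality conditions for $u_r^k$ read
\[
0\in\sum_i\bigl(\nabla f_i(u)+c_0(\nabla\phi(u)-\nabla\phi(z_i^k))+\rho\textstyle\sum_j(\nabla\phi(u)-\nabla\phi(z_j^k))+\partial r_i(u)\bigr).
\]
The iterate optimality conditions, summed over $i$, make the dual terms $\sum_i\sum_j y_{ij}$ vanish by skew-symmetry, leaving $\sum_i\xi_i^{k}$.
\begin{enumerate}[leftmargin=2em]
\item \emph{Strong monotonicity.} Under $c_0+\rho\bar d>L_-$, the map $u\mapsto\sum_i F_i(u,Z_i)+r_i(u)$ is strongly convex relative to $\phi$, and hence strongly monotone on $K$ with $\mu$ from Lemma~\ref{lem:local-variation-from-relative}.
\item \emph{Replacing the node subgradients.} Replace $\sum_i\xi_i^k$ by an element of $\sum_i\partial r_i(\bar x^k)$ at cost $\operatorname{dist}(\cdot)$, which Definition~\ref{def:aggregation-defects}(a) bounds by $M_{C\xi}S^k+\zeta_k^r$.
\item \emph{Bounding the distance to $u_r^k$.} Monotonicity of $\partial r_i$, together with the error terms $\|x^k_i-\bar x^k\|$, $\|x^k-z^k\|$, $\|x^{k}-x^{k-1}\|$ and the local Lipschitz constants, should give
\[
\|\bar x^k-u_r^k\|^2\le C\bigl(S^k+e^{k-1}+d_x^{k-1}+d_z^{k-1}+\zeta_k^r\bigr).
\]
\end{enumerate}
Then $\mathcal G_r(z^k)^2$ is bounded by the same quantities, using $\|z_i^k-\bar x^k\|\le\|z_i^k-x_i^k\|+\|x_i^k-\bar x^k\|$ and $\|x_i^k-\bar x^k\|^2\lesssim S^k$ on the connected graph (Laplacian spectral gap).

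\emph{Stage 3: rate.} Summing the Stage 2 bound over $k=1,\dots,T$ and inserting the telescoped Stage 1 estimate gives
\[
T\min_k\mathcal G_r(z^k)^2\le\Delta_r+C_r\sum_{t=1}^T(\eta_t^r+\zeta_t^r).
\]
Index shifts and the $t=0$ terms only change the constants. The final $\epsilon$ statement then follows immediately.

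\emph{Main obstacle.} I expect the hardest step to be the dual-motion control in Stage 1. The bound on $\|y^{t+1}-y^t\|$ is only available on the range of $\mathcal B$, so $y$ must be decomposed onto that range. The $\xi$ jump then has to be routed through $\eta_t^r$ with one-step lagged indices without breaking the telescoping. I would first follow the smooth proof's bound and treat $\xi$ as an additive perturbation, absorbing the lagged $d^{t-1}$ terms by shifting the Lyapunov weights.
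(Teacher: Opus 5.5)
Your proposal is correct and follows essentially the paper's proof: the composite descent plus the dual identity for $\mathcal L_r^t$, the range inequality with the $\eta_t^r$ split to bound $S^t$ by step errors, telescoping under Assumption~\ref{ass:lyapunov-lower-bound}, and a Stage~2 that is really the paper's proximal-residual domination (Lemma~\ref{comp:lem:residual-domination}, not its lifted residual), via strong monotonicity at $\bar x^t$ and the aggregation defect. Two small points: the smoothing step also produces a $+\tfrac{\rho}{\beta}S^{t+1}$ term from the neighbor references, which is absorbed into $q$; and your per-step inequality in item~6 does not follow for $\mathcal L_r^t$ itself, because $qS^{t+1}$ brings in $d_x^{t+1}$ and $\eta_{t+1}^r$, so the paper works directly with the summed inequality (bounding the boundary term by $d_x^T\le ND_K$), which is equivalent to your reweighting fix.
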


The horizon-average defects are the additional terms caused by nonsmooth transitions. Only $\eta_t^r$ enters the lifted estimate of Proposition~\ref{prop:deterministic-lifted-estimate}; $\zeta_t^r$ is introduced when that estimate is converted to $\mathcal G_r$.

\begin{corollary}[Deterministic composite rate under finite defect accumulation]\label{cor:finite-transition-composite-rate}
Under Theorem~\ref{thm:composite-main}, suppose Assumption~\ref{ass:finite-defect-accumulation}(a) holds with the same absorption constants. Then there exists $\bar\Delta_r<\infty$, independent of $T$, such that
\[
\min_{1\le k\le T}\mathcal G_r(z^k)^2
\le
\frac{\bar\Delta_r}{T},
\qquad T\ge1.
\]
The corresponding best-iterate residual satisfies $\min_{1\le k\le T}\mathcal G_r(z^k)=O(T^{-1/2})$. Thus deterministic composite SDBPG produces an iterate with $\mathcal G_r(z^k)\le\epsilon$ within $O(\epsilon^{-2})$ iterations.
\end{corollary}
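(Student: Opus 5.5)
The plan is to read the corollary off Theorem~\ref{thm:composite-main}, whose hypotheses it assumes verbatim. For every $T\ge1$ that theorem gives
\[
\min_{1\le k\le T}\mathcal G_r(z^k)^2
\le
\frac{\Delta_r+C_r\sum_{t=1}^{T}(\eta_t^r+\zeta_t^r)}{T},
\]
where $\Delta_r$ and $C_r$ do not depend on $T$. The only thing left to control is the partial defect sum, uniformly in $T$.

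The single point requiring care is that the defects in Assumption~\ref{ass:finite-defect-accumulation}(a) must be the same sequences $\eta_t^r,\zeta_t^r$ that appear in Theorem~\ref{thm:composite-main}. This holds because the corollary fixes the same absorption constants $M_{T\xi}$ and $M_{C\xi}$. Note that $M_{T\xi}$ also enters the admissible region of Lemma~\ref{lem:composite-admissible}, so the parameter tuple and the constants $\Delta_r,C_r$ are unchanged.

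Since $\eta_t^r,\zeta_t^r\ge0$ (they are positive parts), the partial sums are monotone in $T$ and bounded by the finite full series. Hence
\[
\sum_{t=1}^{T}(\eta_t^r+\zeta_t^r)\le \Sigma:=\sum_{t=1}^{\infty}\eta_t^r+\sum_{t=1}^{\infty}\zeta_t^r<\infty .
\]
Set $\bar\Delta_r:=\Delta_r+C_r\Sigma$. This constant is independent of $T$ and gives $\min_{1\le k\le T}\mathcal G_r(z^k)^2\le\bar\Delta_r/T$.

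The remaining claims follow by elementary manipulation. Because the minimum commutes with the monotone square root,
\[
\min_k\mathcal G_r(z^k)=\Big(\min_k\mathcal G_r(z^k)^2\Big)^{1/2}\le\sqrt{\bar\Delta_r/T}=O(T^{-1/2}).
\]
For the complexity statement, take $T\ge\lceil\bar\Delta_r\epsilon^{-2}\rceil$. Then some $k\le T$ satisfies $\mathcal G_r(z^k)\le\epsilon$, so $O(\epsilon^{-2})$ iterations suffice.
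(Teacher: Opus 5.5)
Your proposal is correct and matches the paper's proof: substitute the finite defect series from Assumption~\ref{ass:finite-defect-accumulation}(a) into the bound of Theorem~\ref{thm:composite-main} and absorb $C_r\sum_t(\eta_t^r+\zeta_t^r)$ into $\bar\Delta_r$. The paper leaves three steps implicit that you spell out: using nonnegativity of the positive-part defects to bound the partial sums by the full series, and deriving the square-root rate and the $O(\epsilon^{-2})$ iteration count.
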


Proposition~\ref{prop:stability-sufficient} gives defect control in identified regimes. For $\ell_1$ and group $\ell_{2,1}$ penalties, bounded selected subgradients give the Lyapunov lower bound through Proposition~\ref{prop:lyapunov-lower-sufficient}; verified local selections give it through Corollary~\ref{cor:verified-selection-lower-bound}.

\begin{corollary}[Structured nonsmooth composite rate]\label{cor:structured-nonsmooth-rate}
Assume the remaining conditions of Theorem~\ref{thm:composite-main}. Suppose either Assumption~\ref{ass:finite-defect-accumulation}(a) holds, or the selected subgradients admit a verified local selection in the sense of Definition~\ref{def:verified-local-selection}. The parameters are compatible with the absorption constants supplied by the chosen verification. Then there exists $\bar\Delta_r<\infty$ such that
\[
\min_{1\le k\le T}\mathcal G_r(z^k)^2
\le
\frac{\bar\Delta_r}{T},
\qquad T\ge1.
\]
Thus the identified polyhedral, partly smooth, $\ell_1$, and group $\ell_{2,1}$ regimes in Proposition~\ref{prop:stability-sufficient} retain the smooth theorem's $O(\epsilon^{-2})$ best-iterate order for the composite residual.
\end{corollary}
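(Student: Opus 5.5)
The statement is Corollary~\ref{cor:structured-nonsmooth-rate}. The plan is to reduce it to Corollary~\ref{cor:finite-transition-composite-rate}, that is, to Theorem~\ref{thm:composite-main} with a horizon-independent defect sum. The proof splits into two cases.

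\emph{Case 1: Assumption~\ref{ass:finite-defect-accumulation}(a) holds.} All hypotheses of Corollary~\ref{cor:finite-transition-composite-rate} are then in force with the same absorption constants, and there is nothing to prove. Concretely, take
\[
\bar\Delta_r:=\Delta_r+C_r\sum_{t\ge1}(\eta_t^r+\zeta_t^r)<\infty
\]
in Theorem~\ref{thm:composite-main}.

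\emph{Case 2: a verified local selection is given.} Here I would show that Assumption~\ref{ass:finite-defect-accumulation}(a) holds for suitable absorption constants $M_{T\xi},M_{C\xi}$, and that Assumption~\ref{ass:lyapunov-lower-bound} holds. The lower bound is exactly Corollary~\ref{cor:verified-selection-lower-bound}. Split each defect sum at $T_{\rm id}$.

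The finite prefix $1\le t<T_{\rm id}$ involves finitely many terms. Each term is finite because the iterates lie in the compact set $K$ and the selected subgradients are given by the optimality relation
\[
\xi_i^{t+1}=-\nabla K_i^t-\lambda^{-1}\big(\nabla\phi(x_i^{t+1})-\nabla\phi(x_i^t)\big),
\]
which is finite. The prefix therefore contributes a constant independent of $T$.

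\emph{Tail, motion defect.} For $t\ge T_{\rm id}$, Lipschitz continuity of $\Xi_i$ with constant $L_\Xi$ gives $\|\xi_i^{t+1}-\xi_i^t\|\le L_\Xi\|x_i^{t+1}-x_i^t\|$. Strong convexity from Lemma~\ref{lem:local-variation-from-relative} gives $\|x_i^{t+1}-x_i^t\|^2\le(2/\mu)D_\phi(x_i^{t+1},x_i^t)$. Hence
\[
\sum_i\|\xi_i^{t+1}-\xi_i^t\|^2\le(2L_\Xi^2/\mu)\,d_x^t,
\]
so $\eta_t^r=0$ whenever $M_{T\xi}\ge 2L_\Xi^2/\mu$.

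\emph{Tail, aggregation defect.} The point $\sum_i\Xi_i(\bar x^t)$ belongs to $\sum_i\partial r_i(\bar x^t)$. Therefore
\[
\operatorname{dist}\Big(\sum_i\xi_i^t,\sum_i\partial r_i(\bar x^t)\Big)\le L_\Xi\sum_i\|x_i^t-\bar x^t\|.
\]
The deviation $x^t-\mathbf 1\otimes\bar x^t$ lies in the range of $\mathcal B$, and connectivity gives $\|x^t-\mathbf 1\otimes\bar x^t\|^2\le S^t/\lambda_2$, where $\lambda_2>0$ is the algebraic connectivity of the graph Laplacian. With Cauchy--Schwarz this yields
\[
\operatorname{dist}^2\le N L_\Xi^2 S^t/\lambda_2,
\]
so $\zeta_t^r=0$ whenever $M_{C\xi}\ge NL_\Xi^2/\lambda_2$.

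Both defect sums are then finite, and Case 1 applies. Proposition~\ref{prop:stability-sufficient} identifies the polyhedral, partly smooth, $\ell_1$ and group $\ell_{2,1}$ regimes as instances, which gives the final sentence of the statement.

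\emph{Main obstacle: consistency of the parameter choice.} The absorption constant $M_{T\xi}$ fixed by the verification enters the constants $\widehat\Gamma_{r,x}^{\pm},\widehat\Gamma_{r,z}$ of Lemma~\ref{comp:lem:consensus-by-steps}, and therefore the admissible region of Lemma~\ref{lem:composite-admissible}. The statement handles this by hypothesis: the parameters are required to be compatible with the constants supplied by the verification. I would stress that $M_{T\xi}$ and $M_{C\xi}$ depend only on $L_\Xi$, $\mu$, $N$ and the graph, not on $\alpha$, so the choice order in Lemma~\ref{lem:admissible-window-main} (take $\alpha$ large last) still produces a nonempty region.

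A second subtlety arises when the relative neighborhoods $U_i$ are faces or manifolds rather than open sets. In that case the Lipschitz bound must hold along those sets, which requires $x_i^t$, $x_i^{t+1}$ and $\bar x^t$ to all lie in the same $U_i$ for $t\ge T_{\rm id}$. Definition~\ref{def:verified-local-selection} guarantees this.
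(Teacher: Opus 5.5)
Your tail estimates for $\eta_t^r$ and $\zeta_t^r$ are correct and match Lemma~\ref{comp:lem:verified-selection-absorption}. The gap is in the prefix, in the step where Case~2 is reduced to Case~1.

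\textbf{The gap.} Finiteness of the selected vectors $\xi_i^t$ controls the motion defects $\eta_t^r$ for $t<T_{\rm id}$. It does not control the aggregation defects. The quantity $\zeta_t^r$ contains $\operatorname{dist}\bigl(\sum_i\xi_i^t,\sum_i\partial r_i(\bar x^t)\bigr)$, which is finite only if $\partial r_i(\bar x^t)\neq\emptyset$ for every $i$.
\begin{itemize}
\item Definition~\ref{def:verified-local-selection} guarantees this only for $t\ge T_{\rm id}$.
\item Assumption~\ref{ass:composite-regularizer} says nothing about subdifferentials at the averaged point.
\end{itemize}
For extended-valued polyhedral or partly smooth $r_i$, such as node-specific indicators of polyhedra, the average $\bar x^t$ can leave $\operatorname{dom}\partial r_i$ before identification. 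Then $\zeta_t^r=+\infty$ and the bound of Theorem~\ref{thm:composite-main} is vacuous.

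This is why Proposition~\ref{prop:stability-sufficient} asserts finite prefix aggregation defects only in parts (a) and (c), where $r_i$ is finite and continuous, so subgradients exist at every averaged point. It is also why Lemma~\ref{comp:lem:verified-selection-absorption} states Assumption~\ref{ass:finite-defect-accumulation}(a) only conditionally. As written, your Case~2 does not deliver Assumption~\ref{ass:finite-defect-accumulation}(a). Your final claim for the polyhedral and partly smooth regimes is therefore not established.

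\textbf{The fix.} Avoid $\zeta_t^r$ on the prefix entirely instead of routing through Case~1. This is what the paper does.
\begin{enumerate}
\item Lemmas~\ref{comp:lem:summability} and~\ref{comp:lem:consensus-by-steps} involve only $\eta_t^r$, which your argument does make summable. Together with the lower bound from Corollary~\ref{cor:verified-selection-lower-bound}, they bound $\sum_{t=0}^{T-1}(d_x^t+d_z^t+e^t)$ and $\sum_{t=1}^{T}S^t$ uniformly in $T$.
\item Apply Lemma~\ref{comp:lem:residual-domination} only for $t\ge T_{\rm id}$, where $\zeta_t^r=0$.
\item Bound the finitely many earlier residuals directly by $\mathcal G_r(z^t)\le\operatorname{diam}K$, using $z_i^t,u_r^t\in K$.
\end{enumerate}
This gives
\[
\sum_{t=1}^{T}\mathcal G_r(z^t)^2\le (T_{\rm id}-1)(\operatorname{diam}K)^2+C\sum_{t=0}^{T-1}(d_x^t+d_z^t+e^t)+C\sum_{t=1}^{T}S^t\le\bar\Delta_r ,
\]
with $\bar\Delta_r$ independent of $T$. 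Dividing by $T$ gives the best-iterate rate.
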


\subsection{Stochastic Residual Bounds}

The stochastic analysis first uses a lifted residual that separates primal--dual stationarity from consensus before subgradients are aggregated at the network average. For stacked primal variables $x=(x_1,\ldots,x_N)$, set
\[
\nabla f(x):=(\nabla f_1(x_1),\ldots,\nabla f_N(x_N)),
\qquad
\partial r(x):=\prod_{i=1}^N\partial r_i(x_i).
\]
The Cartesian product $\partial r(x)$ collects one node subgradient per block.
For edge multipliers $y$, define the lifted primal-dual residual
\begin{equation}\label{eq:lifted-kkt-residual}
\mathcal R_{\mathrm{lift},r}(x,y)^2
:=
\operatorname{dist}^2\!\left(
0,\nabla f(x)+\mathcal B y+\partial r(x)
\right)
+\|\mathcal B^\top x\|^2,
\end{equation}
The two terms in this residual measure primal--dual stationarity and consensus, respectively. In the smooth case $r_i\equiv0$, write $\mathcal R_{\mathrm{lift},0}$.

\begin{lemma}[Stochastic admissible parameter region]\label{lem:stochastic-admissible}
Assume the constants in Assumptions~\ref{ass:standing-local} and~\ref{ass:composite-regularizer}, together with the absorption constants $M_{T\xi}^s,M_{\delta\xi}$ in Definition~\ref{def:subgradient-motion-defects}(c), are fixed. Choose an auxiliary splitting parameter $\chi\in(0,\mu a_x)$ and set $a_x^s=a_x-\chi/\mu$. Let $\widehat\Gamma_{s,x}^+,\widehat\Gamma_{s,x}^-,\widehat\Gamma_{s,z},\widehat\Gamma_{s,\delta}$ be the finite constants from Lemma~\ref{stoch:lem:consensus-by-steps}. If
\begin{equation}\label{eq:stochastic-admissible-region}
0<\chi<\mu a_x,
\qquad
\tau>\alpha,
\qquad
a_x^s>b(\widehat\Gamma_{s,x}^++\widehat\Gamma_{s,x}^-),
\qquad
a_z>b\widehat\Gamma_{s,z},
\end{equation}
then the parameter tuple is admissible for the stochastic composite lifted-residual theorem. The admissible region is nonempty after fixing the localized constants.
\end{lemma}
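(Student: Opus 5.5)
We have to prove Lemma~\ref{lem:stochastic-admissible}: if the inequalities \eqref{eq:stochastic-admissible-region} hold, the parameter tuple is admissible, and the region is nonempty once the localized constants are fixed.

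\emph{Admissibility.} Here "admissible" means that every coefficient of the descent quantities in the one-step expected Lyapunov inequality is positive. So we retrace the stochastic Lyapunov descent and check the coefficients one by one.

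First, we expand $\E\mathcal L_r^{t+1}-\E\mathcal L_r^t$ using three ingredients:
\begin{itemize}
\item the relative-smoothness upper bound (Assumption~\ref{ass:standing-local}(c)) applied to the primal step of Algorithm~\ref{alg:sdbpg} with $G_i^t$ in place of $\nabla K_i^t$;
\item the three-point identity for $D_\phi$;
\item the strong convexity of the smoothing subproblem.
\end{itemize}

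These produce the negative terms $-a_x d_x^t$, $-a_z d_z^t$ and $-a_e e^t$. The oracle error enters through the cross term $\langle\delta^t,x^{t+1}-x^t\rangle$. Since $x^{t+1}$ depends on $\delta^t$, this term does not vanish in conditional expectation. We split it by Young's inequality:
\[
\langle\delta^t,x^{t+1}-x^t\rangle\le\frac{\chi}{2}\|x^{t+1}-x^t\|^2+\frac1{2\chi}\|\delta^t\|^2 .
\]
By Lemma~\ref{lem:local-variation-from-relative}, $\tfrac\chi2\|x^{t+1}-x^t\|^2\le(\chi/\mu)d_x^t$. This is why the primal coefficient becomes $a_x^s=a_x-\chi/\mu$, and why we require $\chi<\mu a_x$: it keeps $a_x^s>0$ before the dual terms are subtracted. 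The leftover $\|\delta^t\|^2/(2\chi)$ is kept as variance.

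Next, we handle the dual and consensus terms $\langle y^{t+1}-y^t,s^{t+1}\rangle+\tau(S^{t+1}-S^t)$. Using $y^{t+1}-y^t=\alpha s^t$ and $\tau>\alpha$, the consensus increments become nonpositive up to a term proportional to $\|y^{t+1}-y^t\|^2/\alpha^2$ weighted by $q=\tau+\rho/\beta$. The factor $\rho/\beta$ comes from the neighbor-reference couplings created by the smoothing update. Lemma~\ref{stoch:lem:consensus-by-steps} bounds $\E\|y^{t+1}-y^t\|^2$ by
\[
\widehat\Gamma_{s,x}^+\E d_x^t+\widehat\Gamma_{s,x}^-\E d_x^{t-1}+\widehat\Gamma_{s,z}\E d_z^{t-1}+\widehat\Gamma_{s,\delta}\E(\|\delta^t\|^2+\|\delta^{t-1}\|^2),
\]
plus the motion defect $\eta_t^s$, after the absorption constants $M_{T\xi}^s,M_{\delta\xi}$ have been folded into the $\widehat\Gamma$'s. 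Multiplying by $b=q/\alpha^2$ and shifting the lagged terms by one index (a telescoping reindexing), the net coefficients are
\[
a_x^s-b(\widehat\Gamma_{s,x}^++\widehat\Gamma_{s,x}^-) \text{ on } \E d_x^t,\qquad a_z-b\widehat\Gamma_{s,z} \text{ on } \E d_z^t,\qquad a_e>0 \text{ on } \E e^t .
\]
These are exactly the quantities assumed positive in \eqref{eq:stochastic-admissible-region}. The variance terms $(1/(2\chi)+2b\widehat\Gamma_{s,\delta})\sigma^2$ and $b\,\eta_t^s$ remain as additive errors. This is precisely what the lifted-residual theorem needs, so the tuple is admissible.

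\emph{Nonemptiness.} We follow the choice order of Lemma~\ref{lem:admissible-window-main}:
\begin{enumerate}
\item Fix $c_0>L_-$ and $\lambda<(L_++c_0)^{-1}$.
\item Choose $\rho>0$ small so that $a_x>0$.
\item Fix $\chi=\mu a_x/2$, so that $a_x^s=a_x/2>0$.
\item Set $\tau=\kappa\alpha$ with $\kappa>1$.
\end{enumerate}
Then $b=(\kappa\alpha+\rho/\beta)/\alpha^2\to0$ as $\alpha\to\infty$. Provided the $\widehat\Gamma_{s,\cdot}$ do not depend on $\alpha$, the last two inequalities hold for $\alpha$ large.

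\emph{Main obstacle.} The critical point is exactly this independence of the $\widehat\Gamma_{s,\cdot}$ from $\alpha$ (and from $\tau$). Lemma~\ref{stoch:lem:consensus-by-steps} derives them from the primal optimality condition of Definition~\ref{def:subgradient-motion-defects}(c). That condition expresses $\mathcal B(y^{t+1}-y^t)$ through differences of $\nabla F_i$, $\nabla\phi$, $\xi_i$ and $\delta_i$, so the constants depend only on $L_{F,K}$, $L_\phi$, $\mu$, $\lambda$, the graph spectrum (through the smallest positive eigenvalue of $\mathcal B^\top\mathcal B$, with $y$ kept in the range of $\mathcal B^\top$ by $y^0=0$), and the absorption constants. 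We would verify this explicitly, together with the fact that the $\widehat\Gamma$'s may depend on $\chi$ only through quantities fixed before $\alpha$ is chosen. That ensures the order of choices above is not circular.
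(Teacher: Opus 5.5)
Your proposal is correct and follows essentially the paper's route. The nonemptiness argument is exactly the paper's proof: fix $c_0,\lambda,\rho$ so that $a_x>0$, take $\chi\in(0,\mu a_x)$, set $\tau=\kappa\alpha$ with $\kappa>1$, and let $\alpha\to\infty$, so that $b\to0$ while the $\alpha$-independent constants $\widehat\Gamma_{s,\cdot}$ stay fixed. Your admissibility check unpacks what the paper delegates to Lemmas~\ref{stoch:lem:baseline}, \ref{stoch:lem:consensus-by-steps}, and~\ref{stoch:lem:summability}. One nuance: the coefficients are positive only after summation and reindexing, not in the one-step inequality. That reindexing leaves a harmless boundary term $b\widehat\Gamma_{s,x}^+\E d_x^T\le b\widehat\Gamma_{s,x}^+ND_K$.
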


\begin{theorem}[Stochastic composite lifted complexity]\label{thm:stochastic-main}
For any stochastic composite SDBPG trajectory satisfying Assumptions~\ref{ass:standing-local}, \ref{ass:composite-regularizer}, \ref{ass:stochastic-oracle}, and~\ref{ass:lyapunov-lower-bound}, with expected subgradient-motion defects defined by Definition~\ref{def:subgradient-motion-defects}(c) and parameters compatible with Lemma~\ref{lem:stochastic-admissible},
\[
\E\left[\min_{1\le k\le T}\mathcal R_{\mathrm{lift},r}(x^k,y^k)^2\right]
\le
\frac{\Delta_s+C_s\sum_{t=0}^{T}\sum_{i=1}^N(\sigma_i^t)^2+C_{\eta,s}\sum_{t=1}^{T}\eta_t^s}{T}.
\]
This holds for every $T\ge1$. If the per-step variance bounds and expected motion defects are summable, the expected squared best-iterate residual is $O(T^{-1})$. If their horizon averages are $O(\epsilon^2)$ with $T=O(\epsilon^{-2})$, then the displayed expected squared residual is $O(\epsilon^2)$.
\end{theorem}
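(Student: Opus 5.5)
We prove Theorem~\ref{thm:stochastic-main} by a one-step expected descent inequality for $\mathcal L_r^t$, telescoping it, and then bounding the lifted residual by the controlled descent quantities.

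\emph{Step 1: expected descent for $\mathcal L_r^t$.} We combine the three updates of Algorithm~\ref{alg:sdbpg} with $G_i^t$ in place of $\nabla K_i^t$.
\begin{itemize}
\item The primal step, through the three-point property of $D_\phi$ and local $L_+$-relative smoothness on $K$ (Lemma~\ref{lem:local-variation-from-relative}), decreases $F_i+r_i$ by roughly $a_x d_x^t$.
\item This decrease carries an extra cross term $\langle\delta_i^t,x_i^{t+1}-x_i^t\rangle$. We split it by Young's inequality into $\frac{\chi}{\mu}d_x^t+C\|\delta^t\|^2$, using $D_\phi\ge\frac\mu2\|\cdot\|^2$. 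This is exactly why the admissible region uses $a_x^s=a_x-\chi/\mu$.
\item The smoothing step yields a decrease of order $a_z d_z^t+a_e e^t$, as in the smooth analysis.
\item The dual step increases $\langle y,s\rangle$ by about $\alpha^{-1}\|y^{t+1}-y^t\|^2$, and the $\tau S^t$ term with $\tau>\alpha$ compensates the consensus cross terms.
\end{itemize}

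\emph{Step 2: bounding the dual motion.} This step is the crux. We bound $\|y^{t+1}-y^t\|^2$ through the optimality relation of Definition~\ref{def:subgradient-motion-defects}(c). Differencing it between $t$ and $t-1$ and projecting onto $\operatorname{range}(\mathcal B)$ gives
\[
\|\mathcal B(y^{t+1}-y^t)\|^2\lesssim \text{(gradient and reference variation)}+\textstyle\sum_i\|\xi_i^{t+1}-\xi_i^t\|^2+\|\delta^t\|^2+\|\delta^{t-1}\|^2 .
\]
We then proceed as follows.
\begin{itemize}
\item Lemma~\ref{stoch:lem:consensus-by-steps} turns this into the constants $\widehat\Gamma_{s,x}^\pm,\widehat\Gamma_{s,z},\widehat\Gamma_{s,\delta}$.
\item The subgradient variation is replaced by $M_{T\xi}^s(d_x^t+d_x^{t-1}+d_z^{t-1})+M_{\delta\xi}(\|\delta^t\|^2+\|\delta^{t-1}\|^2)+\eta_t^s$, after taking expectations; this is where $\eta_t^s$ enters.
\item The dual variable has to stay in $\operatorname{range}(\mathcal B^\top)$ (since $y^0=0$) so that the smallest nonzero singular value of $\mathcal B$ controls $\|y^{t+1}-y^t\|$.
\end{itemize}
The main obstacle is the cross-time coupling: terms at $t-1$ must be absorbed by the descent at $t-1$, and the admissible conditions $a_x^s>b(\widehat\Gamma^+_{s,x}+\widehat\Gamma^-_{s,x})$ and $a_z>b\widehat\Gamma_{s,z}$ are designed exactly so that this works. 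The resulting inequality is
\[
\E\mathcal L_r^{t+1}\le\E\mathcal L_r^t-c\,\E(d_x^t+d_z^t+e^t+S^t)+C'\textstyle\sum_i\big((\sigma_i^t)^2+(\sigma_i^{t-1})^2\big)+b\,\eta_t^s .
\]
Unbiasedness, $\E[\delta^t\mid\mathcal F_t]=0$, removes the linear noise terms involving $\mathcal F_t$-measurable quantities. Note that $x_i^{t+1}$ depends on $\delta^t$, so those inner products are handled by Young's inequality rather than by unbiasedness.

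\emph{Step 3: telescoping.} We sum Step 2 from $t=1$ to $T$ and use $\inf_t\E\mathcal L_r^t>-\infty$ from Assumption~\ref{ass:lyapunov-lower-bound}. This bounds $\sum_t\E(d_x^t+d_z^t+e^t+S^t)$ by $\Delta_s'+C\,V_T+b\sum_t\eta_t^s$. The $S^t$ descent comes either from the $\tau$-term or from $s^t=(y^{t+1}-y^t)/\alpha$ combined with the dual bound.

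\emph{Step 4: from descent quantities to the lifted residual.} The optimality relation of Definition~\ref{def:subgradient-motion-defects}(c) exhibits
\[
\nabla f(x^{t+1})+\mathcal B y^{t+1}+\xi^{t+1}
\]
as a sum of three pieces:
\begin{itemize}
\item $\nabla f(x^{t+1})-\nabla f(x^t)$;
\item the Bregman reference gradients $c_0\nabla_xD_\phi(x_i^t,z_i^t)+\rho\sum_j\nabla_xD_\phi(x_i^t,z_j^t)$, each bounded by $L_\phi$ times $\|x-z\|$ distances (and hence by $e^{t-1}$, $d_x$, $d_z$);
\item $-\lambda^{-1}(\nabla\phi(x^{t+1})-\nabla\phi(x^t))-\delta^t$, together with an index shift in $y$ controlled by $\alpha\|s^t\|$.
\end{itemize}
Since $\xi^{t+1}\in\partial r(x^{t+1})$, the distance in \eqref{eq:lifted-kkt-residual} at $k=t+1$ is bounded by $C(d_x^t+d_x^{t-1}+d_z^{t-1}+e^{t-1}+e^t+S^t+S^{t+1}+\|\delta^t\|^2)$, via Lemma~\ref{lem:local-variation-from-relative} and strong convexity. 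We take expectations, use $\E\|\delta^t\|^2\le\sum_i(\sigma_i^t)^2$, sum over $k\le T$, and insert Step 3. Finally,
\[
\E\min_k\mathcal R_{\mathrm{lift},r}(x^k,y^k)^2\le\frac1T\sum_k\E\,\mathcal R_{\mathrm{lift},r}(x^k,y^k)^2
\]
gives the stated bound, with $\Delta_s$ collecting the initial Lyapunov gap and the boundary terms.

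The consequences in the theorem are then immediate. Summable $\sigma$ and $\eta^s$ bound the numerator uniformly in $T$, giving the $O(T^{-1})$ rate. If instead their horizon averages are $O(\epsilon^2)$, the bound gives $O(\epsilon^2)$ at $T=O(\epsilon^{-2})$.
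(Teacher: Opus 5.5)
Your proposal follows essentially the same route as the paper: Young's inequality on the oracle cross term $-\langle\delta^t,x^{t+1}-x^t\rangle$ in the composite descent step (which is where $a_x^s=a_x-\chi/\mu$ comes from), consensus control by differencing consecutive optimality systems with the defect split of Definition~\ref{def:subgradient-motion-defects}(c), telescoping against the expected Lyapunov lower bound, and lifted-residual domination from the optimality relation followed by min $\le$ average. The one imprecision is that your displayed per-step inequality does not hold literally, because $qS^{t+1}$ is controlled by $d_x^{t+1}$; as in the paper, sum the baseline inequality first and then substitute the consensus bound, paying a boundary term $d_x^T\le ND_K$ (and in Step~4 no index shift in $y$ is needed, since the update at time $t$ already uses $y^{t+1}$).
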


\begin{corollary}[Expected asymptotic lifted residual]\label{cor:stochastic-asymptotic}
Under Theorem~\ref{thm:stochastic-main}, if $\sum_{t=0}^{\infty}\sum_i(\sigma_i^t)^2<\infty$ and $\sum_t\eta_t^s<\infty$, then $\E\,\mathcal R_{\mathrm{lift},r}(x^t,y^t)^2\to0$.
\end{corollary}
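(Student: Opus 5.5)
The plan is to go back one step behind the min-over-iterates statement of Theorem~\ref{thm:stochastic-main}. A best-iterate bound alone does not give convergence of the individual terms $\E\,\mathcal R_{\mathrm{lift},r}(x^t,y^t)^2$. The proof of that theorem, however, should rest on a stronger \emph{summed} bound, and that is what I would use. Concretely, I would extract two ingredients from the stochastic Lyapunov analysis under the admissible region of Lemma~\ref{lem:stochastic-admissible}.

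The first ingredient is a per-step expected descent inequality. It should have the form
\[
\E\mathcal L_r^{t+1}\le \E\mathcal L_r^{t}-c\,\E\big(d_x^t+d_z^t+e^t+S^t\big)+C_1\sum_i\big((\sigma_i^t)^2+(\sigma_i^{t-1})^2\big)+C_2\eta_t^s,
\]
with $c>0$ coming from the margins $a_x^s-b(\widehat\Gamma_{s,x}^++\widehat\Gamma_{s,x}^-)$ and $a_z-b\widehat\Gamma_{s,z}$. The dual motion $\|y^{t+1}-y^t\|^2$ is controlled through Lemma~\ref{stoch:lem:consensus-by-steps}. In that control the expected subgradient motion enters only through $\eta_t^s$, plus the absorbed multiples of $d_x,d_z$ and $\|\delta\|^2$.

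The second ingredient is a per-step lifted-residual estimate, the stochastic analogue of Proposition~\ref{prop:deterministic-lifted-estimate}. I would build it from the selection in Definition~\ref{def:subgradient-motion-defects}(c). The vector $\xi^{t+1}\in\partial r(x^{t+1})$ together with the primal optimality relation gives the explicit element
\[
\nabla f(x^{t+1})+\mathcal B y^{t+1}+\xi^{t+1}=\big[\nabla f(x^{t+1})-\nabla f(x^t)\big]-\big(\text{reference gradient terms}\big)-\delta^t-\tfrac1\lambda\big(\nabla\phi(x^{t+1})-\nabla\phi(x^t)\big)
\]
of $\nabla f(x^{t+1})+\mathcal B y^{t+1}+\partial r(x^{t+1})$. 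By Lemma~\ref{lem:local-variation-from-relative} and $D_\phi\ge\frac\mu2\|\cdot\|^2$, the squared norm of this element is at most a constant times $d_x^t+d_z^{t-1}+e^{t}+e^{t-1}+\|\delta^t\|^2$. The consensus term satisfies $\|\mathcal B^\top x^{t+1}\|^2=S^{t+1}$. Taking expectations and using Assumption~\ref{ass:stochastic-oracle} gives
\[
\E\,\mathcal R_{\mathrm{lift},r}(x^{t+1},y^{t+1})^2\le C_3\,\E\big(d_x^t+d_z^{t-1}+e^t+e^{t-1}+S^{t+1}\big)+C_3\sum_i(\sigma_i^t)^2 .
\]

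Next I would telescope the descent inequality from $t=1$ to $T$ and use the lower bound $\inf_t\E\mathcal L_r^t>-\infty$ from Assumption~\ref{ass:lyapunov-lower-bound}. Since $\sum_{t,i}(\sigma_i^t)^2<\infty$ and $\sum_t\eta_t^s<\infty$, this yields $\sum_{t\ge1}\E(d_x^t+d_z^t+e^t+S^t)<\infty$, with a bound independent of $T$. Substituting into the lifted estimate and summing over $t$ then shows that $\sum_t\E\,\mathcal R_{\mathrm{lift},r}(x^t,y^t)^2$ is finite. The terms of a convergent nonnegative series tend to zero, which gives $\E\,\mathcal R_{\mathrm{lift},r}(x^t,y^t)^2\to0$.

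The main obstacle is to make sure the proof of Theorem~\ref{thm:stochastic-main} really provides this summed form, not just the min bound. One also needs the lifted estimate to involve no unabsorbed subgradient terms: the $\xi$'s cancel through the optimality equation, so only $\eta_t^s$ appears, and only in the dual-motion control. If the theorem's proof contains only the averaged form, I would rederive the summed inequality directly, applying the same telescoping argument to the descent lemma.
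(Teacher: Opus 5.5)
Your proposal is correct and follows the paper's own argument. The paper's proof simply notes that, under summable variance and defects, the chain behind Theorem~\ref{thm:stochastic-main} (Lemmas~\ref{stoch:lem:summability}, \ref{stoch:lem:consensus-by-steps} and~\ref{stoch:lem:lifted-residual-domination}) makes $\sum_t\E\,\mathcal R_{\mathrm{lift},r}(x^t,y^t)^2$ finite, so the nonnegative terms tend to zero.

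Two bookkeeping slips do not affect the conclusion. First, the term $q\E S^{t+1}$ brings in $\E d_x^{t+1}$ and $\eta_{t+1}^s$, so your descent inequality holds only after telescoping, with a boundary term at most $b\widehat\Gamma_{s,x}^+ND_K$. Second, the neighbor-reference terms $\nabla\phi(x_i^t)-\nabla\phi(z_j^t)$ in your lifted bound require the disagreement $S^t$ (which is controlled by $S^{t+1}$ and $d_x^t$), as in the paper's Lemma~\ref{stoch:lem:lifted-residual-domination}.
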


\begin{corollary}[Stochastic proximal-point residual with transition terms]\label{cor:stochastic-proximal}
Under Theorem~\ref{thm:stochastic-main}, define the stochastic aggregation defect as in Definition~\ref{def:aggregation-defects}(b) and suppose $c_0+\rho\bar d>L_-$. Then
\[
\E\left[\min_{1\le k\le T}\mathcal G_r(z^k)^2\right]
\le
\frac{\Delta_{s,r}+C_{s,r}\sum_{t=0}^{T}\sum_i(\sigma_i^t)^2+C'_{s,r}\sum_{t=1}^{T}(\eta_t^s+\zeta_t^s)}{T}.
\]
\end{corollary}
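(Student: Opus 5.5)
The plan is to convert the lifted stochastic bound of Theorem~\ref{thm:stochastic-main} into a bound on the consensus proximal residual. The structure mirrors the deterministic passage from Proposition~\ref{prop:deterministic-lifted-estimate} to Theorem~\ref{thm:composite-main}. Rather than working from $\mathcal R_{\mathrm{lift},r}$ directly, I would use the per-iteration quantities that the stochastic Lyapunov descent controls in expectation: $\E d_x^t$, $\E d_z^t$, $\E e^t$, and $\E S^t$. The proof of Theorem~\ref{thm:stochastic-main} shows that
\[
\sum_{t=1}^{T}\E\big(d_x^t+d_z^t+e^t+S^t\big)\le \Delta_s'+C_s'V_T+C'_\eta\sum_{t=1}^T\eta_t^s ,
\]
and I take this summed descent inequality as given. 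The goal is then a pointwise estimate
\[
\mathcal G_r(z^k)^2\le C\big(d_x^{k}+d_x^{k-1}+d_z^{k-1}+e^{k-1}+S^k+\|\delta^{k-1}\|^2\big)+C\operatorname{dist}^2\Big(\sum_i\xi_i^k,\sum_i\partial r_i(\bar x^k)\Big),
\]
with a possible index shift. Taking expectations, using Definition~\ref{def:aggregation-defects}(b) to replace the last term by $M_{C\xi}^s\E S^k+\zeta_k^s$, summing over $k$, and using $\E\min_k\le T^{-1}\sum_k\E$ then yields the stated bound.

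For the pointwise estimate, I first sum the primal optimality relations of Definition~\ref{def:subgradient-motion-defects}(c) over nodes. The dual terms cancel because $\sum_i\sum_{j\in\mathcal N_i}y_{ij}=0$ by skew symmetry. This gives
\[
\sum_i\big(\nabla_xF_i(x_i^k,Z_i^{k-1})+\xi_i^k\big)=-\sum_i\delta_i^{k-1}-\frac1\lambda\sum_i\big(\nabla\phi(x_i^k)-\nabla\phi(x_i^{k-1})\big).
\]
Next I move every $x_i^k$ to $\bar x^k$ and every reference $Z_i^{k-1}$ to $Z_i^k$ using Lemma~\ref{lem:local-variation-from-relative}. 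This costs $L_{F,K}$ times $\|x^k-\mathbf 1\bar x^k\|^2\lesssim S^k$, plus $d_z^{k-1}$ via strong convexity. Then I project $\sum_i\xi_i^k$ onto $\sum_i\partial r_i(\bar x^k)$. The result is a point $g\in\sum_i\big(\nabla_xF_i(\bar x^k,Z_i^k)+\partial r_i(\bar x^k)\big)$ whose norm is bounded by the right-hand side above. The $\nabla\phi$ difference is handled through $L_\phi$ and $\mu$, which bound it by $d_x^{k-1}$.

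The key comparison step uses $c_0+\rho\bar d>L_-$. By Assumption~\ref{ass:standing-local}(c), $u\mapsto\sum_iF_i(u,Z_i^k)+r_i(u)$ is strongly convex on $K$ with modulus $m:=N\mu(c_0+\rho\bar d-L_-)>0$. Since $u_r^k$ minimizes it and $\bar x^k\in K$, strong monotonicity of the subdifferential gives $m\|\bar x^k-u_r^k\|\le\|g\|$. Finally,
\[
\mathcal G_r(z^k)\le\max_i\|z_i^k-x_i^k\|+\max_i\|x_i^k-\bar x^k\|+\|\bar x^k-u_r^k\|,
\]
where the first term is bounded by $\sqrt{e^{k-1}}$ and the second by $C\sqrt{S^k}$ through the graph Laplacian's spectral gap.

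Summing the expectations, the oracle terms $\E\|\delta^{k-1}\|^2$ are bounded by $\sum_i(\sigma_i^{k-1})^2$ via Assumption~\ref{ass:stochastic-oracle} and merge into $V_T$. The $\eta^s$ contribution is inherited from the descent inequality, and the $\zeta^s$ terms appear exactly once, giving $C'_{s,r}\sum(\eta_t^s+\zeta_t^s)$.

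I expect the main obstacle to be the bookkeeping of index alignment. The optimality relation at step $k$ involves $Z^{k-1}$ and $Y^k$, while the residual uses $z^k$, and the smoothing mismatch $e^{k-1}$ pairs $x^k$ with $z^k$. The bound needs every discrepancy to land in a quantity that the summed stochastic descent controls, including the $k=1$ boundary terms, which are absorbed into $\Delta_{s,r}$. A second point is that the strong convexity must hold on $K$ rather than globally. This is where forward invariance of the comparison points, Assumption~\ref{ass:standing-local}(d), is used.
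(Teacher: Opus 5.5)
Your proposal is correct and follows the paper's route. The paper proves the corollary by substituting the stochastic proximal-residual domination of Lemma~\ref{stoch:lem:residual-domination} for the lifted domination in the proof of Theorem~\ref{thm:stochastic-main}; that lemma is built exactly as you do, by summing the node optimality relations, using $N\mu(c_0+\rho\bar d-L_-)$-strong convexity at $\bar x^t$, invoking the aggregation defect, and then applying the triangle inequality. One small index slip: the summed relation at step $k-1$ contains $\nabla_xF_i(x_i^{k-1},Z_i^{k-1})$, not $\nabla_xF_i(x_i^{k},Z_i^{k-1})$, but moving it to $\bar x^k$ only adds a $d_x^{k-1}$ term, which your estimate already contains.
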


\begin{corollary}[Stochastic transition-free rate under finite expected defect accumulation]\label{cor:stochastic-finite-accumulation}
Under Corollary~\ref{cor:stochastic-proximal}, suppose in addition Assumption~\ref{ass:finite-defect-accumulation}(b). Then there exists $\bar\Delta_{s,r}<\infty$, independent of $T$, such that
\[
\E\left[\min_{1\le k\le T}\mathcal G_r(z^k)^2\right]
\le
\frac{\bar\Delta_{s,r}+C_{s,r}\sum_{t=0}^{T}\sum_i(\sigma_i^t)^2}{T}.
\]
If furthermore $\sum_{t=0}^{\infty}\sum_i(\sigma_i^t)^2<\infty$, the expected squared best-iterate composite proximal residual is $O(T^{-1})$, matching the deterministic finite-defect order of Corollary~\ref{cor:finite-transition-composite-rate}. For general variance sequences, its bound is $O(T^{-1}+V_T/T)$.
\end{corollary}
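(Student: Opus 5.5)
This is a direct specialization of Corollary~\ref{cor:stochastic-proximal}; no new descent estimate is needed. I would start from the bound
\[
\E\Big[\min_{1\le k\le T}\mathcal G_r(z^k)^2\Big]\le\frac{\Delta_{s,r}+C_{s,r}\sum_{t=0}^{T}\sum_i(\sigma_i^t)^2+C'_{s,r}\sum_{t=1}^{T}(\eta_t^s+\zeta_t^s)}{T}.
\]
Here $\Delta_{s,r},C_{s,r},C'_{s,r}$ are independent of $T$. They depend only on the initialization, the graph, the localization constants of Lemma~\ref{lem:local-variation-from-relative}, the lower bound from Assumption~\ref{ass:lyapunov-lower-bound}, and the parameters admissible by Lemma~\ref{lem:stochastic-admissible}. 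The absorption constants $M_{T\xi}^s,M_{\delta\xi},M_{C\xi}^s$ are the ones fixed in Definitions~\ref{def:subgradient-motion-defects}(c) and~\ref{def:aggregation-defects}(b). The one point to check is that Assumption~\ref{ass:finite-defect-accumulation}(b) is invoked with these same constants, so that the defect sequences in the corollary and in the assumption coincide.

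Since $\eta_t^s,\zeta_t^s\ge0$, I would bound the partial sums by the full series:
\[
\sum_{t=1}^{T}(\eta_t^s+\zeta_t^s)\le\Sigma_{\rm def}:=\sum_{t=1}^{\infty}(\eta_t^s+\zeta_t^s)<\infty.
\]
Setting $\bar\Delta_{s,r}:=\Delta_{s,r}+C'_{s,r}\Sigma_{\rm def}$ gives the displayed bound for every $T\ge1$. The variance term is kept explicit and is identified as $C_{s,r}V_T$ by the definition of $V_T$.

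For the two consequences, first suppose $\sum_{t\ge0}\sum_i(\sigma_i^t)^2=:V_\infty<\infty$. Then $V_T\le V_\infty$, so the right-hand side is at most $(\bar\Delta_{s,r}+C_{s,r}V_\infty)/T=O(T^{-1})$, which is the same order as in Corollary~\ref{cor:finite-transition-composite-rate}. For a general variance sequence, the bound reads $\bar\Delta_{s,r}/T+C_{s,r}V_T/T=O(T^{-1}+V_T/T)$.

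There is no real obstacle here. The one point of care is the admissibility requirement mentioned above: the parameter region of Lemma~\ref{lem:stochastic-admissible} depends on $M_{T\xi}^s$ and $M_{\delta\xi}$. The constants $C_{s,r},C'_{s,r}$ from the corollary are therefore valid only for those fixed absorption constants, and the finite-accumulation hypothesis must be stated for them, which the phrase ``under Corollary~\ref{cor:stochastic-proximal}'' guarantees.
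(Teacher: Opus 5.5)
Your proposal is correct and matches the paper's proof: you substitute the finite series $\sum_{t\ge1}(\eta_t^s+\zeta_t^s)$ from Assumption~\ref{ass:finite-defect-accumulation}(b) into the bound of Corollary~\ref{cor:stochastic-proximal}, absorb $C'_{s,r}$ times this series into $\bar\Delta_{s,r}$, and read off the $O(T^{-1})$ and $O(T^{-1}+V_T/T)$ consequences from the remaining variance term. You also point out that the finite-accumulation hypothesis must use the same absorption constants that fix the admissible region of Lemma~\ref{lem:stochastic-admissible}, a requirement the paper leaves implicit.
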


\begin{remark}[Variance accumulation and batch size]\label{rem:variance-batch-rate}
Under the conditions of Corollary~\ref{cor:stochastic-finite-accumulation}, a uniform network variance bound $\sum_i(\sigma_i^t)^2\le\bar\sigma^2$ yields an expected squared best-iterate residual bound of $O(T^{-1})+O(\bar\sigma^2)$. For averages of $b_t$ conditionally independent samples whose individual conditional variances have a uniform network sum bound $v$, one has $V_T\le v\sum_{t=0}^T b_t^{-1}$. A fixed batch size $B$ therefore gives the bound $O(T^{-1})+O(v/B)$. A schedule satisfying $\sum_{t=0}^{\infty}b_t^{-1}<\infty$, such as geometrically increasing batches, gives the $O(T^{-1})$ squared-residual order. In this summable-variance regime, $\bigl(\E[\min_{1\le k\le T}\mathcal G_r(z^k)^2]\bigr)^{1/2}=O(T^{-1/2})$; the associated $O(\epsilon^{-2})$ bound counts iterations, with sampling cost determined by the batch schedule.
\end{remark}

The next corollary derives the variance-dependent bound from the stochastic selection condition, which gives finite expected defect accumulation through Lemma~\ref{stoch:lem:verified-selection-absorption}.

\begin{corollary}[Stochastic proximal-point residual under verified selection]\label{cor:stochastic-verified-proximal}
Consider stochastic composite SDBPG under Assumptions~\ref{ass:standing-local}, \ref{ass:composite-regularizer}, and~\ref{ass:stochastic-oracle}, with a stochastic verified local selection in the sense of Definition~\ref{def:stochastic-verified-local-selection}. Suppose $\sup_t\sum_i(\sigma_i^t)^2<\infty$, the parameters are compatible with Lemma~\ref{lem:stochastic-admissible} for the absorption constants induced by the stochastic verified selection, and
\[
c_0+\rho\bar d>L_-.
\]
Then there exist constants $\bar\Delta_{s,r},C_{s,r}<\infty$, independent of $T$, such that
\[
\E\left[\min_{1\le k\le T}\mathcal G_r(z^k)^2\right]
\le
\frac{\bar\Delta_{s,r}+C_{s,r}\sum_{t=0}^{T}\sum_i(\sigma_i^t)^2}{T}.
\]
If $\sum_{t=0}^{\infty}\sum_i(\sigma_i^t)^2<\infty$, the expected squared composite proximal-point residual has the same $O(T^{-1})$ best-iterate order as the deterministic finite-defect corollary. More generally, accumulated variance is the only nonconstant numerator term.
\end{corollary}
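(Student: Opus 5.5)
The plan is to reduce Corollary~\ref{cor:stochastic-verified-proximal} to Corollary~\ref{cor:stochastic-finite-accumulation}. That requires checking its two extra inputs: the expected Lyapunov lower bound (Assumption~\ref{ass:lyapunov-lower-bound}) and finite expected defect accumulation (Assumption~\ref{ass:finite-defect-accumulation}(b)). Both must hold for absorption constants $M_{T\xi}^s,M_{\delta\xi},M_{C\xi}^s$ that are fixed once the stochastic verified selection is given, so that Lemma~\ref{lem:stochastic-admissible} can be applied with them.

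\emph{Lower bound.} Corollary~\ref{cor:stochastic-verified-selection-lower-bound} already gives $\inf_t\E\mathcal L_r^t>-\infty$ from the selection and $\sup_t\sum_i(\sigma_i^t)^2<\infty$. Its underlying ingredient is $\sup_t\E\sum_i\|\xi_i^t\|^2<\infty$ combined with Proposition~\ref{prop:lyapunov-lower-sufficient}.

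\emph{Defect summability.} This is the content of Lemma~\ref{stoch:lem:verified-selection-absorption}, and I would argue it by splitting each sum at the stopping time $T_{\rm id}^s$. Write $\ell$ for the common Lipschitz constant of the maps $\Xi_i$.

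On the tail $t\ge T_{\rm id}^s$, we have $\xi_i^{t+1}-\xi_i^t=\Xi_i(x_i^{t+1})-\Xi_i(x_i^t)$. This gives
\[
\|\xi_i^{t+1}-\xi_i^t\|^2\le \ell^2\|x_i^{t+1}-x_i^t\|^2\le \frac{2\ell^2}{\mu}D_\phi(x_i^{t+1},x_i^t),
\]
using the strong convexity bound of Lemma~\ref{lem:local-variation-from-relative}. Hence the tail contribution is at most $(2\ell^2/\mu)\,d_x^t$, and it is absorbed by $M_{T\xi}^s=2\ell^2/\mu$ with $M_{\delta\xi}=0$.

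For aggregation, the vector $\sum_i\Xi_i(\bar x^t)$ belongs to $\sum_i\partial r_i(\bar x^t)$. Therefore
\[
\operatorname{dist}^2\Bigl(\sum_i\xi_i^t,\sum_i\partial r_i(\bar x^t)\Bigr)\le N\ell^2\sum_i\|x_i^t-\bar x^t\|^2 .
\]
The consensus deviation $\sum_i\|x_i^t-\bar x^t\|^2$ is bounded by $S^t/\lambda_2(\mathcal B\mathcal B^\top)$, the graph Laplacian being positive definite on the complement of consensus. This gives $M_{C\xi}^s=N\ell^2/\lambda_2$.

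With these constants, the integrands of $\eta_t^s$ and $\zeta_t^s$ vanish pathwise on $\{t\ge T_{\rm id}^s\}$. I will then use that $[\E X]_+\le\E[X]_+$ for each $t$ and sum over $t$; Tonelli's theorem lets me exchange the sum and the expectation. The total is then bounded by the prefix quantity~\eqref{eq:integrable-identification-prefix}, which is finite by hypothesis. This yields $\sum_t\eta_t^s<\infty$ and $\sum_t\zeta_t^s<\infty$.

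One subtlety is that $\xi_i^{T_{\rm id}^s}$ itself is already given by $\Xi_i$, so the boundary step is covered by the tail estimate. A second is that the random stopping time is harmless, because the pathwise bound holds for every $t$ and only the prefix enters the expectation.

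\emph{Conclusion.} With these constants fixed, Lemma~\ref{lem:stochastic-admissible} supplies admissible parameters, and we assume $c_0+\rho\bar d>L_-$ as stated. Theorem~\ref{thm:stochastic-main}, Corollary~\ref{cor:stochastic-proximal} and Corollary~\ref{cor:stochastic-finite-accumulation} then give the displayed bound with $\bar\Delta_{s,r}=\Delta_{s,r}+C'_{s,r}\sum_t(\eta_t^s+\zeta_t^s)$. The $O(T^{-1})$ statement follows under summable variance.

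The main obstacle is ensuring that the absorption constants are deterministic and fixed before parameters are chosen. That is why a common deterministic Lipschitz constant for the $\Xi_i$ is required in Definition~\ref{def:stochastic-verified-local-selection}, rather than a path-dependent one.
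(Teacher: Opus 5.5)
Your proposal is correct and follows the paper's proof. The paper likewise obtains the expected Lyapunov lower bound from Corollary~\ref{cor:stochastic-verified-selection-lower-bound}, gets finite expected defect accumulation from Lemma~\ref{stoch:lem:verified-selection-absorption}, and then applies Corollary~\ref{cor:stochastic-finite-accumulation} via Theorem~\ref{thm:stochastic-main} and Corollary~\ref{cor:stochastic-proximal}; your explicit use of $[\E X]_+\le\E[X]_+$ with Tonelli to reduce both defect sums to the integrable prefix~\eqref{eq:integrable-identification-prefix} spells out a step that the paper states only briefly in that lemma's proof.
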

The constants may depend on the finite expected prefix in Definition~\ref{def:stochastic-verified-local-selection}, but not on $T$.

\begin{corollary}[Structured stochastic composite rate]\label{cor:structured-stochastic-rate}
Consider stochastic composite SDBPG under Assumptions~\ref{ass:standing-local}, \ref{ass:composite-regularizer}, and~\ref{ass:stochastic-oracle}, with $\sup_t\sum_i(\sigma_i^t)^2<\infty$, parameters compatible with Lemma~\ref{lem:stochastic-admissible} for the absorption constants used in the verification, and $c_0+\rho\bar d>L_-$. Suppose either a stochastic verified local selection in the sense of Definition~\ref{def:stochastic-verified-local-selection} holds, or the $\ell_1$ or group $\ell_{2,1}$ regime in Proposition~\ref{prop:stability-sufficient}(c) holds on an almost-sure identified tail with the integrable prefix in \eqref{eq:integrable-identification-prefix}. Then the variance-dependent bound of Corollary~\ref{cor:stochastic-verified-proximal} holds. If $\sum_{t=0}^{\infty}\sum_i(\sigma_i^t)^2<\infty$, the expected squared composite proximal-point residual has $O(T^{-1})$ best-iterate order.
\end{corollary}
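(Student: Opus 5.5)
The statement to prove is Corollary~\ref{cor:structured-stochastic-rate}. The plan is to reduce it to Corollary~\ref{cor:stochastic-verified-proximal}. Since $\sup_t\sum_i(\sigma_i^t)^2<\infty$ is assumed, the variance hypothesis of Corollary~\ref{cor:stochastic-proximal} and of Corollary~\ref{cor:stochastic-verified-selection-lower-bound} is available. It therefore suffices to show three things in both alternative cases. First, Assumption~\ref{ass:lyapunov-lower-bound} holds in expectation. Second, the expected defects satisfy Assumption~\ref{ass:finite-defect-accumulation}(b) for absorption constants $M_{T\xi}^s,M_{\delta\xi},M_{C\xi}^s$ that do not depend on $T$. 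Third, the parameters are admissible for these constants. The bound then follows from Corollary~\ref{cor:stochastic-finite-accumulation}. Summability of the variance gives the $O(T^{-1})$ statement directly, because the numerator is then bounded uniformly in $T$.

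\emph{Case 1: stochastic verified local selection.} This is exactly the hypothesis of Corollary~\ref{cor:stochastic-verified-proximal}, so that corollary is invoked verbatim. The absorption constants are those induced by the verified selection through Lemma~\ref{stoch:lem:verified-selection-absorption}. Lower boundedness comes from Corollary~\ref{cor:stochastic-verified-selection-lower-bound}.

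\emph{Case 2: the $\ell_1$ or group $\ell_{2,1}$ regime of Proposition~\ref{prop:stability-sufficient}(c).} This regime holds on an almost-sure identified tail after a stopping time $T^s_{\rm id}$, with the integrable prefix \eqref{eq:integrable-identification-prefix}. The argument has three steps.

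\begin{enumerate}[leftmargin=2em]
\item \emph{Lower bound.} Every subgradient of these penalties is bounded by $\omega_i\sqrt d$ or $\omega_i\sqrt{|\mathcal P_i|}$. Hence $\sup_t\E\sum_i\|\xi_i^t\|^2<\infty$, and Proposition~\ref{prop:lyapunov-lower-sufficient} gives the expected Lyapunov lower bound.

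\item \emph{Tail of the defects.} On $\{t\ge T^s_{\rm id}\}$, use the pathwise argument of Proposition~\ref{prop:stability-sufficient}(c). On the common active coordinates or groups, the subgradients are fixed signs, or normalized vectors whose norms are bounded away from zero; these are Lipschitz in $x_i$ and agree with the value at $\bar x^t$.
 \begin{itemize}[leftmargin=1.5em]
 \item On the inactive coordinates, the stochastic primal optimality relation of Definition~\ref{def:subgradient-motion-defects}(c) writes $\xi_i^{t+1}$ as $-G_i^t-\lambda^{-1}(\nabla\phi(x_i^{t+1})-\nabla\phi(x_i^t))$. So $\xi_i^{t+1}-\xi_i^t$ is controlled by three pieces: differences of $\nabla K_i$ (Lemma~\ref{lem:local-variation-from-relative}, plus the dual increment $\alpha s^t$), differences of $\nabla\phi$ (via $L_\phi$ and $\mu$, converting to $d_x,d_z$), and $\delta^t,\delta^{t-1}$.
 \item This gives a pathwise inequality $\sum_i\|\xi_i^{t+1}-\xi_i^t\|^2\le M(d_x^t+d_x^{t-1}+d_z^{t-1})+M'(\|\delta^t\|^2+\|\delta^{t-1}\|^2)$ on the tail, with deterministic constants. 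The dual increment $\alpha s^t$ must also be absorbed into these terms. My plan is to use the same dual-motion estimate of Lemma~\ref{stoch:lem:consensus-by-steps} that bounds $S^t$ by primal and reference steps; otherwise I would include an $S$-term via $M_{C\xi}^s$.
 \item For aggregation on the tail, use common identification and the local Lipschitz property. The inactive components lie in balls (or intervals) that agree at $x_i^t$ and $\bar x^t$, and the active components differ by at most $C\|x_i^t-\bar x^t\|$. Thus $\operatorname{dist}^2\le C\sum_i\|x_i^t-\bar x^t\|^2\le C' S^t$, using connectivity of $G$ (Laplacian spectral gap).
 \end{itemize}

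\item \emph{Taking expectations.} Split each defect with the indicators $\mathbf 1\{t<T^s_{\rm id}\}$ and $\mathbf 1\{t\ge T^s_{\rm id}\}$. The identity $[a+b]_+\le[a]_++[b]_+$ makes the positive-part operation compatible with this split, using $\E$ of the nonnegative prefix terms. The tail contributions vanish. The prefix contributions sum to at most the quantity in \eqref{eq:integrable-identification-prefix}, which is finite and independent of $T$. This yields Assumption~\ref{ass:finite-defect-accumulation}(b).
\end{enumerate}

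\textbf{Main obstacle.} The key difficulty is the measurability and uniformity in the tail step. The constants $M,M',C'$ must be deterministic, even though the identified support pattern is random. Since there are finitely many support or group patterns and the bounds use only $K$-uniform constants, taking the maximum over patterns resolves this. A second subtlety is ordering: the admissibility in Lemma~\ref{lem:stochastic-admissible} must be applied after these constants are fixed. The hypothesis states precisely this compatibility, so the constants are fixed first and admissibility is invoked afterward.
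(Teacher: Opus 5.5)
Your reduction matches the paper's proof. Case 1 is Corollary~\ref{cor:stochastic-verified-proximal} verbatim. In Case 2 the paper likewise combines bounded $\ell_1$/$\ell_{2,1}$ subgradients with Proposition~\ref{prop:lyapunov-lower-sufficient}, then Proposition~\ref{prop:stability-sufficient}(c), then Corollary~\ref{cor:stochastic-finite-accumulation}. The gap is where you re-derive the inactive-coordinate estimate yourself, specifically your treatment of the dual increment $\mathcal B(y^{t+1}-y^t)=\alpha\mathcal B s^t$.

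Lemma~\ref{stoch:lem:consensus-by-steps} does not bound $S^t$ by primal and reference steps alone, for two reasons:
\begin{enumerate}
\item Its right-hand side contains $\widehat\Gamma_{s,\eta}\eta_t^s$. That lemma is proved precisely by splitting $\|\xi^{t+1}-\xi^t\|^2$, so using it here is circular.
\item It holds only in expectation, so it cannot give the pathwise tail inequality you want.
\end{enumerate}
Closing the loop by hand does not help either. Combining $\|P_I(\xi^{t+1}-\xi^t)\|^2\lesssim\alpha^2\|\mathcal B\|^2S^t+\cdots$ with $\alpha^2\sigma_G^2S^t\lesssim\|\xi^{t+1}-\xi^t\|^2+\cdots$ gives a self-bound whose coefficient in front of $\|\xi^{t+1}-\xi^t\|^2$ exceeds $\|\mathcal B\|^2/\sigma_G^2\ge1$. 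Your fallback of absorbing $S^t$ through $M^s_{C\xi}$ is also unavailable. That constant belongs to the aggregation defect, and $\eta_t^s$ in Definition~\ref{def:subgradient-motion-defects}(c) subtracts only $d_x$, $d_z$, and oracle terms. Adding an $S^t$ absorption there would recreate the same circularity inside Lemma~\ref{stoch:lem:consensus-by-steps}.

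The missing observation is that the dual increment vanishes on the common inactive set $I$. Write $P_I$ for the projection onto the inactive coordinates in every block. On $\{t\ge T^s_{\rm id}\}$ every node iterate $x_j^t$ is zero on $I$. The operator $\mathcal B\mathcal B^\top$ is the graph Laplacian acting blockwise, so it commutes with $P_I$. Hence $P_I\mathcal B(y^{t+1}-y^t)=\alpha\,\mathcal B\mathcal B^\top P_Ix^t=0$.

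With $g_i^t=\nabla_xF_i(x_i^t,Z_i^t)$ and $p_i^t=\lambda^{-1}(\nabla\phi(x_i^{t+1})-\nabla\phi(x_i^t))$, subtracting consecutive stochastic optimality relations then gives $P_I(\xi^{t+1}-\xi^t)=-P_I(g^t-g^{t-1})-P_I(p^t-p^{t-1})-P_I(\delta^t-\delta^{t-1})$. Lemma~\ref{lem:local-variation-from-relative} and the Bregman lower bound control this by $d_x^t+d_x^{t-1}+d_z^{t-1}$ and $\|\delta^t\|^2+\|\delta^{t-1}\|^2$, with no $S$-term. This is exactly the argument in the proof of Proposition~\ref{prop:stability-sufficient}(c). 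With this fix, or by citing that proposition's stochastic clause directly as the paper does, your indicator split against the integrable prefix and the rest of the argument go through.

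One smaller point concerns group $\ell_{2,1}$. Finitely many patterns do not by themselves make the Lipschitz constant of $x_g\mapsto\omega_ix_g/\|x_g\|$ deterministic. That requires the active-norm lower bound in Proposition~\ref{prop:stability-sufficient}(c) to be deterministic, which you should state as part of the hypothesis.
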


\section{Experiments}\label{sec:experiments}
The experiments examine defect decay after observed active-structure stabilization, composite sparse recovery, the effect of a geometry-matched quartic kernel, and reconstruction--communication tradeoffs against recent decentralized methods. The regularized recovery studies use common penalties across nodes, $r_i=r$, while local sensing operators and observations account for data heterogeneity. The active-structure experiment uses the quadratic kernel with $\mathcal X=\R^d$, whereas Section~\ref{sec:quartic-phase-retrieval} studies an unregularized phase-retrieval model with a nonlinear quartic kernel. Appendix~\ref{app:experimental-details} gives the problem definitions, additional trajectories, and sensitivity results.

\subsection{Setup and Evaluation Metrics}\label{sec:experimental-setup}
Baselines include decentralized gradient descent (DGD), decentralized gradient tracking (DGT), their proximal variants (ProxDGD and ProxDGT), Prox-EXTRA, ProxGT \citep{xin2021stochastic}, and Prox-DASA-GT \citep{xiao2023one}, as appropriate. Hyperparameters are selected on validation instances and kept fixed on disjoint evaluation instances. Paired comparisons use common data, graphs, and initialization. Same-penalty proximal comparisons isolate algorithmic effects, while unregularized controls quantify the contribution of structural regularization. Appendix~\ref{app:experimental-details} specifies the budgets and regularization weights for each study. Tables report evaluation means; 95\% percentile-bootstrap confidence intervals (CIs) use paired resampling for paired differences.

The quadratic losses in linear, shared-support, and Fourier recovery satisfy two-sided relative smoothness with a quadratic kernel, including for rank-deficient local sensing operators. The quartic kernel in Section~\ref{sec:quartic-phase-retrieval} matches the global curvature growth of the phase-retrieval loss, whose Euclidean curvature is unbounded; quadratic-kernel runs provide a Euclidean reference. Appendix~\ref{app:experimental-details} relates the experimental parameter selection to the analytical sufficient conditions.

For recent-algorithm comparisons, cumulative cost comprises sequential communication rounds, transmitted volume in mebibytes (MiB), consumed oracle samples, and wall time. One round permits simultaneous neighbor transmission of all states available before that exchange, and the sample count includes the data used by local-gradient calls. Full gradients used only for post-run stationarity evaluation are excluded from algorithmic oracle totals. Appendix~\ref{app:modern-baseline-costs} gives method-specific recursions, transmitted states, parameters, and cost identities.

Let $\widehat x=\bar x^T$ denote the final averaged iterate. Vector recovery error is $\|\widehat x-x^\star\|/\|x^\star\|$; for phase retrieval it is sign invariant, $\min_{\nu\in\{-1,1\}}\|\widehat x-\nu x^\star\|/\|x^\star\|$, and group recovery uses the analogous Frobenius relative error. For true and estimated supports $\mathcal S^\star$ and $\widehat{\mathcal S}$,
\[
P=\frac{|\widehat{\mathcal S}\cap\mathcal S^\star|}{|\widehat{\mathcal S}|},
\qquad
R=\frac{|\widehat{\mathcal S}\cap\mathcal S^\star|}{|\mathcal S^\star|},
\qquad
\mathrm{F1}=\frac{2PR}{P+R},
\]
where $P$ and $R$ are support precision and recall, with zero-denominator values set to zero. Coordinate and group support use fixed thresholds $|\widehat x_k|\ge0.05$ and row norm at least $0.12$, respectively; support size is $|\widehat{\mathcal S}|$. Consensus error is $N^{-1}\sum_i\|x_i^T-\bar x^T\|^2$. With the full gradient and reference step $\gamma_{\rm exp}=0.25$, the reported Euclidean composite stationarity metric is
\[
\mathsf S_{\rm exp}^t
:=\left\|\frac{\bar x^t-\operatorname{prox}_{\gamma_{\rm exp}\bar r}
\bigl(\bar x^t-\gamma_{\rm exp}\nabla\bar f(\bar x^t)\bigr)}{\gamma_{\rm exp}}\right\|^2
=\|P_{\gamma_{\rm exp}}(\bar x^t)\|^2.
\]
Proposition~\ref{prop:empirical-stationarity-bridge} bounds this metric by the theoretical Bregman residual and the smoothing mismatch; Appendix~\ref{app:residual-components} displays the lifted-residual components.

\subsection{Defect Accumulation and Active-Structure Stabilization}\label{sec:exp-defect}
We examine the relation between observed support stabilization and defect decay for the $\ell_1$ and group $\ell_{2,1}$ penalties in Proposition~\ref{prop:stability-sufficient}. For each penalty, we generate ten decentralized sparse least-squares instances with $N=6$ agents, $d=40$ candidate coordinates, true active size $s=5$, and $25$ measurements per agent; the group problem contains four signals with shared row support. The graph is a ring with one chord. We use the Euclidean kernel, penalty weight $\omega=0.05$, observation-noise standard deviation $10^{-3}$, and $(\lambda,\alpha,\beta,c_0,\rho)=(0.04,0.02,0.5,0.5,0.1)$.

This study uses local losses $\tfrac12\|\widetilde A_i x-\widetilde b_i\|^2$ and their matrix-valued counterparts, with independent sensing entries distributed as $\mathcal N(0,1/25)$. The active coordinates are $g+\operatorname{sign}(g)$ for $g\sim\mathcal N(0,1)$; active group rows have normalized Gaussian directions and independent lengths uniform on $[1.4,2.4]$. These amplitudes provide separated active components. All primal and reference variables start at zero. The motion and aggregation defects use absorption coefficient $3$ in Definitions~\ref{def:subgradient-motion-defects} and~\ref{def:aggregation-defects}. Let $\mathcal A^t$ denote the active support of $\bar x^t$, augmented with signs for the $\ell_1$ case; a coordinate or group is active when its magnitude or row norm exceeds $10^{-7}$. For the observation horizon $T_{\rm obs}=3000$, define
\[
\widehat T_{\rm stab}(T_{\rm obs})
:=1+\max\!\left(\{-1\}\cup\{0\le t<T_{\rm obs}:\mathcal A^t\ne\mathcal A^{T_{\rm obs}}\}\right),
\]
the first recorded iteration after which the active structure remains unchanged up to $T_{\rm obs}$. We plot $\eta_t^r$, $\zeta_t^r$, their cumulative sum, and the active size of $\bar x^t$ alongside this observed stabilization index.

At $T_{\rm obs}$, all 20 runs recover the true active structure. The median values of $\widehat T_{\rm stab}(T_{\rm obs})$ are 1040 for $\ell_1$ and 962 for $\ell_{2,1}$. We evaluate strict complementarity using the aggregate gradient $\sum_i\nabla f_i(\bar x^{T_{\rm obs}})$ and regularization radius $N\omega=0.3$. The radius minus the largest inactive-coordinate magnitude or inactive-group norm is positive in every run, with minima $0.131$ and $0.182$; the corresponding minimum active magnitudes are $0.968$ and $1.373$. From 50 iterations after $\widehat T_{\rm stab}(T_{\rm obs})$, the largest value of $\eta_t^r+\zeta_t^r$ is below $3.4\times10^{-15}$. The cumulative defects then remain constant to numerical precision, with median final values $6.27\times10^2$ and $6.86\times10^2$, and the maximum final edge-consensus error $S^{T_{\rm obs}}$ is below $1.9\times10^{-23}$.

\begin{figure}[!htbp]
\centering
\includegraphics[width=\textwidth]{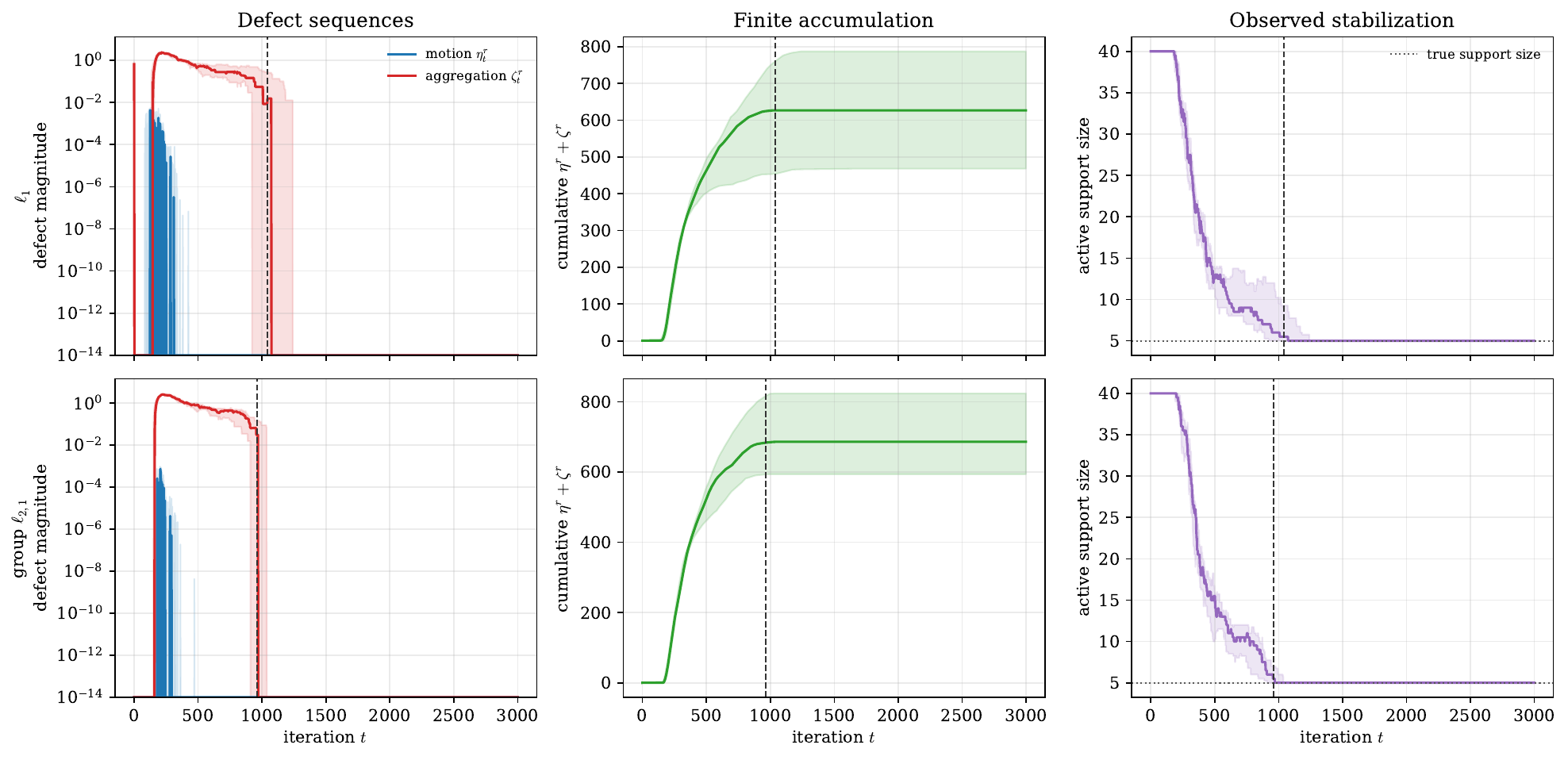}
\caption{Defect and active-structure stabilization for $\ell_1$ (top) and group $\ell_{2,1}$ (bottom) recovery. Solid curves show medians over ten instances and shaded bands show interquartile ranges. The dashed line marks the median $\widehat T_{\rm stab}(T_{\rm obs})$; thereafter the defects decay to numerical zero, the cumulative sums plateau, and the active size remains at $s=5$.}
\label{fig:defect-identification}
\end{figure}
\FloatBarrier

\subsection{Composite Sparse Recovery}
Linear sparse recovery is used to assess both recovery error and support identification. ProxDGD and ProxDGT use the same $\ell_1$ penalty, so they separate the effect of the SDBPG update from that of regularization alone.

\begin{table}[!htbp]
\centering
\caption{Composite sparse recovery in noisy undersampled and noisy moderate regimes, with true support sizes 4 and 5, respectively. Bold entries mark the lowest recovery error and highest support F1. Support size describes the estimated structure.}
\label{tab:sparse-recovery-main}

\scriptsize

\resizebox{\textwidth}{!}{%
\begin{tabular}{lcccccc}
\toprule
& \multicolumn{3}{c}{Noisy undersampled} & \multicolumn{3}{c}{Noisy moderate}\\
\cmidrule(lr){2-4}\cmidrule(lr){5-7}
Algorithm & Recovery error & Support F1 & Support size & Recovery error & Support F1 & Support size\\
\midrule
$\ell_1$-SDBPG & \best{0.657} & \best{0.724} & 2.9 & \best{0.632} & \best{0.620} & 2.6\\
SDBPG & 0.814 & 0.227 & 27.1 & 0.819 & 0.229 & 31.9\\
DGD & 1.058 & 0.223 & 27.7 & 1.061 & 0.181 & 33.7\\
DGT & 0.949 & 0.245 & 27.5 & 0.957 & 0.207 & 32.8\\
ProxDGD & 0.838 & 0.385 & 8.0 & 0.808 & 0.338 & 8.7\\
ProxDGT & 0.781 & 0.518 & 5.7 & 0.751 & 0.457 & 5.4\\
\bottomrule
\end{tabular}}
\end{table}
\FloatBarrier

In both regimes of Table~\ref{tab:sparse-recovery-main}, $\ell_1$-SDBPG has the lowest recovery error and highest support F1. Against the same-penalty ProxDGT baseline, the paired recovery differences are $-0.125$ (95\% CI $[-0.151,-0.096]$) and $-0.119$ (95\% CI $[-0.144,-0.099]$); the support-F1 differences are $0.206$ (95\% CI $[0.114,0.286]$) and $0.163$ (95\% CI $[0.115,0.213]$). The unregularized methods remain dense, while the proximal baselines recover sparser but less accurate solutions.

We repeat the comparison in 24 settings that vary observation noise, graph topology, data heterogeneity, sample count, and signal sparsity. For each comparator, Figure~\ref{fig:sparse-recovery-coverage} counts joint statistical gains: the paired 95\% intervals favor $\ell_1$-SDBPG for both recovery error and support F1, the corresponding two-sided sign-flip tests have $p<0.05$, and the success rate does not significantly decrease. This occurs in 23 settings against each of SDBPG, DGD, DGT, and ProxDGD, and in 22 against ProxDGT. Mean recovery errors are lower and mean support F1 values are higher in all 24 settings. At the largest tested sparsity, some support-F1 intervals contain zero. Appendix~\ref{app:sparse-recovery-trajectories} shows the corresponding tradeoff and trajectories. In the separately tuned EXTRA comparison of Appendix~\ref{app:extra-sparse-baselines}, PG-EXTRA has a slightly lower error in the undersampled regime but remains dense. $\ell_1$-SDBPG has the highest support F1 in both regimes and the lowest error in the moderate regime.

\begin{figure}[!htbp]
\centering
\includegraphics[width=0.96\textwidth]{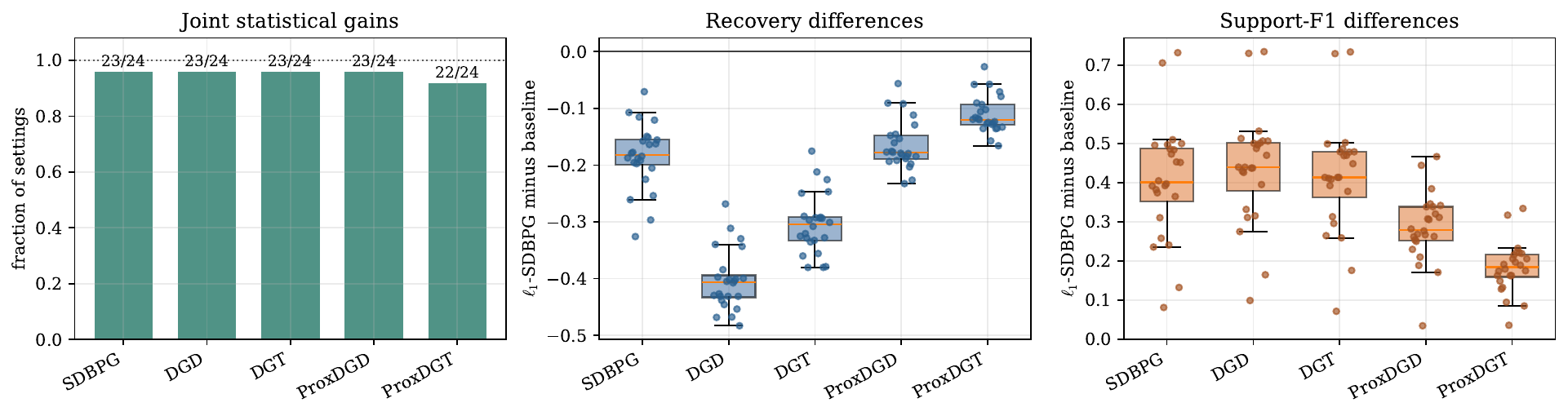}
\caption{Sparse-recovery robustness across 24 settings. Left: fraction satisfying the joint statistical-gain criterion stated in the text. Center and right: distributions of paired mean recovery-error and support-F1 differences across settings; each point represents one setting. Negative recovery differences and positive support differences favor $\ell_1$-SDBPG.}
\label{fig:sparse-recovery-coverage}
\end{figure}
\FloatBarrier

\subsection{Shared-Support Composite Recovery}
The group $\ell_{2,1}$ experiment considers shared row support across multiple signals rather than coordinate sparsity. Its block soft-thresholding update is the second structured case covered by Proposition~\ref{prop:stability-sufficient}.

\begin{table}[!htbp]
\centering
\caption{Shared-support composite recovery with the convex $\ell_{2,1}$ penalty and true active-group counts 5 and 4 in the undersampled and heterogeneous regimes, respectively. Bold entries mark the lowest recovery error and highest support F1.}
\label{tab:group-sparse-main}

\scriptsize

\resizebox{\textwidth}{!}{%
\begin{tabular}{lcccccc}
\toprule
& \multicolumn{3}{c}{Noisy undersampled} & \multicolumn{3}{c}{Noisy heterogeneous}\\
\cmidrule(lr){2-4}\cmidrule(lr){5-7}
Algorithm & Recovery error & Support F1 & Support size & Recovery error & Support F1 & Support size\\
\midrule
$\ell_{2,1}$-SDBPG & \best{0.534} & \best{0.829} & 3.6 & \best{0.380} & \best{0.986} & 3.9\\
SDBPG & 0.662 & 0.667 & 9.9 & 0.623 & 0.630 & 8.9\\
DGD & 1.021 & 0.381 & 18.0 & 0.961 & 0.333 & 17.2\\
DGT & 0.872 & 0.496 & 14.3 & 0.817 & 0.459 & 13.2\\
\bottomrule
\end{tabular}}
\end{table}
\FloatBarrier

In both regimes, $\ell_{2,1}$-SDBPG has the lowest recovery error, the highest support F1, and a support size closest to the true one. Relative to unregularized SDBPG, its paired recovery improvements are $0.128$ (95\% CI $[0.094,0.156]$) and $0.243$ (95\% CI $[0.210,0.276]$), with support-F1 gains of $0.162$ (95\% CI $[0.060,0.263]$) and $0.356$ (95\% CI $[0.295,0.414]$). The $\ell_{2,1}$ results in Figure~\ref{fig:defect-identification} show that the observed group support stabilizes before the defects vanish.

\subsection{Geometry-Matched Non-Euclidean Phase Retrieval}\label{sec:quartic-phase-retrieval}
The sparse-recovery studies above use a quadratic kernel. For phase retrieval, we instead use a kernel with the same quartic growth as the loss:
\begin{equation}\label{eq:quartic-kernel-experiment}
\phi_4(x)=\frac14\|x\|^4+\frac12\|x\|^2,
\qquad
\nabla\phi_4(x)=(1+\|x\|^2)x.
\end{equation}
The resulting mirror map $\nabla\phi_4$ is nonlinear and cannot be reduced to a weighted Euclidean transformation. Its inverse and the SDBPG smoothing subproblem are monotone radial cubic equations that can be solved exactly. Appendix~\ref{app:quartic-kernel-details} gives the formulas, parameters, and detailed results.

We use six agents, dimension $d=16$, 60 quadratic measurements per agent, observation-noise standard deviation $0.02$, and a ring with two chords. For heterogeneity factor $h$, the Gaussian sensing vectors at agent $i\in\{0,\ldots,N-1\}$ are multiplied by $1+h i/(N-1)$; we evaluate $h=2.5$, $3.0$, and $3.25$. Quadratic SDBPG, quartic SDBPG, and deterministic ProxGT are tuned on five validation instances at $h=3.0$; the parameters are then kept unchanged on ten held-out evaluation instances at all three levels for 600 iterations.

At $h=3.0$, quartic SDBPG lowers mean recovery error from $0.01505$ to $0.01197$ relative to quadratic SDBPG and from $0.02452$ relative to ProxGT. The paired quartic-minus-quadratic difference is $-0.00308$ with 95\% bootstrap interval $[-0.00438,-0.00184]$, while the difference from ProxGT is $-0.01254$ with interval $[-0.02179,-0.00413]$; the quartic kernel gives the lower error on nine of ten evaluation instances in both comparisons. At $h=3.25$, its mean error is $0.01113$, compared with $0.01384$ for quadratic SDBPG. At $h=2.5$, the quadratic kernel gives the lower mean, $0.01753$ versus $0.02056$.

Appendix~\ref{app:sparse-phase-retrieval} also considers the quartic phase-retrieval loss with nonsmooth $\ell_1$ regularization. Table~\ref{tab:sparse-phase-main} shows that $\ell_1$-SDBPG has the lowest mean recovery errors, $0.864$ and $0.904$, in the two noisy regimes and produces much smaller supports than the unregularized methods. ProxDGT has the highest support F1, so no method is best on both recovery error and support F1.

\begin{figure}[!htbp]
\centering
\includegraphics[width=0.96\textwidth]{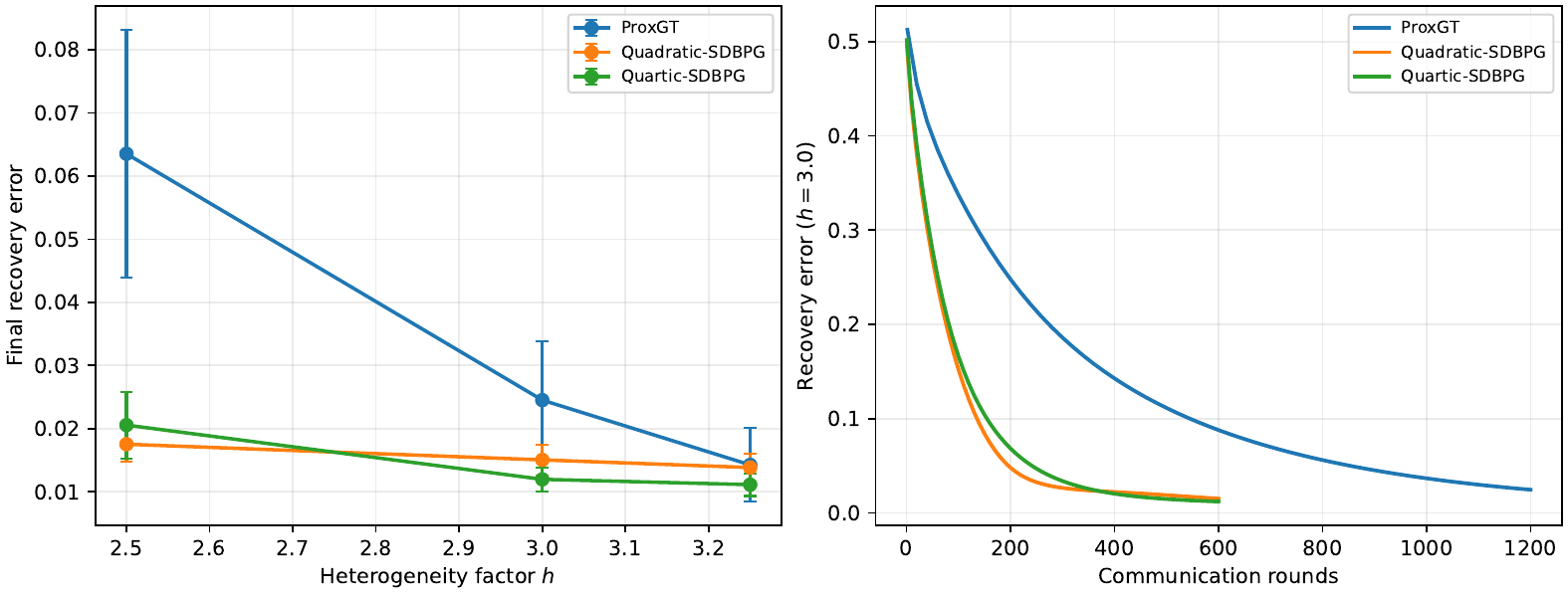}
\caption{Geometry-matched phase retrieval. Left: final recovery error with parameters kept unchanged across the heterogeneity-factor sweep; bars show normal-approximation 95\% intervals over ten evaluation instances. Right: mean recovery trajectories versus sequential communication rounds at $h=3.0$.}
\label{fig:quartic-geometry-modern}
\end{figure}
\FloatBarrier

\subsection{Structured Fourier Imaging}\label{sec:structured-fourier-imaging}
We also consider structured Fourier sensing. Let $W^\top$ synthesize an image from orthonormal wavelet coefficients, $\mathcal F$ be the orthonormal two-dimensional Fourier transform, and $P_\Omega$ select the measured frequencies. The consensus problem is equivalent to
\begin{equation}
\min_x\ \frac12\big\|P_\Omega\mathcal F W^\top x-b\big\|^2+\lambda_r\|x\|_1,
\label{eq:structured-fourier-imaging}
\end{equation}
where $\lambda_r$ is the regularization weight. Disjoint subsets of $\Omega$ and their observations are assigned to the agents. This convex nonsmooth problem replaces the Gaussian designs used above with a structured complex-valued operator.

We reconstruct $64\times64$ modified Shepp--Logan phantoms from $35\%$ variable-density Fourier samples, including a fully sampled central width of $8\%$ on each axis, with $1\%$ relative complex Gaussian noise. The representation uses level-3 orthonormal Daubechies-2 (db2) wavelets. Six agents communicate on a ring with two chords, start from zero, and run for 1000 iterations. ProxGT uses $K=2$ consensus steps in each of its gradient-tracker and proximal-point mixing blocks, hence four sequential communication rounds per iteration. All methods use the same $\ell_1$ weight $0.01$. Parameters are selected on three validation instances and kept unchanged on ten held-out evaluation instances. We report normalized root-mean-square error (NRMSE), peak signal-to-noise ratio (PSNR), and structural similarity index (SSIM). Appendix~\ref{app:structured-fourier-imaging} reports a separate extended benchmark on independently generated masks and noise draws, adding a proximal primal--dual baseline (ProxPDA), Prox-EXTRA, the centralized fast iterative shrinkage-thresholding algorithm (FISTA), and Cameraman images.

\begin{table}[!htbp]
\centering
\caption{Structured-Fourier comparison over ten evaluation phantom masks. Bold entries mark the best displayed reconstruction means and lowest costs to target. The target is $1.10$ times the lowest final NRMSE attained by a compared method on the same instance.}
\label{tab:structured-fourier-modern}

\footnotesize
\setlength{\tabcolsep}{2pt}

\begin{subtable}[t]{0.48\textwidth}
\centering
\caption{Final reconstruction.}
\begin{tabular}{lrrr}
\toprule
Algorithm & NRMSE $\downarrow$ & \shortstack{PSNR $\uparrow$\\(dB)} & SSIM $\uparrow$\\
\midrule
$\ell_1$-SDBPG & \best{0.17250} & \best{27.393} & \best{0.84058}\\
ProxGT ($K=2$) & 0.17349 & 27.344 & 0.83852\\
ProxDGT & 0.17905 & 27.070 & 0.82589\\
\bottomrule
\end{tabular}
\end{subtable}\hfill
\begin{subtable}[t]{0.50\textwidth}
\centering
\caption{Cost to the common target.}
\begin{tabular}{lrrr}
\toprule
Algorithm & Rounds $\downarrow$ & MiB $\downarrow$ & \shortstack{Oracle\\samples $\downarrow$}\\
\midrule
$\ell_1$-SDBPG & \best{407.5} & \best{407.5} & \best{2445}\\
ProxGT ($K=2$) & 1870 & 935 & 2805\\
ProxDGT & 695 & 695 & 4176\\
\bottomrule
\end{tabular}
\end{subtable}
\end{table}

The paired NRMSE difference between $\ell_1$-SDBPG and ProxGT is $-9.83\times10^{-4}$ with 95\% bootstrap interval $[-1.17\times10^{-3},-7.64\times10^{-4}]$, and $\ell_1$-SDBPG gives the lower NRMSE on all ten instances. At the full budget, $\ell_1$-SDBPG and ProxDGT each use 1000 sequential communication rounds and 1000 MiB; ProxGT uses 4000 rounds and 2000 MiB. Relative to ProxGT, $\ell_1$-SDBPG reaches the prescribed reconstruction level with $78.2\%$ fewer rounds, $56.4\%$ fewer transmitted MiB, and $12.8\%$ fewer oracle samples.

\FloatBarrier

\subsection{Stochastic Sparse Recovery}
For stochastic sparse recovery, we compare $\ell_1$-SDBPG with ProxGT-SA and Prox-DASA-GT \citep{xin2021stochastic,xiao2023one}. All methods use mini-batches of size four, a common $\ell_1$ weight of $0.02$, and 900 iterations. Parameters are selected on three validation instances and kept unchanged on ten held-out evaluation instances.

\begin{table}[!htbp]
\centering
\caption{Stochastic sparse recovery over ten evaluation instances with mini-batch size four and 900 iterations. Bold entries mark the best displayed final-quality means; paired intervals are given in the text.}
\label{tab:modern-stochastic-main}

\footnotesize
\setlength{\tabcolsep}{2pt}

\begin{subtable}[t]{0.58\textwidth}
\centering
\caption{Final quality.}
\begin{tabular}{lrrrr}
\toprule
Algorithm & \shortstack{Recovery\\error $\downarrow$} & \shortstack{Support\\F1 $\uparrow$} & \shortstack{Squared\\stationarity\\residual $\downarrow$} & \shortstack{Consensus\\error $\downarrow$}\\
\midrule
$\ell_1$-SDBPG & 0.6529 & 0.5536 & $9.26\times10^{-4}$ & \best{$4.13\times10^{-4}$}\\
ProxGT-SA & \best{0.6510} & 0.5714 & $3.38\times10^{-3}$ & $1.27\times10^{-3}$\\
Prox-DASA-GT & 0.6654 & \best{0.5853} & \best{$3.28\times10^{-4}$} & $7.95\times10^{-4}$\\
\bottomrule
\end{tabular}
\end{subtable}\hfill
\begin{subtable}[t]{0.40\textwidth}
\centering
\caption{Fixed-budget costs.}
\begin{tabular}{lrrr}
\toprule
Algorithm & Rounds & MiB & \shortstack{Oracle\\samples\\\strut}\\
\midrule
$\ell_1$-SDBPG & 900 & 13.733 & 28800\\
ProxGT-SA & 3600 & 27.466 & 28800\\
Prox-DASA-GT & 900 & 20.599 & 28832\\
\bottomrule
\end{tabular}
\end{subtable}
\end{table}

The paired recovery-error differences ($\ell_1$-SDBPG minus baseline) are $0.00189$ against ProxGT-SA and $-0.01252$ against Prox-DASA-GT. Their respective 95\% intervals are $[-0.01771,0.01987]$ and $[-0.03698,0.01205]$. $\ell_1$-SDBPG uses one quarter of the communication rounds and one half of the transmitted MiB of ProxGT-SA. Relative to Prox-DASA-GT, it uses the same number of rounds and $33.3\%$ fewer transmitted MiB. It also gives the smallest consensus error. The cost-to-target results in Appendix~\ref{app:modern-baseline-costs} show that ProxGT-SA and Prox-DASA-GT reach the relaxed $110\%$ recovery target earlier.

Figure~\ref{fig:stochastic-noise-alignment} varies the mini-batch size and the variance of additive gradient noise. Larger mini-batches reduce both the mean squared oracle error and $e^t$, whereas stronger additive noise increases both. This agrees with the oracle-variance term in the finite-horizon bound.

\begin{figure}[!htbp]
\centering
\includegraphics[width=0.95\textwidth]{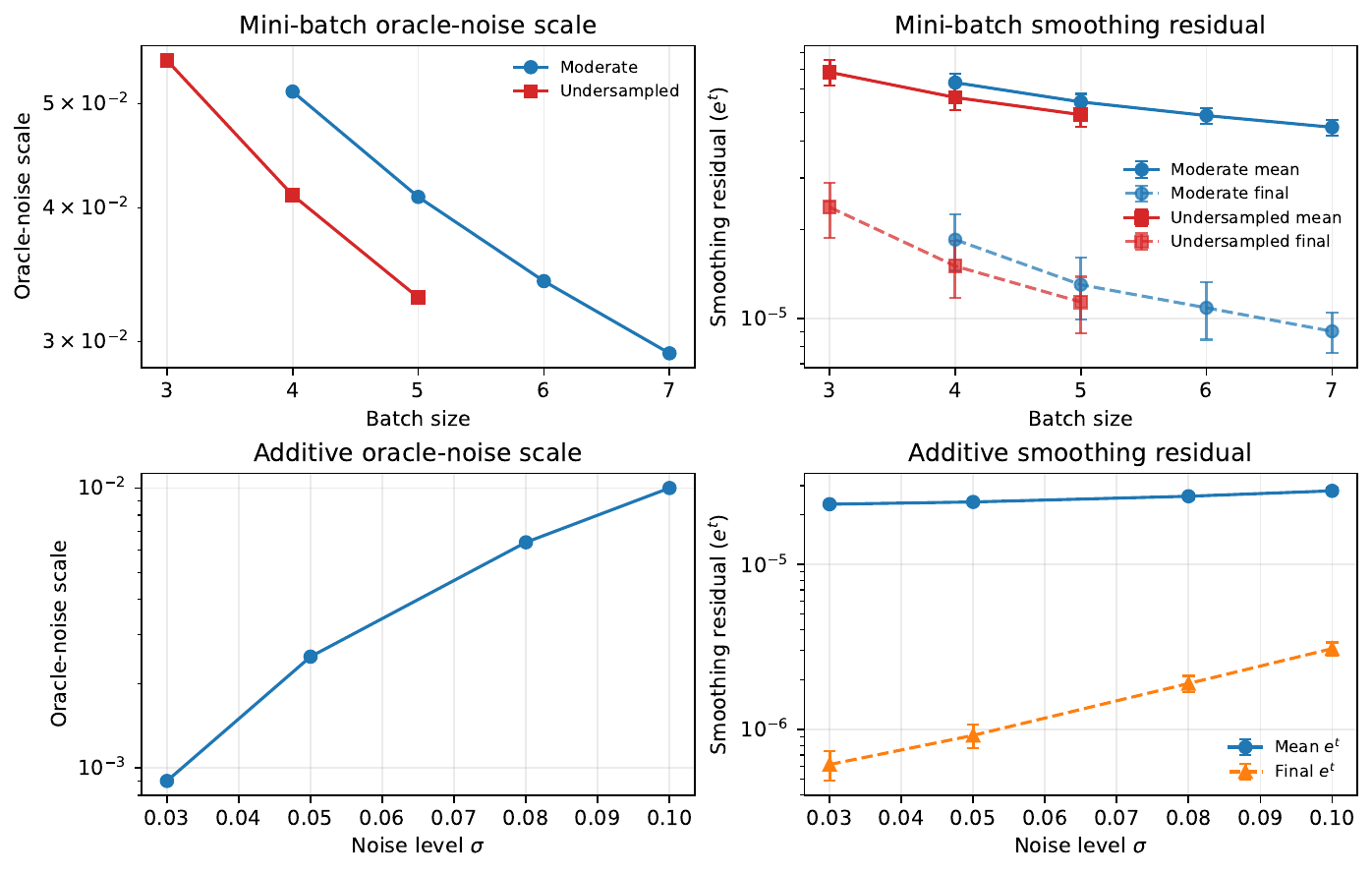}
\caption{Stochastic sparse-recovery oracle-error alignment for
$\ell_1$-SDBPG. Mini-batch panels vary the batch size, and additive
panels vary the injected noise level. Smoothing-residual panels report time-averaged and final $e^t$ over 900 iterations, using the same exact-update configuration and 20 evaluation instances as Figure~\ref{fig:stochastic-residual-trajectories}. Error bars show normal-approximation 95\% confidence intervals.}
\label{fig:stochastic-noise-alignment}
\end{figure}
\FloatBarrier

Appendix~\ref{app:stochastic-sparse-extra} adds unregularized stochastic SDBPG and ProxDGD. Across the reported oracle settings, stochastic $\ell_1$-SDBPG has lower recovery error and higher support F1 than unregularized SDBPG. It also improves both metrics over ProxDGD in the four matched moderate-regime comparisons. The fixed-batch and additive-noise experiments characterize residuals under persistent oracle error, while the increasing-batch experiment traces the transition to exact local gradients.

\section{Conclusion}
We developed composite and stochastic SDBPG methods for node-specific convex regularizers and unbiased local gradients while retaining the Bregman primal--dual smoothing updates. The analysis separates temporal subgradient motion, aggregation mismatch, and oracle variance. Under compact localization and admissible parameter conditions, summable defects give an $O(T^{-1})$ bound on the squared best-iterate proximal residual. The stochastic method has the same order in expectation when the expected defects and oracle variance are summable, and a variance-dependent residual bound under persistent noise. Common active-structure identification and the stated local subgradient conditions provide defect verification through primal optimality or local subgradient selections.

Experiments connect active-structure stabilization with defect decay and demonstrate the benefits of coordinate and group regularization for recovery under heterogeneous sensing. Geometry-matched phase retrieval illustrates how kernel choice influences reconstruction. Partial-Fourier imaging combines improved phantom reconstruction with lower communication cost against gradient-tracking baselines, while the leading proximal methods give similar Cameraman reconstruction quality. Stochastic recovery achieves accuracy comparable to recent proximal gradient-tracking methods with lower transmitted volume at the fixed iteration budget. The cost-to-accuracy comparisons further characterize the different transient behaviors of these methods.

\bibliographystyle{abbrvnat}
\bibliography{references}

\begin{thebibliography}{74}
\providecommand{\natexlab}[1]{#1}
\providecommand{\url}[1]{\texttt{#1}}
\expandafter\ifx\csname urlstyle\endcsname\relax
  \providecommand{\doi}[1]{doi: #1}\else
  \providecommand{\doi}{doi: \begingroup \urlstyle{rm}\Url}\fi

\bibitem[ATTOUCH(2011)]{attouch2011convergence}
H.~ATTOUCH.
\newblock Convergence of descent methods for semi-algebraic and tame problems.
\newblock 2011.

\bibitem[Auslender and Teboulle(2006)]{auslender2006interior}
A.~Auslender and M.~Teboulle.
\newblock Interior gradient and proximal methods for convex and conic
  optimization.
\newblock \emph{SIAM Journal on Optimization}, 16\penalty0 (3):\penalty0
  697--725, 2006.

\bibitem[Bauschke et~al.(2017)Bauschke, Bolte, and
  Teboulle]{bauschke2017descent}
H.~H. Bauschke, J.~Bolte, and M.~Teboulle.
\newblock A descent lemma beyond lipschitz gradient continuity: first-order
  methods revisited and applications.
\newblock \emph{Mathematics of Operations Research}, 42\penalty0 (2):\penalty0
  330--348, 2017.

\bibitem[Beck and Teboulle(2003)]{beck2003mirror}
A.~Beck and M.~Teboulle.
\newblock Mirror descent and nonlinear projected subgradient methods for convex
  optimization.
\newblock \emph{Operations Research Letters}, 31\penalty0 (3):\penalty0
  167--175, 2003.

\bibitem[Beck and Teboulle(2009)]{beck2009fast}
A.~Beck and M.~Teboulle.
\newblock A fast iterative shrinkage-thresholding algorithm for linear inverse
  problems.
\newblock \emph{SIAM journal on imaging sciences}, 2\penalty0 (1):\penalty0
  183--202, 2009.

\bibitem[Bolte et~al.(2014)Bolte, Sabach, and Teboulle]{bolte2014proximal}
J.~Bolte, S.~Sabach, and M.~Teboulle.
\newblock Proximal alternating linearized minimization for nonconvex and
  nonsmooth problems.
\newblock \emph{Mathematical Programming}, 146\penalty0 (1):\penalty0 459--494,
  2014.

\bibitem[Bolte et~al.(2018)Bolte, Sabach, Teboulle, and
  Vaisbourd]{bolte2018first}
J.~Bolte, S.~Sabach, M.~Teboulle, and Y.~Vaisbourd.
\newblock First order methods beyond convexity and lipschitz gradient
  continuity with applications to quadratic inverse problems.
\newblock \emph{SIAM Journal on Optimization}, 28\penalty0 (3):\penalty0
  2131--2151, 2018.

\bibitem[Bottou et~al.(2018)Bottou, Curtis, and
  Nocedal]{bottou2018optimization}
L.~Bottou, F.~E. Curtis, and J.~Nocedal.
\newblock Optimization methods for large-scale machine learning.
\newblock \emph{SIAM review}, 60\penalty0 (2):\penalty0 223--311, 2018.

\bibitem[Bregman(1967)]{bregman1967relaxation}
L.~M. Bregman.
\newblock The relaxation method of finding the common point of convex sets and
  its application to the solution of problems in convex programming.
\newblock \emph{USSR computational mathematics and mathematical physics},
  7\penalty0 (3):\penalty0 200--217, 1967.

\bibitem[Candes and Recht(2012)]{candes2012exact}
E.~Candes and B.~Recht.
\newblock Exact matrix completion via convex optimization.
\newblock \emph{Communications of the ACM}, 55\penalty0 (6):\penalty0 111--119,
  2012.

\bibitem[Candes et~al.(2015)Candes, Li, and Soltanolkotabi]{candes2015phase}
E.~J. Candes, X.~Li, and M.~Soltanolkotabi.
\newblock Phase retrieval via wirtinger flow: Theory and algorithms.
\newblock \emph{IEEE Transactions on Information Theory}, 61\penalty0
  (4):\penalty0 1985--2007, 2015.

\bibitem[Chen and Teboulle(1993)]{chen1993convergence}
G.~Chen and M.~Teboulle.
\newblock Convergence analysis of a proximal-like minimization algorithm using
  bregman functions.
\newblock \emph{SIAM Journal on Optimization}, 3\penalty0 (3):\penalty0
  538--543, 1993.

\bibitem[Chen et~al.(2026)Chen, Kuruzov, and Scutari]{chen2026adaptive}
X.~Chen, I.~Kuruzov, and G.~Scutari.
\newblock Adaptive decentralized composite optimization via three-operator
  splitting.
\newblock \emph{arXiv preprint arXiv:2602.17545}, 2026.

\bibitem[Chi et~al.(2019)Chi, Lu, and Chen]{chi2019nonconvex}
Y.~Chi, Y.~M. Lu, and Y.~Chen.
\newblock Nonconvex optimization meets low-rank matrix factorization: An
  overview.
\newblock \emph{IEEE Transactions on Signal Processing}, 67\penalty0
  (20):\penalty0 5239--5269, 2019.

\bibitem[Chraibi et~al.(2025)Chraibi, Iutzeler, Malick, and
  Rogozin]{chraibi2025delay}
S.~Chraibi, F.~Iutzeler, J.~Malick, and A.~Rogozin.
\newblock Delay-tolerant distributed bregman proximal algorithms.
\newblock \emph{Optimization Methods and Software}, 40\penalty0 (5):\penalty0
  1072--1088, 2025.

\bibitem[Combettes and Pesquet(2011)]{combettes2011proximal}
P.~L. Combettes and J.-C. Pesquet.
\newblock Proximal splitting methods in signal processing.
\newblock In \emph{Fixed-point algorithms for inverse problems in science and
  engineering}, pages 185--212. Springer, 2011.

\bibitem[Dai et~al.(2023)Dai, Wang, Curtis, and Robinson]{dai2023variance}
Y.~Dai, G.~Wang, F.~E. Curtis, and D.~P. Robinson.
\newblock A variance-reduced and stabilized proximal stochastic gradient method
  with support identification guarantees for structured optimization.
\newblock In \emph{International Conference on Artificial Intelligence and
  Statistics}, pages 5107--5133. PMLR, 2023.

\bibitem[Davis and Drusvyatskiy(2019)]{davis2019stochastic}
D.~Davis and D.~Drusvyatskiy.
\newblock Stochastic model-based minimization of weakly convex functions.
\newblock \emph{SIAM Journal on Optimization}, 29\penalty0 (1):\penalty0
  207--239, 2019.

\bibitem[Di~Lorenzo and Scutari(2016)]{di2016next}
P.~Di~Lorenzo and G.~Scutari.
\newblock Next: In-network nonconvex optimization.
\newblock \emph{IEEE Transactions on Signal and Information Processing over
  Networks}, 2\penalty0 (2):\penalty0 120--136, 2016.

\bibitem[Ding et~al.(2025)Ding, Li, and Toh]{ding2025nonconvex}
K.~Ding, J.~Li, and K.-C. Toh.
\newblock Nonconvex stochastic bregman proximal gradient method with
  application to deep learning.
\newblock \emph{Journal of Machine Learning Research}, 26\penalty0
  (39):\penalty0 1--44, 2025.

\bibitem[Donoho(2006)]{donoho2006compressed}
D.~L. Donoho.
\newblock Compressed sensing.
\newblock \emph{IEEE Transactions on information theory}, 52\penalty0
  (4):\penalty0 1289--1306, 2006.

\bibitem[Duchi et~al.(2011)Duchi, Agarwal, and Wainwright]{duchi2011dual}
J.~C. Duchi, A.~Agarwal, and M.~J. Wainwright.
\newblock Dual averaging for distributed optimization: Convergence analysis and
  network scaling.
\newblock \emph{IEEE Transactions on Automatic control}, 57\penalty0
  (3):\penalty0 592--606, 2011.

\bibitem[Eckstein(1993)]{eckstein1993nonlinear}
J.~Eckstein.
\newblock Nonlinear proximal point algorithms using bregman functions, with
  applications to convex programming.
\newblock \emph{Mathematics of Operations Research}, 18\penalty0 (1):\penalty0
  202--226, 1993.

\bibitem[Eldar and Mendelson(2014)]{eldar2014phase}
Y.~C. Eldar and S.~Mendelson.
\newblock Phase retrieval: Stability and recovery guarantees.
\newblock \emph{Applied and Computational Harmonic Analysis}, 36\penalty0
  (3):\penalty0 473--494, 2014.

\bibitem[Ghadimi and Lan(2013)]{ghadimi2013stochastic}
S.~Ghadimi and G.~Lan.
\newblock Stochastic first-and zeroth-order methods for nonconvex stochastic
  programming.
\newblock \emph{SIAM journal on optimization}, 23\penalty0 (4):\penalty0
  2341--2368, 2013.

\bibitem[Guo et~al.(2023)Guo, Shi, Cao, and Wang]{guo2023decentralized}
L.~Guo, X.~Shi, J.~Cao, and Z.~Wang.
\newblock Decentralized inexact proximal gradient method with
  network-independent stepsizes for convex composite optimization.
\newblock \emph{IEEE Transactions on Signal Processing}, 71:\penalty0 786--801,
  2023.

\bibitem[Guo et~al.(2025)Guo, Wang, Shi, and Cao]{guo2025proximal}
L.~Guo, L.~Wang, X.~Shi, and J.~Cao.
\newblock A proximal gradient method with probabilistic multi-gossip
  communications for decentralized composite optimization.
\newblock \emph{IEEE Transactions on Signal and Information Processing over
  Networks}, 2025.

\bibitem[Guo et~al.(2026)Guo, Shi, Xu, and Cao]{guo2026local}
L.~Guo, X.~Shi, W.~Xu, and J.~Cao.
\newblock Local adapt-then-combine algorithms for distributed nonsmooth
  optimization: Achieving provable communication acceleration.
\newblock \emph{Automatica}, 188:\penalty0 112965, 2026.

\bibitem[Hong et~al.(2017)Hong, Hajinezhad, and Zhao]{hong2017prox}
M.~Hong, D.~Hajinezhad, and M.-M. Zhao.
\newblock Prox-pda: The proximal primal-dual algorithm for fast distributed
  nonconvex optimization and learning over networks.
\newblock In \emph{International Conference on Machine Learning}, pages
  1529--1538. PMLR, 2017.

\bibitem[Huang et~al.(2024)Huang, Pu, and Nedi{\'c}]{huang2024distributed}
K.~Huang, S.~Pu, and A.~Nedi{\'c}.
\newblock Distributed normal map-based stochastic proximal gradient methods
  over networks.
\newblock \emph{arXiv preprint arXiv:2412.13054}, 2024.

\bibitem[Jegnell and Vlaski(2022)]{jegnell2022distributed}
S.~Jegnell and S.~Vlaski.
\newblock Distributed relatively smooth optimization.
\newblock In \emph{2022 IEEE 61st Conference on Decision and Control (CDC)},
  pages 6511--6517. IEEE, 2022.

\bibitem[Kiwiel(1997)]{kiwiel1997proximal}
K.~C. Kiwiel.
\newblock Proximal minimization methods with generalized bregman functions.
\newblock \emph{SIAM journal on control and optimization}, 35\penalty0
  (4):\penalty0 1142--1168, 1997.

\bibitem[Koloskova et~al.(2020)Koloskova, Loizou, Boreiri, Jaggi, and
  Stich]{koloskova2020unified}
A.~Koloskova, N.~Loizou, S.~Boreiri, M.~Jaggi, and S.~Stich.
\newblock A unified theory of decentralized sgd with changing topology and
  local updates.
\newblock In \emph{International conference on machine learning}, pages
  5381--5393. PMLR, 2020.

\bibitem[Lan(2012)]{lan2012optimal}
G.~Lan.
\newblock An optimal method for stochastic composite optimization.
\newblock \emph{Mathematical Programming}, 133\penalty0 (1):\penalty0 365--397,
  2012.

\bibitem[Lewis(2002)]{lewis2002active}
A.~S. Lewis.
\newblock Active sets, nonsmoothness, and sensitivity.
\newblock \emph{SIAM Journal on Optimization}, 13\penalty0 (3):\penalty0
  702--725, 2002.

\bibitem[Lian et~al.(2017)Lian, Zhang, Zhang, Hsieh, Zhang, and
  Liu]{lian2017can}
X.~Lian, C.~Zhang, H.~Zhang, C.-J. Hsieh, W.~Zhang, and J.~Liu.
\newblock Can decentralized algorithms outperform centralized algorithms? a
  case study for decentralized parallel stochastic gradient descent.
\newblock \emph{Advances in neural information processing systems}, 30, 2017.

\bibitem[Liang et~al.(2017)Liang, Fadili, and Peyr{\'e}]{liang2017activity}
J.~Liang, J.~Fadili, and G.~Peyr{\'e}.
\newblock Activity identification and local linear convergence of
  forward--backward-type methods.
\newblock \emph{SIAM Journal on Optimization}, 27\penalty0 (1):\penalty0
  408--437, 2017.

\bibitem[Liao et~al.(2025)Liao, Liu, Han, and Guo]{liao2025momentum}
S.~Liao, Y.~Liu, C.~Han, and T.~Guo.
\newblock Momentum-based variance-reduced stochastic bregman proximal gradient
  methods for nonconvex nonsmooth optimization.
\newblock \emph{Expert Systems with Applications}, 266:\penalty0 125960, 2025.

\bibitem[Lu et~al.(2018)Lu, Freund, and Nesterov]{lu2018relatively}
H.~Lu, R.~M. Freund, and Y.~Nesterov.
\newblock Relatively smooth convex optimization by first-order methods, and
  applications.
\newblock \emph{SIAM Journal on Optimization}, 28\penalty0 (1):\penalty0
  333--354, 2018.

\bibitem[Neal et~al.(2011)Neal, Eric, Borja, and Jonathan]{neal2011distributed}
P.~Neal, C.~Eric, P.~Borja, and E.~Jonathan.
\newblock Distributed optimization and statistical learning via the alternating
  direction method of multipliers.
\newblock \emph{Foundations and Trends{\textregistered} in Machine learning},
  3\penalty0 (1):\penalty0 1--122, 2011.

\bibitem[Nedi{\'c} and Olshevsky(2014)]{nedic2014distributed}
A.~Nedi{\'c} and A.~Olshevsky.
\newblock Distributed optimization over time-varying directed graphs.
\newblock \emph{IEEE Transactions on Automatic Control}, 60\penalty0
  (3):\penalty0 601--615, 2014.

\bibitem[Nedic and Ozdaglar(2009)]{nedic2009distributed}
A.~Nedic and A.~Ozdaglar.
\newblock Distributed subgradient methods for multi-agent optimization.
\newblock \emph{IEEE Transactions on automatic control}, 54\penalty0
  (1):\penalty0 48--61, 2009.

\bibitem[Nedic et~al.(2017)Nedic, Olshevsky, and Shi]{nedic2017achieving}
A.~Nedic, A.~Olshevsky, and W.~Shi.
\newblock Achieving geometric convergence for distributed optimization over
  time-varying graphs.
\newblock \emph{SIAM Journal on Optimization}, 27\penalty0 (4):\penalty0
  2597--2633, 2017.

\bibitem[Nemirovski et~al.(2009)Nemirovski, Juditsky, Lan, and
  Shapiro]{nemirovski2009robust}
A.~Nemirovski, A.~Juditsky, G.~Lan, and A.~Shapiro.
\newblock Robust stochastic approximation approach to stochastic programming.
\newblock \emph{SIAM Journal on optimization}, 19\penalty0 (4):\penalty0
  1574--1609, 2009.

\bibitem[Parikh and Boyd(2014)]{parikh2014proximal}
N.~Parikh and S.~Boyd.
\newblock Proximal algorithms.
\newblock \emph{Foundations and Trends in optimization}, 1\penalty0
  (3):\penalty0 127--239, 2014.

\bibitem[Poon et~al.(2018)Poon, Liang, and Schoenlieb]{poon2018local}
C.~Poon, J.~Liang, and C.~Schoenlieb.
\newblock Local convergence properties of saga/prox-svrg and acceleration.
\newblock In \emph{International Conference on Machine Learning}, pages
  4124--4132. PMLR, 2018.

\bibitem[Pu et~al.(2024)Pu, Zhang, Zhou, Fu, and Hong]{pu2024smoothed}
W.~Pu, J.~Zhang, R.~Zhou, X.~Fu, and M.~Hong.
\newblock A smoothed bregman proximal gradient algorithm for decentralized
  nonconvex optimization.
\newblock In \emph{ICASSP 2024-2024 IEEE International Conference on Acoustics,
  Speech and Signal Processing (ICASSP)}, pages 8911--8915. IEEE, 2024.

\bibitem[Qiu et~al.(2026)Qiu, Jiang, and Milzarek]{qiu2026normal}
J.~Qiu, L.~Jiang, and A.~Milzarek.
\newblock A normal map-based proximal stochastic gradient method: Convergence
  and identification properties.
\newblock \emph{SIAM Journal on Optimization}, 36\penalty0 (3):\penalty0
  1773--1804, 2026.

\bibitem[Qu and Li(2017)]{qu2017harnessing}
G.~Qu and N.~Li.
\newblock Harnessing smoothness to accelerate distributed optimization.
\newblock \emph{IEEE Transactions on Control of Network Systems}, 5\penalty0
  (3):\penalty0 1245--1260, 2017.

\bibitem[Recht et~al.(2010)Recht, Fazel, and Parrilo]{recht2010guaranteed}
B.~Recht, M.~Fazel, and P.~A. Parrilo.
\newblock Guaranteed minimum-rank solutions of linear matrix equations via
  nuclear norm minimization.
\newblock \emph{SIAM review}, 52\penalty0 (3):\penalty0 471--501, 2010.

\bibitem[Robbins and Monro(1951)]{robbins1951stochastic}
H.~Robbins and S.~Monro.
\newblock A stochastic approximation method.
\newblock \emph{The annals of mathematical statistics}, pages 400--407, 1951.

\bibitem[Rockafellar and Wets(1998)]{rockafellar1998variational}
R.~T. Rockafellar and R.~J. Wets.
\newblock \emph{Variational analysis}.
\newblock Springer, 1998.

\bibitem[Scutari and Sun(2018)]{scutari2018distributed}
G.~Scutari and Y.~Sun.
\newblock Distributed nonconvex constrained optimization over time-varying
  digraphs.
\newblock \emph{arXiv preprint arXiv:1809.01106}, 2018.

\bibitem[Shi et~al.(2015{\natexlab{a}})Shi, Ling, Wu, and Yin]{shi2015extra}
W.~Shi, Q.~Ling, G.~Wu, and W.~Yin.
\newblock Extra: An exact first-order algorithm for decentralized consensus
  optimization.
\newblock \emph{SIAM Journal on Optimization}, 25\penalty0 (2):\penalty0
  944--966, 2015{\natexlab{a}}.

\bibitem[Shi et~al.(2015{\natexlab{b}})Shi, Ling, Wu, and Yin]{shi2015proximal}
W.~Shi, Q.~Ling, G.~Wu, and W.~Yin.
\newblock A proximal gradient algorithm for decentralized composite
  optimization.
\newblock \emph{IEEE Transactions on Signal Processing}, 63\penalty0
  (22):\penalty0 6013--6023, 2015{\natexlab{b}}.

\bibitem[Sun and Hong(2019)]{sun2019distributed}
H.~Sun and M.~Hong.
\newblock Distributed non-convex first-order optimization and information
  processing: Lower complexity bounds and rate optimal algorithms.
\newblock \emph{IEEE Transactions on Signal processing}, 67\penalty0
  (22):\penalty0 5912--5928, 2019.

\bibitem[Sundhar~Ram et~al.(2010)Sundhar~Ram, Nedi{\'c}, and
  Veeravalli]{sundhar2010distributed}
S.~Sundhar~Ram, A.~Nedi{\'c}, and V.~V. Veeravalli.
\newblock Distributed stochastic subgradient projection algorithms for convex
  optimization.
\newblock \emph{Journal of optimization theory and applications}, 147\penalty0
  (3):\penalty0 516--545, 2010.

\bibitem[Takahashi and Takeda(2025)]{takahashi2025approximate}
S.~Takahashi and A.~Takeda.
\newblock Approximate bregman proximal gradient algorithm for relatively smooth
  nonconvex optimization.
\newblock \emph{Computational Optimization and Applications}, 90\penalty0
  (1):\penalty0 227--256, 2025.

\bibitem[Tibshirani(1996)]{tibshirani1996regression}
R.~Tibshirani.
\newblock Regression shrinkage and selection via the lasso.
\newblock \emph{Journal of the Royal Statistical Society Series B: Statistical
  Methodology}, 58\penalty0 (1):\penalty0 267--288, 1996.

\bibitem[Tsitsiklis et~al.(1986)Tsitsiklis, Bertsekas, and
  Athans]{tsitsiklis1986distributed}
J.~Tsitsiklis, D.~Bertsekas, and M.~Athans.
\newblock Distributed asynchronous deterministic and stochastic gradient
  optimization algorithms.
\newblock \emph{IEEE transactions on automatic control}, 31\penalty0
  (9):\penalty0 803--812, 1986.

\bibitem[Tu et~al.(2016)Tu, Boczar, Simchowitz, Soltanolkotabi, and
  Recht]{tu2016low}
S.~Tu, R.~Boczar, M.~Simchowitz, M.~Soltanolkotabi, and B.~Recht.
\newblock Low-rank solutions of linear matrix equations via procrustes flow.
\newblock In \emph{International conference on machine learning}, pages
  964--973. PMLR, 2016.

\bibitem[Wai et~al.(2017)Wai, Lafond, Scaglione, and
  Moulines]{wai2017decentralized}
H.-T. Wai, J.~Lafond, A.~Scaglione, and E.~Moulines.
\newblock Decentralized frank--wolfe algorithm for convex and nonconvex
  problems.
\newblock \emph{IEEE Transactions on Automatic Control}, 62\penalty0
  (11):\penalty0 5522--5537, 2017.

\bibitem[Wang et~al.(2025)Wang, Bao, and Liu]{wang2025decentralized}
L.~Wang, L.~Bao, and X.~Liu.
\newblock A decentralized proximal gradient tracking algorithm for composite
  optimization on riemannian manifolds.
\newblock \emph{Journal of Machine Learning Research}, 26\penalty0
  (106):\penalty0 1--37, 2025.

\bibitem[Wright et~al.(2009)Wright, Nowak, and Figueiredo]{wright2009sparse}
S.~J. Wright, R.~D. Nowak, and M.~A. Figueiredo.
\newblock Sparse reconstruction by separable approximation.
\newblock \emph{IEEE Transactions on signal processing}, 57\penalty0
  (7):\penalty0 2479--2493, 2009.

\bibitem[Xiao et~al.(2023)Xiao, Chen, Balasubramanian, and
  Ghadimi]{xiao2023one}
T.~Xiao, X.~Chen, K.~Balasubramanian, and S.~Ghadimi.
\newblock A one-sample decentralized proximal algorithm for non-convex
  stochastic composite optimization.
\newblock In \emph{Uncertainty in Artificial Intelligence}, pages 2324--2334.
  PMLR, 2023.

\bibitem[Xiao(2021)]{xiao2021unified}
X.~Xiao.
\newblock A unified convergence analysis of stochastic bregman proximal
  gradient and extragradient methods.
\newblock \emph{Journal of optimization theory and applications}, 188\penalty0
  (3):\penalty0 605--627, 2021.

\bibitem[Xin et~al.(2021)Xin, Das, Khan, and Kar]{xin2021stochastic}
R.~Xin, S.~Das, U.~A. Khan, and S.~Kar.
\newblock A stochastic proximal gradient framework for decentralized non-convex
  composite optimization: Topology-independent sample complexity and
  communication efficiency.
\newblock \emph{arXiv preprint arXiv:2110.01594}, 2021.

\bibitem[Xu et~al.(2021)Xu, Tian, Sun, and Scutari]{xu2021distributed}
J.~Xu, Y.~Tian, Y.~Sun, and G.~Scutari.
\newblock Distributed algorithms for composite optimization: Unified framework
  and convergence analysis.
\newblock \emph{IEEE Transactions on Signal Processing}, 69:\penalty0
  3555--3570, 2021.

\bibitem[Yan et~al.(2023)Yan, Chen, Chen, Cui, Lu, and Xu]{yan2023compressed}
Y.~Yan, J.~Chen, P.-Y. Chen, X.~Cui, S.~Lu, and Y.~Xu.
\newblock Compressed decentralized proximal stochastic gradient method for
  nonconvex composite problems with heterogeneous data.
\newblock In \emph{International Conference on Machine Learning}, pages
  39035--39061. PMLR, 2023.

\bibitem[Yang et~al.(2016)Yang, Scutari, Palomar, and
  Pesavento]{yang2016parallel}
Y.~Yang, G.~Scutari, D.~P. Palomar, and M.~Pesavento.
\newblock A parallel decomposition method for nonconvex stochastic multi-agent
  optimization problems.
\newblock \emph{IEEE Transactions on Signal Processing}, 64\penalty0
  (11):\penalty0 2949--2964, 2016.

\bibitem[Ye and Chang(2025)]{ye2025optimal}
H.~Ye and X.~Chang.
\newblock Optimal decentralized composite optimization for strongly convex
  functions.
\newblock \emph{Journal of Machine Learning Research}, 26\penalty0
  (280):\penalty0 1--34, 2025.

\bibitem[Yuan et~al.(2016)Yuan, Ling, and Yin]{yuan2016convergence}
K.~Yuan, Q.~Ling, and W.~Yin.
\newblock On the convergence of decentralized gradient descent.
\newblock \emph{SIAM Journal on Optimization}, 26\penalty0 (3):\penalty0
  1835--1854, 2016.

\bibitem[Yuan and Lin(2006)]{yuan2006model}
M.~Yuan and Y.~Lin.
\newblock Model selection and estimation in regression with grouped variables.
\newblock \emph{Journal of the Royal Statistical Society Series B: Statistical
  Methodology}, 68\penalty0 (1):\penalty0 49--67, 2006.

\bibitem[Zhang(2025)]{zhang2025stochastic}
J.~Zhang.
\newblock Stochastic bregman proximal gradient method revisited: Kernel
  conditioning and painless variance reduction: J. zhang.
\newblock \emph{Mathematical Programming}, pages 1--60, 2025.

\end{thebibliography}

\clearpage
\appendix

\section{Dependencies among the Convergence Results}\label{app:assumption-usage}
The smooth proximal-residual theorem uses the localized geometry of Assumption~\ref{ass:standing-local}, the constants of Lemma~\ref{lem:local-variation-from-relative}, and the admissible region in Lemma~\ref{lem:admissible-window-main}. The deterministic composite theorem additionally uses Assumption~\ref{ass:composite-regularizer}, the motion defect in Definition~\ref{def:subgradient-motion-defects}(b), and the Lyapunov lower bound. Conversion to the consensus proximal residual further requires the aggregation defect in Definition~\ref{def:aggregation-defects}(a) and $c_0+\rho\bar d>L_-$. Assumption~\ref{ass:finite-defect-accumulation}(a) gives Corollary~\ref{cor:finite-transition-composite-rate}. Verified selection gives summable motion defects, vanishing aggregation defects on the identified tail, and Lyapunov lower boundedness through Lemma~\ref{comp:lem:verified-selection-absorption} and Corollary~\ref{cor:verified-selection-lower-bound}; Corollary~\ref{cor:structured-nonsmooth-rate} accounts for the finite prefix through its proximal residuals.

The stochastic results use the corresponding expected defects and Assumption~\ref{ass:stochastic-oracle}. Assumption~\ref{ass:finite-defect-accumulation}(b) bounds the accumulated expected defects in Corollary~\ref{cor:stochastic-finite-accumulation}. The stopping-time selection in Definition~\ref{def:stochastic-verified-local-selection} verifies this condition by Lemma~\ref{stoch:lem:verified-selection-absorption}; the identified $\ell_1$ and group $\ell_{2,1}$ regimes verify it directly by Proposition~\ref{prop:stability-sufficient}(c) with an integrable prefix. Summable oracle variance then gives an $O(T^{-1})$ expected squared best-iterate residual, while general variance sequences contribute $V_T/T$.

For a globally regular quadratic kernel and globally Lipschitz smooth parts, the localized constants reduce to the model constants. The compact localization in Assumption~\ref{ass:standing-local}(d) supplies uniform bounds for the trajectory and comparison points throughout the proofs. Smooth convex composite terms use their Lipschitz gradients as selections. For polyhedral and partly smooth regularizers, common active-structure identification and the local selections in Proposition~\ref{prop:stability-sufficient} give the tail defect control used in Corollary~\ref{cor:structured-nonsmooth-rate}.

\section{Experimental Details and Extended Comparisons}\label{app:experimental-details}
This appendix specifies the test problems and parameter-selection procedure and reports additional results for active-structure stabilization, composite recovery, kernel choice, stochastic noise, network topology, and communication cost. Sparse-recovery tables include recovery error, support F1, and support size to distinguish accurate recovery from dense fitting.

\subsection{Test Problems and Data Generation}

For $M_i$ local measurements, the phase-retrieval and matrix-sensing objectives are
\[
f_i(x)=\frac1{M_i}\sum_{m=1}^{M_i}\big((a_{i,m}^{\top}x)^2-y_{i,m}\big)^2,
\qquad
f_i(U)=\frac1{M_i}\sum_{m=1}^{M_i}\big(\langle A_{i,m},UU^\top\rangle-y_{i,m}\big)^2,
\]
respectively. Here $a_{i,m}$ and $A_{i,m}$ are local sensing vectors and matrices, $y_{i,m}$ is the corresponding observation, and $U$ is the low-rank factor. The linear sparse-recovery comparisons use $f_i(x)=(2M_i)^{-1}\|A_i x-b_i\|^2$, and shared-support recovery uses $f_i(X)=(2M_i)^{-1}\|A_iX-B_i\|_{\mathrm F}^2$ with signal matrix $X$ and row penalty $\|X\|_{2,1}=\sum_j\|X_{j,:}\|$; $A_i$ denotes the local design matrix and $b_i$ or $B_i$ its vector- or matrix-valued observations. Sparse phase retrieval combines the first objective with an $\ell_1$ penalty. Measurement matrices and additive observation noise are Gaussian. For heterogeneity factor $h$, the sensing operator at agent $i\in\{0,\ldots,N-1\}$ is scaled by $1+h i/(N-1)$; the shared-support study additionally multiplies individual columns by $1+0.35h u$, where $u$ is uniform on $[-1,1]$. Thus larger $h$ produces greater variation among the local sensing operators. The recovery-comparison signals have unit Euclidean or Frobenius norm. The defect study uses the amplitudes and loss normalization specified in Section~\ref{sec:exp-defect}.

For structured Fourier imaging, $x\in\R^{4096}$ contains level-3 db2 wavelet coefficients and agent $i$ uses
\[
f_i(x)=\frac{N}{2}\big\|P_{\Omega_i}\mathcal F W^\top x-b_i\big\|^2,
\qquad
\Omega_i\cap\Omega_j=\varnothing\ (i\ne j),
\qquad
\bigcup_{i=1}^N\Omega_i=\Omega.
\]
The fixed-cardinality mask draws $35\%$ of the two-dimensional Fourier coefficients with radially decaying probability after including a centered square whose width is $8\%$ of each axis. The selected coefficients are shuffled and partitioned as evenly as possible among the six agents. Circular complex Gaussian noise is scaled to $1\%$ of the root-mean-square magnitude of the noiseless observations. Images are normalized to $[0,1]$. For reconstruction $\widehat u=W^\top\bar x^T$ and reference $u^\star$, NRMSE is $\|\widehat u-u^\star\|/\|u^\star\|$, and PSNR is $20\log_{10}(R/\operatorname{RMSE})$, where RMSE is root-mean-square error and $R$ is the image range. SSIM uses Gaussian local statistics with standard deviation $1$, $C_1=(0.01R)^2$, and $C_2=(0.03R)^2$.

The defect study uses a ring with one chord, and imaging and communication-aware comparisons use a ring with two chords. The other recovery comparisons use Erd\H{o}s--R\'enyi graphs with edge probability $0.5$, resampled at increasing probability until connected. Phase retrieval and matrix sensing use leading-eigenspace spectral initialization; sparse phase retrieval retains the largest spectral coordinates. Linear and group recovery comparisons use a common normalized Gaussian initialization, with independent node perturbations of standard deviation $0.01$. The defect study and imaging start from zero. Paired methods share the same initialization.

\subsection{Parameter Selection and Evaluation Design}
Tuned primal and dual stepsizes, smoothing weights, and regularization weights are selected on validation instances disjoint from the reported evaluation set. Within each comparison family, methods receive the stated tuning budget and use common evaluation instances and iteration budgets. The noise sweep keeps a single selected SDBPG configuration fixed across oracle settings, and the defect study uses the prescribed parameters in Section~\ref{sec:exp-defect}.

Parameters are chosen from the candidate configurations using the prescribed validation criteria, independently of the sufficient analytical parameter regions. Those regions are expressed in terms of localized curvature, kernel and graph constants, defect absorption constants, and the auxiliary weights $\tau$ and $\chi$. The experiments characterize recovery, stationarity, consensus, and communication cost for the selected configurations over fixed iteration budgets.

\begin{table}[htbp]
\centering
\caption{Experimental settings. Iterations and evaluation counts refer to the reported results, per regime or image; validation uses disjoint instances. The noise sweep selects parameters on the moderate regime and keeps them fixed.}
\label{tab:appendix-experiment-settings}

\footnotesize
\setlength{\tabcolsep}{4pt}

\begin{tabular}{>{\raggedright\arraybackslash}p{0.21\textwidth}>{\raggedright\arraybackslash}p{0.38\textwidth}rrr}
\toprule
Study & Network and data & Iterations & Validation & Evaluation\\
\midrule
Defects ($\ell_1$ / $\ell_{2,1}$) & $N=6$, $d=40$, 25 samples/node; $\sigma=10^{-3}$; 4 group signals & 3000 & Prescribed & 10\\
Phase retrieval & $N=6$, $d=12$; easy: 90 samples/node, $\sigma=0$; hard: 12 samples/node, $\sigma=0.15$ & 1200 & 5 & 10\\
Quartic phase retrieval & $N=6$, $d=16$, 60 samples/node; $\sigma=0.02$; $h=2.5,3,3.25$ & 600 & 5 & 10\\
Matrix factorization & $N=6$, $8\times2$ factor; easy: 120 samples/node, $\sigma=0$; hard: 16 samples/node, $\sigma=0.08$ & 1200 & 5 & 10\\
Sparse recovery & $N=8$; undersampled: $d=40$, 6 samples/node; moderate: $d=50$, 8 samples/node; $\sigma=0.08$ & 900 & 5 & 10\\
Sparse phase retrieval & $N=8$; undersampled: $d=40$, 8 samples/node; moderate: $d=50$, 10 samples/node; $\sigma=0.08$ & 1200 & 5 & 10\\
Shared-support recovery & $N=8$, 4 signals; undersampled: $d=36$, 8 samples/node; heterogeneous: $d=40$, 9 samples/node; $\sigma=0.08$ & 950 & 5 & 10\\
Extended Fourier imaging & $N=6$, $64\times64$, 35\% Fourier samples, 1\% relative noise; Phantom and Cameraman & 1000 & Preselected & 10\\
Stochastic noise sweep & $N=8$; undersampled: $d=40$, 6 samples/node; moderate: $d=50$, 8 samples/node; $\sigma=0.08$ & 900 & 5 & 20\\
Modern Fourier comparison & $N=6$, $64\times64$, 35\% Fourier samples, 1\% relative noise & 1000 & 3 & 10\\
Modern stochastic comparison & $N=8$, $d=50$, 8 samples/node; batch size 4, $\sigma=0.08$ & 900 & 3 & 10\\
\bottomrule
\end{tabular}
\end{table}

For phase retrieval and matrix factorization, the heterogeneity factor is $h=0.5$ in the easy regime and $h=2.0$ in the hard regime. Sparse and sparse-phase recovery use $h=1.6$. Shared-support recovery uses $h=1.8$ in the undersampled regime and $h=2.2$ in the heterogeneous regime. The geometry-matched study separately uses $h=2.5$, $3.0$, and $3.25$. The validation sweeps cover the primal and dual stepsizes, smoothing weights, regularization weights, and the consensus depth of ProxGT; the stochastic noise study holds the algorithmic parameters fixed while varying mini-batch size or additive-noise magnitude.

For unregularized phase retrieval and matrix factorization, final recovery error is the primary criterion, supplemented by stationarity and consensus residuals. For matrix factorization, $\widehat U$ is the factor reshaped from $\widehat x$ and the recovery error is $\|\widehat U\widehat U^\top-X^\star\|_{\mathrm F}/\|X^\star\|_{\mathrm F}$. Smooth stationarity is measured by the running minimum of $\|N^{-1}\sum_i\nabla f_i(\bar x^t)\|^2$. Sparse and sparse-phase recovery pair recovery error with support F1, while support size describes the degree of sparsity relative to the true active size.

\subsection{Residual Components and Active-Set Variation}\label{app:residual-components}
Figure~\ref{fig:lifted-residual-diagnostics} shows the lifted primal--dual residual and its stationarity and consensus components, together with the smoothing mismatch $e^t$, oracle error, and support variation. These diagnostics use ten sparse-recovery instances over 700 iterations, with $N=8$ and dimensions 40 and 50 for the deterministic and stochastic cases, respectively. The deterministic support panel records stabilization under the evaluation threshold $0.05$, complementing the separate defect experiment in Figure~\ref{fig:defect-identification}.

\begin{figure}[htbp]
\centering
\includegraphics[width=\textwidth]{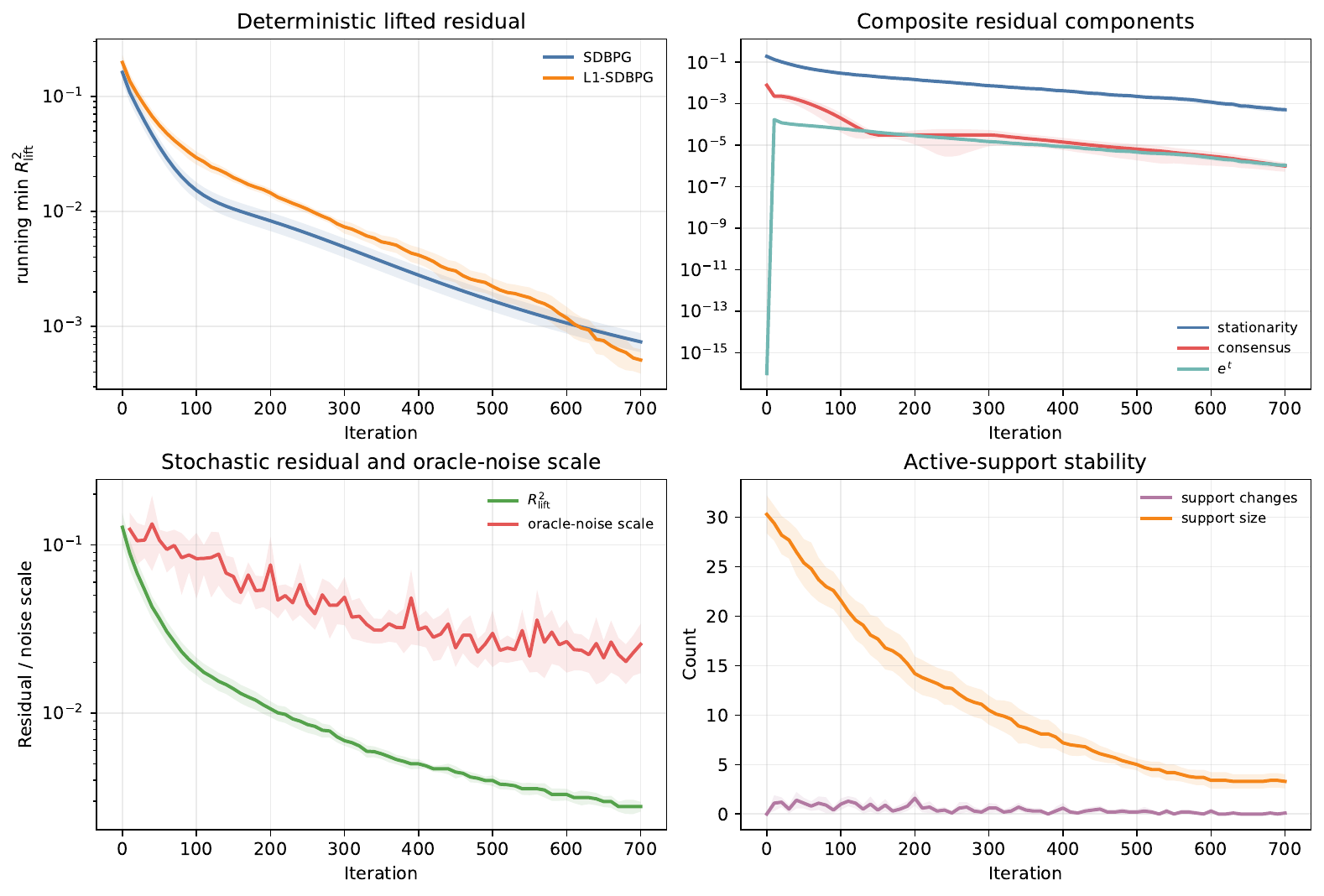}
\caption{Sparse-recovery diagnostics: deterministic lifted residual (top left); stationarity, consensus, and smoothing mismatch (top right); stochastic lifted residual and squared oracle error (bottom left); and deterministic support size and changes (bottom right). Lifted, stationarity, and consensus residual curves show running minima of instance means; the remaining curves show pointwise means.}
\label{fig:lifted-residual-diagnostics}
\end{figure}
\FloatBarrier

\subsection{Smooth SDBPG Baseline}\label{app:base-dynamics}
Table~\ref{tab:main-convergence} places the composite and stochastic extensions against the smooth SDBPG baseline by reporting stationarity and consensus after 2000 iterations.

\begin{table}[htbp]
\centering
\caption{Smooth SDBPG baseline: stationarity, consensus, and recovery with parameters selected on validation instances.}
\label{tab:smooth-backbone-combined}

\scriptsize
\begin{subtable}{0.92\textwidth}
\centering
\caption{Stationarity and consensus after 2000 iterations.}
\label{tab:main-convergence}
\begin{tabular}{lcc}
\toprule
Algorithm & Smallest squared gradient norm observed & Final consensus error\\
\midrule
SDBPG (unregularized) & \best{$7.051\times10^{-7}$} & $1.836\times10^{-10}$\\
DGD & $1.813\times10^{-5}$ & $7.349\times10^{-10}$\\
DGT & $1.508\times10^{-5}$ & \best{$2.010\times10^{-14}$}\\
\bottomrule
\end{tabular}
\end{subtable}

\vspace{0.5em}
\begin{subtable}{\textwidth}
\centering
\caption{Final recovery error; brackets give 95\% bootstrap confidence intervals over ten evaluation instances.}
\label{tab:benchmark-main}
\resizebox{\textwidth}{!}{%
\begin{tabular}{llcc}
\toprule
Task & Algorithm & Easy & Hard\\
\midrule
\multirow{3}{*}{Phase retrieval}
& SDBPG (unregularized) & \best{0.0727 [0.0566, 0.0902]} & \best{0.1562 [0.0939, 0.2445]}\\
& DGD & 0.1440 [0.1134, 0.1800] & 0.2206 [0.1098, 0.3808]\\
& DGT & 0.0900 [0.0713, 0.1114] & 0.1886 [0.0967, 0.3296]\\
\midrule
\multirow{3}{*}{Matrix factorization}
& SDBPG (unregularized) & 0.0145 [0.0058, 0.0275] & 0.0383 [0.0290, 0.0480]\\
& DGD & 0.0421 [0.0254, 0.0600] & 0.0475 [0.0326, 0.0635]\\
& DGT & \best{0.0136 [0.0052, 0.0263]} & \best{0.0374 [0.0280, 0.0467]}\\
\bottomrule
\end{tabular}}
\end{subtable}

\end{table}

All methods reach small consensus error within 2000 iterations, while SDBPG attains the lowest reported running-minimum gradient norm squared. Table~\ref{tab:benchmark-main} gives recovery results for unregularized phase retrieval and matrix factorization. SDBPG has the lowest mean phase-retrieval error in both regimes. In matrix factorization, SDBPG improves on DGD, while DGT gives slightly lower means.

\FloatBarrier

\subsection{Sparse-Recovery Trajectories}\label{app:sparse-recovery-trajectories}
Figure~\ref{fig:sparse-recovery-tradeoff} displays the recovery--support tradeoff behind Table~\ref{tab:sparse-recovery-main}. Figure~\ref{fig:sparse-recovery-dynamics} follows the noisy undersampled regime over iterations: as the $\ell_1$-SDBPG support contracts, its support F1 increases and its recovery error decreases.

\begin{figure}[htbp]
\centering
\begin{subfigure}{0.97\textwidth}
\centering
\includegraphics[width=0.95\textwidth]{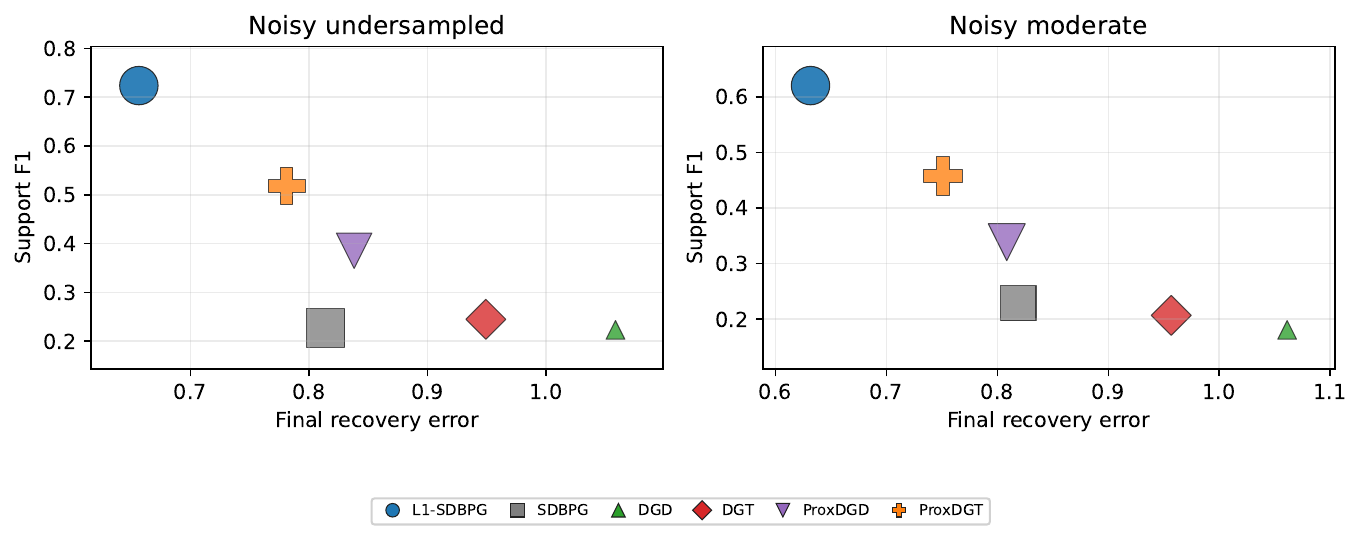}
\caption{Recovery--support tradeoff.}
\label{fig:sparse-recovery-tradeoff}
\end{subfigure}

\vspace{0.5em}
\begin{subfigure}{0.97\textwidth}
\centering
\includegraphics[width=0.95\textwidth]{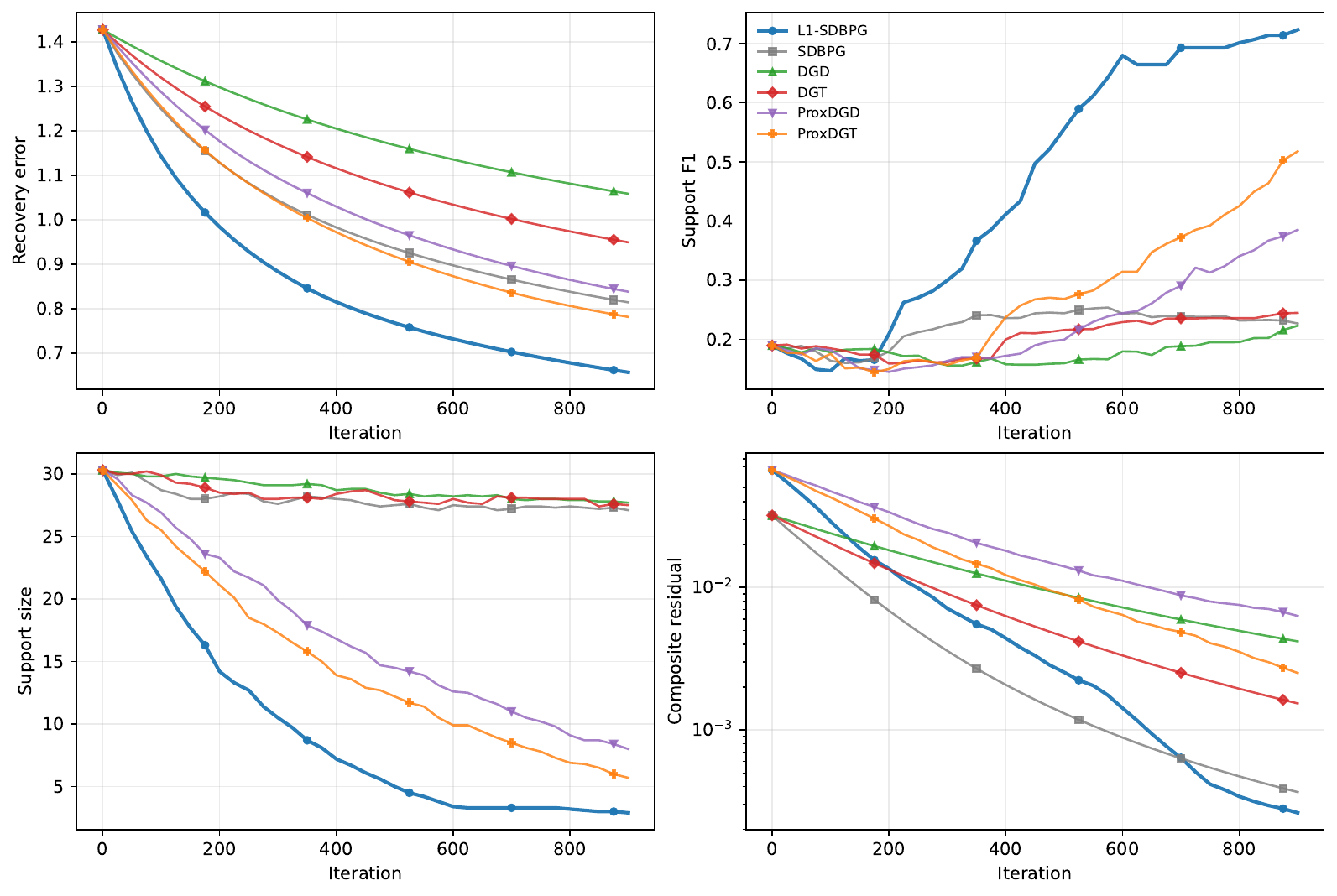}
\caption{Trajectories in the noisy undersampled regime; curves report averages over the evaluation instances.}
\label{fig:sparse-recovery-dynamics}
\end{subfigure}
\caption{Sparse-recovery tradeoff and iteration dynamics.}
\label{fig:sparse-recovery-trajectories}
\end{figure}
\FloatBarrier

\subsection{EXTRA-Family Sparse-Recovery Baselines}\label{app:extra-sparse-baselines}
Table~\ref{tab:appendix-extra-baselines} compares independently tuned PG-EXTRA and Prox-EXTRA on the two representative sparse-recovery regimes. PG-EXTRA omits the regularizer, whereas Prox-EXTRA applies an $\ell_1$ proximal step. PG-EXTRA has the lowest recovery error in the undersampled regime but remains dense; $\ell_1$-SDBPG has the highest support F1 in both regimes and the lowest recovery error in the moderate regime.

\begin{table}[htbp]
\centering
\caption{Numerical results for the independently tuned EXTRA-family baselines. True support sizes are 4 and 5 in the undersampled and moderate regimes. Bold entries mark the lowest recovery error and highest support F1.}
\label{tab:appendix-extra-baselines}

\scriptsize

\resizebox{\textwidth}{!}{%
\begin{tabular}{llccc}
\toprule
Regime & Algorithm & Final recovery error & Support F1 & Support size\\
\midrule
\multirow{6}{*}{Noisy undersampled}
& $\ell_1$-SDBPG & 0.657 & \best{0.724} & 2.9\\
& PG-EXTRA & \best{0.632} & 0.262 & 24.4\\
& Prox-EXTRA & 1.038 & 0.257 & 13.3\\
& ProxDGT & 0.781 & 0.518 & 5.7\\
& ProxDGD & 0.838 & 0.385 & 8.0\\
& SDBPG & 0.814 & 0.227 & 27.1\\
\midrule
\multirow{6}{*}{Noisy moderate}
& $\ell_1$-SDBPG & \best{0.632} & \best{0.620} & 2.6\\
& PG-EXTRA & 0.646 & 0.254 & 29.8\\
& Prox-EXTRA & 1.057 & 0.164 & 21.6\\
& ProxDGT & 0.751 & 0.457 & 5.4\\
& ProxDGD & 0.808 & 0.338 & 8.7\\
& SDBPG & 0.819 & 0.229 & 31.9\\
\bottomrule
\end{tabular}}
\end{table}

\subsection{Sparse Phase Retrieval}\label{app:sparse-phase-retrieval}
Sparse phase retrieval tests the composite update on a nonconvex recovery model. Table~\ref{tab:sparse-phase-main} shows that $\ell_1$-SDBPG has the lowest mean recovery error in both noisy regimes and produces much smaller supports than the unregularized methods. ProxDGT gives the highest support F1, with recovery errors of $0.870$ and $0.916$ compared with $0.864$ and $0.904$ for $\ell_1$-SDBPG.

\begin{table}[htbp]
\centering
\caption{Sparse phase retrieval with composite decentralized methods. Bold entries mark the lowest recovery error and highest support F1 in each regime.}
\label{tab:sparse-phase-main}

\scriptsize

\resizebox{\textwidth}{!}{%
\begin{tabular}{lcccccc}
\toprule
& \multicolumn{3}{c}{Noisy undersampled} & \multicolumn{3}{c}{Noisy moderate}\\
\cmidrule(lr){2-4}\cmidrule(lr){5-7}
Algorithm & Recovery error & Support F1 & Support size & Recovery error & Support F1 & Support size\\
\midrule
$\ell_1$-SDBPG & \best{0.864} & 0.301 & 10.0 & \best{0.904} & 0.232 & 11.3\\
SDBPG & 0.965 & 0.213 & 20.8 & 1.006 & 0.248 & 25.3\\
DGD & 0.975 & 0.206 & 20.6 & 0.991 & 0.247 & 21.9\\
DGT & 0.965 & 0.212 & 20.9 & 0.999 & 0.239 & 23.9\\
ProxDGD & 0.868 & 0.329 & 5.1 & 0.911 & 0.253 & 5.2\\
ProxDGT & 0.870 & \best{0.342} & 4.7 & 0.916 & \best{0.275} & 3.7\\
\bottomrule
\end{tabular}}
\end{table}

The paired recovery-error differences, $\ell_1$-SDBPG minus ProxDGT, are $-0.00596$ (95\% CI $[-0.03165,0.01989]$) in the undersampled regime and $-0.01216$ (95\% CI $[-0.05301,0.02267]$) in the moderate regime. Both intervals contain zero. Figure~\ref{fig:sparse-phase-comparison} displays the recovery--support tradeoff: $\ell_1$-SDBPG gives the lowest mean recovery errors, while ProxDGT gives higher mean support F1.

\begin{figure}[htbp]
\centering
\includegraphics[width=0.95\textwidth]{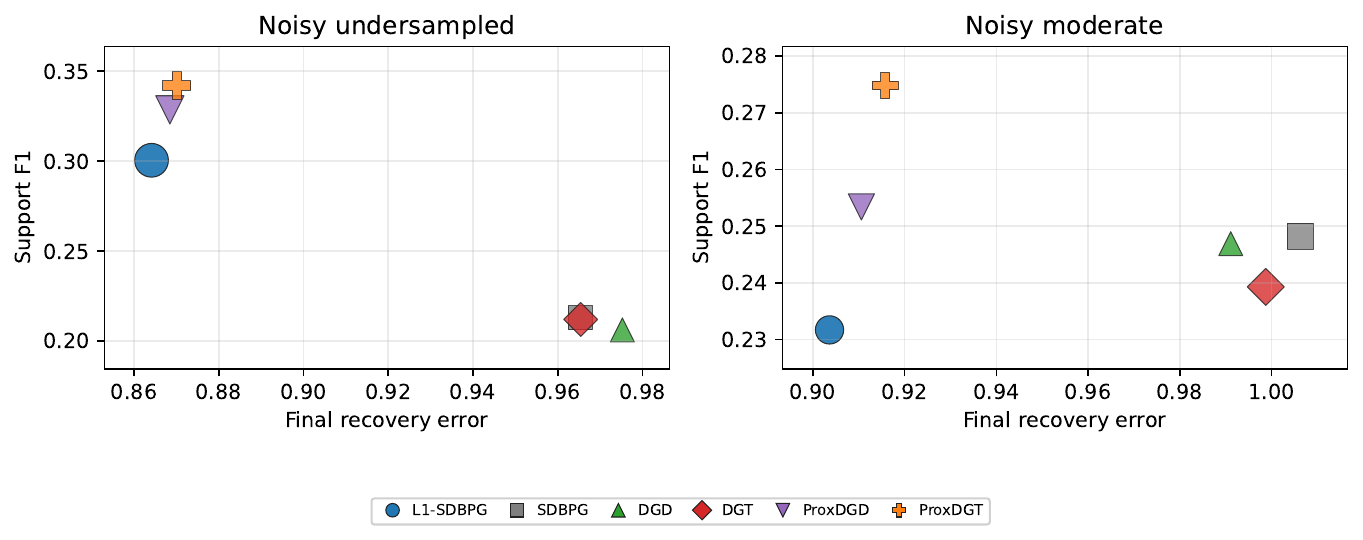}
\caption{Sparse phase-retrieval recovery--support tradeoff. Points show evaluation means for the methods in Table~\ref{tab:sparse-phase-main}; lower recovery error and higher support F1 are preferable.}
\label{fig:sparse-phase-tradeoff}
\label{fig:sparse-phase-comparison}
\end{figure}
\FloatBarrier

\subsection{Structured Fourier Imaging: Extended Results}\label{app:structured-fourier-imaging}
Table~\ref{tab:structured-fourier-main} uses new masks and noise draws and adds ProxPDA, Prox-EXTRA, unregularized SDBPG, centralized FISTA, and Cameraman images to the phantom comparison in Section~\ref{sec:structured-fourier-imaging}. $\ell_1$-SDBPG has the lowest mean NRMSE among decentralized methods on both images. On the phantom, its NRMSE is $3.47\%$ lower than that of ProxDGT and $3.51\%$ lower than that of ProxPDA. On Cameraman, both paired intervals contain zero. Relative to unregularized SDBPG, its NRMSE decreases by $52.8\%$ on the phantom and $29.5\%$ on Cameraman.

\begin{table}[htbp]
\centering
\caption{Extended structured-Fourier results over ten masks and noise draws per image: reconstruction quality and paired NRMSE differences.}
\label{tab:structured-fourier-combined}

\footnotesize
\setlength{\tabcolsep}{4pt}

\begin{subtable}{\textwidth}
\centering
\caption{Reconstruction means. Bold marks the best displayed decentralized value, including ties at the displayed precision. Centralized FISTA is a quality reference.}
\label{tab:structured-fourier-main}
\begin{tabular}{lrrrrrr}
\toprule
& \multicolumn{3}{c}{Phantom} & \multicolumn{3}{c}{Cameraman}\\
\cmidrule(lr){2-4}\cmidrule(lr){5-7}
Algorithm & NRMSE $\downarrow$ & PSNR $\uparrow$ & SSIM $\uparrow$ & NRMSE $\downarrow$ & PSNR $\uparrow$ & SSIM $\uparrow$\\
\midrule
$\ell_1$-SDBPG & \best{0.1771} & \best{27.17} & \best{0.8348} & \best{0.04304} & \best{31.25} & \best{0.8731}\\
ProxDGT & 0.1835 & 26.86 & 0.8192 & 0.04305 & \best{31.25} & 0.8729\\
ProxPDA & 0.1836 & 26.85 & 0.8191 & 0.04306 & \best{31.25} & 0.8729\\
SDBPG & 0.3753 & 20.64 & 0.3527 & 0.06101 & 28.23 & 0.7012\\
Prox-EXTRA & 0.3780 & 20.58 & 0.6077 & 0.10448 & 23.53 & 0.6795\\
\midrule
Centralized FISTA & 0.1646 & 27.81 & 0.8338 & 0.04021 & 31.84 & 0.8699\\
\bottomrule
\end{tabular}
\end{subtable}

\vspace{0.5em}
\begin{subtable}{\textwidth}
\centering
\caption{Paired NRMSE differences, $\ell_1$-SDBPG minus comparator. Negative values favor $\ell_1$-SDBPG; intervals and win counts use the same ten instances.}
\label{tab:structured-fourier-paired}
\begin{tabular}{llrcr}
\toprule
Image & Comparator & Mean difference & 95\% bootstrap interval & Wins\\
\midrule
\multirow{4}{*}{Phantom}
& ProxDGT & $-0.006361$ & $[-0.007783,-0.004562]$ & 10/10\\
& ProxPDA & $-0.006446$ & $[-0.007886,-0.004634]$ & 10/10\\
& SDBPG & $-0.198227$ & $[-0.205550,-0.188623]$ & 10/10\\
& Centralized FISTA & $0.012511$ & $[0.010253,0.014305]$ & 0/10\\
\midrule
\multirow{4}{*}{Cameraman}
& ProxDGT & $-0.000013$ & $[-0.000180,0.000150]$ & 5/10\\
& ProxPDA & $-0.000019$ & $[-0.000188,0.000146]$ & 6/10\\
& SDBPG & $-0.017973$ & $[-0.020141,-0.015805]$ & 10/10\\
& Centralized FISTA & $0.002823$ & $[0.002579,0.003094]$ & 0/10\\
\bottomrule
\end{tabular}
\end{subtable}
\end{table}

Table~\ref{tab:structured-fourier-configs} lists the parameters selected before evaluation. They remain fixed across all reported masks and noise draws. Every decentralized method uses one local-gradient evaluation per agent and iteration. Communication counts include both directions of each edge and distinguish methods that transmit one state vector from those that transmit two.

\begin{table}[htbp]
\centering
\caption{Parameters and per-edge communication payloads for the extended Fourier benchmark. The step is the primal or proximal stepsize; $\alpha,\beta,c_0,\rho$ denote SDBPG parameters. ProxPDA additionally uses augmented-consensus coefficient $0.15$.}
\label{tab:structured-fourier-configs}

\footnotesize
\setlength{\tabcolsep}{5pt}

\begin{tabular}{lrrrrrrr}
\toprule
Algorithm & $\lambda_r$ & Step & $\alpha$ & $\beta$ & $c_0$ & $\rho$ & States\\
\midrule
$\ell_1$-SDBPG & 0.01 & 0.16 & 0.12 & 0.30 & 0.75 & 1 & 2\\
SDBPG & 0 & 0.12 & 0.16 & 0.85 & 1.5 & 2 & 2\\
ProxDGT & 0.01 & 0.08 & -- & -- & -- & -- & 2\\
ProxPDA & 0.01 & 0.08 & -- & -- & -- & -- & 1\\
Prox-EXTRA & $10^{-4}$ & 0.05 & -- & -- & -- & -- & 1\\
Centralized FISTA & 0.005 & 1 & -- & -- & -- & -- & --\\
\bottomrule
\end{tabular}
\end{table}

Table~\ref{tab:structured-fourier-paired} gives instance-wise NRMSE differences. On the phantom, $\ell_1$-SDBPG gives lower NRMSE than ProxDGT and ProxPDA on all ten instances. On Cameraman, the mean differences are two orders of magnitude smaller and the intervals contain zero. Centralized FISTA uses the separately selected regularization weight $0.005$ and provides a reconstruction reference. It has lower NRMSE, while $\ell_1$-SDBPG has slightly higher mean SSIM.

The mean squared composite-stationarity values of $\ell_1$-SDBPG are $1.22\times10^{-5}$ on the phantom and $6.34\times10^{-7}$ on Cameraman, compared with $1.40\times10^{-4}$ and $5.88\times10^{-6}$ for ProxDGT and $1.43\times10^{-4}$ and $6.08\times10^{-6}$ for ProxPDA. For an NRMSE threshold equal to $1.10$ times the best final decentralized value on each instance, the mean transmitted volumes on the phantom are 396 MiB for $\ell_1$-SDBPG, 678 MiB for ProxDGT, and 339 MiB for ProxPDA; the corresponding Cameraman values are 122, 188, and 101 MiB. Thus $\ell_1$-SDBPG reaches this reconstruction level with less communication than ProxDGT. ProxPDA uses still less communication at the relaxed target but has a slightly worse final reconstruction.

\begin{figure}[!htbp]
\centering
\includegraphics[width=0.94\textwidth]{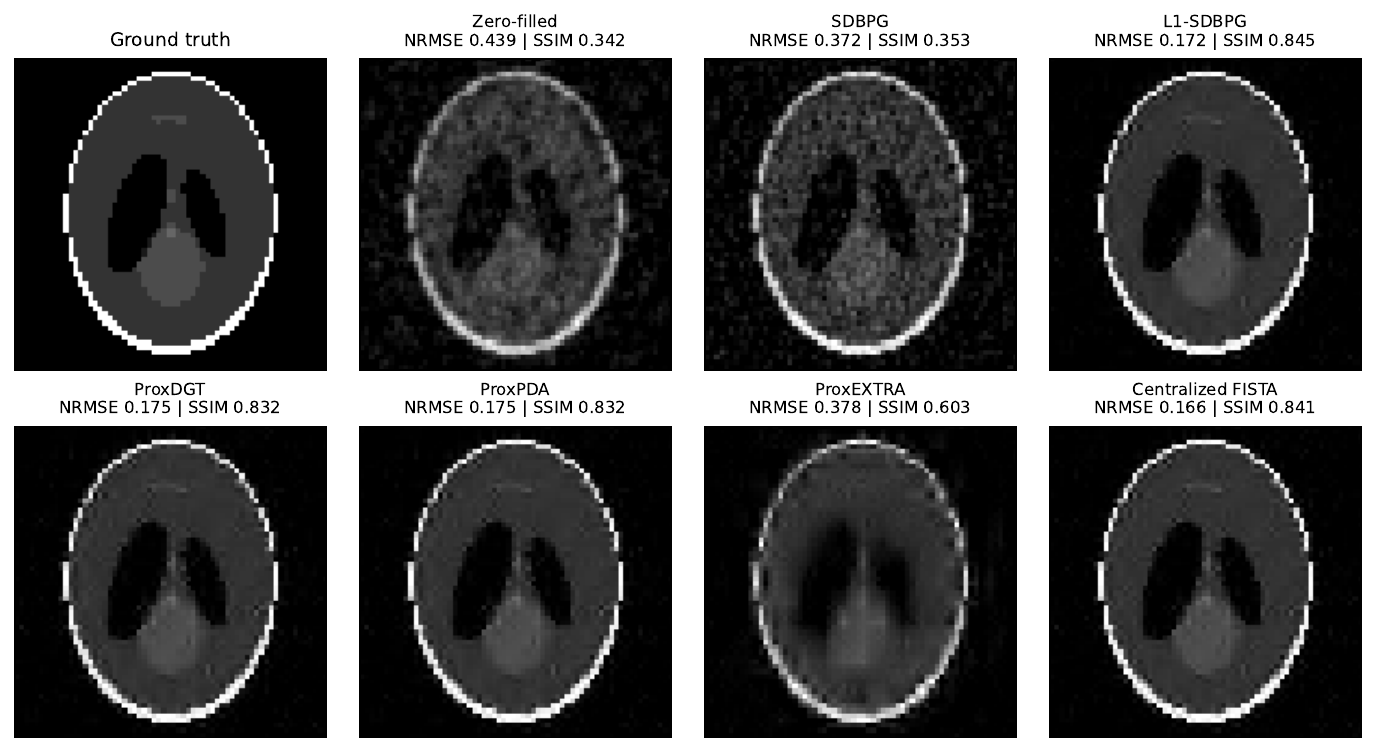}
\caption{Representative phantom reconstruction from the extended comparison. $\ell_1$-SDBPG reduces the aliasing visible in the zero-filled and unregularized SDBPG images and attains the lowest NRMSE among the decentralized proximal methods shown. Centralized FISTA provides a reconstruction reference; aggregate results appear in Table~\ref{tab:structured-fourier-main}.}
\label{fig:structured-fourier-phantom}
\end{figure}

The images have compressible wavelet representations. The phantom reconstructions in Figure~\ref{fig:structured-fourier-phantom} illustrate the reduction in aliasing under structural regularization; the Cameraman results in Table~\ref{tab:structured-fourier-combined} show similar reconstruction quality among the leading decentralized proximal methods.
\FloatBarrier

\subsection{Exact Quartic-Kernel Steps and Parameters}\label{app:quartic-kernel-details}
For the mirror map in~\eqref{eq:quartic-kernel-experiment},
\[
\nabla\phi_4(x)=(1+\|x\|^2)x.
\]
Consequently its inverse is radial. Given a dual vector $q\ne0$, let $r\geq0$ be the unique root of $r^3+r=\|q\|$; then $(\nabla\phi_4)^{-1}(q)=r q/\|q\|$. For a Bregman proximal-gradient step with local model gradient $g$, stepsize $\eta$, and $\ell_1$ weight $\lambda_r$, soft-thresholding is first applied in mirror coordinates,
\[
q=\operatorname{soft}\!\left(\nabla\phi_4(x)-\eta g,\eta\lambda_r\right),
\qquad
x^+= (\nabla\phi_4)^{-1}(q).
\]
Here $\operatorname{soft}(\cdot,\theta)$ denotes coordinatewise soft-thresholding at level $\theta$.
The phase-retrieval geometry study is unregularized, so $\lambda_r=0$. Its smoothing update is also exact: for
$q=x^++\beta\nabla\phi_4(z)$, the new radius is the unique nonnegative root of
\[
\beta r^3+(1+\beta)r=\|q\|,
\qquad
z^+=\begin{cases}r q/\|q\|,&q\ne0,\\0,&q=0.\end{cases}
\]
Both scalar equations are monotone and are solved to machine precision; no inner optimization tolerance is introduced.

\begin{table}[htbp]
\centering
\caption{Quartic-kernel phase retrieval: mean recovery error over ten evaluation instances and parameters selected on five validation instances at $h=3.0$, then kept fixed across the sweep.}
\label{tab:quartic-phase-combined}

\footnotesize
\setlength{\tabcolsep}{2pt}
\begin{subtable}[t]{0.44\textwidth}
\centering
\caption{Mean recovery error.}
\label{tab:quartic-phase-retrieval}
\begin{tabular}{lccc}
\toprule
Algorithm & \shortstack{$h=2.5$\\\strut} & \shortstack{$h=3.0$\\\strut} & \shortstack{$h=3.25$\\\strut}\\
\midrule
Quadratic SDBPG & \best{0.01753} & 0.01505 & 0.01384\\
Quartic SDBPG & 0.02056 & \best{0.01197} & \best{0.01113}\\
ProxGT & 0.06353 & 0.02452 & 0.01426\\
\bottomrule
\end{tabular}
\end{subtable}\hfill
\begin{subtable}[t]{0.54\textwidth}
\centering
\caption{Selected parameters.}
\label{tab:quartic-phase-configs}
\begin{tabular}{lrrrrrr}
\toprule
Algorithm & \shortstack{Primal/prox.\\step} & \shortstack{Dual\\step} & $\beta$ & $c_0$ & $\rho$ & $K$\\
\midrule
Quadratic SDBPG & 0.011 & 0.01 & 0.8 & 1.5 & 2 & --\\
Quartic SDBPG & 0.019 & 0.01 & 0.8 & 1.5 & 2 & --\\
ProxGT & 0.004 & -- & -- & -- & -- & 1\\
\bottomrule
\end{tabular}
\end{subtable}

\end{table}

At $h=3.0$, both SDBPG variants use 600 sequential communication rounds, $2.344$ MiB, and 216000 oracle samples at the full budget; ProxGT uses 1200 rounds with the same transmitted MiB and oracle samples. Mean wall times are $0.0753$, $0.1642$, and $0.0276$ seconds for quadratic SDBPG, quartic SDBPG, and ProxGT, respectively. Thus the kernel-matched update improves recovery at this heterogeneity level with greater local computation time.
\FloatBarrier

\subsection{Recent Algorithms and Cost Accounting}\label{app:modern-baseline-costs}
We implement ProxGT and ProxGT-SA according to Algorithm~1 of \citet{xin2021stochastic}. With $X_t$, $Y_t$, $V_t$, $W$, and $K$ denoting the primal state, gradient tracker, exact or stochastic gradient estimate, mixing matrix, and consensus depth, one iteration has the two mixing blocks
\[
Y_{t+1}=W^K(Y_t+V_t-V_{t-1}),\qquad
X_{t+1}=W^K\operatorname{prox}_{\alpha r}(X_t-\alpha Y_{t+1}).
\]
Here $\operatorname{prox}_{\alpha r}$ is the proximal operator of the regularizer $r$ with stepsize $\alpha$.
Prox-DASA-GT follows Algorithm~2 of \citet{xiao2023one}: it forms a proximal point from $(X_t,Z_t)$, averages it into the primal state with weight $\alpha$, updates the gradient-tracking and averaged-gradient states, and mixes all three states for $m$ rounds. Its initial stochastic gradient, used to set the two tracking states, is included in the oracle count.

Under the cost definitions in Section~\ref{sec:experiments}, SDBPG sends $(x,z)$ together in one round and transmits two state vectors; ProxGT uses $2K$ rounds and transmits one state vector per round; Prox-DASA-GT uses $m$ rounds and transmits three state vectors per round. These state counts produce the cumulative communication and oracle costs reported below.

\begin{table}[htbp]
\centering
\caption{Selected parameters and cost-to-accuracy results for the recent-algorithm comparisons.}
\label{tab:modern-baseline-combined}

\footnotesize
\begin{subtable}{\textwidth}
\centering
\caption{Selected parameters. The step is the primal/proximal stepsize, denoted by $\gamma$ for Prox-DASA-GT; its $\alpha$ is the averaging weight and $m$ is the mixing depth.}
\label{tab:modern-baseline-configs}
\begin{tabular}{llrr>{\raggedright\arraybackslash}p{0.34\textwidth}}
\toprule
Task & Algorithm & $\lambda_r$ & Step & Other parameters\\
\midrule
\multirow{3}{*}{Fourier}
& $\ell_1$-SDBPG & 0.01 & 0.16 & $\alpha=0.12$, $\beta=0.30$, $c_0=0.75$, $\rho=1$\\
& ProxGT & 0.01 & 0.12 & $K=2$\\
& ProxDGT & 0.01 & 0.08 & --\\
\midrule
\multirow{3}{*}{Stochastic}
& $\ell_1$-SDBPG & 0.02 & 0.18 & $\alpha=0.24$, $\beta=0.85$, $c_0=1.5$, $\rho=2$\\
& ProxGT-SA & 0.02 & 2.0 & $K=2$\\
& Prox-DASA-GT & 0.02 & 25.6 & $\alpha=0.2$, $m=1$\\
\bottomrule
\end{tabular}
\end{subtable}

\vspace{0.5em}
\begin{subtable}{\textwidth}
\centering
\caption{Cost to a common per-instance threshold equal to $1.10$ times the lowest final error. Values average ten evaluation instances; every method reaches the threshold on every instance.}
\label{tab:modern-cost-to-target}
\begin{tabular}{llrrrr}
\toprule
Task & Algorithm & Rounds & MiB & Oracle samples & Wall time (s)\\
\midrule
\multirow{3}{*}{Fourier imaging}
& $\ell_1$-SDBPG & 407.5 & 407.5 & 2445 & 1.0134\\
& ProxGT & 1870 & 935.0 & 2805 & 0.9355\\
& ProxDGT & 695 & 695.0 & 4176 & 1.5089\\
\midrule
\multirow{3}{*}{Stochastic recovery}
& $\ell_1$-SDBPG & 422.5 & 6.447 & 13520 & 0.1108\\
& ProxGT-SA & 140 & 1.068 & 1120 & 0.0049\\
& Prox-DASA-GT & 40 & 0.916 & 1312 & 0.0057\\
\bottomrule
\end{tabular}
\end{subtable}

\end{table}

The support-F1 difference between $\ell_1$-SDBPG and ProxGT-SA is $-0.01786$ with 95\% interval $[-0.08929,0.05357]$; the difference from Prox-DASA-GT is $-0.03175$ with interval $[-0.10714,0.04762]$.

\FloatBarrier

On Fourier imaging, $\ell_1$-SDBPG reaches the target with fewer rounds, transmitted MiB, and oracle samples than both gradient-tracking baselines. On stochastic recovery, ProxGT-SA and Prox-DASA-GT reach the relaxed target with fewer rounds, transmitted MiB, oracle samples, and wall time. Their final-iterate recovery intervals overlap those of $\ell_1$-SDBPG in Table~\ref{tab:modern-stochastic-main}.

\begin{figure}[htbp]
\centering
\includegraphics[width=0.98\textwidth]{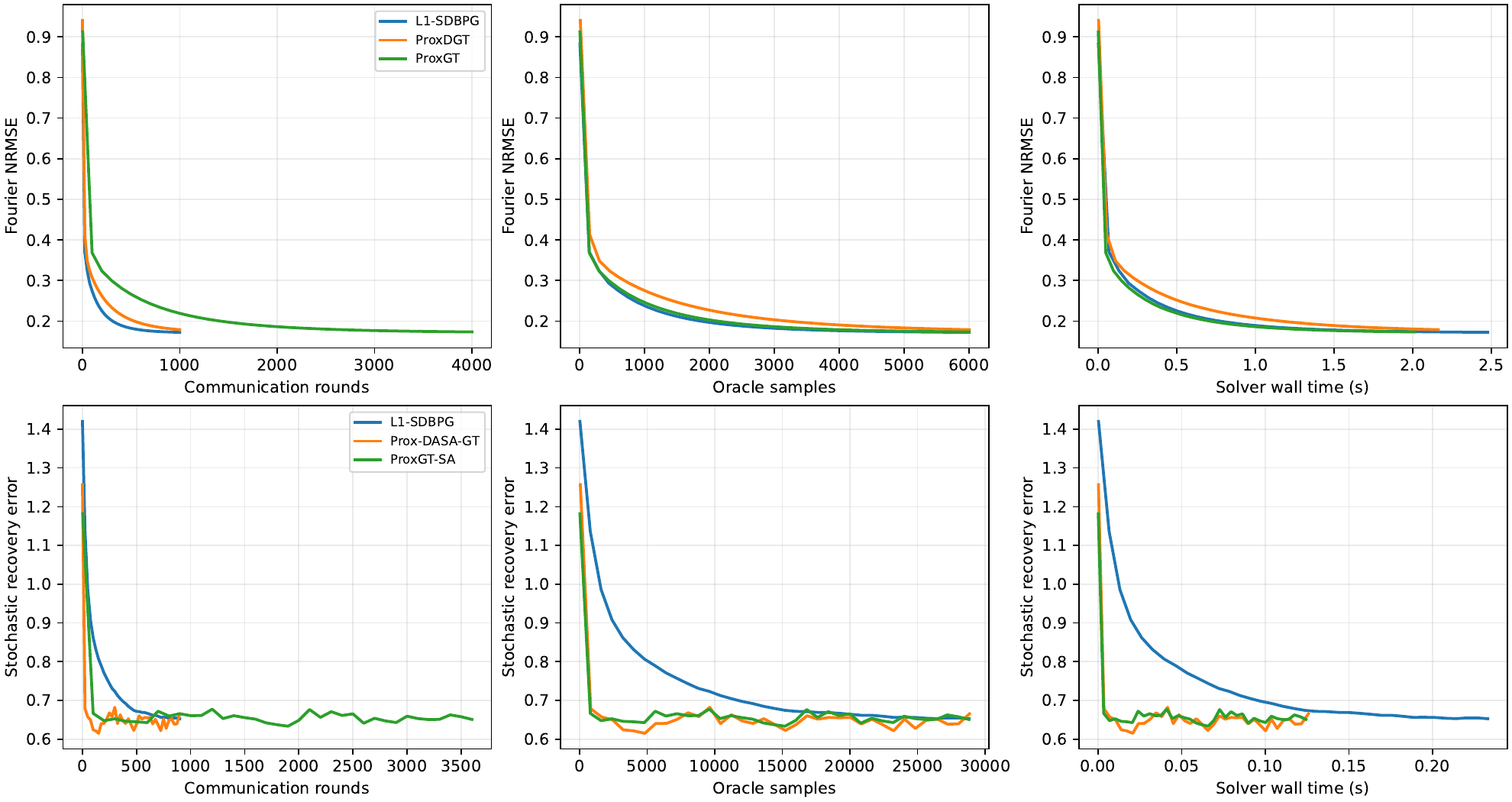}
\caption{Quality against cost over ten held-out evaluation instances. The top row shows Fourier-imaging NRMSE; the bottom row shows stochastic recovery error. Columns use sequential communication rounds, oracle samples, and wall time. Curves are evaluation means at common cost values; lower is preferable.}
\label{fig:modern-cost-quality-axes}
\end{figure}
\FloatBarrier

\subsection{Stochastic Baselines and Residual Trajectories}\label{app:stochastic-sparse-extra}
The noise sweep uses exact quadratic primal and smoothing updates with $(\lambda,\alpha,\beta,c_0,\rho)=(0.0225,0.03,0.85,1.5,2)$ and $\ell_1$ weight $0.02$. These parameters are selected on five moderate-regime validation instances and held fixed across all listed noise settings and both sampling regimes. The unregularized control uses the same parameters with zero penalty. Each setting uses 20 paired evaluation instances and 900 iterations, with the support threshold $0.05$ from Section~\ref{sec:experimental-setup}. Mini-batches sample with replacement, and a batch covering the local sample count uses the exact gradient. Additive Gaussian gradient noise has covariance $(\sigma^2/d)I$, so its expected squared norm is $\sigma^2$. Figures~\ref{fig:stochastic-noise-alignment} and~\ref{fig:stochastic-residual-trajectories} and Table~\ref{tab:stochastic-sparse-combined} use this common configuration; the communication-aware comparison in Table~\ref{tab:modern-stochastic-main} uses the separately selected parameters in Table~\ref{tab:modern-baseline-configs}.

\begin{table}[htbp]
\centering
\caption{Stochastic sparse-recovery paired differences over 20 evaluation instances after 900 iterations. Entries give $\ell_1$-SDBPG minus comparator, with 95\% paired bootstrap intervals in brackets. Lower recovery differences and higher support-F1 differences favor $\ell_1$-SDBPG.}
\label{tab:stochastic-sparse-combined}

\footnotesize
\setlength{\tabcolsep}{4pt}

\begin{subtable}{\textwidth}
\centering
\caption{Noise sweep against unregularized stochastic SDBPG; oracle error is the measured mean squared gradient error of $\ell_1$-SDBPG.}
\label{tab:stochastic-sparse-main}
\begin{tabular}{llccr}
\toprule
Regime & Oracle & Recovery difference [95\% CI] & Support-F1 difference [95\% CI] & Oracle error\\
\midrule
\multirow{8}{*}{Moderate} & Batch 4 & $-0.167\;[-0.196,-0.135]$ & $0.397\;[0.327,0.458]$ & 0.0513\\
 & Batch 5 & $-0.167\;[-0.196,-0.135]$ & $0.399\;[0.328,0.462]$ & 0.0409\\
 & Batch 6 & $-0.168\;[-0.198,-0.137]$ & $0.413\;[0.352,0.468]$ & 0.0342\\
 & Batch 7 & $-0.167\;[-0.197,-0.135]$ & $0.410\;[0.351,0.466]$ & 0.0293\\
 & $\sigma=0.030$ & $-0.167\;[-0.196,-0.134]$ & $0.399\;[0.330,0.460]$ & 0.0009\\
 & $\sigma=0.050$ & $-0.167\;[-0.196,-0.134]$ & $0.399\;[0.328,0.462]$ & 0.0025\\
 & $\sigma=0.080$ & $-0.167\;[-0.196,-0.135]$ & $0.402\;[0.333,0.465]$ & 0.0064\\
 & $\sigma=0.100$ & $-0.167\;[-0.197,-0.134]$ & $0.401\;[0.333,0.464]$ & 0.0100\\
\midrule
\multirow{3}{*}{Undersampled} & Batch 3 & $-0.184\;[-0.228,-0.141]$ & $0.444\;[0.371,0.512]$ & 0.0549\\
 & Batch 4 & $-0.185\;[-0.228,-0.140]$ & $0.443\;[0.369,0.510]$ & 0.0411\\
 & Batch 5 & $-0.184\;[-0.227,-0.141]$ & $0.444\;[0.371,0.509]$ & 0.0330\\
\bottomrule
\end{tabular}
\end{subtable}

\vspace{0.5em}
\begin{subtable}{\textwidth}
\centering
\caption{Moderate-regime comparisons against the same-penalty ProxDGD baseline.}
\label{tab:appendix-stochastic-prox-baselines}
\begin{tabular}{lcc}
\toprule
Oracle & Recovery difference [95\% CI] & Support-F1 difference [95\% CI]\\
\midrule
Batch 4 & $-0.154\;[-0.189,-0.121]$ & $0.230\;[0.170,0.290]$\\
Batch 7 & $-0.153\;[-0.187,-0.119]$ & $0.244\;[0.179,0.307]$\\
$\sigma=0.030$ & $-0.153\;[-0.188,-0.120]$ & $0.241\;[0.182,0.298]$\\
$\sigma=0.100$ & $-0.153\;[-0.188,-0.120]$ & $0.243\;[0.184,0.300]$\\
\bottomrule
\end{tabular}
\end{subtable}
\end{table}
\FloatBarrier

For mini-batch size four in the moderate regime, the paired recovery difference is $-0.167$ with 95\% interval $[-0.196,-0.135]$. Recovery differences are negative and support-F1 differences positive across all listed oracle settings.

Table~\ref{tab:appendix-stochastic-prox-baselines} compares stochastic $\ell_1$-SDBPG with tuned stochastic ProxDGD using the same coordinatewise $\ell_1$ proximal step. Negative recovery differences and positive support-F1 differences favor $\ell_1$-SDBPG in all four moderate-regime oracle settings. Stochastic ProxDGT diverged in several mini-batch settings and is omitted from the aggregate table.

Figure~\ref{fig:stochastic-residual-trajectories} shows the corresponding residual trajectories. Larger fixed mini-batches reduce the smoothing mismatch, and stronger additive noise raises its terminal level. The increasing schedule uses batches of 2 for iterations 1--300 and 4 for iterations 301--600, followed by all 8 local samples. The last stage uses exact local gradients, eliminating oracle noise and accelerating the decay of the smoothing mismatch.

\begin{figure}[htbp]
\centering
\includegraphics[width=0.95\textwidth]{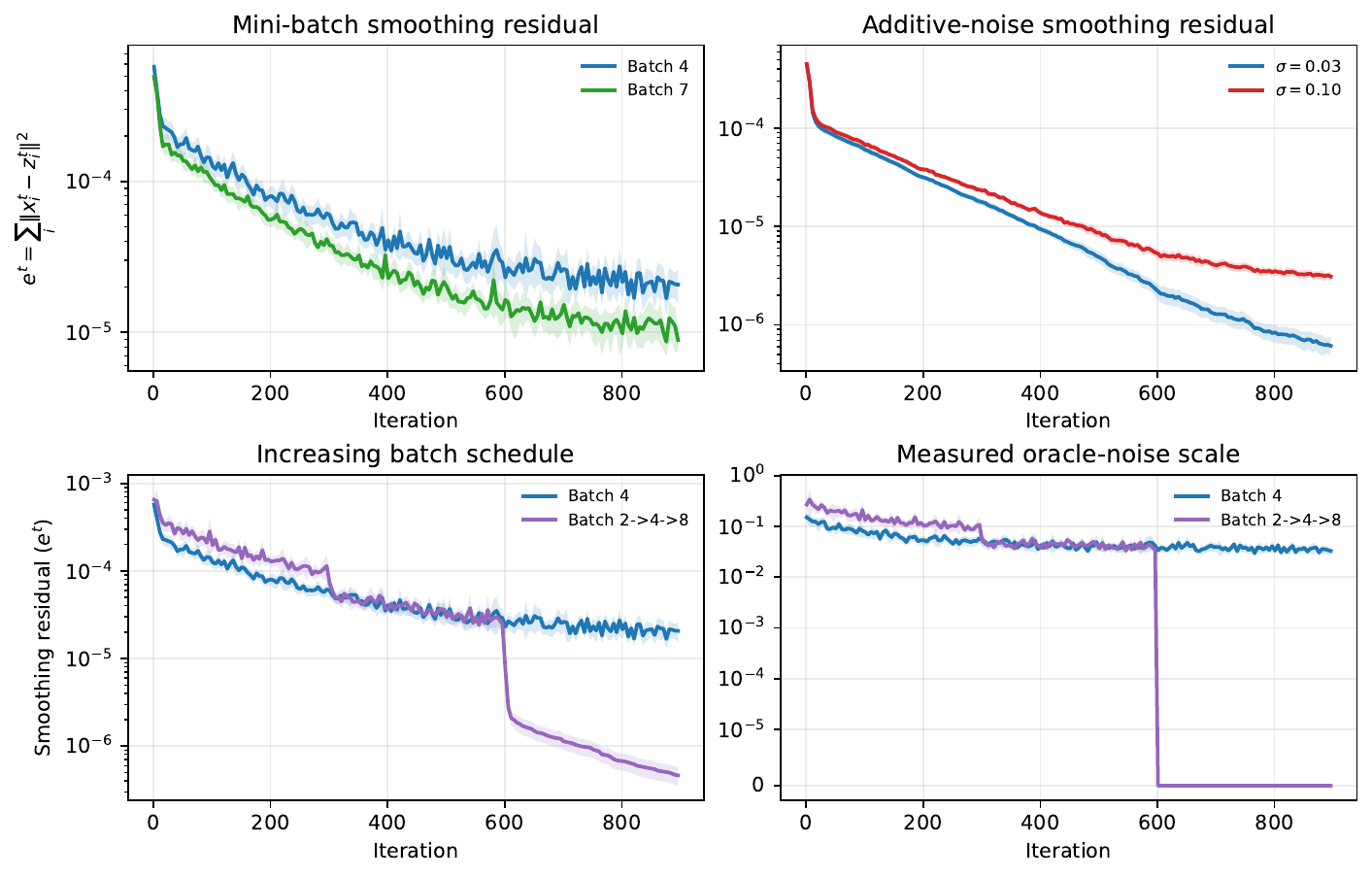}
\caption{Stochastic sparse-recovery smoothing-residual trajectories for
$\ell_1$-SDBPG on the moderate regime over 900 iterations and 20 evaluation instances. Shaded bands show normal-approximation 95\% confidence intervals. The increasing-batch schedule uses $2\to4\to8$ in three 300-iteration stages; the final stage uses exact gradients over all local samples. The fixed-batch curves correspond to the settings summarized in Figure~\ref{fig:stochastic-noise-alignment}.}
\label{fig:stochastic-residual-trajectories}
\end{figure}
\FloatBarrier

\subsection{Sensitivity and Residual Behavior}\label{app:base-sensitivity}
Figure~\ref{fig:appendix-final-controls}(a)--(c) reports squared stationarity residuals as the network size, smoothing weight, and problem dimension vary. The mean residuals remain small over the tested ranges, with variability across instances. Panel~(d) compares empirical hitting times on complete, random, and ring graphs with the theorem's worst-case $O(\epsilon^{-2})$ reference slope.

\begin{figure}[htbp]
\centering
\begin{subfigure}{0.32\textwidth}
\centering
\includegraphics[width=\textwidth]{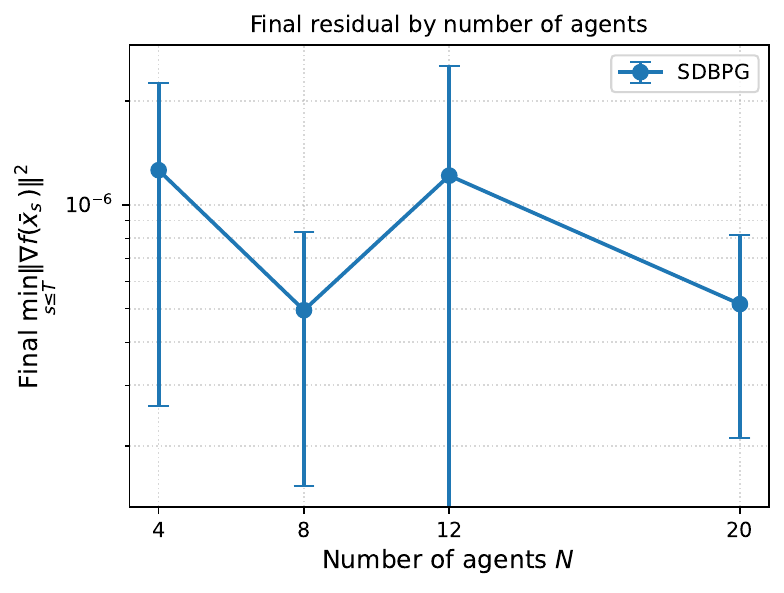}
\caption{Squared stationarity residual versus number of agents $N$.}
\end{subfigure}\hfill
\begin{subfigure}{0.32\textwidth}
\centering
\includegraphics[width=\textwidth]{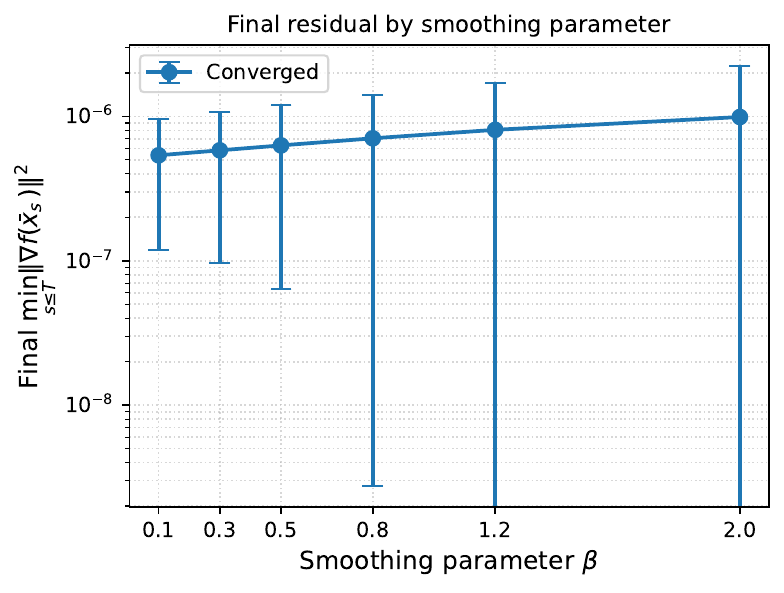}
\caption{Squared stationarity residual versus smoothing weight $\beta$.}
\end{subfigure}\hfill
\begin{subfigure}{0.32\textwidth}
\centering
\includegraphics[width=\textwidth]{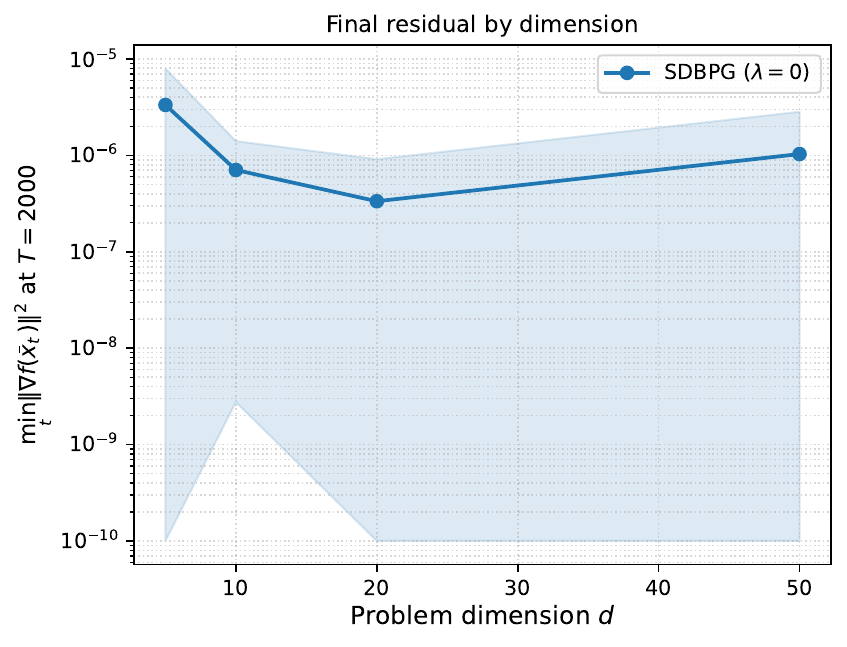}
\caption{Squared stationarity residual versus problem dimension $d$.}
\end{subfigure}
\par\vspace{0.5em}
\begin{subfigure}{0.78\textwidth}
\centering
\includegraphics[width=0.94\textwidth]{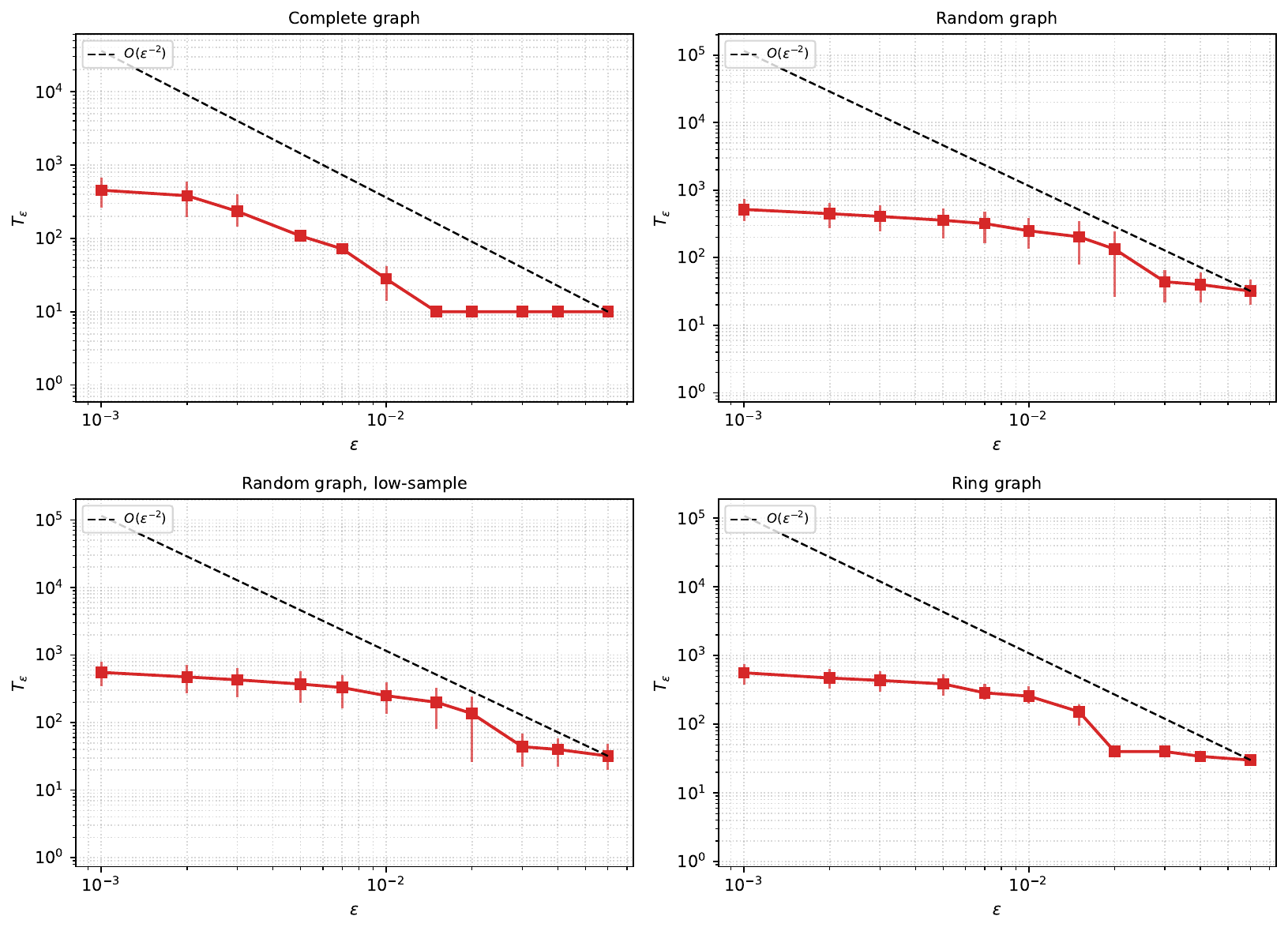}
\caption{Empirical hitting time $T_\epsilon$ on four graph regimes: the first of two consecutive checkpoints at which the maximum nodewise consensus proximal-point distance is at most $\epsilon$. Checkpoints are spaced ten iterations apart after the initial checkpoint. Dashed lines show the theorem's worst-case $O(\epsilon^{-2})$ reference slope; the measured hitting times grow more slowly over the tested range.}
\end{subfigure}
\caption{Sensitivity and residual behavior: squared stationarity residuals versus network size, smoothing weight, and dimension, followed by empirical hitting times.}
\label{fig:appendix-final-controls}
\label{fig:appendix-theory-measures}
\end{figure}
\FloatBarrier

\subsection{Weighted Quadratic Kernel Illustration}\label{app:weighted-kernel-illustration}
Most recovery experiments use the standard quadratic kernel. Figure~\ref{fig:weighted-kernel-illustration} and Table~\ref{tab:weighted-kernel-sweep} instead consider a diagonal quadratic consensus problem with the weighted kernel
\[
\phi_W(x)=\frac12 x^\top W x,
\]
where $W$ is proportional to the diagonal objective curvature and normalized to have average weight one. The Bregman proximal step then applies $W^{-1}$ to the local surrogate gradient, so the kernel acts as a diagonal preconditioner.

\begin{figure}[htbp]
\centering
\includegraphics[width=0.95\textwidth]{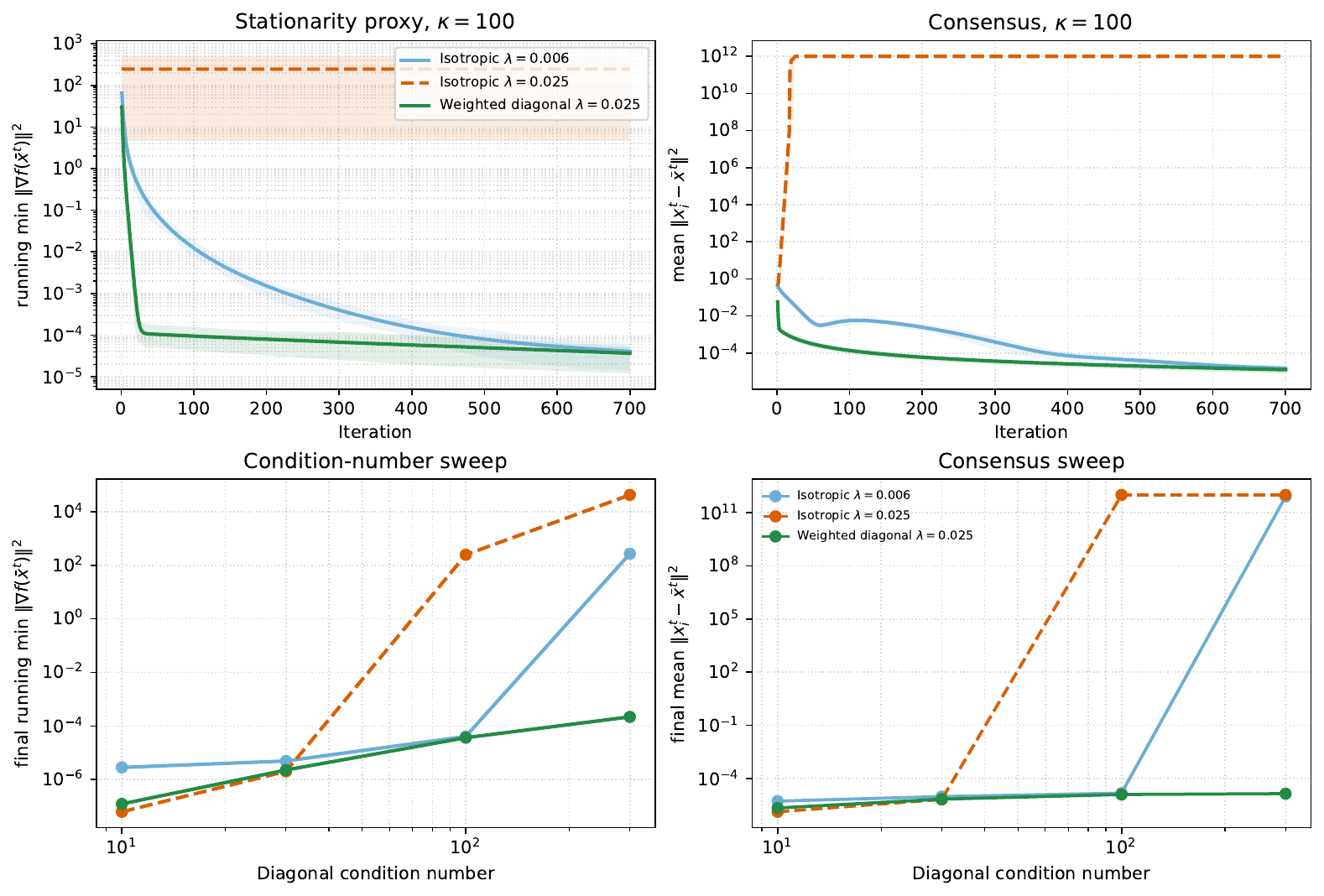}
\caption{Weighted quadratic Bregman kernels on diagonal quadratic consensus problems. At $\kappa=100$ (top), the isotropic kernel is stable with the conservative primal step and unstable with the larger step; the weighted diagonal kernel remains stable with the larger step. The bottom row reports the smallest squared averaged-gradient norm and final consensus error across condition numbers.}
\label{fig:weighted-kernel-illustration}
\end{figure}

On this diagonal quadratic problem, the weighted kernel permits a larger stable nominal primal step. At $\kappa=300$, all weighted-kernel runs remain stable, compared with divergence rates of $80\%$ and $100\%$ for the two isotropic configurations. The weighted-kernel stationarity residual decays more slowly at this condition number.

\begin{table}[htbp]
\centering
\caption{Weighted-kernel condition-number sweep over ten instances. Each entry gives the mean smallest squared averaged-gradient norm observed during the run, followed by the percentage of runs that diverged.}
\label{tab:weighted-kernel-sweep}

\footnotesize
\setlength{\tabcolsep}{5pt}

\begin{tabular}{rccc}
\toprule
& \multicolumn{2}{c}{Isotropic kernel} & Weighted diagonal\\
\cmidrule(lr){2-3}
$\kappa$ & $\lambda=0.006$ & $\lambda=0.025$ & $\lambda=0.025$\\
\midrule
10 & $2.85\times10^{-6}$ (0\%) & $6.31\times10^{-8}$ (0\%) & $1.24\times10^{-7}$ (0\%)\\
30 & $4.96\times10^{-6}$ (0\%) & $2.03\times10^{-6}$ (0\%) & $2.28\times10^{-6}$ (0\%)\\
100 & $4.09\times10^{-5}$ (0\%) & $2.46\times10^{2}$ (100\%) & $3.67\times10^{-5}$ (0\%)\\
300 & $2.69\times10^{2}$ (80\%) & $4.21\times10^{4}$ (100\%) & $2.20\times10^{-4}$ (0\%)\\
\bottomrule
\end{tabular}
\end{table}
\FloatBarrier

\section{Auxiliary Proofs for the Main Statements}\label{app:auxiliary-main-proofs}
The auxiliary results below establish the parameter conditions and are used in the main convergence proofs.

\begin{proof}[Proof of Lemma~\ref{lem:local-variation-from-relative}]
The initial points lie in $K$, and Assumption~\ref{ass:standing-local}(d) makes the selected primal, smoothing, and consensus proximal maps invariant on $K$. Induction places all iterates and comparison points in $K$. Choose a compact convex set $Q$ with $K\subseteq\operatorname{int}Q$ and $Q\Subset U$, where $U$ is the neighborhood in Assumption~\ref{ass:standing-local}(b),(c). Strong convexity of $\phi$ on $U$ gives the Bregman lower bound. Since $\phi\in C^2(U)$, there is $M_\phi<\infty$ with $\|\nabla^2\phi\|\le M_\phi$ on $Q$, which gives the gradient bound for $\phi$. Continuity of each $f_i$ gives its lower bound on $K$.

Set $L_f=M_\phi\max\{L_+,L_-,0\}$. The two relative convexity conditions imply that both $(L_f/2)\|\cdot\|^2-f_i$ and $(L_f/2)\|\cdot\|^2+f_i$ are convex on $Q$. These bounds give local Lipschitz continuity of $\nabla f_i$ using only $f_i\in C^1$: smooth mollifications $f_{i,\varepsilon}$ satisfy $-L_f I\preceq\nabla^2f_{i,\varepsilon}\preceq L_f I$ on $K$ for sufficiently small $\varepsilon$, and their gradients converge uniformly to $\nabla f_i$ on $K$. Passing to the limit in the Lipschitz inequality shows that $\nabla f_i$ is $L_f$-Lipschitz on $K$. Finally, $F_i$ adds terms of the form $\nabla\phi(u)-\nabla\phi(z_j)$ with coefficients $c_0$ and $\rho$. Their local Lipschitz bounds give the stated model-gradient estimate.
\end{proof}

\begin{proof}[Proof of Proposition~\ref{prop:lyapunov-lower-sufficient}]
On the compact set $K$, the smooth objective, composite terms, and Bregman terms appearing in the Lyapunov function are bounded below, except possibly for the dual linear term $\langle y^t,\mathcal B^\top x^t\rangle$. Since $y^0=0$ and $y^{t+1}=y^t+\alpha\mathcal B^\top x^t$, we have $y^t\in\operatorname{range}(\mathcal B^\top)$. On that range, $\|y^t\|\le \sigma_G^{-1}\|\mathcal B y^t\|$, where $\sigma_G$ is the smallest positive singular value of $\mathcal B$. The stacked primal optimality system gives $\mathcal B y^{t+1}=-g^t-p^t-\xi^{t+1}$ in the deterministic composite case, with $g_i^t=\nabla_xF_i(x_i^t,Z_i^t)$ and $p_i^t=\lambda^{-1}(\nabla\phi(x_i^{t+1})-\nabla\phi(x_i^t))$. Compactness and the bounded selected subgradients bound $y^t$, and compactness also bounds $\mathcal B^\top x^t$. The stochastic case is identical after adding the oracle error $\delta^t$ and taking expectations.
\end{proof}

\begin{proof}[Proof of Corollary~\ref{cor:verified-selection-lower-bound}]
On the tail, the points $x_i^t$ lie in the compact set $K$ and $\xi_i^t=\Xi_i(x_i^t)$. Since each $\Xi_i$ is Lipschitz on a set containing the tail points, the selected tail subgradients are bounded. The finite prefix contains only finitely many selected finite vectors, so the full deterministic selected-subgradient sequence is uniformly bounded. Proposition~\ref{prop:lyapunov-lower-sufficient} then gives Assumption~\ref{ass:lyapunov-lower-bound}.
\end{proof}

\begin{proof}[Proof of Corollary~\ref{cor:stochastic-verified-selection-lower-bound}]
The stochastic verified selection gives $\sup_t\E\sum_i\|\xi_i^t\|^2<\infty$. Combining this with the uniform variance bound, Proposition~\ref{prop:lyapunov-lower-sufficient} gives the expected Lyapunov lower bound.
\end{proof}

\begin{proof}[Proof of Proposition~\ref{prop:stability-sufficient}]
Part (a) follows by taking the ordinary gradient selection $\Xi_i=\nabla r_i$, which is Lipschitz on a neighborhood of $K$. In particular, for squared $\ell_2$ regularization $r_i(x)=(\omega_i/2)\|x\|^2$, the selection $\Xi_i(x)=\omega_i x$ gives
\[
\|\Xi_i(x_i^{t+1})-\Xi_i(x_i^t)\|^2
\le \omega_i^2\|x_i^{t+1}-x_i^t\|^2,
\]
and the same Lipschitz bound controls $\|\Xi_i(x_i^t)-\Xi_i(\bar x^t)\|^2$ by $\|x_i^t-\bar x^t\|^2$. Thus the deterministic motion and aggregation defects can be chosen to vanish after fixing the absorption constants.

For part (b), the Lipschitz estimates for the verified active selection absorb its temporal motion and aggregation mismatch on the identified tail, as in Lemma~\ref{comp:lem:verified-selection-absorption}.

For part (c), write $g_i^t=\nabla_xF_i(x_i^t,Z_i^t)$ and $p_i^t=\lambda^{-1}(\nabla\phi(x_i^{t+1})-\nabla\phi(x_i^t))$. Let $P_I$ project onto the common inactive coordinates. On the identified tail, every node is zero on $I$, so $P_I\mathcal B(y^{t+1}-y^t)=0$. Subtracting consecutive primal optimality equations gives
\[
P_I(\xi^{t+1}-\xi^t)=-P_I(g^t-g^{t-1})-P_I(p^t-p^{t-1}).
\]
Lemma~\ref{lem:local-variation-from-relative} bounds the right-hand side by the primal and reference motion terms. For $\ell_1$, the active subgradients are fixed by the common signs, yielding
\[
\sum_i\|\xi_i^{t+1}-\xi_i^t\|^2
\le C(d_x^t+d_x^{t-1}+d_z^{t-1}).
\]
Also $\xi_i^t\in\partial r_i(\bar x^t)$ for each node, so the aggregation defect vanishes on the tail.

For group $\ell_{2,1}$, the active maps $x_g\mapsto\omega_i x_g/\|x_g\|$ are Lipschitz under the stated norm bound and add only a primal-motion term. At $\bar x^t$, choose these active subgradients and keep the selected inactive subgradients, which remain valid on zero groups. The squared aggregation mismatch is then bounded by $C\sum_i\|x_i^t-\bar x^t\|^2\le CS^t$. Fixed absorption constants therefore make both defects zero on the identified tail. The finite temporal prefix consists of finite selected-vector differences. In parts (a),(c), subgradients exist at every averaged point in $K$, so the aggregation prefix is also finite.

In the stochastic case, the inactive-coordinate equation adds $-P_I(\delta^t-\delta^{t-1})$. Its squared norm is controlled by the oracle terms in Definition~\ref{def:subgradient-motion-defects}(c). The same aggregation estimate applies pathwise. Taking expectations and using \eqref{eq:integrable-identification-prefix} bounds the accumulated expected defects. For parts (a),(b), the stochastic verified selection gives the corresponding bounds by the same Lipschitz estimates.
\end{proof}

\begin{proof}[Proof of Proposition~\ref{prop:kkt-bridge}]
The optimality condition for the selected scalar consensus subproblem gives
\[
0\in \sum_i\nabla f_i(u_r)+\sum_i\partial r_i(u_r)
+\sum_i c_0(\nabla\phi(u_r)-\nabla\phi(z_i))
+\sum_i\rho\sum_{j\in\mathcal N_i}(\nabla\phi(u_r)-\nabla\phi(z_j)).
\]
Taking the norm of the Bregman-reference contribution and using the Lipschitz bound on $\nabla\phi$ over $K$ proves the estimate.
\end{proof}

\begin{proof}[Proof of Lemma~\ref{lem:admissible-window-main}]
The inequalities in \eqref{eq:main-admissible-region} give the positive coefficients needed in Lemma~\ref{det:lem:summability}; the condition $c_0+\rho\bar d>L_-$ gives strong convexity on $K$ in Lemma~\ref{det:lem:residual-domination}. For nonemptiness, choose $c_0>L_-$ and $\lambda<(L_++c_0)^{-1}$, then choose $\rho>0$ small enough that $a_x>0$. Set $\tau=\kappa\alpha$ with $\kappa>1$. Since $b=(\tau+\rho/\beta)/\alpha^2=\kappa/\alpha+O(\alpha^{-2})$, the last two inequalities hold for all sufficiently large $\alpha$.
\end{proof}

\begin{proof}[Proof of Lemma~\ref{lem:composite-admissible}]
The same parameter-window argument applies with the constants from Lemma~\ref{comp:lem:consensus-by-steps}. The proximal strong-convexity condition $c_0+\rho\bar d>L_-$ is not needed for the intermediate lifted estimate; it is imposed only when Theorem~\ref{thm:composite-main} is invoked.
\end{proof}

\begin{proof}[Proof of Lemma~\ref{lem:stochastic-admissible}]
Choose $c_0,\rho,\lambda$ so that $a_x>0$, then choose $\chi\in(0,\mu a_x)$, so $a_x^s>0$. With $\tau=\kappa\alpha$ and $\kappa>1$, the same limit $b\to0$ as $\alpha\to\infty$ gives the last two strict inequalities for all sufficiently large $\alpha$.
\end{proof}

\section{Proof of Deterministic SDBPG Convergence}\label{app:deterministic-proof}
The proof of Theorem~\ref{thm:deterministic-main} combines one-step Bregman descent, a dual-corrected Lyapunov recursion, a graph-disagreement bound, and a bound on the proximal residual. We first fix the graph notation.

\paragraph{Graph notation.}
Let $m=|\mathcal E|$ and let $\mathcal B:\R^{md}\to\R^{Nd}$ be the oriented block incidence operator. Set $s^t=\mathcal B^\top x^t$ and $S^t=\|s^t\|^2$. The stacked dual update is
\[
y^{t+1}=y^t+\alpha s^t.
\]
Because $y^0=0$, $y^t\in\operatorname{range}(\mathcal B^\top)$. Connectivity gives $\ker(\mathcal B^\top)=\{\mathbf 1\otimes u:u\in\R^d\}$. Let $\sigma_G>0$ be the smallest positive singular value of $\mathcal B$. The range bound and graph Poincare inequality are
\begin{equation}\label{det:eq:graph-range}
\|w\|\le\sigma_G^{-1}\|\mathcal B w\|,
\qquad w\in\operatorname{range}(\mathcal B^\top),
\end{equation}
and
\begin{equation}\label{det:eq:poincare}
\sum_i\|x_i-\bar x\|^2\le \sigma_G^{-2}\|\mathcal B^\top x\|^2.
\end{equation}
On the compact set $K$, define
\begin{equation}\label{det:eq:compact-diameter}
D_K:=\sup_{u,v\in K}D_\phi(u,v)<\infty.
\end{equation}

The primal descent inequality combines the upper relative-smoothness bound with the Bregman three-point identity.

\begin{lemma}[One-step surrogate descent]\label{det:lem:surrogate-descent}
Let
\[
a_x:=\frac1\lambda-(L_++c_0+\rho d_{\max}),
\qquad
a_z:=c_0+\rho d_{\min},
\qquad
a_e:=\frac{c_0}{\beta}.
\]
For $F^t:=\sum_iF_i(x_i^t,Z_i^t)$,
\begin{equation}\label{det:eq:surrogate-descent}
F^{t+1}-F^t
\le
-a_xd_x^t-a_zd_z^t-a_e e^t
-\langle y^{t+1},s^{t+1}-s^t\rangle
+\frac{\rho}{\beta}S^{t+1}.
\end{equation}
\end{lemma}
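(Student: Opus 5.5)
The plan is to split the change $F^{t+1}-F^t$ into a primal part with references frozen at $Z^t$ and a reference part with primal variables frozen at $x^{t+1}$:
\[
F^{t+1}-F^t=\sum_i\big[F_i(x_i^{t+1},Z_i^t)-F_i(x_i^t,Z_i^t)\big]+\sum_i\big[F_i(x_i^{t+1},Z_i^{t+1})-F_i(x_i^{t+1},Z_i^t)\big].
\]
The first bracket will produce $-a_xd_x^t-\langle y^{t+1},s^{t+1}-s^t\rangle$, and the second will produce $-a_zd_z^t-a_ee^t+\frac{\rho}{\beta}S^{t+1}$.

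\emph{Primal part.} The function $u\mapsto F_i(u,Z_i^t)$ is $(L_++c_0+\rho d_i)$-smooth relative to $\phi$ on $K$, because the Bregman terms contribute exactly $c_0+\rho d_i$ times $D_\phi$. The upper model inequality then gives
\[
F_i(x_i^{t+1},Z_i^t)\le F_i(x_i^t,Z_i^t)+\langle\nabla F_i,x_i^{t+1}-x_i^t\rangle+(L_++c_0+\rho d_{\max})D_\phi(x_i^{t+1},x_i^t).
\]
Next I use optimality of $x_i^{t+1}$ in the Bregman step with $r_i\equiv0$, which gives $\nabla K_i^t=-\lambda^{-1}(\nabla\phi(x_i^{t+1})-\nabla\phi(x_i^t))$, together with the three-point identity $\langle\nabla\phi(x^{t+1})-\nabla\phi(x^t),x^{t+1}-x^t\rangle=D_\phi(x^{t+1},x^t)+D_\phi(x^t,x^{t+1})\ge D_\phi(x^{t+1},x^t)$. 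These give $\langle\nabla K_i^t,x_i^{t+1}-x_i^t\rangle\le-\lambda^{-1}D_\phi(x_i^{t+1},x_i^t)$. Since $\nabla F_i=\nabla K_i^t-\sum_j y^{t+1}_{ij}$, summing over $i$ and using the skew-symmetric edge pairing gives $\sum_i\sum_{j}\langle y^{t+1}_{ij},x_i^{t+1}-x_i^t\rangle=\langle y^{t+1},s^{t+1}-s^t\rangle$. This yields the primal part with coefficient $a_x$.

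\emph{Reference part.} The reference difference is
\[
\sum_i\Big[c_0\big(D_\phi(x_i^{t+1},z_i^{t+1})-D_\phi(x_i^{t+1},z_i^t)\big)+\rho\sum_{j\in\mathcal N_i}\big(D_\phi(x_i^{t+1},z_j^{t+1})-D_\phi(x_i^{t+1},z_j^t)\big)\Big].
\]
The three-point identity gives $D_\phi(x,z^{+})-D_\phi(x,z)=-D_\phi(z^+,z)+\langle\nabla\phi(z)-\nabla\phi(z^+),x-z^+\rangle$. The smoothing optimality condition is $x_i^{t+1}-z_i^{t+1}=\beta(\nabla\phi(z_i^{t+1})-\nabla\phi(z_i^t))$. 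For the $c_0$ terms the inner product therefore equals $-\beta^{-1}\|x_i^{t+1}-z_i^{t+1}\|^2$, so these terms give $-c_0d_z^t-a_ee^t$. For the $\rho$ terms, $j$ ranges over the neighbors and the inner product becomes $-\beta^{-1}\langle z_j^{t+1}-x_j^{t+1},x_i^{t+1}-z_j^{t+1}\rangle$ after a sign check. I write $x_i^{t+1}-z_j^{t+1}=(x_i^{t+1}-x_j^{t+1})+(x_j^{t+1}-z_j^{t+1})$. The second piece gives a further $-\rho\beta^{-1}\|x_j^{t+1}-z_j^{t+1}\|^2\le0$, which I drop. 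For the first piece I apply Young's inequality and absorb it against that square; summed over ordered neighbor pairs, this leaves at most $\frac{\rho}{\beta}$ times the edge differences, that is, $\frac{\rho}{\beta}S^{t+1}$, with each edge counted twice and halved by Young. Counting each $j$ once per neighbor gives $-\rho\sum_j d_jD_\phi(z_j^{t+1},z_j^t)\le-\rho d_{\min}d_z^t$.

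I expect the main obstacle to be exactly this bookkeeping of the $\rho$ cross terms. The Young split has to exactly cancel the $\|x_j-z_j\|^2$ contributions so that the $a_e$ coefficient stays $c_0/\beta$ and the consensus coefficient is precisely $\rho/\beta$. Doing this requires correct accounting of the orientation and of the double counting between $S^{t+1}$ and the ordered-neighbor sum. If the constants do not close exactly, I would instead keep the negative $\rho\|x_j-z_j\|^2/\beta$ term fully and apply Young with weight one, checking $\langle a,b\rangle-\|b\|^2\le\|a\|^2/4$ per ordered pair, which yields $\rho S^{t+1}/(2\beta)\le\rho S^{t+1}/\beta$.
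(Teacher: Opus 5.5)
Your proposal is correct and follows essentially the paper's proof. You use the same split into a frozen-reference primal step (relative smoothness of $F_i(\cdot,Z_i^t)$ with constant $L_++c_0+\rho d_i$, the primal optimality condition, and the symmetric three-point bound) and a frozen-primal reference step (the three-point identity, smoothing optimality, and Young's inequality on the neighbor cross terms). One small fix: the neighbor inner product is $+\beta^{-1}\langle z_j^{t+1}-x_j^{t+1},x_i^{t+1}-z_j^{t+1}\rangle$, which is the sign your later negative-square step actually uses; keeping that square to absorb the Young term gives $\frac{\rho}{2\beta}S^{t+1}\le\frac{\rho}{\beta}S^{t+1}$, as in your fallback.
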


\begin{proof}
Fix $Z_i^t$. Relative smoothness of $f_i$ and the identity
\[
D_\phi(u,z)-D_\phi(v,z)-\langle\nabla\phi(v)-\nabla\phi(z),u-v\rangle=D_\phi(u,v)
\]
give
\[
F_i(x_i^{t+1},Z_i^t)-F_i(x_i^t,Z_i^t)
\le
\langle\nabla_xF_i(x_i^t,Z_i^t),x_i^{t+1}-x_i^t\rangle
+(L_++c_0+\rho d_i)D_\phi(x_i^{t+1},x_i^t).
\]
In the smooth case $r_i\equiv0$, the primal optimality condition is
\[
\nabla K_i(x_i^t;Y_i^{t+1},Z_i^t)
+\frac1\lambda(\nabla\phi(x_i^{t+1})-\nabla\phi(x_i^t))=0.
\]
Taking the inner product with $x_i^{t+1}-x_i^t$ gives
\[
\langle\nabla K_i(x_i^t;Y_i^{t+1},Z_i^t),x_i^{t+1}-x_i^t\rangle
=-\frac1\lambda\big(D_\phi(x_i^{t+1},x_i^t)+D_\phi(x_i^t,x_i^{t+1})\big)
\le -\frac1\lambda D_\phi(x_i^{t+1},x_i^t).
\]
Since $\nabla K_i=\nabla_xF_i+\sum_{j\in\mathcal N_i}y_{ij}^{t+1}$, summing over $i$ yields
\begin{equation}\label{det:eq:x-change}
\sum_i\big(F_i(x_i^{t+1},Z_i^t)-F_i(x_i^t,Z_i^t)\big)
\le -a_xd_x^t-\langle y^{t+1},s^{t+1}-s^t\rangle.
\end{equation}
The smoothing optimality condition is
\[
z_i^{t+1}-x_i^{t+1}+\beta(\nabla\phi(z_i^{t+1})-\nabla\phi(z_i^t))=0.
\]
Using the Bregman three-point identity, the own-reference change gives
\[
D_\phi(x_i^{t+1},z_i^{t+1})-D_\phi(x_i^{t+1},z_i^t)
=-D_\phi(z_i^{t+1},z_i^t)-\frac1\beta\|x_i^{t+1}-z_i^{t+1}\|^2.
\]
For neighbor references, the same identity gives
\[
D_\phi(x_i^{t+1},z_j^{t+1})-D_\phi(x_i^{t+1},z_j^t)
=-D_\phi(z_j^{t+1},z_j^t)
+\frac1\beta\langle z_j^{t+1}-x_j^{t+1},x_i^{t+1}-z_j^{t+1}\rangle.
\]
After summing over edges and using Young's inequality, the neighbor-reference contribution is bounded by
\[
-\rho\sum_j d_jD_\phi(z_j^{t+1},z_j^t)+\frac{\rho}{\beta}S^{t+1}.
\]
Combining this estimate with the own-reference term gives
\begin{equation}\label{det:eq:z-change}
\sum_i\big(F_i(x_i^{t+1},Z_i^{t+1})-F_i(x_i^{t+1},Z_i^t)\big)
\le -a_zd_z^t-a_ee^t+\frac{\rho}{\beta}S^{t+1}.
\end{equation}
Adding \eqref{det:eq:x-change} and \eqref{det:eq:z-change} proves the claim.
\end{proof}

Adding the dual update gives the following Lyapunov inequality. The term $\tau S^t$ later absorbs the future consensus error.

\begin{lemma}[Baseline Lyapunov inequality]\label{det:lem:baseline}
For $\tau>0$, define
\[
\mathcal L^t:=F^t+\langle y^t,s^t\rangle+\tau S^t.
\]
Then
\begin{equation}\label{det:eq:baseline}
\mathcal L^{t+1}-\mathcal L^t
\le
-a_xd_x^t-a_zd_z^t-a_e e^t
-(\tau-\alpha)S^t+qS^{t+1},
\qquad q:=\tau+\frac{\rho}{\beta}.
\end{equation}
\end{lemma}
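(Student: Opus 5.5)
The plan is to add the one-step surrogate descent of Lemma~\ref{det:lem:surrogate-descent} to the exact change of the two remaining Lyapunov terms, $\langle y^t,s^t\rangle$ and $\tau S^t$. No new estimate is needed beyond the dual update $y^{t+1}=y^t+\alpha s^t$. Write
\[
\mathcal L^{t+1}-\mathcal L^t
=(F^{t+1}-F^t)+\big(\langle y^{t+1},s^{t+1}\rangle-\langle y^t,s^t\rangle\big)+\tau(S^{t+1}-S^t),
\]
and bound the first bracket by \eqref{det:eq:surrogate-descent}.

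For the dual bracket, split it as
\[
\langle y^{t+1},s^{t+1}\rangle-\langle y^t,s^t\rangle
=\langle y^{t+1},s^{t+1}-s^t\rangle+\langle y^{t+1}-y^t,s^t\rangle .
\]
The first piece cancels exactly the term $-\langle y^{t+1},s^{t+1}-s^t\rangle$ in \eqref{det:eq:surrogate-descent}. This cancellation is the reason the descent lemma was written with $y^{t+1}$ rather than $y^t$. The second piece equals $\alpha\|s^t\|^2=\alpha S^t$ by the stacked dual update.

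Collecting terms gives
\[
\mathcal L^{t+1}-\mathcal L^t\le -a_xd_x^t-a_zd_z^t-a_ee^t+\frac\rho\beta S^{t+1}+\alpha S^t+\tau S^{t+1}-\tau S^t,
\]
which is \eqref{det:eq:baseline} with $q=\tau+\rho/\beta$.

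There is no real obstacle here. The only point to check is that the inner product $\langle y^{t+1},s^{t+1}-s^t\rangle$ in Lemma~\ref{det:lem:surrogate-descent} uses the same oriented-edge pairing as $\langle y^t,s^t\rangle$, that is, $\sum_i\sum_{j\in\mathcal N_i}y_{ij}^\top x_i=\langle y,\mathcal B^\top x\rangle$. This identity follows from skew-symmetry $y_{ji}=-y_{ij}$ and the definition of $\mathcal B$. It was already used implicitly in \eqref{det:eq:x-change}.
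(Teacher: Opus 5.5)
Your proposal is correct and follows the paper's proof: the paper uses the same identity $\langle y^{t+1},s^{t+1}\rangle-\langle y^t,s^t\rangle=\langle y^{t+1},s^{t+1}-s^t\rangle+\alpha S^t$ from $y^{t+1}-y^t=\alpha s^t$. It then adds this identity and $\tau(S^{t+1}-S^t)$ to \eqref{det:eq:surrogate-descent}, exactly as you do. Your check that $\sum_i\sum_{j\in\mathcal N_i}y_{ij}^\top x_i=\langle y,\mathcal B^\top x\rangle$ under skew-symmetry is a harmless extra consistency remark.
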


\begin{proof}
The identity
\[
\langle y^{t+1},s^{t+1}\rangle-\langle y^t,s^t\rangle
=\langle y^{t+1},s^{t+1}-s^t\rangle+\alpha S^t
\]
follows from $y^{t+1}-y^t=\alpha s^t$. Add this identity and $\tau(S^{t+1}-S^t)$ to \eqref{det:eq:surrogate-descent}.
\end{proof}

The future disagreement term is bounded by comparing two consecutive primal optimality systems.

\begin{lemma}[Consensus controlled by step errors]\label{det:lem:consensus-by-steps}
There exist finite constants $\widehat\Gamma_x^+,\widehat\Gamma_x^-,\widehat\Gamma_z$ such that, for every $t\ge1$,
\begin{equation}\label{det:eq:consensus-by-steps}
S^t\le\frac1{\alpha^2}\left(\widehat\Gamma_x^+d_x^t+\widehat\Gamma_x^-d_x^{t-1}+\widehat\Gamma_zd_z^{t-1}\right).
\end{equation}
The constants depend only on the localized model constants, $\lambda,c_0,\rho$, and the graph constants, and are independent of the dual stepsize $\alpha$.
\end{lemma}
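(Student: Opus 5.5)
We compare two consecutive primal optimality systems, in the smooth setting where $r_i\equiv0$. The primal step at time $t$ reads, in stacked form,
\[
\mathcal B y^{t+1}=-g^t-p^t,\qquad g_i^t=\nabla_xF_i(x_i^t,Z_i^t),\quad p_i^t=\tfrac1\lambda\big(\nabla\phi(x_i^{t+1})-\nabla\phi(x_i^t)\big).
\]
Subtracting the same identity at $t-1$ and using $y^{t+1}-y^t=\alpha s^t$ gives
\[
\alpha\,\mathcal B s^t=-(g^t-g^{t-1})-(p^t-p^{t-1}).
\]

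The next step converts this into a bound on $S^t$. Since $s^t=\mathcal B^\top x^t\in\operatorname{range}(\mathcal B^\top)$, the range bound \eqref{det:eq:graph-range} applies and yields
\[
\alpha^2 S^t\le\sigma_G^{-2}\|\mathcal Bs^t\|^2\le 2\sigma_G^{-2}\big(\|g^t-g^{t-1}\|^2+\|p^t-p^{t-1}\|^2\big).
\]
It then remains to bound the two differences by the Bregman motion quantities, with constants that do not involve $\alpha$.

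\emph{Gradient term.} By Lemma~\ref{lem:local-variation-from-relative}, applied with $u_i=x_i^t$, $v_i=x_i^{t-1}$ and reference collections $Z_i^t$, $Z_i^{t-1}$ (all entries lie in $K$),
\[
\|g^t-g^{t-1}\|^2\le L_{F,K}\Big(\sum_i\|x_i^t-x_i^{t-1}\|^2+(d_{\max}+1)\sum_j\|z_j^t-z_j^{t-1}\|^2\Big).
\]
Each node's reference appears in at most $d_{\max}+1$ neighborhoods, which gives the factor $d_{\max}+1$.

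\emph{Mirror term.} The $L_\phi$-Lipschitz bound on $\nabla\phi$ gives
\[
\|p^t-p^{t-1}\|^2\le \frac{2L_\phi^2}{\lambda^2}\Big(\sum_i\|x_i^{t+1}-x_i^t\|^2+\sum_i\|x_i^t-x_i^{t-1}\|^2\Big).
\]

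Finally, strong convexity on $K$, namely $\|u-v\|^2\le \tfrac2\mu D_\phi(u,v)$, turns the squared displacements into $d_x^t$, $d_x^{t-1}$ and $d_z^{t-1}$. Collecting coefficients gives \eqref{det:eq:consensus-by-steps} with
\[
\widehat\Gamma_x^+=\frac{8L_\phi^2}{\sigma_G^2\mu\lambda^2},\qquad
\widehat\Gamma_x^-=\frac{4}{\sigma_G^2\mu}\Big(L_{F,K}+\frac{2L_\phi^2}{\lambda^2}\Big),\qquad
\widehat\Gamma_z=\frac{4L_{F,K}(d_{\max}+1)}{\sigma_G^2\mu}.
\]
These depend only on the localized constants, on $\lambda$, on $c_0$ and $\rho$ (through $L_{F,K}$), and on the graph. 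They do not involve $\alpha$.

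The main point requiring care is that the dual variables must cancel exactly, up to $\alpha\mathcal B s^t$, and must not enter $g$. This holds because $\nabla K_i=\nabla_xF_i+\sum_j y_{ij}^{t+1}$ separates the multiplier term linearly. The other point to check is that $L_{F,K}$ is independent of $\alpha$. Forward invariance (Assumption~\ref{ass:standing-local}(d)) keeps every point in $K$ whatever the multipliers are, so the constants of Lemma~\ref{lem:local-variation-from-relative} are measured on $K$ alone and do not depend on the dual sequence or on $\alpha$. The requirement $t\ge1$ is needed so that the system at $t-1$ exists.
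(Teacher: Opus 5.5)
Your proof is correct and follows the paper's argument step for step: you subtract consecutive stacked optimality systems, apply the range inequality to $s^t=\mathcal B^\top x^t$, and bound the $g$- and $p$-differences via Lemma~\ref{lem:local-variation-from-relative} and the Bregman lower bound, and your explicit constants check out. One cosmetic slip: the middle term of your range-bound chain should read $\sigma_G^{-2}\|\alpha\mathcal B s^t\|^2$, since it is $\alpha\mathcal B s^t$, not $\mathcal B s^t$, that equals $-(g^t-g^{t-1})-(p^t-p^{t-1})$.
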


\begin{proof}
Let
\[
g_i^t:=\nabla_xF_i(x_i^t,Z_i^t),
\qquad
p_i^t:=\frac1\lambda(\nabla\phi(x_i^{t+1})-\nabla\phi(x_i^t)).
\]
The stacked primal optimality system is
\begin{equation}\label{det:eq:stacked-opt}
g^t+\mathcal B y^{t+1}+p^t=0.
\end{equation}
Subtracting the relation at time $t-1$ yields
\[
\mathcal B(y^{t+1}-y^t)=-(g^t-g^{t-1})-(p^t-p^{t-1}).
\]
Since $y^{t+1}-y^t=\alpha s^t$ and $s^t\in\operatorname{range}(\mathcal B^\top)$, \eqref{det:eq:graph-range} gives
\[
\alpha^2\sigma_G^2S^t\le 2\|g^t-g^{t-1}\|^2+2\|p^t-p^{t-1}\|^2.
\]
Lemma~\ref{lem:local-variation-from-relative} and the Bregman lower bound imply
\[
\|g^t-g^{t-1}\|^2\le C_1d_x^{t-1}+C_2d_z^{t-1},
\qquad
\|p^t-p^{t-1}\|^2\le C_3(d_x^t+d_x^{t-1}).
\]
Absorbing constants proves the estimate.
\end{proof}

The telescoping argument also requires a uniform lower bound for the Lyapunov sequence.

\begin{lemma}[Lower boundedness]\label{det:lem:lower-bound}
There exists $\mathcal L_\star>-\infty$ such that $\mathcal L^t\ge\mathcal L_\star$ for all $t$ in the smooth deterministic case.
\end{lemma}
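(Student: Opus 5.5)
We bound the three pieces of $\mathcal L^t=F^t+\langle y^t,s^t\rangle+\tau S^t$ separately. The last term is nonnegative. The first term is bounded below on the compact set $K$. The dual cross term is bounded through the smooth primal optimality system, in the same way as in the proof of Proposition~\ref{prop:lyapunov-lower-sufficient} with $\xi\equiv0$. By Lemma~\ref{lem:local-variation-from-relative}, every $x_i^t$ and $z_i^t$ lies in $K$. Since $\phi$ is convex and the Bregman divergences in $F_i$ are nonnegative,
\[
F^t\ge\sum_i f_i(x_i^t)\ge \sum_i\min_{K}f_i=:F_\star>-\infty .
\]
The minimum exists because each $f_i$ is continuous on the compact set $K$.

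For the cross term, Cauchy--Schwarz gives $|\langle y^t,s^t\rangle|\le\|y^t\|\,\|s^t\|$. Because $K$ is compact, $\|s^t\|=\|\mathcal B^\top x^t\|\le\|\mathcal B\|\,\sqrt N\,\operatorname{diam}$-type bounds hold; we write $\|s^t\|\le C_s$ for a constant determined by $K$ and the graph. To bound $\|y^t\|$, first note that $y^0=0$ and $y^{t+1}=y^t+\alpha s^t$ with $s^t\in\operatorname{range}(\mathcal B^\top)$. Hence $y^t\in\operatorname{range}(\mathcal B^\top)$, and \eqref{det:eq:graph-range} gives $\|y^t\|\le\sigma_G^{-1}\|\mathcal B y^t\|$. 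For $t\ge1$ we apply the stacked optimality system \eqref{det:eq:stacked-opt} at time $t-1$:
\[
\mathcal B y^{t}=-g^{t-1}-p^{t-1}.
\]
Both terms on the right are bounded uniformly in $t$. Indeed, $g_i^{t-1}=\nabla_xF_i(x_i^{t-1},Z_i^{t-1})$ is a continuous function of arguments lying in the compact set $K$; concretely, $\|\nabla f_i\|$ is bounded on $K$, and $\|\nabla\phi(u)-\nabla\phi(v)\|\le L_\phi\operatorname{diam}(K)$. Likewise $\|p_i^{t-1}\|\le\lambda^{-1}L_\phi\operatorname{diam}(K)$. Therefore $\|y^t\|\le C_y$ for all $t$, and at $t=0$ we have $y^0=0$ trivially.

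Combining the three estimates gives
\[
\mathcal L^t\ge F_\star-C_yC_s=:\mathcal L_\star>-\infty
\]
for all $t$. The one step that needs care is controlling the multiplier sequence $y^t$. The recursion alone gives only the growing bound $\|y^t\|\le\alpha\sum_k\|s^k\|$, which is useless here. The argument instead relies on two facts together: the range restriction from the zero initialization, which allows the inversion of $\mathcal B$ in \eqref{det:eq:graph-range}, and the exact optimality identity \eqref{det:eq:stacked-opt}. This identity holds because the subproblems are solved exactly in the smooth case, where $r_i\equiv0$ and no subgradient selection enters.
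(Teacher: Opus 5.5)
Your proposal is correct and follows essentially the same argument as the paper: nonnegativity of the Bregman terms and of $\tau S^t$, a lower bound on each $f_i$ over the compact set $K$, and control of $y^t$ through the inclusion $y^t\in\operatorname{range}(\mathcal B^\top)$, the range inequality \eqref{det:eq:graph-range}, and the stacked optimality identity $\mathcal B y^t=-g^{t-1}-p^{t-1}$, combined with the compactness bound on $s^t$. Your explicit bounds on $g^{t-1}$, $p^{t-1}$, and $s^t$, and your separate treatment of $t=0$, only spell out details that the paper leaves implicit.
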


\begin{proof}
Continuity on the compact set $K$ bounds each $f_i$ below, and the Bregman terms are nonnegative, so $F^t$ is bounded below. It remains to control the dual linear term. From \eqref{det:eq:stacked-opt} at time $t-1$,
\[
\mathcal B y^t=-g^{t-1}-p^{t-1}.
\]
The right-hand side is bounded on $K$. Since $y^t\in\operatorname{range}(\mathcal B^\top)$, \eqref{det:eq:graph-range} bounds $y^t$. Compactness bounds $s^t$, and therefore $\langle y^t,s^t\rangle$ is bounded below. The term $\tau S^t$ is nonnegative.
\end{proof}

Under the admissible parameter inequalities, these estimates make the accumulated primal and smoothing motion finite.

\begin{lemma}[Summability of step errors]\label{det:lem:summability}
Under \eqref{eq:main-admissible-region}, there is $M_1<\infty$ such that, for every $T\ge1$,
\begin{equation}\label{det:eq:summable}
\sum_{t=0}^{T-1}(d_x^t+d_z^t+e^t)\le M_1.
\end{equation}
\end{lemma}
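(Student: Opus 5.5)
The plan is to telescope the baseline Lyapunov inequality of Lemma~\ref{det:lem:baseline} and use Lemma~\ref{det:lem:consensus-by-steps} to absorb the positive consensus term $qS^{t+1}$ into the negative motion terms. Summing \eqref{det:eq:baseline} over $t=0,\ldots,T-1$ gives
\[
\mathcal L^{T}-\mathcal L^0\le-\sum_{t=0}^{T-1}\big(a_xd_x^t+a_zd_z^t+a_ee^t\big)-(\tau-\alpha)\sum_{t=0}^{T-1}S^t+q\sum_{t=1}^{T}S^{t}.
\]
The last sum is rewritten as $q\sum_{t=1}^{T-1}S^t+qS^T$. The condition $\tau>\alpha$ makes the $S^t$ sum with coefficient $-(\tau-\alpha)$ nonpositive, so it can be dropped, although it may be kept if it helps.

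Next I would bound every $S^t$ with $t\ge1$ by \eqref{det:eq:consensus-by-steps}. This gives
\[
q\sum_{t=1}^{T}S^t\le b\sum_{t=1}^{T}\big(\widehat\Gamma_x^+d_x^t+\widehat\Gamma_x^-d_x^{t-1}+\widehat\Gamma_zd_z^{t-1}\big),
\]
with $b=q/\alpha^2$. After re-indexing, each $d_x^t$ with $0\le t\le T-1$ carries coefficient at most $b(\widehat\Gamma_x^++\widehat\Gamma_x^-)$, and each $d_z^t$ carries coefficient $b\widehat\Gamma_z$. The one leftover term is $b\widehat\Gamma_x^+d_x^T$. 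I would bound it by $b\widehat\Gamma_x^+D_K$, using the finite Bregman diameter \eqref{det:eq:compact-diameter}, since all iterates lie in $K$ by Lemma~\ref{lem:local-variation-from-relative}. An alternative is to stop the telescoping one step earlier. Collecting terms gives
\[
\sum_{t=0}^{T-1}\Big(\big[a_x-b(\widehat\Gamma_x^++\widehat\Gamma_x^-)\big]d_x^t+\big[a_z-b\widehat\Gamma_z\big]d_z^t+a_ee^t\Big)\le\mathcal L^0-\mathcal L^T+b\widehat\Gamma_x^+N D_K.
\]

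By \eqref{eq:main-admissible-region}, all three bracketed coefficients are strictly positive; $a_e=c_0/\beta>0$ holds trivially. Let $\kappa_{\min}$ be the smallest of them. Lemma~\ref{det:lem:lower-bound} gives $\mathcal L^T\ge\mathcal L_\star$, and $\mathcal L^0$ is finite because $x^0=z^0\in K$ and $y^0=0$. Therefore
\[
M_1:=\kappa_{\min}^{-1}\big(\mathcal L^0-\mathcal L_\star+b\widehat\Gamma_x^+ND_K\big)
\]
works and does not depend on $T$.

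The main obstacle is the index bookkeeping at the boundaries. Lemma~\ref{det:lem:consensus-by-steps} holds only for $t\ge1$. Here $S^0=0$ because the initial primal variables coincide, although that term is not even needed since $S^0$ appears only with a negative sign. Terms at index $t=-1$ do not arise. The top-end term $d_x^T$ is handled by the compactness bound above. A final check is that the re-indexed coefficients never exceed $b(\widehat\Gamma_x^++\widehat\Gamma_x^-)$, so the admissibility inequality is exactly what is needed.
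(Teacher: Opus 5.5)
Your proposal is correct and follows the paper's proof essentially step for step. You telescope \eqref{det:eq:baseline}, drop $-(\tau-\alpha)S^t$, bound $q\sum_{t=1}^{T}S^t$ via Lemma~\ref{det:lem:consensus-by-steps} with $b=q/\alpha^2$, control the boundary term by $b\widehat\Gamma_x^+d_x^T\le b\widehat\Gamma_x^+ND_K$, and conclude with Lemma~\ref{det:lem:lower-bound} and the strict inequalities in \eqref{eq:main-admissible-region}.

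There are two harmless slips. Your first mention of the boundary bound omits the factor $N$, which your final display restores. Also, $S^0=0$ need not hold, because the initialization only imposes $x_i^0=z_i^0$ node by node, not agreement across nodes. As you yourself note, that term enters with a negative sign and is never used.
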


\begin{proof}
Sum \eqref{det:eq:baseline} from $t=0$ to $T-1$ and drop $-(\tau-\alpha)S^t$. Lemma~\ref{det:lem:consensus-by-steps}, with $b=q/\alpha^2$, gives
\[
q\sum_{t=0}^{T-1}S^{t+1}
\le b(\widehat\Gamma_x^++\widehat\Gamma_x^-)\sum_{t=0}^{T-1}d_x^t
+b\widehat\Gamma_z\sum_{t=0}^{T-1}d_z^t
+b\widehat\Gamma_x^+d_x^T.
\]
The boundary term is at most $b\widehat\Gamma_x^+ND_K$. Using Lemma~\ref{det:lem:lower-bound} and the strict positivity in \eqref{eq:main-admissible-region} proves \eqref{det:eq:summable}.
\end{proof}

The following lemma bounds the residual in Theorem~\ref{thm:deterministic-main} by the summable quantities.

\begin{lemma}[Residual domination]\label{det:lem:residual-domination}
There exists $H<\infty$ such that, for every $t\ge1$,
\begin{equation}\label{det:eq:residual-domination}
\mathcal G_0(z^t)^2\le H(e^{t-1}+d_x^{t-1}+d_z^{t-1}+S^t).
\end{equation}
\end{lemma}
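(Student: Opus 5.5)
The approach is to compare each reference $z_i^t$ with the consensus proximal point $u_0^t$ by means of strong convexity of the consensus proximal objective. Set $\Phi_t(u):=\sum_i F_i(u,Z_i^t)$. By Assumption~\ref{ass:standing-local}(c), $f_i+L_-\phi$ is convex on $U$, and $\sum_i\big(c_0 D_\phi(u,z_i^t)+\rho\sum_{j\in\mathcal N_i}D_\phi(u,z_j^t)\big)$ equals $N(c_0+\rho\bar d)\phi(u)$ plus an affine function. Hence $\Phi_t-N(c_0+\rho\bar d-L_-)\phi$ is convex on $U$. By Lemma~\ref{lem:local-variation-from-relative}, $\Phi_t$ is therefore $\kappa:=N(c_0+\rho\bar d-L_-)\mu$-strongly convex on $K$, and this uses $c_0+\rho\bar d>L_-$ from \eqref{eq:main-admissible-region}. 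Since $u_0^t\in K$ minimizes $\Phi_t$ with $\nabla\Phi_t(u_0^t)=0$, for every $v\in K$ we obtain the error bound
\[
\kappa\|v-u_0^t\|\le\|\nabla\Phi_t(v)\|.
\]

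Take $v=\bar x^t$. The next task is to show that $\|\nabla\Phi_t(\bar x^t)\|$ is small. Summing the stacked primal optimality system \eqref{det:eq:stacked-opt} at time $t-1$ over nodes removes the dual term, because $(\mathbf 1\otimes I)^\top\mathcal B=0$. This gives
\[
\sum_i\nabla_xF_i(x_i^{t-1},Z_i^{t-1})=-\sum_i p_i^{t-1}.
\]
From this we get
\[
\nabla\Phi_t(\bar x^t)=\sum_i\big[\nabla_xF_i(\bar x^t,Z_i^t)-\nabla_xF_i(x_i^{t-1},Z_i^{t-1})\big]-\sum_ip_i^{t-1}.
\]
We bound the bracket with Lemma~\ref{lem:local-variation-from-relative} by $\|\bar x^t-x_i^{t-1}\|$ and $\|z^t-z^{t-1}\|$. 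We split $\|\bar x^t-x_i^{t-1}\|\le\|\bar x^t-x_i^t\|+\|x_i^t-x_i^{t-1}\|$. The first piece is controlled by $\sigma_G^{-2}S^t$ through \eqref{det:eq:poincare}. The second piece, together with $\|z^t-z^{t-1}\|^2$, is bounded by $(2/\mu)d_x^{t-1}$ and $(2/\mu)d_z^{t-1}$. We also use $\|p_i^{t-1}\|\le\lambda^{-1}L_\phi\|x_i^t-x_i^{t-1}\|$. Squaring with Cauchy--Schwarz then gives
\[
\|\bar x^t-u_0^t\|^2\le C(S^t+d_x^{t-1}+d_z^{t-1}).
\]

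For each $i$, write $\|z_i^t-u_0^t\|\le\|z_i^t-x_i^t\|+\|x_i^t-\bar x^t\|+\|\bar x^t-u_0^t\|$. The first term is at most $(e^{t-1})^{1/2}$ by the definition of $e^{t-1}$. The second is at most $\sigma_G^{-1}(S^t)^{1/2}$ by \eqref{det:eq:poincare}. Taking the maximum over $i$ and squaring gives \eqref{det:eq:residual-domination} with $H$ depending only on $N,\mu,L_\phi,L_{F,K},\lambda,\sigma_G$ and $c_0+\rho\bar d-L_-$.

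The main obstacle is the strong-convexity step. Relative weak convexity is stated on $U$, while the Bregman lower bound $\mu$ is stated on $K$. To pass from convexity of $\Phi_t-N(c_0+\rho\bar d-L_-)\phi$ to the Euclidean error bound, I would use monotonicity of the gradient of that difference on the convex set $K$, combined with $\langle\nabla\phi(v)-\nabla\phi(u),v-u\rangle\ge\mu\|v-u\|^2$ on $K$. This requires $\bar x^t\in K$, which holds because $K$ is convex and every $x_i^t\in K$. It also requires the gradient optimality of $u_0^t$, which holds because $u_0^t\in K\Subset\mathcal C$ is an interior minimizer.
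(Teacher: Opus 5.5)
Your proposal is correct and follows essentially the same route as the paper's proof. Both arguments get strong convexity of $\sum_i F_i(\cdot,Z_i^t)$ on $K$ with modulus $N\mu(c_0+\rho\bar d-L_-)$, which gives the gradient error bound at $v=\bar x^t$. Both then cancel the edge multipliers by summing the stacked optimality system at time $t-1$ and finish with the triangle inequality through $x_i^t$ and $\bar x^t$. Your strong-monotonicity argument on the convex set $K$, with $\bar x^t\in K$ and $\nabla\Phi_t(u_0^t)=0$ at the interior minimizer, just spells out the step the paper states in one line.
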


\begin{proof}
Let $u_0^t$ be the common component of the consensus proximal point for $z^t$ and define
\[
\psi_t(u):=\sum_{i=1}^N F_i(u,Z_i^t).
\]
The total coefficient of $\phi(u)$ in $\psi_t$ is $Nc_0+\rho\sum_i d_i=N(c_0+\rho\bar d)$. Relative weak convexity and $c_0+\rho\bar d>L_-$ therefore make $\psi_t$ strongly convex on $K$ with modulus
\[
m=N\mu(c_0+\rho\bar d-L_-)>0.
\]
For a differentiable $m$-strongly convex function, $\|v-u_0^t\|\le m^{-1}\|\nabla\psi_t(v)\|$. Apply this with $v=\bar x^t$. Summing \eqref{det:eq:stacked-opt} at time $t-1$ cancels the edge multipliers and gives
\[
\sum_i\nabla_xF_i(x_i^{t-1},Z_i^{t-1})+\sum_i p_i^{t-1}=0.
\]
Thus $\|\nabla\psi_t(\bar x^t)\|^2$ is bounded by $C(d_x^{t-1}+d_z^{t-1}+S^t)$ using the local model-gradient variation, the Bregman lower bound, and \eqref{det:eq:poincare}. Hence
\[
\|\bar x^t-u_0^t\|^2\le C(d_x^{t-1}+d_z^{t-1}+S^t).
\]
Finally,
\[
\|z_i^t-u_0^t\|^2\le3\|z_i^t-x_i^t\|^2+3\|x_i^t-\bar x^t\|^2+3\|\bar x^t-u_0^t\|^2,
\]
and the definitions of $e^{t-1}$ and $S^t$ prove the claim.
\end{proof}

\begin{proof}[Proof of Theorem~\ref{thm:deterministic-main}]
Lemma~\ref{det:lem:summability} bounds $\sum_{t=0}^{T-1}(d_x^t+d_z^t+e^t)$. Lemma~\ref{det:lem:consensus-by-steps} then bounds $\sum_{t=1}^{T}S^t$. Summing Lemma~\ref{det:lem:residual-domination} gives $\sum_{t=1}^{T}\mathcal G_0(z^t)^2\le M_2$ for a constant independent of $T$. Therefore
\[
\min_{1\le k\le T}\mathcal G_0(z^k)^2\le \frac1T\sum_{t=1}^{T}\mathcal G_0(z^t)^2\le\frac{M_2}{T}.
\]
The $O(\epsilon^{-2})$ statement follows by taking $T\ge M_2/\epsilon^2$.
\end{proof}

\section{Proof of Composite Deterministic SDBPG Convergence}\label{app:composite-proof}
The deterministic composite proof adds the convex regularizers to the primal energy and otherwise uses the Lyapunov sequence, graph notation, and step quantities from Appendix~\ref{app:deterministic-proof}. Define
\[
H^t:=\sum_i\big(F_i(x_i^t,Z_i^t)+r_i(x_i^t)\big),
\qquad
\mathcal L_r^t:=H^t+\langle y^t,s^t\rangle+\tau S^t.
\]

Convexity of $r_i$ preserves the descent inequality when the smooth optimality equation is replaced by the composite variational inequality.

\begin{lemma}[Composite one-step surrogate descent]\label{comp:lem:surrogate-descent}
For every $t\ge0$,
\begin{equation}\label{comp:eq:surrogate-descent}
H^{t+1}-H^t
\le
-a_xd_x^t-a_zd_z^t-a_e e^t
-\langle y^{t+1},s^{t+1}-s^t\rangle
+\frac{\rho}{\beta}S^{t+1}.
\end{equation}
\end{lemma}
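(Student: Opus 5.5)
The plan is to follow the proof of Lemma~\ref{det:lem:surrogate-descent} closely. We split $H^{t+1}-H^t$ into two pieces. The first is the primal change at fixed references,
\[
\sum_i\big(F_i(x_i^{t+1},Z_i^t)+r_i(x_i^{t+1})-F_i(x_i^t,Z_i^t)-r_i(x_i^t)\big).
\]
The second is the reference change $\sum_i\big(F_i(x_i^{t+1},Z_i^{t+1})-F_i(x_i^{t+1},Z_i^t)\big)$. The regularizers $r_i$ depend only on $x_i$, so they cancel from the second piece. That piece is exactly the quantity bounded in \eqref{det:eq:z-change}, whose derivation uses only the smoothing optimality condition and the Bregman three-point identity. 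Since the $z$-update is unchanged in Algorithm~\ref{alg:sdbpg}, that bound carries over verbatim and gives $-a_zd_z^t-a_ee^t+\frac{\rho}{\beta}S^{t+1}$.

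For the first piece, fix $Z_i^t$. As before, relative smoothness of $f_i$ together with the three-point identity for the Bregman terms gives
\[
F_i(x_i^{t+1},Z_i^t)-F_i(x_i^t,Z_i^t)\le\langle\nabla_xF_i(x_i^t,Z_i^t),x_i^{t+1}-x_i^t\rangle+(L_++c_0+\rho d_i)D_\phi(x_i^{t+1},x_i^t).
\]
In place of the smooth optimality equation, we use the composite one from Definition~\ref{def:subgradient-motion-defects}(b). It provides $\xi_i^{t+1}\in\partial r_i(x_i^{t+1})$ with $\xi_i^{t+1}=-\nabla K_i^t-\lambda^{-1}(\nabla\phi(x_i^{t+1})-\nabla\phi(x_i^t))$, and this is available by Assumption~\ref{ass:composite-regularizer}. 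Convexity of $r_i$ gives the subgradient inequality $r_i(x_i^t)\ge r_i(x_i^{t+1})+\langle\xi_i^{t+1},x_i^t-x_i^{t+1}\rangle$. Substituting the expression for $\xi_i^{t+1}$ yields
\[
r_i(x_i^{t+1})-r_i(x_i^t)\le-\langle\nabla K_i^t,x_i^{t+1}-x_i^t\rangle-\tfrac1\lambda\big(D_\phi(x_i^{t+1},x_i^t)+D_\phi(x_i^t,x_i^{t+1})\big).
\]
Here the symmetric Bregman sum comes from the identity $\langle\nabla\phi(a)-\nabla\phi(b),a-b\rangle=D_\phi(a,b)+D_\phi(b,a)$. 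We then drop the nonnegative term $D_\phi(x_i^t,x_i^{t+1})$.

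Adding the two inequalities, the $\nabla_xF_i$ parts cancel, because $\nabla K_i^t=\nabla_xF_i(x_i^t,Z_i^t)+\sum_{j\in\mathcal N_i}y_{ij}^{t+1}$. What remains is $-\langle\sum_jy_{ij}^{t+1},x_i^{t+1}-x_i^t\rangle$ plus $(L_++c_0+\rho d_i-1/\lambda)D_\phi(x_i^{t+1},x_i^t)$. We sum over $i$ and use $d_i\le d_{\max}$ to get the coefficient $-a_x$ on $d_x^t$. Skew-symmetry $y_{ji}=-y_{ij}$ rewrites the edge sum as $\langle y^{t+1},\mathcal B^\top(x^{t+1}-x^t)\rangle=\langle y^{t+1},s^{t+1}-s^t\rangle$, as in \eqref{det:eq:x-change}. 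Combining with the reference-change bound gives \eqref{comp:eq:surrogate-descent}.

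The only genuinely new point, and the one to check carefully, is that the convex subgradient inequality replaces the equality. This requires $r_i$ to be finite at both $x_i^t$ and $x_i^{t+1}$, which Assumption~\ref{ass:composite-regularizer} and localization in $K$ guarantee. It also requires the selected $\xi_i^{t+1}$ to be a genuine convex subgradient. No Lipschitz property of $r_i$ is needed, so no defect appears in this step.
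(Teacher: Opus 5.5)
Your proposal is correct and follows essentially the paper's argument: the paper tests the composite variational inequality of the primal subproblem at $x=x_i^t$, which is the same as your subgradient inequality for the selected $\xi_i^{t+1}$, and it likewise reuses the unchanged reference-change estimate \eqref{det:eq:z-change}. Your write-up additionally spells out the cancellation of the $\nabla_xF_i$ terms and the rewriting of the edge sum as $\langle y^{t+1},s^{t+1}-s^t\rangle$, which the paper leaves implicit.
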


\begin{proof}
The descent argument of Lemma~\ref{det:lem:surrogate-descent} carries over with the composite variational inequality
\[
\left\langle
\nabla K_i(x_i^t;Y_i^{t+1},Z_i^t)+\frac1\lambda(\nabla\phi(x_i^{t+1})-\nabla\phi(x_i^t)),
 x-x_i^{t+1}\right\rangle+r_i(x)-r_i(x_i^{t+1})\ge0.
\]
Taking $x=x_i^t$ gives the same descent inequality with the difference $r_i(x_i^{t+1})-r_i(x_i^t)$ included. The reference-change estimate is unchanged.
\end{proof}

The same dual correction gives the composite Lyapunov recursion.

\begin{lemma}[Composite baseline Lyapunov inequality]\label{comp:lem:baseline}
For every $t\ge0$,
\begin{equation}\label{comp:eq:baseline}
\mathcal L_r^{t+1}-\mathcal L_r^t
\le
-a_xd_x^t-a_zd_z^t-a_e e^t
-(\tau-\alpha)S^t+qS^{t+1}.
\end{equation}
\end{lemma}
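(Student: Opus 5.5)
The plan is to mirror the proof of Lemma~\ref{det:lem:baseline}, with $H^t$ in place of $F^t$ and Lemma~\ref{comp:lem:surrogate-descent} in place of Lemma~\ref{det:lem:surrogate-descent}. The composite regularizers enter only through $H^t$, and the edge-multiplier and consensus terms of $\mathcal L_r^t$ are identical to those of $\mathcal L^t$. So the whole argument reduces to the algebra of the dual update, which the composite modification does not affect.

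First, decompose the increment as
\[
\mathcal L_r^{t+1}-\mathcal L_r^t
=(H^{t+1}-H^t)
+\big(\langle y^{t+1},s^{t+1}\rangle-\langle y^t,s^t\rangle\big)
+\tau(S^{t+1}-S^t).
\]
Second, use the dual update $y^{t+1}=y^t+\alpha s^t$ from Algorithm~\ref{alg:sdbpg} to write the middle bracket as
\[
\langle y^{t+1},s^{t+1}-s^t\rangle+\langle y^{t+1}-y^t,s^t\rangle
=\langle y^{t+1},s^{t+1}-s^t\rangle+\alpha S^t .
\]
Third, insert the bound \eqref{comp:eq:surrogate-descent} for $H^{t+1}-H^t$. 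Its term $-\langle y^{t+1},s^{t+1}-s^t\rangle$ cancels exactly against the first term of the middle bracket. What remains is
\[
-a_xd_x^t-a_zd_z^t-a_ee^t+\frac{\rho}{\beta}S^{t+1}+\alpha S^t+\tau S^{t+1}-\tau S^t .
\]
Grouping the consensus terms gives $-(\tau-\alpha)S^t+(\tau+\rho/\beta)S^{t+1}=-(\tau-\alpha)S^t+qS^{t+1}$, which is \eqref{comp:eq:baseline}.

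There is essentially no obstacle once Lemma~\ref{comp:lem:surrogate-descent} is available. The only point to check is that the composite descent lemma produces the cross term $-\langle y^{t+1},s^{t+1}-s^t\rangle$ with exactly the same sign and multiplier $y^{t+1}$ as in the smooth case. It does, because the linear term $\sum_{j\in\mathcal N_i}y_{ij}^{t+1\top}x$ enters $\nabla K_i$ identically in the variational inequality, and summing over nodes gives $\langle \mathcal B y^{t+1},x^{t+1}-x^t\rangle=\langle y^{t+1},s^{t+1}-s^t\rangle$. Note also that no sign condition on $\tau-\alpha$ is needed at this stage; the inequality holds for every $\tau>0$.
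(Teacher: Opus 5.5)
Your proposal is correct and matches the paper's proof: the paper likewise adds the dual identity $\langle y^{t+1},s^{t+1}\rangle-\langle y^t,s^t\rangle=\langle y^{t+1},s^{t+1}-s^t\rangle+\alpha S^t$ and the term $\tau(S^{t+1}-S^t)$ to the composite descent bound \eqref{comp:eq:surrogate-descent}, so the cross terms cancel and the consensus terms combine into $-(\tau-\alpha)S^t+qS^{t+1}$. Your remark that no sign condition on $\tau-\alpha$ is needed here is also right; that condition is used only later, in the summability argument.
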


\begin{proof}
Add the dual identity used in Lemma~\ref{det:lem:baseline} and $\tau(S^{t+1}-S^t)$ to \eqref{comp:eq:surrogate-descent}.
\end{proof}

The graph-disagreement estimate now includes the change in the selected subgradients. The part not controlled by step errors is $\eta_t^r$.

\begin{lemma}[Composite consensus controlled by step errors]\label{comp:lem:consensus-by-steps}
There exist finite constants $\widehat\Gamma_{r,x}^+,\widehat\Gamma_{r,x}^-,\widehat\Gamma_{r,z},\widehat\Gamma_{r,\eta}$ such that, for $t\ge1$,
\begin{equation}\label{comp:eq:consensus-by-steps}
S^t\le\frac1{\alpha^2}\left(\widehat\Gamma_{r,x}^+d_x^t+\widehat\Gamma_{r,x}^-d_x^{t-1}+\widehat\Gamma_{r,z}d_z^{t-1}+\widehat\Gamma_{r,\eta}\eta_t^r\right).
\end{equation}
\end{lemma}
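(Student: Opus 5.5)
I will follow the proof of Lemma~\ref{det:lem:consensus-by-steps}, now carrying the selected subgradients through the stacked optimality system. With $g_i^t:=\nabla_xF_i(x_i^t,Z_i^t)$ and $p_i^t:=\lambda^{-1}(\nabla\phi(x_i^{t+1})-\nabla\phi(x_i^t))$, Definition~\ref{def:subgradient-motion-defects}(b) stacks to
\[
g^t+\mathcal B y^{t+1}+p^t+\xi^{t+1}=0,\qquad t\ge0 .
\]
For $t\ge1$, I subtract the same relation at time $t-1$ and use $y^{t+1}-y^t=\alpha s^t$. This gives
\[
\alpha\,\mathcal B s^t=-(g^t-g^{t-1})-(p^t-p^{t-1})-(\xi^{t+1}-\xi^t).
\]
Since $s^t=\mathcal B^\top x^t\in\operatorname{range}(\mathcal B^\top)$, the range bound \eqref{det:eq:graph-range} applies. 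Combining it with $\|a+b+c\|^2\le3(\|a\|^2+\|b\|^2+\|c\|^2)$ yields
\[
\alpha^2\sigma_G^2S^t\le3\|g^t-g^{t-1}\|^2+3\|p^t-p^{t-1}\|^2+3\sum_i\|\xi_i^{t+1}-\xi_i^t\|^2 .
\]

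The first two terms are handled exactly as in the smooth case. By Lemma~\ref{lem:local-variation-from-relative}, all iterates lie in $K$, so $\|g^t-g^{t-1}\|^2\le L_{F,K}(\sum_i\|x_i^t-x_i^{t-1}\|^2+(1+d_{\max})\sum_j\|z_j^t-z_j^{t-1}\|^2)$. The Bregman lower bound $D_\phi\ge(\mu/2)\|\cdot\|^2$ then turns this into $C_1d_x^{t-1}+C_2d_z^{t-1}$. In the same way, the Lipschitz bound on $\nabla\phi$ over $K$ gives $\|p^t-p^{t-1}\|^2\le(2L_\phi^2/\lambda^2)(2/\mu)(d_x^t+d_x^{t-1})$.

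For the subgradient term, I use the defining inequality of the defect, $a\le[a-b]_++b$, which gives
\[
\sum_i\|\xi_i^{t+1}-\xi_i^t\|^2\le M_{T\xi}(d_x^t+d_x^{t-1}+d_z^{t-1})+\eta_t^r .
\]
Collecting terms and dividing by $\alpha^2\sigma_G^2$ gives \eqref{comp:eq:consensus-by-steps} with the following constants:
\[
\widehat\Gamma_{r,x}^+=\frac{3(4L_\phi^2/(\lambda^2\mu)+M_{T\xi})}{\sigma_G^2},\qquad
\widehat\Gamma_{r,x}^-=\frac{3(C_1+4L_\phi^2/(\lambda^2\mu)+M_{T\xi})}{\sigma_G^2},
\]
\[
\widehat\Gamma_{r,z}=\frac{3(C_2+M_{T\xi})}{\sigma_G^2},\qquad
\widehat\Gamma_{r,\eta}=\frac{3}{\sigma_G^2}.
\]
None of these depends on $\alpha$, which is needed for Lemma~\ref{lem:composite-admissible}.

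The main point to check is that the subgradient selection used in the optimality equation at time $t-1$ is the same $\xi^t$ that appears in $\eta_t^r$. This holds because Definition~\ref{def:subgradient-motion-defects}(b) fixes one selection $\xi_i^{t}$ per step, and I use it consistently in both equations. Beyond that, the argument is routine: no Lipschitz property of $\partial r_i$ is used, since all uncontrolled variation is placed into $\eta_t^r$ by construction.
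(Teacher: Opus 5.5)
Your proof is correct and follows the paper's argument. You subtract consecutive stacked composite optimality systems, apply the graph range inequality to $\alpha\mathcal B s^t$, bound the $g$- and $p$-differences as in the smooth lemma, and split the subgradient variation into its absorbed part and $\eta_t^r$; your explicit $\alpha$-independent constants correctly instantiate the generic constant $C$ that the paper leaves implicit.
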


\begin{proof}
With $g_i^t=\nabla_xF_i(x_i^t,Z_i^t)$ and $p_i^t=\lambda^{-1}(\nabla\phi(x_i^{t+1})-\nabla\phi(x_i^t))$, the composite optimality system is
\begin{equation}\label{comp:eq:stacked-opt}
g^t+\mathcal B y^{t+1}+p^t+\xi^{t+1}=0.
\end{equation}
Subtract this relation at time $t-1$ and use $y^{t+1}-y^t=\alpha s^t$. The graph range inequality gives
\[
\alpha^2\sigma_G^2S^t\le C\big(\|g^t-g^{t-1}\|^2+\|p^t-p^{t-1}\|^2+\|\xi^{t+1}-\xi^t\|^2\big).
\]
The first two terms are bounded as in Lemma~\ref{det:lem:consensus-by-steps}; the last term is split into the absorbed part and the defect $\eta_t^r$ by Definition~\ref{def:subgradient-motion-defects}(b).
\end{proof}

The general lifted and proximal estimates use Assumption~\ref{ass:lyapunov-lower-bound}. In the structured finite-defect regimes, Proposition~\ref{prop:lyapunov-lower-sufficient} gives this lower bound for bounded selected subgradients, and Corollary~\ref{cor:verified-selection-lower-bound} gives it for verified local selection.

\begin{lemma}[Composite lower boundedness]\label{comp:lem:lower-bound}
Assume the deterministic composite Lyapunov sequence satisfies Assumption~\ref{ass:lyapunov-lower-bound}. Then there exists $\mathcal L_{r,\star}>-\infty$ such that $\mathcal L_r^t\ge\mathcal L_{r,\star}$.
\end{lemma}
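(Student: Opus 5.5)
This is almost immediate from Assumption~\ref{ass:lyapunov-lower-bound}. In the deterministic composite setting that assumption says $\inf_t\mathcal L_r^t>-\infty$. So I would simply set
\[
\mathcal L_{r,\star}:=\inf_{t\ge0}\mathcal L_r^t,
\]
which is finite by hypothesis and satisfies $\mathcal L_r^t\ge\mathcal L_{r,\star}$ for every $t$ by definition of the infimum.

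I would also check that $\mathcal L_{r,\star}$ is a real number and not $+\infty$. Each $\mathcal L_r^t$ is finite, because the iterates lie in $K$ by Lemma~\ref{lem:local-variation-from-relative}. Each $r_i$ is finite at the iterates by Assumption~\ref{ass:composite-regularizer}, and $f_i$, the Bregman terms, $\langle y^t,s^t\rangle$ and $\tau S^t$ are all finite quantities. The infimum of finite reals that is bounded below is therefore a real number.

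To make the lemma usable in the regimes the paper has in mind, I would add a remark on where the hypothesis comes from, mirroring Lemma~\ref{det:lem:lower-bound}. The function $H^t$ is bounded below on the compact set $K$, since $f_i$ is continuous, $r_i$ is closed and proper, and the Bregman terms are nonnegative. The term $\tau S^t$ is nonnegative. The only delicate piece is $\langle y^t,s^t\rangle$. The stacked optimality system \eqref{comp:eq:stacked-opt} at time $t-1$ gives $\mathcal B y^t=-g^{t-1}-p^{t-1}-\xi^t$, and \eqref{det:eq:graph-range} then bounds $\|y^t\|$ once the selected subgradients $\xi^t$ are bounded. This is exactly Proposition~\ref{prop:lyapunov-lower-sufficient}, or Corollary~\ref{cor:verified-selection-lower-bound} under a verified selection.

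There is no real obstacle here, since the statement is a restatement of the assumption. The substantive difficulty, bounding the dual term when the selected subgradients may be unbounded, has been deliberately moved into Assumption~\ref{ass:lyapunov-lower-bound}, so I would not attempt to prove it in general.
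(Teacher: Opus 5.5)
Your proposal is correct and matches the paper's proof, which observes that the lemma is Assumption~\ref{ass:lyapunov-lower-bound} restated for the deterministic composite sequence; taking $\mathcal L_{r,\star}:=\inf_t\mathcal L_r^t$ is exactly that. Your remark tracing the hypothesis to bounded selected subgradients through Proposition~\ref{prop:lyapunov-lower-sufficient} and Corollary~\ref{cor:verified-selection-lower-bound} is consistent with the paper but not needed for the proof.
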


\begin{proof}
This is exactly Assumption~\ref{ass:lyapunov-lower-bound} for the deterministic composite estimate.
\end{proof}

The composite summability estimate differs from the smooth estimate by the accumulated defect term.

\begin{lemma}[Composite summability]\label{comp:lem:summability}
Under \eqref{eq:composite-admissible-region}, there exist $M_r,C_{r,\eta}<\infty$ such that
\begin{equation}\label{comp:eq:summable}
\sum_{t=0}^{T-1}(d_x^t+d_z^t+e^t)\le M_r+C_{r,\eta}\sum_{t=1}^{T}\eta_t^r.
\end{equation}
\end{lemma}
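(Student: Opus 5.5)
The plan mirrors the proof of Lemma~\ref{det:lem:summability}, replacing the smooth ingredients by their composite counterparts. Summing the composite Lyapunov recursion \eqref{comp:eq:baseline} from $t=0$ to $T-1$ telescopes to
\[
\mathcal L_r^T-\mathcal L_r^0\le-\sum_{t=0}^{T-1}\big(a_xd_x^t+a_zd_z^t+a_ee^t\big)-(\tau-\alpha)\sum_{t=0}^{T-1}S^t+q\sum_{t=0}^{T-1}S^{t+1}.
\]
Since $\tau>\alpha$ by \eqref{eq:composite-admissible-region}, the term $-(\tau-\alpha)S^t$ can be dropped. Lemma~\ref{comp:lem:lower-bound} gives $\mathcal L_r^T\ge\mathcal L_{r,\star}$. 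The initial value $\mathcal L_r^0$ is finite: $y^0=0$, $x^0=z^0\in K$, and $r_i$ is finite on $K$ by Assumption~\ref{ass:composite-regularizer}.

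The future consensus terms are handled next. For $t\ge1$, Lemma~\ref{comp:lem:consensus-by-steps} is applied to each $S^{t+1}$ with $t+1\in\{1,\dots,T\}$. Multiplying by $q$ and writing $b=q/\alpha^2$ gives
\[
q\sum_{t=0}^{T-1}S^{t+1}\le b\big(\widehat\Gamma_{r,x}^++\widehat\Gamma_{r,x}^-\big)\sum_{t=0}^{T-1}d_x^t+b\widehat\Gamma_{r,z}\sum_{t=0}^{T-1}d_z^t+b\widehat\Gamma_{r,x}^+d_x^T+b\widehat\Gamma_{r,\eta}\sum_{t=1}^{T}\eta_t^r.
\]
Here the index shift places the lagged terms $d_x^{t-1},d_z^{t-1}$ in the range $0,\dots,T-1$. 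The leading term $d_x^T$ is a boundary term. Since $x_i^T,x_i^{T+1}\in K$, it satisfies $d_x^T\le ND_K$ with $D_K$ from \eqref{det:eq:compact-diameter}.

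Substituting this bound gives
\[
\big(a_x-b(\widehat\Gamma_{r,x}^++\widehat\Gamma_{r,x}^-)\big)\sum_t d_x^t+\big(a_z-b\widehat\Gamma_{r,z}\big)\sum_t d_z^t+a_e\sum_t e^t\le\mathcal L_r^0-\mathcal L_{r,\star}+b\widehat\Gamma_{r,x}^+ND_K+b\widehat\Gamma_{r,\eta}\sum_{t=1}^T\eta_t^r.
\]
All three coefficients on the left are strictly positive by \eqref{eq:composite-admissible-region} and $a_e=c_0/\beta>0$. Dividing by their minimum $\kappa_r>0$ gives \eqref{comp:eq:summable} with
\[
M_r=\kappa_r^{-1}\big(\mathcal L_r^0-\mathcal L_{r,\star}+b\widehat\Gamma_{r,x}^+ND_K\big),\qquad C_{r,\eta}=\kappa_r^{-1}b\widehat\Gamma_{r,\eta}.
\]
Both constants are independent of $T$.

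The main point requiring care is the index bookkeeping in the second step. Lemma~\ref{comp:lem:consensus-by-steps} is stated only for $t\ge1$, and it is exactly the shift $S^{t+1}$ that makes this lemma applicable for every term, including the $t=0$ term. One must also check that the defect indices produced are $\eta_1^r,\dots,\eta_T^r$, matching the right-hand side of \eqref{comp:eq:summable}. A second check is that the constants $\widehat\Gamma_{r,\cdot}$ do not depend on $\alpha$, which is needed for $b$ to be the only $\alpha$-dependence. Otherwise the argument is routine.
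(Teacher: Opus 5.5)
Your proposal is correct and follows the paper's argument, which simply reruns the telescoping proof of Lemma~\ref{det:lem:summability}. You telescope \eqref{comp:eq:baseline}, bound the shifted terms $S^{t+1}$ via Lemma~\ref{comp:lem:consensus-by-steps} with boundary term $d_x^T\le ND_K$, invoke Lemma~\ref{comp:lem:lower-bound}, and divide by the smallest positive coefficient. Your index bookkeeping makes explicit what the paper leaves implicit, and the $\alpha$-independence of the $\widehat\Gamma_{r,\cdot}$ constants matters only for nonemptiness of the admissible region, not for this lemma.
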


\begin{proof}
Sum \eqref{comp:eq:baseline} and use Lemma~\ref{comp:lem:consensus-by-steps} in the same telescoping argument as Lemma~\ref{det:lem:summability}. Lemma~\ref{comp:lem:lower-bound} bounds the telescoping term from below, and the admissible inequalities make the step coefficients positive.
\end{proof}

The lifted residual can be controlled without aggregating node subgradients at the averaged primal point.

\begin{lemma}[Lifted residual domination]\label{comp:lem:lifted-residual-domination}
There exists $H_{\mathrm{lift}}<\infty$ such that, for every $t\ge1$,
\begin{equation}\label{comp:eq:lifted-residual-domination}
\mathcal R_{\mathrm{lift},r}(x^{t+1},y^{t+1})^2
\le H_{\mathrm{lift}}(d_x^t+e^{t-1}+S^t+S^{t+1}).
\end{equation}
\end{lemma}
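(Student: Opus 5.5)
The plan is to exhibit an explicit element of $\nabla f(x^{t+1})+\mathcal B y^{t+1}+\partial r(x^{t+1})$ and bound its norm. The consensus part of $\mathcal R_{\mathrm{lift},r}(x^{t+1},y^{t+1})^2$ is exactly $\|\mathcal B^\top x^{t+1}\|^2=S^{t+1}$, so only the distance term needs work. For it we use the selected subgradient $\xi^{t+1}\in\partial r(x^{t+1})$ from Definition~\ref{def:subgradient-motion-defects}(b), which lies in the product $\partial r(x^{t+1})$. We then rewrite the composite optimality system \eqref{comp:eq:stacked-opt} at time $t$ as
\[
\nabla f(x^{t+1})+\mathcal B y^{t+1}+\xi^{t+1}
=\nabla f(x^{t+1})-g^t-p^t .
\]
No subgradient has to be compared with another one, and this is why $\eta_t^r$ does not enter the bound.

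We split $\nabla f(x^{t+1})-g^t$ blockwise as
\[
\big(\nabla f_i(x_i^{t+1})-\nabla f_i(x_i^t)\big)
-c_0\big(\nabla\phi(x_i^t)-\nabla\phi(z_i^t)\big)
-\rho\sum_{j\in\mathcal N_i}\big(\nabla\phi(x_i^t)-\nabla\phi(z_j^t)\big),
\]
using $g_i^t=\nabla f_i(x_i^t)+c_0(\nabla\phi(x_i^t)-\nabla\phi(z_i^t))+\rho\sum_{j}(\nabla\phi(x_i^t)-\nabla\phi(z_j^t))$. The terms are then bounded as follows.
\begin{enumerate}[leftmargin=2em]
\item By Lemma~\ref{lem:local-variation-from-relative} and the Bregman lower bound, the first term and $p_i^t$ together contribute at most $C\|x^{t+1}-x^t\|^2\le (2C/\mu)d_x^t$.
\item The reference terms are bounded by $L_\phi^2$ times $\|x_i^t-z_i^t\|^2$ and $\|x_i^t-z_j^t\|^2$. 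The first is a summand of $e^{t-1}$.
\item For the neighbor term we write $x_i^t-z_j^t=(x_i^t-x_j^t)+(x_j^t-z_j^t)$. The edge difference is controlled by $S^t$, and the rest by $e^{t-1}$, with a factor depending on $d_{\max}$.
\end{enumerate}
Summing over $i$ with $(a+b+c)^2\le 3(a^2+b^2+c^2)$ gives
\[
\operatorname{dist}^2\le C'(d_x^t+e^{t-1}+S^t),
\]
and adding the consensus term $S^{t+1}$ yields \eqref{comp:eq:lifted-residual-domination} with $H_{\mathrm{lift}}=\max\{C',1\}$.

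All points involved, namely $x^t$, $x^{t+1}$ and $z^t$, lie in $K$ by Lemma~\ref{lem:local-variation-from-relative}, so the local constants apply. The main obstacle is routine bookkeeping: we must make sure that only $e^{t-1}$ (defined with $x^t,z^t$) and the $S^t$ edge differences appear, and not $e^t$. Choosing to measure the residual at $(x^{t+1},y^{t+1})$, which matches the time-$t$ optimality system, ensures this. The requirement $t\ge1$ is needed only so that $e^{t-1}$ is defined; at $t=0$ one would instead use $x^0=z^0$.
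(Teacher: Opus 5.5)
Your proposal is correct and follows the paper's proof. You take the selected $\xi^{t+1}\in\partial r(x^{t+1})$ from the time-$t$ optimality system, which reduces the distance term to $\|\nabla f(x^{t+1})-g^t-p^t\|^2$; the gradient step error and $p^t$ are then bounded by $d_x^t$, the Bregman-reference terms by $e^{t-1}$ and $S^t$, and $S^{t+1}$ is added at the end. The only cosmetic difference is that you bound $x_i^t-x_j^t$ directly by the edge sum $S^t$ (each edge counted twice), whereas the paper invokes the graph Poincar\'e inequality; both give the same estimate.
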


\begin{proof}
From \eqref{comp:eq:stacked-opt}, $\xi^{t+1}\in\partial r(x^{t+1})$ and
\[
g^t+\mathcal B y^{t+1}+p^t+\xi^{t+1}=0.
\]
Therefore the lifted stationarity distance at $(x^{t+1},y^{t+1})$ is bounded by
\[
\|\nabla f(x^{t+1})-g^t-p^t\|^2.
\]
The difference between $g^t$ and $\nabla f(x^{t+1})$ consists of the smooth-gradient step error and the Bregman-reference terms $\nabla\phi(x_i^t)-\nabla\phi(z_j^t)$. The local variation bounds, the Bregman lower bound, and the graph Poincare inequality give
\[
\|\nabla f(x^{t+1})-g^t-p^t\|^2\le C(d_x^t+e^{t-1}+S^t).
\]
Adding the feasibility term $S^{t+1}$ proves the lemma.
\end{proof}

The stronger proximal residual uses the aggregation defect, because its stationarity measure is evaluated at a common point.

\begin{lemma}[Composite proximal residual domination]\label{comp:lem:residual-domination}
Assume $c_0+\rho\bar d>L_-$. For the deterministic aggregation defect in Definition~\ref{def:aggregation-defects}(a), there exist $H_r,C_{r,\zeta}<\infty$ such that, for every $t\ge1$,
\begin{equation}\label{comp:eq:residual-domination}
\mathcal G_r(z^t)^2
\le H_r(e^{t-1}+d_x^{t-1}+d_z^{t-1}+S^t)+C_{r,\zeta}\zeta_t^r.
\end{equation}
\end{lemma}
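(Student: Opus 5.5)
The plan follows the proof of Lemma~\ref{det:lem:residual-domination}, replacing gradient-based strong convexity with a subdifferential version. Set $\psi_t(u):=\sum_iF_i(u,Z_i^t)$ and $\Psi_t:=\psi_t+\sum_ir_i$. The coefficient of $\phi$ in $\psi_t$ is $N(c_0+\rho\bar d)$, so relative weak convexity with $c_0+\rho\bar d>L_-$ makes $\psi_t$ $m$-strongly convex on $K$, where $m=N\mu(c_0+\rho\bar d-L_-)$. Adding the convex $r_i$ keeps this modulus. The comparison point $u_r^t$ minimizes $\Psi_t$, and both $u_r^t$ and $\bar x^t$ lie in the convex set $K$, with the sum rule available by Assumption~\ref{ass:composite-regularizer}. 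Strong monotonicity of $\partial\Psi_t$ on $K$, paired with $0\in\partial\Psi_t(u_r^t)$, then gives
\[
\|\bar x^t-u_r^t\|\le m^{-1}\operatorname{dist}\!\Big(0,\nabla\psi_t(\bar x^t)+\sum_i\partial r_i(\bar x^t)\Big).
\]

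Next I bound this distance. Summing the stacked composite optimality system \eqref{comp:eq:stacked-opt} at time $t-1$ over nodes cancels the edge multipliers, because $(\mathbf 1\otimes I)^\top\mathcal B=0$. This yields
\[
\sum_i g_i^{t-1}+\sum_ip_i^{t-1}+\sum_i\xi_i^t=0.
\]
For any $w\in\sum_i\partial r_i(\bar x^t)$, I write $\nabla\psi_t(\bar x^t)+w$ as
\[
\Big[\nabla\psi_t(\bar x^t)-\sum_ig_i^{t-1}\Big]-\sum_ip_i^{t-1}+\Big[w-\sum_i\xi_i^t\Big].
\]
The first bracket compares $\nabla_xF_i(\bar x^t,Z_i^t)$ with $\nabla_xF_i(x_i^{t-1},Z_i^{t-1})$. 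By Lemma~\ref{lem:local-variation-from-relative} it is bounded by $\sum_i\|\bar x^t-x_i^{t-1}\|^2$ plus the reference motion. Splitting $\bar x^t-x_i^{t-1}=(\bar x^t-x_i^t)+(x_i^t-x_i^{t-1})$ and applying the Poincar\'e inequality \eqref{det:eq:poincare} together with the Bregman lower bound gives $C(S^t+d_x^{t-1}+d_z^{t-1})$. The term $\sum_ip_i^{t-1}$ is bounded by $C d_x^{t-1}$, using the Lipschitz bound on $\nabla\phi$ and $D_\phi\ge\frac\mu2\|\cdot\|^2$. Minimizing the last bracket over $w$ gives exactly the aggregation distance, which Definition~\ref{def:aggregation-defects}(a) bounds by $M_{C\xi}S^t+\zeta_t^r$. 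Squaring and collecting terms, I obtain
\[
\|\bar x^t-u_r^t\|^2\le C(d_x^{t-1}+d_z^{t-1}+S^t)+C\zeta_t^r.
\]

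Finally, for each $i$ I use the triangle split
\[
\|z_i^t-u_r^t\|^2\le3\|z_i^t-x_i^t\|^2+3\|x_i^t-\bar x^t\|^2+3\|\bar x^t-u_r^t\|^2.
\]
I bound the first term by $e^{t-1}$ and the second by $\sigma_G^{-2}S^t$. Taking the maximum over $i$ then gives \eqref{comp:eq:residual-domination}. The main obstacle is the first step. Strong convexity holds only on $K$, not globally, so I must justify the subdifferential inequality through the variational inequality for $u_r^t$ restricted to $K$, using convexity of $K$. I would prove it by adding the strong-convexity inequality of $\psi_t$ on the segment $[\bar x^t,u_r^t]\subseteq K$ to the subgradient inequalities of $r_i$, and then taking the selected $w$ and $0\in\partial\Psi_t(u_r^t)$ as test elements.
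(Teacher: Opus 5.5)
Your proposal is correct and follows essentially the same route as the paper. Both arguments use strong monotonicity of $\nabla h_t+\sum_i\partial r_i$ on $K$, evaluate it at $\bar x^t$ via the summed optimality system at time $t-1$ with the aggregation defect absorbing the subgradient mismatch, and finish with the three-term triangle split; your infimum over $w$ is equivalent to the paper's $\varepsilon$-approximate choice of $\bar\xi^t$ followed by $\varepsilon\downarrow0$.
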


\begin{proof}
Let $u_r^t$ be the selected common component of the consensus proximal point and define
\[
\psi_{r,t}(u)=\sum_i\big(F_i(u,Z_i^t)+r_i(u)\big).
\]
The Bregman terms have total coefficient $N(c_0+\rho\bar d)$, and the regularizers are convex. Thus $\psi_{r,t}$ is strongly convex on $K$ with modulus
\[
m_r=N\mu(c_0+\rho\bar d-L_-)>0.
\]
Set $h_t(u)=\sum_i F_i(u,Z_i^t)$. Its gradient is $m_r$-strongly monotone on $K$, and each $\partial r_i$ is monotone. The optimality condition at $u_r^t$ therefore gives, for $v\in K$,
\[
\|v-u_r^t\|\le m_r^{-1}\operatorname{dist}\!\left(0,\nabla h_t(v)+\sum_i\partial r_i(v)\right).
\]
Apply this inequality at $v=\bar x^t$. For any $\varepsilon>0$, Definition~\ref{def:aggregation-defects}(a) gives $\bar\xi^t\in\sum_i\partial r_i(\bar x^t)$ with $\|\bar\xi^t-\sum_i\xi_i^t\|^2\le M_{C\xi}S^t+\zeta_t^r+\varepsilon$. Summing \eqref{comp:eq:stacked-opt} at time $t-1$ cancels the edge multipliers and gives
\[
\sum_i\nabla_xF_i(x_i^{t-1},Z_i^{t-1})+\sum_i p_i^{t-1}+\sum_i\xi_i^t=0.
\]
The local variation bounds, the Bregman lower bound, and the aggregation estimate imply
\[
\|\bar x^t-u_r^t\|^2\le C(d_x^{t-1}+d_z^{t-1}+S^t+\zeta_t^r+\varepsilon).
\]
Letting $\varepsilon\downarrow0$ and comparing $z_i^t$ with $u_r^t$ as in Lemma~\ref{det:lem:residual-domination} proves the claim.
\end{proof}

\begin{proposition}[Intermediate deterministic composite lifted estimate]\label{prop:deterministic-lifted-estimate}
For any deterministic composite SDBPG trajectory satisfying Assumptions~\ref{ass:standing-local}, \ref{ass:composite-regularizer}, and~\ref{ass:lyapunov-lower-bound}, with subgradient-motion defects defined by Definition~\ref{def:subgradient-motion-defects}(b) and parameters compatible with Lemma~\ref{lem:composite-admissible},
\[
\min_{1\le k\le T}\mathcal R_{\mathrm{lift},r}(x^k,y^k)^2
\le
\frac{\Delta_{\mathrm{lift}}+C_\eta\sum_{t=1}^{T}\eta_t^r}{T},
\qquad T\ge1.
\]
If $\sum_t\eta_t^r<\infty$, the squared lifted residual is $O(T^{-1})$ in the best-iterate sense.
\end{proposition}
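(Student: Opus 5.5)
The plan is to combine the composite summability estimate of Lemma~\ref{comp:lem:summability} with the lifted residual domination of Lemma~\ref{comp:lem:lifted-residual-domination}, and then pass from the sum to the minimum. First, Lemma~\ref{comp:lem:lower-bound} gives the Lyapunov lower bound, and the admissible inequalities \eqref{eq:composite-admissible-region} hold. Under these, Lemma~\ref{comp:lem:summability} yields
\[
\sum_{t=0}^{T}(d_x^t+d_z^t+e^t)\le M_r+C_{r,\eta}\sum_{t=1}^{T+1}\eta_t^r .
\]
We apply the lemma with horizon $T+1$ so that the index $d_x^T$ needed below is also covered.

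Next, we sum Lemma~\ref{comp:lem:consensus-by-steps} over $t=1,\dots,T+1$. This gives
\[
\sum_{t=1}^{T+1}S^t\le\alpha^{-2}\Bigl(\widehat\Gamma\sum_{t=0}^{T+1}(d_x^t+d_z^t)+\widehat\Gamma_{r,\eta}\sum_{t=1}^{T+1}\eta_t^r\Bigr),
\]
where $\widehat\Gamma$ absorbs the step constants. The boundary term $d_x^{T+1}$ is at most $ND_K$ by \eqref{det:eq:compact-diameter}, so it is a horizon-independent constant.

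Then we sum \eqref{comp:eq:lifted-residual-domination}. That inequality controls $\mathcal R_{\mathrm{lift},r}(x^{k},y^{k})^2$ for $k=t+1\ge2$. The index $k=1$ is a single term, and it is bounded by a constant: $x^1$ lies in $K$, and $y^1=\alpha\mathcal B^\top x^0$ is bounded. Hence
\[
\sum_{k=1}^{T}\mathcal R_{\mathrm{lift},r}(x^k,y^k)^2\le R_1+H_{\mathrm{lift}}\sum_{t=1}^{T-1}(d_x^t+e^{t-1}+S^t+S^{t+1}).
\]
By the previous two steps, the right side is bounded by $\Delta_{\mathrm{lift}}+C_\eta\sum_{t=1}^{T+1}\eta_t^r$.

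Finally, we bound the minimum by the average. A horizon mismatch remains, namely the extra defect $\eta_{T+1}^r$. I would handle it by truncating the lifted sum at $T-1$ from the domination lemma, so that only $d_x$ and $S$ up to index $T$ appear. The $S^T$ term is then controlled by Lemma~\ref{comp:lem:consensus-by-steps} at $t=T$, which involves only $\eta_T^r$, and $d_x^T$ is bounded by $ND_K$. The final term $k=T$ is either treated the same way or bounded crudely by compactness. With all defect sums then ending at $T$, dividing by $T$ gives the claim.

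The main obstacle is this index bookkeeping: the summability lemma, the consensus lemma and the domination lemma each shift indices, and the boundary terms must all be absorbed into $T$-independent constants via $D_K$ and compactness. The final statement for $\sum_t\eta_t^r<\infty$ is then immediate.
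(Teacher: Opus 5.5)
Your proposal is correct and follows the paper's proof. Both arguments bound the accumulated step errors and consensus via Lemmas~\ref{comp:lem:summability} and~\ref{comp:lem:consensus-by-steps}, sum Lemma~\ref{comp:lem:lifted-residual-domination} for $k\ge2$, absorb the single finite $k=1$ residual into $\Delta_{\mathrm{lift}}$, and bound the minimum by the average. Your repaired bookkeeping is also right, and in fact the summability lemma at horizon $T$ suffices from the start: the domination sum over $t=1,\dots,T-1$ needs only $d_x^t,e^t$ for $t\le T-1$ and $S^t$ for $t\le T$, while $d_x^T\le ND_K$ enters only as the boundary term of the summed consensus bound, so no $\eta_{T+1}^r$ ever arises.
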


\begin{proof}
Lemma~\ref{comp:lem:summability} bounds the accumulated step errors by a constant plus $C\sum_t\eta_t^r$. Lemma~\ref{comp:lem:consensus-by-steps} gives the same type of bound for $\sum_tS^t$. Lemma~\ref{comp:lem:lifted-residual-domination} controls the residuals from index $k=2$ onward; the finite residual at $k=1$ is absorbed into $\Delta_{\mathrm{lift}}$. Summing the domination estimates and dividing by $T$ proves the proposition.
\end{proof}

\begin{corollary}[Asymptotic lifted residual and cluster points]\label{cor:asymptotic-cluster}
Under Proposition~\ref{prop:deterministic-lifted-estimate}, if $\sum_t\eta_t^r<\infty$, then $\mathcal R_{\mathrm{lift},r}(x^t,y^t)\to0$. If, in addition, $(x^{t_j},y^{t_j})\to(x^\star,y^\star)$ along a subsequence and the graph of $\partial r$ is closed along that subsequence, then
\[
0\in\nabla f(x^\star)+\mathcal B y^\star+\partial r(x^\star),
\qquad
\mathcal B^\top x^\star=0.
\]
\end{corollary}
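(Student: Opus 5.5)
The plan is to derive both claims from Proposition~\ref{prop:deterministic-lifted-estimate} and the summability machinery behind it, not only from its best-iterate form. Suppose $\sum_t\eta_t^r<\infty$. Lemma~\ref{comp:lem:summability} then gives
\[
\sum_{t=0}^{\infty}(d_x^t+d_z^t+e^t)\le M_r+C_{r,\eta}\sum_{t\ge1}\eta_t^r<\infty .
\]
Inserting this into Lemma~\ref{comp:lem:consensus-by-steps} shows $\sum_{t\ge1}S^t<\infty$, because each $S^t$ is bounded by $\alpha^{-2}$ times a combination of $d_x^t$, $d_x^{t-1}$, $d_z^{t-1}$ and $\eta_t^r$, all of which are summable. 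Every term on the right of the lifted domination estimate \eqref{comp:eq:lifted-residual-domination} is then the general term of a convergent series. In particular $d_x^t,e^{t-1},S^t,S^{t+1}\to0$, and therefore
\[
\mathcal R_{\mathrm{lift},r}(x^{t+1},y^{t+1})^2\le H_{\mathrm{lift}}(d_x^t+e^{t-1}+S^t+S^{t+1})\to0 .
\]
This gives the first claim. The best-iterate bound by itself would not suffice for this step; the argument needs the full summability, and here it is available.

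For the cluster-point claim, I will take the subsequence $(x^{t_j},y^{t_j})\to(x^\star,y^\star)$. Consensus follows from $\|\mathcal B^\top x^{t_j}\|^2=S^{t_j}\to0$ together with continuity of $\mathcal B^\top$, which give $\mathcal B^\top x^\star=0$.

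For stationarity, I will use the explicit certificate in the optimality system \eqref{comp:eq:stacked-opt} at time $t_j-1$, namely $\xi^{t_j}\in\partial r(x^{t_j})$ with
\[
w^{t_j}:=\nabla f(x^{t_j})+\mathcal B y^{t_j}+\xi^{t_j}=\nabla f(x^{t_j})-g^{t_j-1}-p^{t_j-1}.
\]
The proof of Lemma~\ref{comp:lem:lifted-residual-domination} bounds $\|w^{t_j}\|^2$ by $C(d_x^{t_j-1}+e^{t_j-2}+S^{t_j-1})$, and this tends to $0$. Set
\[
\xi^{t_j}=w^{t_j}-\nabla f(x^{t_j})-\mathcal B y^{t_j}.
\]
Since $\nabla f$ is continuous on $K$ (Lemma~\ref{lem:local-variation-from-relative}) and $x^\star\in K$ because $K$ is closed, this gives $\xi^{t_j}\to\xi^\star:=-\nabla f(x^\star)-\mathcal B y^\star$. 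Closedness of the graph of $\partial r$ along the subsequence then yields $\xi^\star\in\partial r(x^\star)$, which is the stated inclusion.

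The main obstacle is using the explicit selected subgradient rather than only the distance $\operatorname{dist}(0,\cdot)$. A distance tending to zero does not by itself produce convergent subgradient representatives. I resolve this by working with the exact identity from the optimality system, which supplies a subgradient sequence $\xi^{t_j}$ that converges. A secondary point is index bookkeeping: the domination estimate is stated for $(x^{t+1},y^{t+1})$ with $t\ge1$, so one shifts indices and discards the first two terms, which does not affect limits.
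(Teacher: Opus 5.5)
Your proposal is correct and follows the same route as the paper. You combine Lemma~\ref{comp:lem:summability} and Lemma~\ref{comp:lem:consensus-by-steps} with the lifted domination estimate to get $\sum_t\mathcal R_{\mathrm{lift},r}(x^t,y^t)^2<\infty$, then pass to the limit along the subsequence using closedness of the graph of $\partial r$; your explicit convergent representative $\xi^{t_j}$, taken from \eqref{comp:eq:stacked-opt}, just fills in the step the paper's two-line proof leaves implicit.
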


\begin{proof}
The summability argument in Proposition~\ref{prop:deterministic-lifted-estimate} gives $\sum_t\mathcal R_{\mathrm{lift},r}(x^t,y^t)^2<\infty$ when $\sum_t\eta_t^r<\infty$, so the residual converges to zero. Passing to a convergent subsequence and using closedness of the graph of $\partial r$ gives the displayed lifted Karush--Kuhn--Tucker (KKT) system.
\end{proof}

\begin{lemma}[Verified selection controls deterministic defect tails]\label{comp:lem:verified-selection-absorption}
A verified local selection in the sense of Definition~\ref{def:verified-local-selection} admits fixed absorption constants such that
\[
\eta_t^r=\zeta_t^r=0\quad(t\ge T_{\rm id}),
\qquad
\sum_{t=1}^{\infty}\eta_t^r<\infty.
\]
Finite aggregation defects before $T_{\rm id}$ also give Assumption~\ref{ass:finite-defect-accumulation}(a).
\end{lemma}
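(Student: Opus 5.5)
The plan is to use the Lipschitz maps $\Xi_i$ to bound both the temporal subgradient variation and the aggregation mismatch on the identified tail by the controllable quantities $d_x$ and $S^t$. We then choose the absorption constants $M_{T\xi}$ and $M_{C\xi}$ large enough that the positive parts in Definitions~\ref{def:subgradient-motion-defects}(b) and~\ref{def:aggregation-defects}(a) vanish. The prefix $t<T_{\rm id}$ is finite and contributes only finitely many finite terms.

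\emph{Motion defect.} Let $\ell$ be a common Lipschitz constant of the maps $\Xi_i$ on $U_i$. For $t\ge T_{\rm id}$ both $x_i^t$ and $x_i^{t+1}$ lie in $U_i$, and Definition~\ref{def:verified-local-selection} gives $\xi_i^t=\Xi_i(x_i^t)$ and $\xi_i^{t+1}=\Xi_i(x_i^{t+1})$. Hence
\[
\sum_i\|\xi_i^{t+1}-\xi_i^t\|^2\le \ell^2\sum_i\|x_i^{t+1}-x_i^t\|^2\le \frac{2\ell^2}{\mu}\,d_x^t,
\]
using the Bregman lower bound of Lemma~\ref{lem:local-variation-from-relative} on $K$, which contains all iterates. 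Taking $M_{T\xi}\ge 2\ell^2/\mu$, the bracket in $\eta_t^r$ is nonpositive, since the extra terms $d_x^{t-1}$ and $d_z^{t-1}$ are nonnegative. Thus $\eta_t^r=0$ for $t\ge T_{\rm id}$. For $1\le t<T_{\rm id}$ we have $\eta_t^r\le\sum_i\|\xi_i^{t+1}-\xi_i^t\|^2<\infty$, because each selected $\xi_i^t$ is a finite vector; these are finitely many terms. Therefore $\sum_{t\ge1}\eta_t^r<\infty$.

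\emph{Aggregation defect.} For $t\ge T_{\rm id}$, $\Xi_i(\bar x^t)\in\partial r_i(\bar x^t)$, so $\sum_i\Xi_i(\bar x^t)$ lies in the Minkowski sum $\sum_i\partial r_i(\bar x^t)$. Hence
\[
\operatorname{dist}^2\!\Big(\sum_i\xi_i^t,\sum_i\partial r_i(\bar x^t)\Big)\le \Big\|\sum_i(\Xi_i(x_i^t)-\Xi_i(\bar x^t))\Big\|^2\le N\ell^2\sum_i\|x_i^t-\bar x^t\|^2\le N\ell^2\sigma_G^{-2}S^t,
\]
by Cauchy--Schwarz and the graph Poincar\'e inequality~\eqref{det:eq:poincare}. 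Taking $M_{C\xi}\ge N\ell^2\sigma_G^{-2}$ gives $\zeta_t^r=0$ on the tail. Together with the hypothesis that the finitely many pre-identification aggregation defects are finite, this gives $\sum_t\zeta_t^r<\infty$, and hence Assumption~\ref{ass:finite-defect-accumulation}(a).

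The main obstacle is the Lipschitz step: we need the relevant pairs of points to lie in a set on which $\Xi_i$ is Lipschitz. The definition only places the tail points in $U_i$, which is a relative neighborhood on the identified face. So the estimate requires both $x_i^t,x_i^{t+1}\in U_i$ and $x_i^t,\bar x^t\in U_i$, with Lipschitz continuity on $U_i$ as a set, not just locally. That is exactly what Definition~\ref{def:verified-local-selection} states, so no convexity of $U_i$ is needed. The remaining care is that the constants $M_{T\xi}$ and $M_{C\xi}$ are fixed independently of $t$, which the uniform $\ell$, $\mu$ and $\sigma_G$ ensure.
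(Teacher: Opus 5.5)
Your proof is correct and follows the paper's argument essentially step for step. For the motion defect you use the tail Lipschitz bound with $M_{T\xi}\ge 2\ell^2/\mu$. For the aggregation defect you take $\Xi_i(\bar x^t)$ as the comparison subgradient and apply the Poincar\'e inequality, and you make the paper's unnamed constant $C$ explicit as $N\ell^2\sigma_G^{-2}$.
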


\begin{proof}
Let $L_\Xi$ be a Lipschitz constant for the tail selections after increasing it to dominate all active maps $\Xi_i$. For $t\ge T_{\rm id}$,
\[
\sum_i\|\xi_i^{t+1}-\xi_i^t\|^2
=\sum_i\|\Xi_i(x_i^{t+1})-\Xi_i(x_i^t)\|^2
\le L_\Xi^2\sum_i\|x_i^{t+1}-x_i^t\|^2
\le \frac{2L_\Xi^2}{\mu}d_x^t.
\]
Choosing the absorption constant $M_{T\xi}$ in Definition~\ref{def:subgradient-motion-defects}(b) at least $2L_\Xi^2/\mu$ makes $\eta_t^r=0$ on the identified tail; the finite prefix $t<T_{\rm id}$ contributes only finitely many (finite) terms to $\sum_{t}\eta_t^r$.

For aggregation, choose $\bar\xi_i^t=\Xi_i(\bar x^t)\in\partial r_i(\bar x^t)$. The graph Poincare inequality and the same Lipschitz bound give, for $t\ge T_{\rm id}$,
\[
\begin{aligned}
\operatorname{dist}^2\!\left(\sum_i\xi_i^t,\sum_i\partial r_i(\bar x^t)\right)
&\le
\left\|\sum_i(\Xi_i(x_i^t)-\Xi_i(\bar x^t))\right\|^2\\
&\le C\sum_i\|x_i^t-\bar x^t\|^2
\le C S^t .
\end{aligned}
\]
Choosing $M_{C\xi}$ to dominate the aggregation constant gives $\zeta_t^r=0$ for $t\ge T_{\rm id}$. When the pre-identification aggregation defects are finite, their finite sum completes Assumption~\ref{ass:finite-defect-accumulation}(a).
\end{proof}

\begin{proof}[Proof of Theorem~\ref{thm:composite-main}]
Use Lemma~\ref{comp:lem:residual-domination} in place of Lemma~\ref{comp:lem:lifted-residual-domination} and combine it with Lemmas~\ref{comp:lem:summability} and~\ref{comp:lem:consensus-by-steps}. Assumption~\ref{ass:lyapunov-lower-bound} supplies the lower bound needed in Lemma~\ref{comp:lem:summability}, and Definition~\ref{def:aggregation-defects}(a) supplies the aggregation defect in Lemma~\ref{comp:lem:residual-domination}. Summing the residual-domination estimate and dividing by $T$ gives
\[
\min_{1\le k\le T}\mathcal G_r(z^k)^2
\le
\frac{\Delta_r+C_r\sum_{t=1}^{T}(\eta_t^r+\zeta_t^r)}{T}.
\]
\end{proof}

\begin{proof}[Proof of Corollary~\ref{cor:finite-transition-composite-rate}]
Assumption~\ref{ass:finite-defect-accumulation}(a) states exactly $\sum_t(\eta_t^r+\zeta_t^r)<\infty$ for the motion and aggregation defects appearing in Theorem~\ref{thm:composite-main}. Substituting this finite sum into the bound of Theorem~\ref{thm:composite-main} absorbs it into $\bar\Delta_r$.
\end{proof}

\begin{proof}[Proof of Corollary~\ref{cor:structured-nonsmooth-rate}]
For finite defect accumulation, apply Corollary~\ref{cor:finite-transition-composite-rate}. For verified local selection, Lemma~\ref{comp:lem:verified-selection-absorption} gives $\sum_t\eta_t^r<\infty$ and $\zeta_t^r=0$ for $t\ge T_{\rm id}$, while Corollary~\ref{cor:verified-selection-lower-bound} gives the Lyapunov lower bound. Lemmas~\ref{comp:lem:summability} and~\ref{comp:lem:consensus-by-steps} then bound the accumulated step errors and consensus error uniformly in the horizon. Sum Lemma~\ref{comp:lem:residual-domination} from $t=T_{\rm id}$ onward. For the preceding indices, $z_i^t,u_r^t\in K$ gives $\mathcal G_r(z^t)\le\operatorname{diam}K$. Thus, for every $T\ge1$,
\[
\begin{aligned}
\sum_{t=1}^{T}\mathcal G_r(z^t)^2
&\le (T_{\rm id}-1)(\operatorname{diam}K)^2
+C\sum_{t=0}^{T-1}(d_x^t+d_z^t+e^t)\\
&\quad+C\sum_{t=1}^{T}S^t\\
&\le\bar\Delta_r<\infty.
\end{aligned}
\]
Here $\bar\Delta_r$ is independent of $T$. Dividing by $T$ proves the squared best-iterate rate.
\end{proof}

\section{Proof of Stochastic Composite SDBPG Convergence}\label{app:stochastic-proof}
The stochastic proof keeps the oracle error $\delta_i^t=G_i^t-\nabla K_i(x_i^t;Y_i^{t+1},Z_i^t)$ in each step of the deterministic composite argument. This adds variance terms and the stochastic subgradient-motion defect.

\begin{lemma}[Stochastic baseline Lyapunov inequality]\label{stoch:lem:baseline}
Under Assumption~\ref{ass:stochastic-oracle},
\begin{equation}\label{stoch:eq:baseline}
\E\big[\mathcal L_r^{t+1}-\mathcal L_r^t\big]
\le
-a_x^s\E d_x^t-a_z\E d_z^t-a_e\E e^t
-(\tau-\alpha)\E S^t+q\E S^{t+1}
+\frac1{2\chi}\sum_i(\sigma_i^t)^2.
\end{equation}
\end{lemma}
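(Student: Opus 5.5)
The plan is to repeat the proof of Lemma~\ref{comp:lem:baseline} along a single sample path, with the exact model gradient replaced by $G_i^t=\nabla K_i(x_i^t;Y_i^{t+1},Z_i^t)+\delta_i^t$. The one new cross term, $\langle\delta_i^t,x_i^t-x_i^{t+1}\rangle$, is then split by Young's inequality with the auxiliary weight $\chi$; this split is where the shift from $a_x$ to $a_x^s=a_x-\chi/\mu$ comes from. Expectations are taken only at the end.

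\emph{Step 1: pathwise primal descent.} Fix $Z_i^t$. Upper relative smoothness and the Bregman three-point identity give
\[
F_i(x_i^{t+1},Z_i^t)-F_i(x_i^t,Z_i^t)\le\langle\nabla_xF_i(x_i^t,Z_i^t),x_i^{t+1}-x_i^t\rangle+(L_++c_0+\rho d_i)D_\phi(x_i^{t+1},x_i^t),
\]
exactly as in Lemma~\ref{det:lem:surrogate-descent}. The stochastic composite variational inequality is the one in Lemma~\ref{comp:lem:surrogate-descent} with $\nabla K_i$ replaced by $G_i^t$. Test it at $x=x_i^t$ and drop $D_\phi(x_i^t,x_i^{t+1})\ge0$. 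This yields
\[
\langle\nabla K_i^t,x_i^{t+1}-x_i^t\rangle+r_i(x_i^{t+1})-r_i(x_i^t)\le-\tfrac1\lambda D_\phi(x_i^{t+1},x_i^t)+\langle\delta_i^t,x_i^t-x_i^{t+1}\rangle .
\]
Summing over $i$ and writing $\nabla K_i=\nabla_xF_i+\sum_j y_{ij}^{t+1}$ gives the $x$-change part of \eqref{comp:eq:surrogate-descent}. The only extra term is the cross term $\sum_i\langle\delta_i^t,x_i^t-x_i^{t+1}\rangle$.

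\emph{Step 2: absorb the cross term.} Apply Young's inequality and then the Bregman lower bound of Lemma~\ref{lem:local-variation-from-relative}:
\[
\langle\delta_i^t,x_i^t-x_i^{t+1}\rangle\le\frac{1}{2\chi}\|\delta_i^t\|^2+\frac\chi2\|x_i^{t+1}-x_i^t\|^2\le\frac1{2\chi}\|\delta_i^t\|^2+\frac\chi\mu D_\phi(x_i^{t+1},x_i^t).
\]
This replaces $a_x$ by $a_x^s$. The smoothing update contains no oracle, so the reference-change estimate \eqref{det:eq:z-change} holds pathwise without change.

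\emph{Step 3: Lyapunov recursion and expectation.} Add the dual identity $\langle y^{t+1},s^{t+1}\rangle-\langle y^t,s^t\rangle=\langle y^{t+1},s^{t+1}-s^t\rangle+\alpha S^t$ and the term $\tau(S^{t+1}-S^t)$, as in Lemma~\ref{comp:lem:baseline}. This gives the pathwise inequality
\[
\mathcal L_r^{t+1}-\mathcal L_r^t\le-a_x^sd_x^t-a_zd_z^t-a_ee^t-(\tau-\alpha)S^t+qS^{t+1}+\frac1{2\chi}\|\delta^t\|^2 .
\]
Taking total expectation and using the tower property, $\E\|\delta^t\|^2=\E\,\E[\|\delta^t\|^2\mid\mathcal F_t]\le\sum_i(\sigma_i^t)^2$ by Assumption~\ref{ass:stochastic-oracle}, gives \eqref{stoch:eq:baseline}. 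The quantities involved are bounded on $K$, so all expectations are finite.

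\emph{Main obstacle.} The delicate point is that $x^{t+1}$ depends on $\delta^t$, so the unbiasedness $\E[\delta^t\mid\mathcal F_t]=0$ cannot be used to make the cross term vanish. The plan therefore avoids martingale cancellation entirely and relies on the Young split in Step 2, using only the conditional second-moment bound. The split costs only a fraction of $a_x$, which is exactly why the admissible region of Lemma~\ref{lem:stochastic-admissible} requires $\chi<\mu a_x$.
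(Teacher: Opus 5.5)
Your proposal is correct and follows the paper's proof step for step. You rerun the composite descent with $G_i^t$ in the variational inequality, absorb the single extra term $-\sum_i\langle\delta_i^t,x_i^{t+1}-x_i^t\rangle$ by Young's inequality and $D_\phi\ge\frac\mu2\|\cdot\|^2$ (which turns $a_x$ into $a_x^s=a_x-\chi/\mu$), leave the smoothing and dual steps unchanged, and take expectations using only the conditional second-moment bound. Your closing remark that everything is bounded on $K$ overstates the situation: $\|\delta^t\|^2$ is only integrable, and $r_i$ is only assumed finite at the iterates. This is harmless, because the right-hand side of your pathwise inequality is integrable, and that is all you need to take expectations.
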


\begin{proof}
Repeat the composite one-step descent with $G_i^t$ in the primal optimality condition. The descent inequality acquires the extra term $-\sum_i\langle\delta_i^t,x_i^{t+1}-x_i^t\rangle$. Young's inequality and the Bregman lower bound give
\[
-\langle\delta_i^t,x_i^{t+1}-x_i^t\rangle
\le \frac{\chi}{\mu}D_\phi(x_i^{t+1},x_i^t)+\frac1{2\chi}\|\delta_i^t\|^2.
\]
Conditioning on $\mathcal F_t$ and using $\E[\|\delta_i^t\|^2\mid\mathcal F_t]\le(\sigma_i^t)^2$ gives the conditional version of \eqref{stoch:eq:baseline}. Taking total expectation yields the displayed unconditional inequality.
\end{proof}

\begin{lemma}[Stochastic consensus controlled by step errors]\label{stoch:lem:consensus-by-steps}
There exist finite constants $\widehat\Gamma_{s,x}^+,\widehat\Gamma_{s,x}^-,\widehat\Gamma_{s,z},\widehat\Gamma_{s,\delta},\widehat\Gamma_{s,\eta}$ such that, for $t\ge1$,
\begin{equation}\label{stoch:eq:consensus-by-steps}
\E S^t\le\frac1{\alpha^2}\left(\widehat\Gamma_{s,x}^+\E d_x^t+\widehat\Gamma_{s,x}^-\E d_x^{t-1}+\widehat\Gamma_{s,z}\E d_z^{t-1}+\widehat\Gamma_{s,\delta}\sum_i((\sigma_i^t)^2+(\sigma_i^{t-1})^2)+\widehat\Gamma_{s,\eta}\eta_t^s\right).
\end{equation}
\end{lemma}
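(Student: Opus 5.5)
The plan is to mirror the proof of Lemma~\ref{comp:lem:consensus-by-steps}, keeping the oracle error inside the stacked optimality system. With $g_i^t=\nabla_xF_i(x_i^t,Z_i^t)$ and $p_i^t=\lambda^{-1}(\nabla\phi(x_i^{t+1})-\nabla\phi(x_i^t))$, the stochastic primal optimality condition in Definition~\ref{def:subgradient-motion-defects}(c) reads, in stacked form,
\[
g^t+\mathcal B y^{t+1}+\delta^t+p^t+\xi^{t+1}=0 .
\]
I will subtract the same relation at time $t-1$ and use $y^{t+1}-y^t=\alpha s^t$ with $s^t\in\operatorname{range}(\mathcal B^\top)$. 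The range inequality \eqref{det:eq:graph-range} then gives, pathwise,
\[
\alpha^2\sigma_G^2S^t\le 4\|g^t-g^{t-1}\|^2+4\|p^t-p^{t-1}\|^2+4\|\delta^t-\delta^{t-1}\|^2+4\|\xi^{t+1}-\xi^t\|^2 .
\]
All iterates lie in $K$ pathwise by Assumption~\ref{ass:standing-local}(d), so the deterministic estimates from the proof of Lemma~\ref{det:lem:consensus-by-steps} apply realization by realization. These are $\|g^t-g^{t-1}\|^2\le C_1d_x^{t-1}+C_2d_z^{t-1}$, which follows from Lemma~\ref{lem:local-variation-from-relative} and $D_\phi\ge\frac\mu2\|\cdot\|^2$, and $\|p^t-p^{t-1}\|^2\le C_3(d_x^t+d_x^{t-1})$.

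Next I take total expectations. The oracle term satisfies $\|\delta^t-\delta^{t-1}\|^2\le2\|\delta^t\|^2+2\|\delta^{-1}\|^2$ with $\delta^{-1}=0$ convention at $t=0$. The tower property with Assumption~\ref{ass:stochastic-oracle} gives $\E\|\delta^t\|^2=\E\,\E[\|\delta^t\|^2\mid\mathcal F_t]\le\sum_i(\sigma_i^t)^2$, and similarly at $t-1$. Here I must check that $\sigma_i^t$ are deterministic bounds. If they were $\mathcal F_t$-measurable, I would bound by their expectations instead, and the paper treats $V_T$ as deterministic. For the subgradient term, I apply the defining inequality of $\eta_t^s$, namely $a\le[a-b]_++b$, to the expectations:
\[
\E\|\xi^{t+1}-\xi^t\|^2\le M_{T\xi}^s\E(d_x^t+d_x^{t-1}+d_z^{t-1})+M_{\delta\xi}\E(\|\delta^t\|^2+\|\delta^{t-1}\|^2)+\eta_t^s .
\]
The variance part is again bounded by $\sum_i((\sigma_i^t)^2+(\sigma_i^{t-1})^2)$.

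Collecting coefficients and dividing by $\alpha^2\sigma_G^2$ gives \eqref{stoch:eq:consensus-by-steps}. The constants are
\[
\widehat\Gamma_{s,x}^+=4(C_3+M_{T\xi}^s)\sigma_G^{-2},\qquad
\widehat\Gamma_{s,\delta}=4(2+M_{\delta\xi})\sigma_G^{-2},\qquad
\widehat\Gamma_{s,\eta}=4\sigma_G^{-2},
\]
with $\widehat\Gamma_{s,x}^-$ and $\widehat\Gamma_{s,z}$ defined analogously. None of them depends on $\alpha$, which is what Lemma~\ref{lem:stochastic-admissible} needs.

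The main point needing care is measurability and integrability. $S^t$, $d_x^t$ and the $\xi$'s are bounded on $K$ except for $\xi$ itself. Integrability of $\|\xi^{t+1}-\xi^t\|^2$ is implicit in the definition of $\eta_t^s$ being finite. If it is not, the inequality holds trivially with $\eta_t^s=+\infty$, so the statement remains valid.
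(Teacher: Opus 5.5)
Your proposal is correct and takes the same route as the paper's proof. You difference the stacked stochastic optimality system $g^t+\delta^t+\mathcal B y^{t+1}+p^t+\xi^{t+1}=0$ at consecutive times, apply the range inequality to $\alpha s^t$, and bound the $g$ and $p$ differences pathwise as in Lemma~\ref{det:lem:consensus-by-steps}; the expected subgradient variation is then split into absorbed step, oracle, and defect parts through the definition of $\eta_t^s$, with the oracle terms handled by the tower property.

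There is one cosmetic slip: the oracle estimate should read $\|\delta^t-\delta^{t-1}\|^2\le2\|\delta^t\|^2+2\|\delta^{t-1}\|^2$, and no $\delta^{-1}$ convention is needed since $t\ge1$. Your integrability concern also disappears, because the optimality equation writes $\xi^{t+1}$ in terms of quantities bounded on $K$, the pathwise-bounded $y^{t+1}$, and the square-integrable $\delta^t$.
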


\begin{proof}
The stochastic optimality system is
\[
g^t+\delta^t+\mathcal B y^{t+1}+p^t+\xi^{t+1}=0.
\]
Subtract the relation at time $t-1$ and use the graph range inequality. The deterministic $g$ and $p$ terms are controlled as before; the selected-subgradient difference is split into absorbed step, oracle, and defect parts by Definition~\ref{def:subgradient-motion-defects}(c), and the oracle errors are controlled by Assumption~\ref{ass:stochastic-oracle}.
\end{proof}

\begin{lemma}[Stochastic lower boundedness]\label{stoch:lem:lower-bound}
Under Assumption~\ref{ass:lyapunov-lower-bound}, the expected stochastic Lyapunov sequence is bounded below.
\end{lemma}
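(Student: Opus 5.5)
The statement asks for an expected lower bound, and Assumption~\ref{ass:lyapunov-lower-bound} in the stochastic composite setting is literally the hypothesis $\inf_t\E\mathcal L_r^t>-\infty$. So the proof is essentially a restatement, in the same style as Lemma~\ref{comp:lem:lower-bound}. I will set
\[
\mathcal L_{s,\star}:=\inf_t\E\mathcal L_r^t ,
\]
which is finite by the assumption, and observe that $\E\mathcal L_r^t\ge\mathcal L_{s,\star}$ for every $t$. This is the constant needed later, when the expected telescoping sum of \eqref{stoch:eq:baseline} is bounded below.

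One point of well-posedness should be addressed first: each $\E\mathcal L_r^t$ must exist. By Assumption~\ref{ass:standing-local}(d), $x^t$ and $z^t$ lie in the compact set $K$. Hence $F_i$ is bounded there, $S^t$ is bounded, and $r_i$ is bounded below on $K$. The only possibly unbounded term is $\langle y^t,s^t\rangle$. I will handle it as in the proof of Proposition~\ref{prop:lyapunov-lower-sufficient}. The stochastic stacked optimality system at time $t-1$ gives
\[
\mathcal B y^t=-g^{t-1}-\delta^{t-1}-p^{t-1}-\xi^t .
\]
Together with \eqref{det:eq:graph-range}, this yields $\|y^t\|\le\sigma_G^{-1}\bigl(C+\|\delta^{t-1}\|+\|\xi^t\|\bigr)$.

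That bound is square-integrable under Assumption~\ref{ass:stochastic-oracle}, provided the selected subgradients have a finite second moment. Under these conditions the negative part of $\mathcal L_r^t$ is integrable, and the expectation is well defined in $(-\infty,+\infty]$. I expect this integrability bookkeeping to be the only delicate point. If Assumption~\ref{ass:lyapunov-lower-bound} is read as presupposing that the expectations exist, the lemma follows immediately from the definition of the infimum.
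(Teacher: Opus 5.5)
Your proposal is correct and matches the paper. The paper's proof simply observes that the lemma is Assumption~\ref{ass:lyapunov-lower-bound} read in the stochastic setting, which is exactly what you do with $\inf_t\E\mathcal L_r^t>-\infty$.

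Your integrability digression is not needed for the lemma, and it does not require a subgradient moment condition. Since $y^0=0$ gives $y^t=\alpha\sum_{k=0}^{t-1}s^k$ with each $s^k$ bounded on the compact set $K$, the term $\langle y^t,s^t\rangle$ is bounded for each fixed $t$. Hence every $\E\mathcal L_r^t$ is automatically well defined in $(-\infty,+\infty]$.
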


\begin{proof}
This is Assumption~\ref{ass:lyapunov-lower-bound} for the stochastic theorem.
\end{proof}

\begin{lemma}[Stochastic summability]\label{stoch:lem:summability}
Under \eqref{eq:stochastic-admissible-region}, there exist finite constants $M_s,C_s,C_{s,\eta}$ such that
\begin{equation}\label{stoch:eq:summable}
\sum_{t=0}^{T-1}\E(d_x^t+d_z^t+e^t)
\le M_s+C_s\sum_{t=0}^{T}\sum_i(\sigma_i^t)^2+C_{s,\eta}\sum_{t=1}^{T}\eta_t^s.
\end{equation}
\end{lemma}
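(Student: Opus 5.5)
The proof follows the telescoping argument of Lemma~\ref{det:lem:summability} and Lemma~\ref{comp:lem:summability}, now in expectation. I would sum the stochastic baseline inequality \eqref{stoch:eq:baseline} of Lemma~\ref{stoch:lem:baseline} from $t=0$ to $T-1$. The left-hand side telescopes to $\E\mathcal L_r^T-\mathcal L_r^0$. By Lemma~\ref{stoch:lem:lower-bound} this is bounded below by $\mathcal L_{s,\star}-\mathcal L_r^0$, where $\mathcal L_{s,\star}>-\infty$ and $\mathcal L_r^0$ is deterministic because $x^0=z^0$ and $y^0=0$. I would drop the nonpositive terms $-(\tau-\alpha)\E S^t$, using $\tau>\alpha$. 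This leaves
\[
\sum_{t=0}^{T-1}\big(a_x^s\E d_x^t+a_z\E d_z^t+a_e\E e^t\big)
\le \mathcal L_r^0-\mathcal L_{s,\star}+q\sum_{t=1}^{T}\E S^t+\frac1{2\chi}\sum_{t=0}^{T-1}\sum_i(\sigma_i^t)^2 .
\]

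Next I would bound $q\sum_{t=1}^T\E S^t$ with Lemma~\ref{stoch:lem:consensus-by-steps}, writing $b=q/\alpha^2$. Reindexing the $d_x^{t-1}$ and $d_z^{t-1}$ terms gives a total contribution of at most
\[
b(\widehat\Gamma_{s,x}^++\widehat\Gamma_{s,x}^-)\sum_{t=0}^{T-1}\E d_x^t
+b\widehat\Gamma_{s,z}\sum_{t=0}^{T-1}\E d_z^t
+b\widehat\Gamma_{s,x}^+\E d_x^T .
\]
It also contributes $2b\widehat\Gamma_{s,\delta}\sum_{t=0}^T\sum_i(\sigma_i^t)^2$, using $\sigma_i^{-1}=0$, and $b\widehat\Gamma_{s,\eta}\sum_{t=1}^T\eta_t^s$. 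The boundary term satisfies $\E d_x^T\le ND_K$ by \eqref{det:eq:compact-diameter}, because all iterates lie in $K$ almost surely under Assumption~\ref{ass:standing-local}(d) and Lemma~\ref{lem:local-variation-from-relative}. This term therefore goes into $M_s$.

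Moving the $d_x$ and $d_z$ sums to the left leaves the coefficients $a_x^s-b(\widehat\Gamma_{s,x}^++\widehat\Gamma_{s,x}^-)$, $a_z-b\widehat\Gamma_{s,z}$ and $a_e=c_0/\beta$. All three are strictly positive by \eqref{eq:stochastic-admissible-region}. Let $c_{\min}>0$ be their minimum and divide through by it. This gives \eqref{stoch:eq:summable} with
\[
M_s=\frac{\mathcal L_r^0-\mathcal L_{s,\star}+b\widehat\Gamma_{s,x}^+ND_K}{c_{\min}},
\qquad
C_s=\frac{(2\chi)^{-1}+2b\widehat\Gamma_{s,\delta}}{c_{\min}},
\qquad
C_{s,\eta}=\frac{b\widehat\Gamma_{s,\eta}}{c_{\min}} .
\]
None of these depends on $T$.

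The main point needing care is the bookkeeping of expectations. Each term must be finite so that it can be moved across the inequality; this holds because $d_x^t$, $d_z^t$, $e^t$ and $S^t$ are bounded almost surely on the compact set $K$. The variance sums must be aligned so that the $\sigma^{t-1}$ terms from Lemma~\ref{stoch:lem:consensus-by-steps} and the $\sigma^t$ terms from Lemma~\ref{stoch:lem:baseline} both fit inside $\sum_{t=0}^T$. The expected defect $\eta_t^s$ must enter linearly; it does, because it is already defined in expectation and Lemma~\ref{stoch:lem:consensus-by-steps} is stated for $\E S^t$.
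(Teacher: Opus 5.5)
Your proposal is correct and follows the same route as the paper: sum the stochastic baseline inequality \eqref{stoch:eq:baseline}, control $\sum_{t=1}^{T}\E S^t$ with Lemma~\ref{stoch:lem:consensus-by-steps}, use the expected lower bound of Lemma~\ref{stoch:lem:lower-bound}, and absorb the step-error terms using the strict inequalities in \eqref{eq:stochastic-admissible-region}. Your explicit bookkeeping fills in details the paper leaves implicit: the boundary term $\E d_x^T\le ND_K$, the factor $2$ on the variance sum, and the explicit constants $M_s,C_s,C_{s,\eta}$ (the appeal to $\sigma_i^{-1}=0$ is unnecessary, since the consensus lemma is only applied for $t\ge1$, but it is harmless).
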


\begin{proof}
Sum \eqref{stoch:eq:baseline}, use Lemma~\ref{stoch:lem:consensus-by-steps} to control $\sum_t\E S^{t+1}$, and use the expected lower bound from Lemma~\ref{stoch:lem:lower-bound}. The admissible inequalities make the coefficients of the accumulated step errors positive.
\end{proof}

\begin{lemma}[Stochastic lifted residual domination]\label{stoch:lem:lifted-residual-domination}
There exists $H_s<\infty$ such that
\begin{equation}\label{stoch:eq:lifted-residual-domination}
\E\mathcal R_{\mathrm{lift},r}(x^{t+1},y^{t+1})^2
\le H_s\E(d_x^t+e^{t-1}+S^t+S^{t+1})+H_s\sum_i(\sigma_i^t)^2.
\end{equation}
\end{lemma}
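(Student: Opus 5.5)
We follow the deterministic argument of Lemma~\ref{comp:lem:lifted-residual-domination}, now with the stochastic optimality system
\[
G^t+\mathcal B y^{t+1}+p^t+\xi^{t+1}=0,\qquad G^t=g^t+\delta^t,
\]
where $g_i^t=\nabla_xF_i(x_i^t,Z_i^t)$ and $p_i^t=\lambda^{-1}(\nabla\phi(x_i^{t+1})-\nabla\phi(x_i^t))$. Since $\xi^{t+1}\in\partial r(x^{t+1})$ by Definition~\ref{def:subgradient-motion-defects}(c), the vector $-(g^t+\delta^t+p^t)-\mathcal B y^{t+1}$ lies in $\partial r(x^{t+1})$. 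Adding $\nabla f(x^{t+1})+\mathcal B y^{t+1}$ to it gives an admissible element of $\nabla f(x^{t+1})+\mathcal B y^{t+1}+\partial r(x^{t+1})$. This yields the pathwise bound
\[
\operatorname{dist}^2\bigl(0,\nabla f(x^{t+1})+\mathcal B y^{t+1}+\partial r(x^{t+1})\bigr)
\le \|\nabla f(x^{t+1})-g^t-p^t-\delta^t\|^2 .
\]
We split the right-hand side with $\|a+b\|^2\le 2\|a\|^2+2\|b\|^2$ into a deterministic-type part $2\|\nabla f(x^{t+1})-g^t-p^t\|^2$ and an oracle part $2\|\delta^t\|^2$.

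For the deterministic-type part we reuse the estimate from the proof of Lemma~\ref{comp:lem:lifted-residual-domination}. The difference $\nabla f_i(x_i^{t+1})-\nabla_xF_i(x_i^t,Z_i^t)$ splits into two pieces. The first is the smooth-gradient step $\nabla f_i(x_i^{t+1})-\nabla f_i(x_i^t)$, which is bounded via Lemma~\ref{lem:local-variation-from-relative} by $C\|x_i^{t+1}-x_i^t\|^2\le (2C/\mu)D_\phi(x_i^{t+1},x_i^t)$. The second consists of the Bregman-reference terms $c_0(\nabla\phi(x_i^t)-\nabla\phi(z_i^t))$ and $\rho\sum_{j}(\nabla\phi(x_i^t)-\nabla\phi(z_j^t))$. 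We bound these by $L_\phi\|x_i^t-z_j^t\|$. Then we write $x_i^t-z_j^t=(x_i^t-x_j^t)+(x_j^t-z_j^t)$, so the sum over $i,j$ is controlled by $S^t$ plus $e^{t-1}=\sum_j\|x_j^t-z_j^t\|^2$, using the neighbor structure and the incidence operator. The term $p^t$ is bounded by $\lambda^{-1}L_\phi\|x^{t+1}-x^t\|$, hence by $d_x^t$. All points lie in $K$ by Assumption~\ref{ass:standing-local}(d), so these constants are uniform and do not depend on the random path. Adding the feasibility term $\|\mathcal B^\top x^{t+1}\|^2=S^{t+1}$ gives the pathwise inequality
\[
\mathcal R_{\mathrm{lift},r}(x^{t+1},y^{t+1})^2\le H(d_x^t+e^{t-1}+S^t+S^{t+1})+2\|\delta^t\|^2 .
\]

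Finally we take expectations. Assumption~\ref{ass:stochastic-oracle} together with the tower property gives $\E\|\delta^t\|^2=\E\,\E[\|\delta^t\|^2\mid\mathcal F_t]\le\sum_i(\sigma_i^t)^2$, and setting $H_s=\max\{H,2\}$ proves \eqref{stoch:eq:lifted-residual-domination}. The only delicate point is measurability and integrability, in particular making sure $x^{t+1}$ and $y^{t+1}$ are random variables for which the expectations are finite. Compactness of $K$ bounds every term except $\|\delta^t\|^2$, and that term is integrable by the variance assumption. Note that the cross term between $\delta^t$ and the deterministic part does not vanish in expectation, because $x^{t+1}$ depends on $\delta^t$; this is why we avoid it through the factor-2 splitting rather than any unbiasedness argument.
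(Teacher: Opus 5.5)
Your proposal is correct and follows the paper's proof. The stochastic optimality system $g^t+\delta^t+\mathcal B y^{t+1}+p^t+\xi^{t+1}=0$ exhibits $\nabla f(x^{t+1})-g^t-p^t-\delta^t$ as an element of $\nabla f(x^{t+1})+\mathcal B y^{t+1}+\partial r(x^{t+1})$, the deterministic part is controlled as in Lemma~\ref{comp:lem:lifted-residual-domination}, and the oracle part by the variance bound. Your factor-2 splitting and tower-property step make explicit what the paper leaves implicit, and you rightly avoid an unbiasedness argument, since $x^{t+1}$ depends on $\delta^t$.

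One cosmetic point: in the paper $G_i^t$ estimates $\nabla K_i$ and therefore already contains the dual term. Calling $g^t+\delta^t$ "$G^t$" clashes with that notation, although the system you actually use is the paper's.
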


\begin{proof}
The stochastic optimality system gives
\[
g^t+\delta^t+\mathcal B y^{t+1}+p^t+\xi^{t+1}=0.
\]
Thus the lifted stationarity distance is bounded by $\|\nabla f(x^{t+1})-g^t-p^t-\delta^t\|^2$. The deterministic terms are controlled as in Lemma~\ref{comp:lem:lifted-residual-domination}; the final term is controlled by the variance bound.
\end{proof}

\begin{lemma}[Stochastic proximal residual domination]\label{stoch:lem:residual-domination}
Assume $c_0+\rho\bar d>L_-$. For the stochastic aggregation defect in Definition~\ref{def:aggregation-defects}(b), there exist $H_{s,r},C_{s,\zeta}<\infty$ such that
\begin{equation}\label{stoch:eq:residual-domination}
\E\mathcal G_r(z^t)^2
\le H_{s,r}\E(e^{t-1}+d_x^{t-1}+d_z^{t-1}+S^t)+H_{s,r}\sum_i(\sigma_i^{t-1})^2+C_{s,\zeta}\zeta_t^s.
\end{equation}
\end{lemma}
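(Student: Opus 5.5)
The argument is the proof of Lemma~\ref{comp:lem:residual-domination}, applied pathwise, with the oracle error $\delta^{t-1}$ kept explicit; expectations are taken only at the end. Let $u_r^t$ be the selected common component of the consensus proximal point for $z^t$, and set $\psi_{r,t}(u)=\sum_i(F_i(u,Z_i^t)+r_i(u))$ and $h_t(u)=\sum_iF_i(u,Z_i^t)$. Because $c_0+\rho\bar d>L_-$, relative weak convexity and the Bregman lower bound of Lemma~\ref{lem:local-variation-from-relative} make $\nabla h_t$ strongly monotone on $K$ with modulus $m_r=N\mu(c_0+\rho\bar d-L_-)$. Since each $\partial r_i$ is monotone, the optimality condition at $u_r^t$ gives, for every $v\in K$,
\[
\|v-u_r^t\|\le m_r^{-1}\operatorname{dist}\Big(0,\nabla h_t(v)+\sum_i\partial r_i(v)\Big).
\]
This step is deterministic and uses only the fact that all points lie in $K$, which forward invariance guarantees for every realization.

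Next, take $v=\bar x^t$. Summing the stochastic stacked optimality system $g^{t-1}+\delta^{t-1}+\mathcal By^t+p^{t-1}+\xi^t=0$ over nodes cancels the edge multipliers, because $\mathbf 1^\top\mathcal B=0$. This gives
\[
\sum_i g_i^{t-1}+\sum_i\delta_i^{t-1}+\sum_ip_i^{t-1}+\sum_i\xi_i^t=0.
\]
For $\varepsilon>0$, choose a measurable $\bar\xi^t\in\sum_i\partial r_i(\bar x^t)$ whose squared distance to $\sum_i\xi_i^t$ is within $\varepsilon$ of the infimum. Then
\[
\nabla h_t(\bar x^t)+\bar\xi^t=\sum_i\big(\nabla_xF_i(\bar x^t,Z_i^t)-g_i^{t-1}\big)-\sum_ip_i^{t-1}-\sum_i\delta_i^{t-1}+\Big(\bar\xi^t-\sum_i\xi_i^t\Big).
\]
Square this and apply the elementary inequality $\|a_1+\dots+a_4\|^2\le4\sum\|a_k\|^2$. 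The first term is bounded as in Lemma~\ref{det:lem:residual-domination}: it involves $\|\bar x^t-x_i^{t-1}\|\le\|\bar x^t-x_i^t\|+\|x_i^t-x_i^{t-1}\|$ and the reference motion $z^t$ versus $z^{t-1}$. The local constant $L_{F,K}$, the bound $D_\phi\ge\frac\mu2\|\cdot\|^2$, and the Poincar\'e inequality \eqref{det:eq:poincare} then bound it by $C(d_x^{t-1}+d_z^{t-1}+S^t)$. The $p^{t-1}$ term is at most $C d_x^{t-1}$ by the Lipschitz bound on $\nabla\phi$. The oracle term contributes $N\|\delta^{t-1}\|^2$. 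The mismatch term contributes the squared aggregation distance plus $\varepsilon$. This yields the pathwise bound
\[
\|\bar x^t-u_r^t\|^2\le C\big(d_x^{t-1}+d_z^{t-1}+S^t+\|\delta^{t-1}\|^2+\operatorname{dist}^2\big(\textstyle\sum_i\xi_i^t,\sum_i\partial r_i(\bar x^t)\big)+\varepsilon\big).
\]

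Finally, use the comparison $\|z_i^t-u_r^t\|^2\le3\|z_i^t-x_i^t\|^2+3\|x_i^t-\bar x^t\|^2+3\|\bar x^t-u_r^t\|^2$. Taking the maximum over $i$ and bounding by the sum gives control by $e^{t-1}$, $\sigma_G^{-2}S^t$, and the previous display. Then let $\varepsilon\downarrow0$ and take total expectations. By the tower property and Assumption~\ref{ass:stochastic-oracle}, $\E\|\delta^{t-1}\|^2\le\sum_i(\sigma_i^{t-1})^2$; under the convention $\delta^{-1}=0$ this also covers $t=1$. Definition~\ref{def:aggregation-defects}(b) gives $\E\operatorname{dist}^2(\cdot)\le M_{C\xi}^s\E S^t+\zeta_t^s$, and the $M_{C\xi}^s\E S^t$ part is absorbed into $H_{s,r}$.

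The main obstacle is measurability. The near-optimal choice of $\bar\xi^t$ and the selection of $u_r^t$ must be measurable so that the expectations are legitimate. I would avoid constructing $\bar\xi^t$ measurably. Instead, I would take expectations of the pathwise inequality stated directly in terms of $\operatorname{dist}^2$, which is measurable as a function of $(\xi^t,\bar x^t)$ because the Minkowski sum of the closed convex sets $\partial r_i(\bar x^t)$ is a closed-graph convex-valued map. Note that the pathwise bound is stated via $\operatorname{dist}^2$ only after $\varepsilon\downarrow0$; that is the version whose expectation is taken.
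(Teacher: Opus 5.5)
Your proposal is correct and follows the paper's argument: the paper's proof reruns Lemma~\ref{comp:lem:residual-domination} with the extra summed oracle term $\sum_i\delta_i^{t-1}$, whose squared norm is bounded in expectation by $N\sum_i(\sigma_i^{t-1})^2$, which is what you do. Your pathwise-then-expectation organization and the remark that only $\operatorname{dist}^2$ needs to be integrated add care the paper omits; one small correction is that at $t=1$ the oracle term is $\delta^0$, so the convention $\delta^{-1}=0$ is not needed.
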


\begin{proof}
Repeat Lemma~\ref{comp:lem:residual-domination}. The summed stochastic optimality relation at time $t-1$ contains the extra term $\sum_i\delta_i^{t-1}$, whose squared norm is bounded in expectation by $N\sum_i(\sigma_i^{t-1})^2$.
\end{proof}

\begin{lemma}[Stochastic verified selection absorbs expected defects]\label{stoch:lem:verified-selection-absorption}
For a stochastic verified local selection in the sense of Definition~\ref{def:stochastic-verified-local-selection}, the stochastic motion defects can be absorbed up to a finite expected prefix, and the aggregation defect of Definition~\ref{def:aggregation-defects}(b) is summable. Consequently, for all $T\ge1$,
\[
\sum_{t=1}^{T}\eta_t^s\le B_{\eta,s},
\qquad
\sum_{t=1}^{T}\zeta_t^s\le B_{\zeta,s},
\]
for finite constants $B_{\eta,s},B_{\zeta,s}$ independent of $T$. In particular $\sum_t\eta_t^s<\infty$ and $\sum_t\zeta_t^s<\infty$, i.e.\ the selection condition in Definition~\ref{def:stochastic-verified-local-selection} implies Assumption~\ref{ass:finite-defect-accumulation}(b).
\end{lemma}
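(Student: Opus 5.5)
The plan is to mirror Lemma~\ref{comp:lem:verified-selection-absorption} pathwise and then take expectations, splitting every defect at the stopping time $T_{\rm id}^s$. Let $L_\Xi$ be the common deterministic Lipschitz constant of the maps $\Xi_i$. I will choose absorption constants $M_{T\xi}^s\ge 2L_\Xi^2/\mu$, $M_{\delta\xi}=0$ and $M_{C\xi}^s\ge C_{\rm agg}$, where $C_{\rm agg}$ is the constant in the pathwise aggregation estimate below. With these constants fixed, the argument proceeds in three steps.

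First, on the event $\{t\ge T_{\rm id}^s\}$, I use the Lipschitz bound and the Bregman lower bound of Lemma~\ref{lem:local-variation-from-relative}. This gives
\[
\sum_i\|\xi_i^{t+1}-\xi_i^t\|^2\le \frac{2L_\Xi^2}{\mu}d_x^t .
\]
Next, I select $\Xi_i(\bar x^t)\in\partial r_i(\bar x^t)$. Together with the Poincar\'e inequality \eqref{det:eq:poincare}, this gives
\[
\operatorname{dist}^2\Big(\sum_i\xi_i^t,\sum_i\partial r_i(\bar x^t)\Big)\le N L_\Xi^2\sigma_G^{-2}S^t=:C_{\rm agg}S^t .
\]
Both bounds hold pathwise on the identified tail.

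Second, I decompose with indicators. Write $X_t:=\sum_i\|\xi_i^{t+1}-\xi_i^t\|^2$ and $A_t:=\operatorname{dist}^2(\sum_i\xi_i^t,\sum_i\partial r_i(\bar x^t))$. Then
\[
X_t\le \frac{2L_\Xi^2}{\mu}d_x^t+X_t\mathbf 1_{\{t<T_{\rm id}^s\}},\qquad A_t\le C_{\rm agg}S^t+A_t\mathbf 1_{\{t<T_{\rm id}^s\}}.
\]
Taking expectations, and using that $d_x^{t-1},d_z^{t-1}\ge0$, yields
\[
\E X_t-M_{T\xi}^s\E(d_x^t+d_x^{t-1}+d_z^{t-1})\le \E[X_t\mathbf 1_{\{t<T_{\rm id}^s\}}].
\]
Since the right side is nonnegative, the positive part satisfies $\eta_t^s\le\E[X_t\mathbf 1_{\{t<T_{\rm id}^s\}}]$. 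In the same way, $\zeta_t^s\le \E[A_t\mathbf 1_{\{t<T_{\rm id}^s\}}]$.

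Third, I sum over $1\le t\le T$. Tonelli's theorem lets me exchange the sum and the expectation, because all terms are nonnegative. The resulting totals are bounded by the integrable prefix quantity in \eqref{eq:integrable-identification-prefix}. This gives $B_{\eta,s}$ and $B_{\zeta,s}$, both independent of $T$, and hence Assumption~\ref{ass:finite-defect-accumulation}(b).

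The main obstacle is measurability and integrability on the tail. The Lipschitz bounds hold only on an almost-sure event, and only for $t\ge T_{\rm id}^s$. To handle this, I will use that $\{t<T_{\rm id}^s\}\in\mathcal F_t$. I will also need each $\E X_t$ and $\E A_t$ to be finite, so that subtracting the absorbed expectations is legitimate. This follows from $\sup_t\E\sum_i\|\xi_i^t\|^2<\infty$ and from compactness of $K$, which bounds $d_x^t$, $d_z^t$ and $S^t$. The almost-sure null set does not affect any of the expectations. The common deterministic Lipschitz constant is what allows a single $T$-independent choice of absorption constants.
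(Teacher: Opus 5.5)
Your proposal is correct and follows the paper's own argument. Both proofs use pathwise Lipschitz bounds on the identified tail with $M_{T\xi}^s\ge 2L_\Xi^2/\mu$ for the motion defect, and the selection $\Xi_i(\bar x^t)$ together with the graph Poincar\'e inequality for the aggregation defect, and both charge the pre-identification contributions to the integrable prefix \eqref{eq:integrable-identification-prefix}. Your explicit indicator bounds $\eta_t^s\le\E[X_t\mathbf 1_{\{t<T_{\rm id}^s\}}]$ and $\zeta_t^s\le\E[A_t\mathbf 1_{\{t<T_{\rm id}^s\}}]$ spell out the step the paper summarizes as ``the remaining positive part can only come from the pre-identification event.''
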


\begin{proof}
Let $L_\Xi$ be the common tail Lipschitz constant from Definition~\ref{def:stochastic-verified-local-selection}. On the event $\{t\ge T_{\rm id}^s\}$,
\[
\sum_i\|\xi_i^{t+1}-\xi_i^t\|^2
\le L_\Xi^2\sum_i\|x_i^{t+1}-x_i^t\|^2
\le \frac{2L_\Xi^2}{\mu}d_x^t.
\]
Choose $M_{T\xi}^s\ge2L_\Xi^2/\mu$ in Definition~\ref{def:subgradient-motion-defects}(c). Since the oracle-error term in that definition is subtracted with a nonnegative coefficient, the remaining positive part can only come from the pre-identification event. The integrable prefix condition in Definition~\ref{def:stochastic-verified-local-selection} gives a constant $B_{\eta,s}<\infty$ with $\sum_{t=1}^T\eta_t^s\le B_{\eta,s}$.

For aggregation, choose $\bar\xi_i^t=\Xi_i(\bar x^t)$ on the identified tail. Then, on $\{t\ge T_{\rm id}^s\}$,
\[
\begin{aligned}
\operatorname{dist}^2\!\left(\sum_i\xi_i^t,\sum_i\partial r_i(\bar x^t)\right)
&\le
\left\|\sum_i(\Xi_i(x_i^t)-\Xi_i(\bar x^t))\right\|^2\\
&\le C\sum_i\|x_i^t-\bar x^t\|^2
\le C S^t .
\end{aligned}
\]
Taking expectations and assigning the pre-identification contribution to $\zeta_t^s$ gives the bound in Definition~\ref{def:aggregation-defects}(b) with $\sum_{t=1}^T\zeta_t^s\le B_{\zeta,s}<\infty$ by the same integrable prefix condition.
\end{proof}

\begin{proof}[Proof of Theorem~\ref{thm:stochastic-main}]
Combine Lemmas~\ref{stoch:lem:summability}, \ref{stoch:lem:consensus-by-steps}, and~\ref{stoch:lem:lifted-residual-domination}. The domination lemma controls the residuals from index $k=2$ onward, and the finite expected residual at $k=1$ is absorbed into $\Delta_s$. Dividing by $T$ and using the bound by the average residual proves the theorem. The stated consequences follow by imposing summable variance and defects, or horizon schedules with average variance and defect of order $O(\epsilon^2)$.
\end{proof}

\begin{proof}[Proof of Corollary~\ref{cor:stochastic-asymptotic}]
Under summable variance and defects, the proof of Theorem~\ref{thm:stochastic-main} yields summability of $\E\mathcal R_{\mathrm{lift},r}(x^t,y^t)^2$. Nonnegativity gives convergence to zero.
\end{proof}

\begin{proof}[Proof of Corollary~\ref{cor:stochastic-proximal}]
Use Lemma~\ref{stoch:lem:residual-domination} instead of Lemma~\ref{stoch:lem:lifted-residual-domination} in the proof of Theorem~\ref{thm:stochastic-main}.
\end{proof}

\begin{proof}[Proof of Corollary~\ref{cor:stochastic-finite-accumulation}]
Assumption~\ref{ass:finite-defect-accumulation}(b) states exactly $\sum_t\eta_t^s<\infty$ and $\sum_t\zeta_t^s<\infty$ for the expected motion and aggregation defects appearing in the bound of Corollary~\ref{cor:stochastic-proximal}. Substituting these finite sums into that bound absorbs the term $C'_{s,r}\sum_{t=1}^T(\eta_t^s+\zeta_t^s)$ into $\bar\Delta_{s,r}$, leaving only the accumulated variance term explicit. Summability of the variance then yields the $O(T^{-1})$ order.
\end{proof}

\begin{proof}[Proof of Corollary~\ref{cor:stochastic-verified-proximal}]
Corollary~\ref{cor:stochastic-verified-selection-lower-bound} supplies the expected Lyapunov lower bound, so Theorem~\ref{thm:stochastic-main} applies. Lemma~\ref{stoch:lem:verified-selection-absorption} shows that the selection condition in Definition~\ref{def:stochastic-verified-local-selection} implies Assumption~\ref{ass:finite-defect-accumulation}(b) and makes the aggregation defect of Definition~\ref{def:aggregation-defects}(b) summable. Applying Corollary~\ref{cor:stochastic-finite-accumulation} gives the displayed bound with only the accumulated variance term left explicit.
\end{proof}

\begin{proof}[Proof of Corollary~\ref{cor:structured-stochastic-rate}]
For stochastic verified selection, apply Corollary~\ref{cor:stochastic-verified-proximal}. In the identified $\ell_1$ or group $\ell_{2,1}$ case, Proposition~\ref{prop:stability-sufficient}(c) gives finite expected defect accumulation. The selected subgradients of these penalties are uniformly bounded, so Proposition~\ref{prop:lyapunov-lower-sufficient} gives the expected Lyapunov lower bound under the uniform variance bound. Corollary~\ref{cor:stochastic-finite-accumulation} then gives the same variance-dependent estimate. If the cumulative variance is finite, the numerator is bounded uniformly in $T$, proving the $O(T^{-1})$ best-iterate order.
\end{proof}

\end{document}